\newcommand{\stepa}[1]{\overset{\rm (a)}{#1}}
\newcommand{\stepb}[1]{\overset{\rm (b)}{#1}}
\newcommand{\stepc}[1]{\overset{\rm (c)}{#1}}
\newcommand{\stepd}[1]{\overset{\rm (d)}{#1}}
\newcommand{\eqlaw}{\overset{{\rm law}}{=}}
\newcommand{\termI}{(\text{I})}
\newcommand{\termII}{(\text{II})}
\newcommand{\btheta}{\boldsymbol{\theta}} 
\newcommand{\balpha}{\boldsymbol{\alpha}} 
\newcommand{\bbeta}{\boldsymbol{\beta}} 
\newcommand{\bxi}{\boldsymbol{\xi}} 
\newcommand{\fnorm}[1]{\|#1\|_{\rm F}}
\newcommand{\opnorm}[1]{\lnorm{#1}{\rm op}}
\newcommand{\Opnorm}[1]{\|#1\|_{\rm op}}
\newcommand{\polylog}{{\mathrm{polylog}}}
\newcommand{\sech}{{\mathrm{sech}}}
\newcommand{\Lip}{\mathrm{Lip}}
\newcommand{\thetanorm}{\|\theta_*\|}
\newcommand{\thetaub}{r}
\newcommand{\Tstar}{T_{\star}}
\newcommand{\thetaMLE}{\hat{\theta}_{{\rm MLE}}}
\newcommand{\Var}{\mathrm{Var}}
\newcommand{\Span}{\mathrm{span}}
\newcommand{\otheta}{\theta^{\texttt{+}}}
\newcommand{\utheta}{\theta^{\texttt{-}}}
\newcommand{\oepsilon}{\epsilon^{\texttt{+}}}
\newcommand{\uepsilon}{\epsilon^{\texttt{-}}}
\newcommand{\oalpha}{\alpha^{\texttt{+}}}
\newcommand{\ualpha}{\alpha^{\texttt{-}}}
\newcommand{\ttheta}{\tilde{\theta}}
\newcommand{\talpha}{\tilde{\alpha}}
\newcommand{\tb}{{\tilde{b}}}
\newcommand{\ty}{{\tilde{y}}}
\newcommand{\tY}{{\tilde{Y}}}
\renewcommand{\tilde}{\widetilde}
\title{Randomly initialized EM algorithm for two-component Gaussian mixture achieves near optimality in $O(\sqrt{n})$ iterations}
\author{Yihong Wu and Harrison H.~Zhou\thanks{The authors are with the Department of Statistics and Data Science, Yale University, New Haven, CT, email: 
\url{{yihong.wu,huibin.zhou}@yale.edu}. The results of this paper were first presented in part at Joint Statistical Meetings (JSM), Vancouver BC, Canada, July 2018 \cite{WZ-JSM18}. Y.~Wu is supported in part by the NSF grants CCF-1527105, CCF-1900507, an NSF CAREER award CCF-1651588, and an Alfred Sloan fellowship.
H.~H.~Zhou is supported in part by NSF grants DMS 1811740, DMS 1918925, and NIH grant 1P50MH115716.
}}
\date{\today}
\begin{document}
\maketitle

\begin{abstract}


We analyze the classical EM algorithm for parameter estimation in the symmetric two-component Gaussian mixtures in $d$ dimensions. We show that, 
even in the absence of any separation between components, provided that the sample size satisfies $n=\Omega(d \log^3 d)$,
 the randomly initialized EM algorithm converges to an estimate in at most $O(\sqrt{n})$ iterations with high probability, which is at most 
$O((\frac{d \log^3 n}{n})^{1/4})$ in Euclidean distance from the true parameter and within logarithmic factors of the minimax rate of $(\frac{d}{n})^{1/4}$. 
Both the nonparametric statistical rate and the sublinear convergence rate are direct consequences of the zero Fisher information in the worst case.
Refined pointwise guarantees beyond worst-case analysis and convergence to the MLE are also shown under mild conditions.

This improves the previous result of  Balakrishnan et al \cite{BWY17} which requires strong conditions on both the separation of the components and the quality of the initialization,
and that of  Daskalakis et al \cite{DTZ17} which requires sample splitting and restarting the EM iteration.

\end{abstract}

\tableofcontents

\section{Introduction}
\label{sec:intro}

The Expectation-Maximization (EM) algorithm \cite{DLR1977} is a powerful heuristic aiming at approximating the maximal likelihood estimator (MLE) in the presence of latent variables. The general setting can be described as follows:
Let $(X,Y)$ be random variables distributed according to some parametrized joint distribution with density $p_{\theta_*}(x,y)$.
Observing $Y$ (but not the latent $X$), the goal is to estimate the true parameter $\theta_*$.
Let $p_\theta(y) = \int p_\theta(x,y) dx$ denote the marginal density of $Y$.
Given $Y=y$, the MLE for $\theta_*$ is
		\begin{equation}
		\hat \theta_{\text{MLE}} \in \arg\max_\theta \log p_\theta(y),
		\label{eq:MLE}
		\end{equation}
which is frequently expensive to compute due to the non-convexity of the likelihood and the computational cost of the marginalization.
		To this end, the EM algorithm was proposed as an iterative algorithm to approximate the MLE. 
		Given the current estimate $\theta_0$, the next estimate $\theta_1$ is obtained by executing the following two steps:
		\begin{itemize}
			\item ``E step'': compute
	\begin{equation}
	Q(\theta|\theta_t) \triangleq \int p_{\theta_t}(x|y) \log p_{\theta}(x,y) dx
	\label{eq:Estep}
	\end{equation}
				
			\item ``M step'': update
			\begin{equation}
			\theta_{t+1} = \arg\max_{\theta} Q(\theta|\theta_t).
			\label{eq:Mstep}
			\end{equation}
			\end{itemize}
			The algorithm then proceeds by iterating the following two steps and generates a sequence of estimators $\{\theta_t\colon t \geq 0\}$.			
			The interpretation of this methodology is that \prettyref{eq:Mstep} is equivalent to maximizing the following lower bound of the log-likelihood:
		\[
		\int p_{\theta_t}(x|y) \log \frac{p_{\theta}(x,y)}{p_{\theta_t}(x|y)} dx = \log p_\theta(y) - D(p_{\theta_t}(\cdot|y) \| p_{\theta}(\cdot|y))
		\]
			where $D(\cdot\|\cdot)$ denote the Kullback-Leibler (KL) divergence. Consequently,
			\[
			\log p_\theta(y)-\log p_{\theta_t}(y) \geq Q(\theta|\theta_t)-Q(\theta_t|\theta_t)
			\]
			for any $\theta$, and hence the likelihood along the EM trajectory $\{\theta_t\}$ is non-decreasing.

\subsection{Gaussian mixture model}
\label{sec:intro-model}
We consider the symmetric two-component Gaussian mixture (2-GM) model in $d$ dimensions:
\begin{equation}
P_\theta = \frac{1}{2} N(-\theta,I_d) + \frac{1}{2} N(\theta,I_d),
\label{eq:Ptheta}
\end{equation}
which corresponds to two equally weighted clusters centered at $\pm \theta$ respectively.
Recall that 
$\cosh(x)=\frac{e^x+e^{-x}}{2}$, $\sinh(x)=\frac{e^x-e^{-x}}{2}$, and  $\tanh(x)=\frac{\sinh(x)}{\cosh(x)}$.
The density function of $P_\theta$ is
\begin{equation}
p_\theta(y) \triangleq \frac{1}{2}[\varphi(y-\theta)+\varphi(y+\theta)] = \exp(-\|y\|^2/2) \varphi(\theta) \cosh \iprod{y}{\theta}.
\label{eq:ptheta}
\end{equation}
where $\varphi$ denotes the standard normal density in $\reals^d$, $\|\cdot\|$ denotes the Euclidean norm.

Let $\theta_*\in\reals^d$ denote the ground truth. Given iid samples $Y=(Y_1,\ldots,Y_n) \iiddistr P_{\theta_*}$, the goal is to estimate  
$\theta_*$ up to a global sign flip, under the following loss function:
\[
\ell(\hat\theta, \theta) = \min\{\|\hat\theta -\theta\|, \|\hat\theta +\theta\|\}.
\]
Here the latent variables $(X_1,\ldots,X_n)$ correspond to the labels of each sample, which are iid and equally likely to be $\pm 1$ (Rademacher). Then we have
\begin{equation}
Y_i = X_i \theta_* + Z_i
\label{eq:latent}
\end{equation}
where $Z_i \iiddistr N(0,I_d)$ and are independent of $X_i$'s.
Since 
\[
p_\theta(x,y) \propto e^{-\frac{1}{2} \sum_{i=1}^n \|y_i - x_i \theta\|^2} \propto e^{-\frac{1}{2} \sum_{i=1}^n \|\theta\|^2 - \iprod{x_i y_i}{\theta}},
\]
 the M-step in \prettyref{eq:Mstep} simplifies to
\begin{align*}
			\theta_{t+1} &~ = \arg\min_{\theta} \sum_{i=1}^n \sum_{x_i\in\{\pm\}} \|y_i - x_i \theta\|^2 p_{\theta_t}(x_i|y) \\
			&~ = \arg\min_{\theta} \sth{n \|\theta\|^2 - \iprod{\theta}{\sum_{i=1}^n y_i \Expect_{\theta_t}[X_i|Y_i=y_i]}} \\
			&~ = \frac{1}{n} \sum_{i=1}^n y_i \Expect_{\theta_t}[X_i|Y_i=y_i],
\end{align*}
where the conditional mean is given by 
\begin{equation}
\Expect_{\theta}[X|Y=y] = \tanh \iprod{\theta}{y}.
\label{eq:condmean}
\end{equation}
Thus, specialized to the symmetric 2-GM model, the EM algorithm takes the following form:
\begin{equation}
\theta_{t+1} = f_n(\theta_{t})
\label{eq:EMsample}
\end{equation}
where 
\begin{equation}
f_n(\theta) \triangleq \Expect_n[Y \tanh \iprod{\theta}{Y}] \triangleq 
\frac{1}{n}\sum_{i=1}^n Y_i \tanh \iprod{\theta}{Y_i}.
\label{eq:fn}
\end{equation}
In the case of infinite samples ($n\diverge$), \prettyref{eq:fn} reduces to the following
\begin{equation}
f(\theta) \triangleq \Expect[Y \tanh \iprod{\theta}{Y}], \quad Y\sim P_{\theta_*}.
\label{eq:f}
\end{equation}
We refer to \prettyref{eq:fn} and \prettyref{eq:f} as the \emph{sample version} and the \emph{population version} of the EM map, respectively.

In the special case of symmetric Gaussian mixture,\footnote{In fact, this holds for any Gaussian mixture distribution, where the center of each component has the same Euclidean norm.} EM algorithm can also be interpreted as maximizing the likelihood by means of gradient ascent with \emph{constant step size}.
Indeed, denote the average $n$-sample log likelihood by
	\begin{equation}
	\ell_n(\theta) \triangleq \frac{1}{n} \sum_{i=1}^n \log p_\theta(Y_i) = \Expect_n[\log p_\theta(Y)]
	\label{eq:LL}
	\end{equation}
	and its population version by
	\begin{equation}
\ell(\theta) \triangleq \Expect[\log p_\theta(Y)], \quad Y\sim P_{\theta_*}.
\label{eq:LL-pop}
\end{equation}
	Since $\nabla \log \ell_n(\theta) = 
	\Expect_n[\nabla_\theta \log p_\theta(Y)] = -\theta + \Expect_n[Y \tanh \iprod{\theta}{Y}],
	$
	the EM iteration \prettyref{eq:EMsample} can be written as
	in the following gradient ascent form (with step size equal to one)
	\begin{equation}
	\theta_{t+1} = \theta_{t} + \nabla \ell_n(\theta_t).
		\label{eq:EM-GD}
	\end{equation}


Recently there is a sequence of work on the performance of the EM algorithm
\cite{BWY17,XHM16,DTZ17,jin2016local},  in particular, on the global convergence of the population (infinite sample size) version. For finite samples, either strong conditions on the initializations and the separation need to be assumed, or certain variants of the algorithm (such as sample splitting or restart) need to be executed.
Despite these progress, the performance guarantee of the classical EM algorithm remains not fully understood, especially with random initializations, which are widely adopted in practice.
The main focus of this paper is to provide statistical and computational guarantees for the randomly initialized EM algorithm in high dimensions, thereby assessing the optimality of the EM estimate and the number of iterations needed to reach the statistical optimum. We do so in the simple symmetric 2-GM model.

\subsection{Main results}
\label{sec:main}

We focus on the regime of bounded $\thetanorm$. This is the most interesting case for parameter estimation, wherein consistent clustering is impossible but accurate estimation of $\theta_*$ is nevertheless possible. In fact, for the purpose of parameter estimation, it is not necessary to impose any separation between the two clusters, since the parameter $\theta_*$ is perfectly identifiable even when $\theta_*=0$ is allowed, in which case the data are simply generated from a single standard Gaussian component.

Formally, throughout the paper we assume that 
\begin{equation}
\thetanorm \leq \thetaub
\label{eq:bounded}
\end{equation}
for some constant $\thetaub$.

\begin{theorem}
\label{thm:main}
There exist constants $C,C_0$ depending only on $\thetaub$, such that 
the following holds. 
Assume that $n \geq C  d \log^3 d$.
Initialize the EM iteration \prettyref{eq:EMsample} with 
\begin{equation}
\theta_0 = C_0 \pth{\frac{d}{n} \log n}^{1/4} \eta_0,
\label{eq:random-init-main}
\end{equation}
where $\eta_0$ is drawn uniformly at random from the unit sphere $S^{d-1}$. 
For any $\theta_*\leq \thetaub$,  with probability $1-o_{n}(1)$,  
\begin{equation}
\ell(\theta_t,\theta_*) \leq C \pth{\frac{d \log^3 n}{n}}^{1/4}
\label{eq:main}
\end{equation}
for all $t \geq C \sqrt{n}$.
\end{theorem}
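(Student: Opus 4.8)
\emph{Reduction to two coordinates.} The plan is to compress the $d$-dimensional iteration $\theta_{t+1}=f_n(\theta_t)$ into two scalars: with $u=\theta_*/\|\theta_*\|$ (an arbitrary unit vector when $\theta_*=0$), track $\alpha_t=\iprod{\theta_t}{u}$ and $\beta_t=(\|\theta_t\|^2-\alpha_t^2)^{1/2}$, the lengths of the components of $\theta_t$ parallel and orthogonal to $\theta_*$. Because $P_{\theta_*}$ is rotationally invariant in the directions orthogonal to $\theta_*$ and $f(\theta)=\Expect[Y\tanh\iprod{\theta}{Y}]$, both $\iprod{f(\theta)}{u}$ and $\|f(\theta)-\iprod{f(\theta)}{u}\,u\|$ depend on $\theta$ only through $(\alpha_t,\beta_t)$ — as one- and two-dimensional Gaussian integrals of $\tanh$ — so under the population map $f$ the pair $(\alpha_t,\beta_t)$ satisfies a closed planar recursion with no leakage out of $\Span\{u,\theta_t\}$. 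Under $f_n$ this reduction holds only up to sample fluctuation, handled last.

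\emph{Population dynamics.} First I would show $f$ maps a ball $B(0,R)$, $R=R(r)$, into itself, so $\|\theta_t\|=O(1)$ throughout, and then read off from the planar recursion two monotonicity properties: the drift on $\alpha$ carries the multiplicative factor $\approx 1+\|\theta_*\|^2-\Theta(\alpha^2\vee\beta^2)$, strictly larger than the factor $\approx 1-\Theta(\|\theta_t\|^2\vee\|\theta_*\|^2)$ governing $\beta$, so $\sign(\alpha_t)$ is preserved, the alignment $|\alpha_t|/\|\theta_t\|$ is nondecreasing, and the iterate is pushed toward one of the fixed points $\pm\theta_*$. The rate is the delicate point: near $\theta_*\approx 0$ the Jacobian of $f$ at the origin is $I_d+\theta_*\theta_*^{\top}$, the leading correction is cubic ($f(\theta)=\theta-\|\theta\|^2\theta+O(\|\theta\|^5)$ when $\theta_*=0$), and hence the norm contracts only at the sublinear rate $\|\theta_t\|^2\asymp 1/t$ — the quantitative face of vanishing Fisher information. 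Assembling these, and using that a uniformly random $\theta_0$ of the prescribed norm $\asymp(d\log n/n)^{1/4}$ has $|\alpha_0|\gtrsim\|\theta_0\|/\sqrt{d\log n}$ with probability $1-o_n(1)$: when $\|\theta_*\|\ge\|\theta_0\|$, the signal $|\alpha_t|$ grows geometrically at rate $1+\|\theta_*\|^2$ to within a constant of $\|\theta_*\|$ in $O(\log(\|\theta_*\|\sqrt{d\log n}/\|\theta_0\|)/\|\theta_*\|^2)$ steps — which one checks is $O(\sqrt n)$ precisely because $x\mapsto(\log x)/x$ is bounded and $\|\theta_*\|^2\gtrsim\|\theta_0\|^2\asymp\sqrt{d\log n/n}$ — after which local contraction with spectral gap $\gtrsim\|\theta_*\|^2$ pins $\ell(\theta_t,\theta_*)$ at the statistical scale; when $\|\theta_*\|<\|\theta_0\|$ one has $\|\theta_0\|<$ the target radius, so $\theta_*$ and $\theta_0$ both lie inside the target ball and it suffices to show $\|\theta_t\|$ cannot escape it.

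\emph{Sample trajectory.} Since all $n$ samples are reused at every iteration, there is no fresh randomness across $t$; the argument is to condition on a high-probability data event and then run a deterministic perturbed recursion. On that event I would have $\|f_n(\theta)-f(\theta)\|\lesssim\polylog(n)\,\|\theta\|\sqrt{d/n}$ for all $\theta\in B(0,R)$, together with $\Opnorm{\nabla f_n(\theta)-\nabla f(\theta)}\lesssim\polylog(n)\sqrt{d/n}$, proved by a covering/chaining argument over $B(0,R)\subset\reals^d$ with sub-exponential tail bounds for $Y_i\tanh\iprod{\theta}{Y_i}$ — the step that consumes the hypothesis $n\gtrsim d\log^3 d$. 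Projecting, the increments of $(\alpha_t,\beta_t)$ under $f_n$ are those under $f$ plus errors of size $\polylog(n)\|\theta_t\|/\sqrt n$ in the $\alpha$-direction and $\polylog(n)\|\theta_t\|\sqrt{d/n}$ in the $\beta$-direction — crucially proportional to $\|\theta_t\|$, not constant. Feeding these into the recursions above, an induction along the (now deterministic) trajectory shows $\beta_t$ never exceeds $C(d\log^3 n/n)^{1/4}$: its contraction gap $\gtrsim\|\theta_t\|^2$ balances the per-step noise $\polylog(n)\|\theta_t\|\sqrt{d/n}$ at the level $\beta_t\lesssim\polylog(n)\sqrt{d/n}/\|\theta_t\|\lesssim\polylog(n)(d/n)^{1/4}$ (using $\|\theta_t\|\gtrsim\min\{\|\theta_0\|,\|\theta_*\|\}$), while $|\alpha_t|$ still reaches and stays within that same radius of $\|\theta_*\|$. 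Since $\ell(\theta_t,\theta_*)\le\beta_t+\bigl|\,|\alpha_t|-\|\theta_*\|\,\bigr|$, this yields \prettyref{eq:main} for all $t\ge C\sqrt n$, the target ball being absorbing because the noise floor sits strictly inside it; the case $\theta_*=0$ is the degenerate specialization where $\alpha_t$ is immaterial, $\|\theta_0\|$ already lies inside the target ball, and one only checks it is never left.

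\emph{Main obstacle.} The crux is carrying out the last two steps \emph{uniformly} over $\|\theta_*\|\in[0,r]$. In the statistically hard regime $\|\theta_*\|\asymp(d/n)^{1/4}$ the per-step multiplicative gain on the signal is merely $1+\Theta(\sqrt{d/n})$, so one must run the noise-perturbed recursion for $\Theta(\sqrt{n/d}\cdot\polylog(n))$ iterations while keeping $|\alpha_t|$ from being overtaken by $\beta_t$ and controlling the fluctuation accumulated over that many steps without any martingale cancellation to lean on; reconciling the sublinear signal growth with this accumulated noise, and tracking exactly how the contraction gap, the noise level, and the target radius each scale with $\|\theta_t\|$, is what forces both the logarithmic factors in \prettyref{eq:main} and the sample-size requirement $n\gtrsim d\log^3 d$.
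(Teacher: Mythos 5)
Your reduction to the planar recursion for $(\alpha_t,\beta_t)$, the population analysis, the relative (proportional-to-$\|\theta\|$) deviation bound, and the absorbing-ball treatment of small $\thetanorm$ all track the paper's route (\prettyref{lmm:EMsubspace}, \prettyref{lmm:FG}, \prettyref{thm:concentration}, Theorems \ref{thm:main-beta}--\ref{thm:main-alpha-small}). The genuine gap is in your sample-trajectory step, exactly where your ``main obstacle'' paragraph points but where your mechanism does not deliver: you claim that, after conditioning on one uniform data event, the per-step error in the signal direction is $\polylog(n)\,\|\theta_t\|/\sqrt{n}$ while the orthogonal error is $\polylog(n)\,\|\theta_t\|\sqrt{d/n}$. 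A covering/chaining argument over $B(0,R)\subset\reals^d$ cannot give the dimension-free $1/\sqrt{n}$ rate for $\sup_{\theta}|\Iprod{\Delta_n(\theta)}{\eta_*}|/\|\theta\|$, because the index set is still $d$-dimensional; in fact the claim is false: already for $\theta_*=0$, taking $\theta$ along the empirical cross-correlation $\frac{1}{n}\sum_i \Iprod{Y_i}{\eta_*}\,(I-\eta_*\eta_*^\top)Y_i$ forces $|\Iprod{\Delta_n(\theta)}{\eta_*}|/\|\theta\|\gtrsim\sqrt{d/n}$. Since $\theta_t$ depends on all the data, the pointwise $1/\sqrt{n}$ rate is not available either. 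With the honest uniform bound $\omega\asymp\sqrt{d\log n/n}$, the perturbation of $\alpha_{t+1}$ is $\omega(|\alpha_t|+\beta_t)$; at a random start $\alpha_0\asymp\|\theta_0\|/\sqrt{d}$ and $\beta_0\asymp\|\theta_0\|$, so the drift $\thetanorm^2\alpha_0$ dominates the noise $\omega\beta_0$ only when $|\Iprod{\eta_0}{\eta_*}|\gtrsim\frac{1}{\thetanorm^2}\sqrt{\frac{d}{n}\log n}$, which is exactly condition \prettyref{eq:goodini} of \prettyref{thm:main-alpha-large}; a uniformly random $\eta_0$ satisfies it only when $n\gtrsim d^2\log n/\thetanorm^4$, not under $n\gtrsim d\log^3 d$ uniformly over $(\frac{d\log^3 n}{n})^{1/4}\lesssim\thetanorm\leq\thetaub$.

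What is missing is the decoupling device the paper introduces for the initial phase (\prettyref{thm:phase1} and Lemmas \ref{lmm:induction}, \ref{lmm:alphat1-raw}, \ref{lmm:ind1-raw}): an auxiliary ``leave-one-coordinate-out'' trajectory $\ttheta_t$ driven by samples whose signal-coordinate signs are replaced by fresh Rademacher variables, so that $\ttheta_t$ is independent of the original signs $b_i$. Cross terms such as $\frac{1}{n}\sum_i b_i Y_{i,1}\Iprod{Y_{i,\perp}}{\ttheta_{t,\perp}}$ then concentrate at the pointwise rate $\sqrt{\log n/n}\,\|\ttheta_{t,\perp}\|$ rather than $\sqrt{d/n}\,\|\theta_{t,\perp}\|$, and the discrepancy $\|\theta_t-\ttheta_t\|$ is controlled by a companion induction, which is what yields the geometric growth $\alpha_{t+1}\geq\alpha_t(1+\thetanorm^2-\sqrt{Kd\log^3 n/n})$ during the first $O(\log n/\thetanorm^2)$ iterations. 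Without this (or some equivalent argument ruling out alignment of the data-dependent iterate with the bad empirical directions), the purely uniform perturbed recursion you describe cannot show that $\alpha_t$ escapes the noise floor from a single random initialization at the stated sample size, so the theorem does not follow from your outline; the rest of your plan (unconditional control of $\beta_t$, small-$\thetanorm$ case, Phase II/III convergence once $\alpha_t\gtrsim(\frac{d\log n}{n})^{1/4}$) is sound and matches the paper.
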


\prettyref{thm:main} provides a statistical and computational guarantee for the EM algorithm for all $\theta_*$, with the worst case occurring for $\theta_*$ close to zero.
In fact, if $\|\theta_*\| = O((\frac{d}{n})^{1/4})$, the 2-GM model is statistically indistinguishable from the standard normal model.
The following result is a refined version of \prettyref{thm:main} under the modest assumption that $\theta_*$ is slightly bounded away from zero, which also shows the convergence to the MLE:
\begin{theorem}
\label{thm:main-pointwise}
In the setting of \prettyref{thm:main}, assume in addition that $\thetanorm \geq (C \frac{d \log^3 n}{n})^{1/4}$.
		Then, with probability at least $1-o_n(1)$, 
		\begin{equation}
	\ell(\theta_t,\theta_*) \leq \frac{C}{\thetanorm} \sqrt{\frac{d \log n}{n}}
	\label{eq:main-pointwise}
	\end{equation}
	holds for all $t \geq \frac{C \log n}{\thetanorm^2}$ and, furthermore, $\lim_{t\diverge} \theta_t$ exists and coincides with $\thetaMLE$, the unique (up to a global sign change) global maximizer of the likelihood \prettyref{eq:LL} and 
		$\ell(\theta_t,\thetaMLE) = o(\frac{1}{n})$ for all $t \geq \frac{C \log n}{\thetanorm^2}$.
\end{theorem}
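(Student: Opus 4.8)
The plan is to feed the global convergence of \prettyref{thm:main} into a sharpened local analysis around $\theta_*$ that cashes in the extra hypothesis $\thetanorm \geq (Cd\log^3 n/n)^{1/4}$. Write $\hat u = \theta_*/\thetanorm$ and split each iterate as $\theta_t = \alpha_t\hat u + \beta_t$, with $\alpha_t = \iprod{\theta_t}{\hat u}\in\reals$ and $\beta_t \in \hat u^\perp$; by the $\theta\mapsto-\theta$ symmetry of $f_n$ we may assume the trajectory is pulled toward $+\theta_*$. Everything rests on two deterministic facts about the population map: near the origin $f(\theta) = (I_d + \theta_*\theta_*^\top)\theta + O(\|\theta\|^3)$, so the $\hat u$-component has multiplicative drift $1 + \thetanorm^2 > 1$ while the $\hat u^\perp$-component is non-expansive; and near $\theta_*$ the Jacobian $\nabla f(\theta_*) = \Expect[YY^\top\sech^2\iprod{\theta_*}{Y}]$ has all eigenvalues equal to $1 - \Theta(\thetanorm^2)$ (a one-line Gaussian computation using $\Expect[\tanh^2\iprod{\theta_*}{Y}] = \Theta(\thetanorm^2)$ for bounded $\thetanorm$), so, after also bounding the Lipschitz constant of $\nabla f$, $f$ is a contraction with factor $1 - c\thetanorm^2$ on a fixed-radius ball $B$ around $\theta_*$.

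First I would handle the escape phase, in $O(\log n/\thetanorm^2)$ iterations. With probability $1 - o_n(1)$ the spherical marginal satisfies $|\iprod{\eta_0}{\hat u}| \geq 1/(\sqrt d\,\polylog n)$, so $|\alpha_0| \geq C_0(\tfrac dn\log n)^{1/4}/(\sqrt d\,\polylog n)$, and the point of the hypothesis $\thetanorm \geq (Cd\log^3 n/n)^{1/4}$ is precisely that this initial $|\alpha_0|$ already exceeds the ``Phase-I noise floor'' $\Theta(\|\theta_t\|/(\thetanorm^2\sqrt n))$ at which the $\Theta(\sqrt{d/n})$ fluctuation $f_n - f$ could stall the drift. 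Consequently $\alpha_t$ grows geometrically at rate $1 + \Theta(\thetanorm^2)$ while $\|\beta_t\|$ stays $O((\tfrac dn\log n)^{1/4})$, so after $T_1 = O(\log n/\thetanorm^2)$ iterations $\theta_{T_1}\in B$. I would establish this by importing the intermediate estimates behind \prettyref{thm:main} and re-reading them under the stronger lower bound on $\thetanorm$; for the easier range $t \geq C\sqrt n$ one may just invoke \prettyref{thm:main} directly, whose bound $\ell(\theta_t,\theta_*)\leq C(d\log^3 n/n)^{1/4} \leq \tfrac12\thetanorm$ already puts $\theta_t\in B$.

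Next, the local contraction to the statistical floor. On $B$ one has the deterministic bound $\|f(\theta)-\theta_*\|\leq (1-c\thetanorm^2)\|\theta-\theta_*\|$, together with a uniform fluctuation bound $\sup_{\theta\in B}\|f_n(\theta)-f(\theta)\|\leq C\thetanorm\sqrt{d\log n/n}$ with probability $1-o_n(1)$, proved by chaining over the $\tanh$-class; the extra factor $\thetanorm$ beyond the naive $\sqrt{d/n}$ is the crux and comes from $\Var\bigl(\Pi_{\hat u^\perp}Y\tanh\iprod{\theta}{Y}\bigr) = \Theta(\thetanorm^2)\,\Pi_{\hat u^\perp}$ near $\theta_*$. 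Iterating $\|\theta_{t+1}-\theta_*\|\leq (1-c\thetanorm^2)\|\theta_t-\theta_*\| + C\thetanorm\sqrt{d\log n/n}$ then shows $\ell(\theta_t,\theta_*)\leq \frac C{\thetanorm}\sqrt{d\log n/n}$ once $t - T_1 \geq C'\log n/\thetanorm^2$, and that the iterate stays in $B$ forever after; with $T_1 = O(\log n/\thetanorm^2)$ this is exactly \prettyref{eq:main-pointwise} for all $t \geq C\log n/\thetanorm^2$.

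Finally, convergence to the MLE. The same Jacobian computation with $\Expect$ replaced by $\Expect_n$, plus concentration, shows $f_n$ is itself a contraction (factor $1-c'\thetanorm^2$) on a slightly smaller ball $B'\subset B$, so by Banach's theorem it has a unique fixed point $\theta^\dagger\in B'$; since fixed points of $f_n$ are exactly the critical points of $\ell_n$ and $\ell_n$ is strongly concave on $B'$ while concentration of $\ell_n$ around $\ell$ rules out competing near-maximizers, $\theta^\dagger = \thetaMLE$ (up to sign). Being now a noise-free contraction toward $\thetaMLE$, the iteration obeys $\ell(\theta_t,\thetaMLE)\leq (1-c'\thetanorm^2)^{t-T_1}\poly(n) = o(1/n)$ for all $t\geq C\log n/\thetanorm^2$ after enlarging $C$, and the Phase-II bound applied to $\theta^\dagger$ gives $\ell(\thetaMLE,\theta_*)\leq \frac C{\thetanorm}\sqrt{d\log n/n}$, so \prettyref{eq:main-pointwise} holds with $\theta_*$ replaced by $\thetaMLE$ as well. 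I expect the hard part to be Phases I and II: obtaining the $f_n - f$ fluctuation bound with the sharp $\thetanorm$-dependence uniformly over the relevant region, and simultaneously tracking $(\alpha_t,\beta_t)$ through the delicate early regime where the $\Theta(\thetanorm^2)$ drift is comparable to the sampling noise.
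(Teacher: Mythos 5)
The central gap is in your Phase I. Your escape-phase argument rests on the asserted ``noise floor'' $\Theta(\|\theta_t\|/(\thetanorm^2\sqrt n))$, i.e.\ on a dimension-free fluctuation bound for the signal component $\iprod{f_n(\theta_t)-f(\theta_t)}{\eta_*}$ along the data-dependent iterate $\theta_t$. Nothing in your proposal delivers this: since every iteration reuses the same $n$ samples, the only bound valid uniformly in $\theta$ is the relative deviation of \prettyref{thm:concentration}, $\|f_n(\theta)-f(\theta)\|\leq \omega\|\theta\|$ with $\omega\asymp\sqrt{\tfrac dn\log n}$, which is a factor $\sqrt d$ worse than what you assume. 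With that bound the drift $\thetanorm^2\alpha_t$ dominates the perturbation $\omega\|\theta_t\|$ only when $\alpha_0/\|\theta_0\|\gtrsim \omega/\thetanorm^2$, and for a uniform random direction ($\alpha_0/\|\theta_0\|\asymp d^{-1/2}$) this forces $n\gtrsim d^2$ up to logs (for constant $\thetanorm$), exactly the regime of \prettyref{thm:main-alpha-large} under \prettyref{eq:goodini} — not the $n\gtrsim d\log^3 d$ regime of the theorem. The paper's resolution, which is the missing idea, is the leave-one-coordinate-out auxiliary sequence $\ttheta_t$ of \prettyref{sec:refined}: the first coordinate's label is re-randomized so that $\ttheta_t$ is independent of the signs $b_i$, dimension-free (up to logs) deviations hold along $e_1$ for $\ttheta_t$, and an induction (Lemmas \ref{lmm:induction}, \ref{lmm:alphat1-raw}, \ref{lmm:ind1-raw}) controlling $\|\theta_t-\ttheta_t\|$ transfers them to $\theta_t$, yielding $\alpha_{t+1}\geq\alpha_t(1+\thetanorm^2-o(\thetanorm^2))$ from $\alpha_0\asymp\|\theta_0\|/\sqrt{d\log n}$. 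Your fallback of ``importing the intermediate estimates behind \prettyref{thm:main}'' or invoking \prettyref{thm:main} directly is circular — in this paper \prettyref{thm:main} is deduced from \prettyref{thm:main-pointwise} — and in any case \prettyref{thm:main} only gives the worst-case rate for $t\geq C\sqrt n$, not the $\tfrac{1}{\thetanorm}\sqrt{\tfrac{d\log n}{n}}$ rate for all $t\geq C\log n/\thetanorm^2$.

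Two further points. Your local contraction ``on a fixed-radius ball $B$ around $\theta_*$'' fails when $\thetanorm=o(1)$, which the hypothesis allows: such a ball contains $0$ and $-\theta_*$, where $\nabla f$ has eigenvalue $1+\thetanorm^2>1$ and where $\mathrm{Var}$ of $Y\tanh\iprod{\theta}{Y}$ is not $\Theta(\thetanorm^2)$; you need radius $\propto\thetanorm$, at which point your fluctuation bound $C\thetanorm\sqrt{\tfrac{d\log n}{n}}$ is just the relative bound $\omega\|\theta\|$ of \prettyref{thm:concentration} evaluated at $\|\theta\|\asymp\thetanorm$ (the paper instead runs the sandwich reduction to \prettyref{thm:main-1D} plus the Phase-III improvement of $\beta_t$). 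For the MLE step, identifying the unique fixed point of $f_n$ on the small ball with the \emph{global} maximizer needs an a priori localization of $\thetaMLE$ near $\pm\theta_*$; your appeal to ``concentration of $\ell_n$ rules out competing near-maximizers'' is not substantiated at the required resolution (likelihood gaps scale like small powers of $\thetanorm$), and the paper imports the mixture-MLE rate of \cite{ho2016convergence} precisely for this. Modulo that localization, your Banach-fixed-point argument is equivalent to the paper's Hessian bound $0\preceq I+\nabla^2\ell_n(\theta)\preceq e^{-c\thetanorm^2}I$ in \prettyref{lmm:hessMLE}.
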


The statistical optimality of the EM estimate can be seen by comparing 
Theorems \ref{thm:main} and 
\ref{thm:main-pointwise}  with the following minimax results (which are consequences of \prettyref{thm:minimax} in \prettyref{app:minimax}): for any $r\gtrsim 1$ and $n \gtrsim d$, we have
	\begin{equation}
	\inf_{\hat\theta} \sup_{\|\theta_*\|\leq r} \Expect_{\theta^*}[\ell(\hat\theta,\theta_*)] \asymp \pth{\frac{d}{n}}^{\frac{1}{4}}.
	\label{eq:minimax-ball}
	\end{equation}	
	where the infimum is take over all estimators $\hat\theta$ as a function of $Y_1,\ldots,Y_n \iiddistr P_{\theta_*}$.
Furthermore, for any fixed $\thetanorm = s \lesssim 1$ and $n \gtrsim d$, we have
		\begin{equation}
	\inf_{\hat\theta} \sup_{\|\theta_*\| = s} \Expect_{\theta^*}[\ell(\hat\theta,\theta_*)] \asymp \min\sth{s,\frac{1}{s} \sqrt{\frac{d}{n}}}.
	\label{eq:minimax-sphere}
	\end{equation}		
Comparing \prettyref{eq:minimax-ball} with \prettyref{eq:main}, we conclude that the performance of the EM algorithm is within logarithmic factors\footnote{In the one-dimensional case, it is possible to show that the EM algorithm attains the minimax rate \prettyref{eq:minimax-ball} without logarithmic factors; see \prettyref{cor:em1d} in \prettyref{sec:onedim}.} of the minimax rate, which can be attained in at most $O(\sqrt{n})$ iterations in the worst case.
In addition, \prettyref{eq:minimax-sphere} shows that the transition from the worst-case rate $(\frac{d}{n})^{1/4}$ to the parametric rate $\frac{1}{\thetanorm} \sqrt{\frac{d}{n}}$
occurs when $\thetanorm$ exceeds $(\frac{d}{n})^{1/4}$, in which case the more refined guarantee \prettyref{eq:main-pointwise} demonstrates the near-optimality of the EM algorithm that is adaptive to $\thetanorm$.

We pause to clarify that the main objective of this paper is not to exhibit nearly minimax optimal methods, as other procedures (e.g., spectral method; cf.~\prettyref{app:minimax}) are known to achieve the minimax rate \prettyref{eq:minimax-ball} without extra logarithmic factors, but rather to show 
the popular EM algorithm with a single random initialization achieves near optimality and, furthermore, approaches the MLE. Compared to spectral methods, the statistical advantage of the EM algorithm is due to its asymptotic efficiency which is inherited from the MLE.


We conclude this subsection with a remark interpreting the results of the preceding theorems:

\begin{remark}[Statistical  and computational consequences of flat likelihood]
	In \prettyref{thm:main}, the statistical estimation rate $O((\frac{d}{n})^{1/4})$ which is slower than the typical parametric rate. Furthermore, the convergence rate is in fact $O(\frac{1}{\sqrt{t}})$ which is much slower than the typical linear convergence rate that is exponential in $t$. Both guarantees are tight in the worst case which occurs when $\|\theta_*\| = O((\frac{d}{n})^{1/4})$, and both phenomena are due to the zero curvature of log likelihood function.
	To explain this, let us consider the simple setting of one dimension and $\theta_*=0$. 




\begin{itemize}
	\item 
	\emph{Vanishing Fisher information and nonparametric rate}: 
	When $\theta_*=0$, a simple Taylor expansion shows that the population likelihood \prettyref{eq:LL-pop} satisfies $\ell_n(\theta)=\ell_n(0) - \frac{1}{4} \theta^4 + O(\theta^6)$ when $\theta \to0$, corresponding to the flat maxima at $\theta=0$ at as shown in \prettyref{fig:flat}. In particular, the Fisher information is zero, resulting in an  estimation rate 
slower than the typical rate $\sqrt{d/n}$ for parametric models.
	Furthermore, for $\theta_* \neq 0$, the Fisher information behaves as $\Theta(\theta_*^2)$ (cf.~\prettyref{rmk:rate}). Therefore \prettyref{eq:main-pointwise} shows that the EM algorithm achieves the local minimax rate within logarithmic factors.
	
	\item 
	\emph{Non-contraction and sub-linear convergence rate}: 
	In typical analysis of iterative methods, linear convergence rate is a direct consequence of contractive mapping theorem. This however fails for the case of $\theta_*=0$. Indeed, using \prettyref{eq:EM-GD} we obtain that the population EM map $f(\theta)$ satisfies $f(\theta) = \theta - \theta^3 + O(\theta^5)$ with $f'(0)=1$. Thus 
the EM iteration roughly behaves as $\theta_{t+1} \approx \theta_t - \theta_t^3$. Despite the non-strict contraction, the iteration nevertheless converges monotonically to the unique fixed point at zero (see \prettyref{fig:contraction}); however, the resulting convergence rate is $O(\frac{1}{\sqrt{t}})$ (cf.~\prettyref{lmm:rate} in \prettyref{app:lemma}). 
This gives theoretical quantification of the slow convergence rate of EM algorithm for poorly separated Gaussian mixtures, which has been widely observed in practice \cite{RW1984,KX2003}. 
\end{itemize}

\end{remark}

%

\begin{figure}[ht]
	\centering
	\subfigure[Flat minimum of the negative log likelihood.]%
	{\label{fig:flat} \includegraphics[width=0.4\columnwidth]{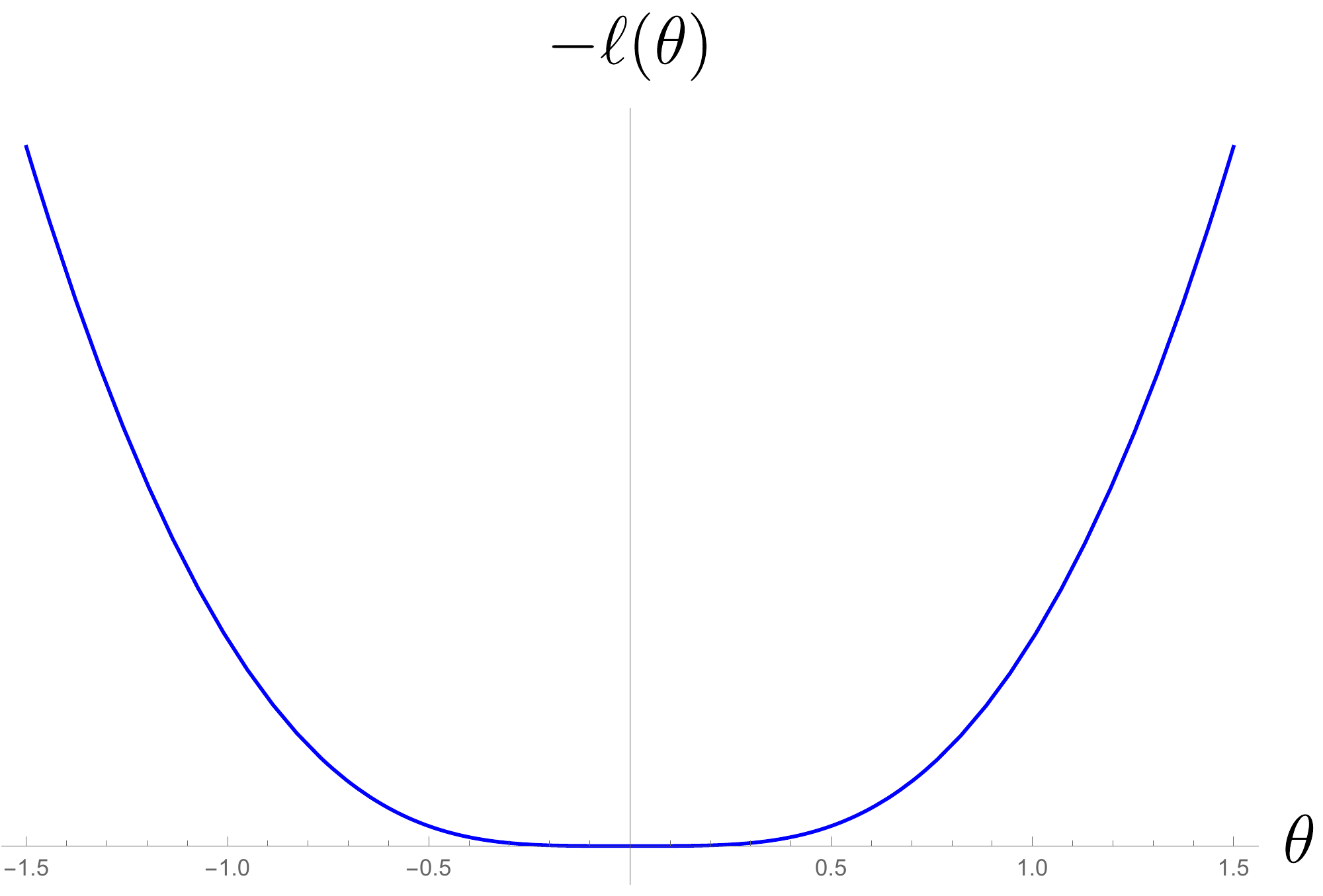}}
	\subfigure[Nonlinear contraction of $f(\theta)$ and the resulting sublinear rate of convergence.]%
	{\label{fig:contraction} \includegraphics[width=0.4\columnwidth]{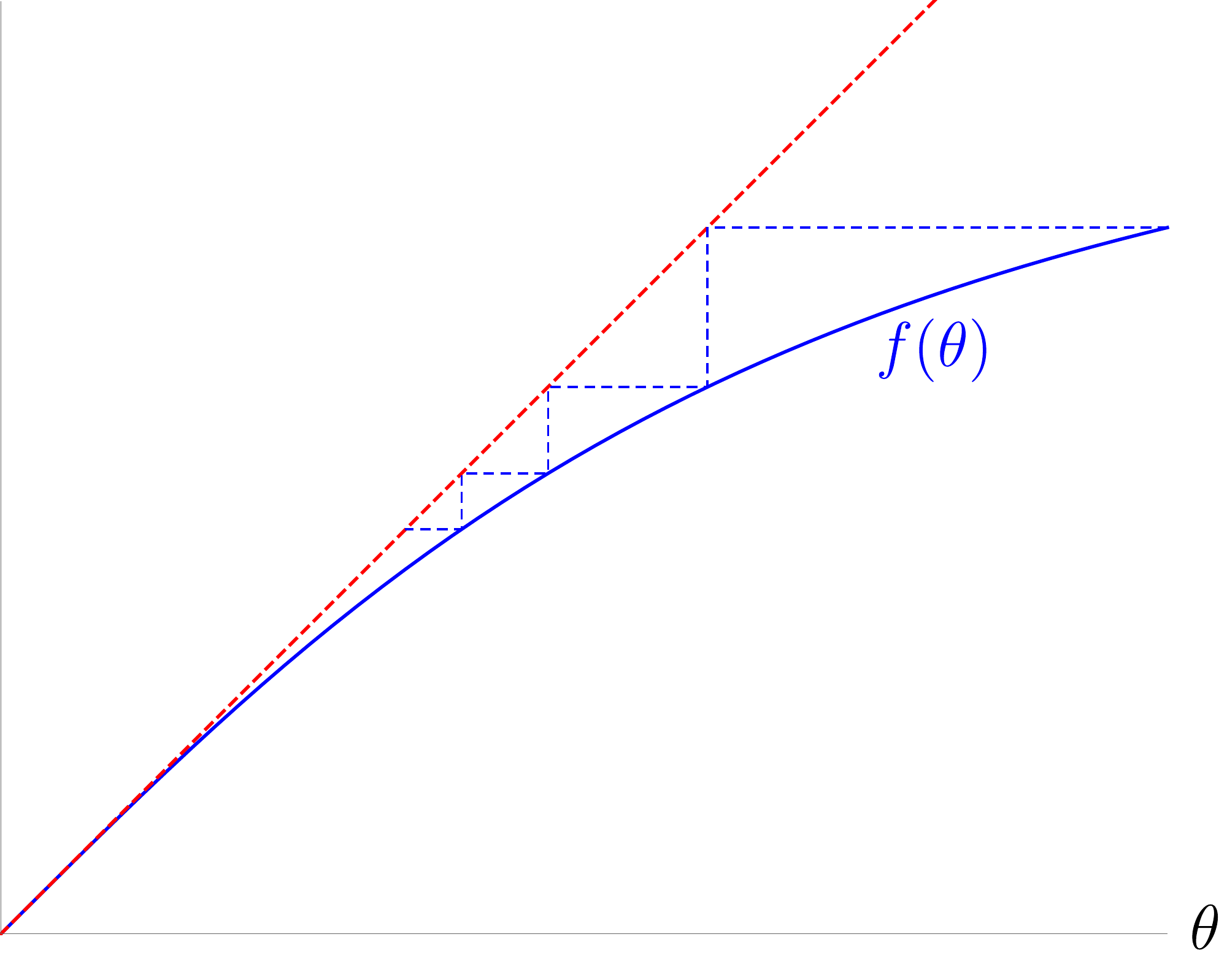}}
	\caption{Population version of the negative log likelihood and the EM map for $\theta_*=0$.}
	\label{fig:zerotheta}
\end{figure}

\subsection{Related work}
	\label{sec:related}
	
		Since the original paper \cite{DLR1977}, the EM algorithm has been widely used in Gaussian mixture models \cite{RW1984,XJ1996}. As can be seen from its gradient ascent interpretation \prettyref{eq:EM-GD}, a limiting point of the EM iteration is only guaranteed to be a critical point of the likelihood function rather than the global MLE. Various techniques for choosing the initialization has been proposed (cf.~the survey \cite{KX2003} and the references therein); 
	however, in practice random initializations are often preferred due to its simplicity over more costly approaches such as spectral methods \cite{biernacki2003choosing}.
		Furthermore, it is well-known in practice \cite{RW1984,KX2003} that the convergence of the EM iteration can be very slow when the components are not well separated, agreeing with the theoretical findings in \prettyref{thm:main} and \prettyref{thm:main-pointwise}.
	
	Recently there is a renewed interest on the EM algorithm in high dimensions from both statistical and optimization perspectives. General conditions (such as strong concavity and smoothness) are given in \cite{BWY17}	to guarantee the local convergence of the EM algorithm as well as its statistical performance. 
	Particularized to the simple 2-GM model \prettyref{eq:Ptheta}, \cite[Corollary 2]{BWY17} shows that if $\thetanorm$ exceeds some large constant and the initialization satisfies $\|\theta_0 - \theta_*\| \leq \frac{1}{4} \thetanorm$, then with probability $1-\delta$ the EM iteration converges exponentially fast to a neighborhood at $\theta_*$ of radius $\sqrt{\frac{C d }{n} \log \frac{1}{\delta}}$ for some constant $C$ depending on $\thetanorm$. 
	There are two major distinctions between \cite{BWY17}	 and the current paper:
	First, the requirement on the initialization in \cite{BWY17} is very strong, which implies that $\theta_0$ has a non-trivial angle with $\theta_*$ and clearly cannot be afforded by random initializations. 
	Second, to bound the deviation between the sample EM trajectory and its population counterpart, \cite{BWY17} proved that 
	\[
	\sup_{\|\theta\|\leq C}\|f_n(\theta)-f(\theta)\| = \tilde O\pth{\sqrt{\frac{d}{n}}}
	\]
	with high probability, where $\tilde O(\cdot)$ hides logarithmic factors. 
	Such a concentration inequality in terms \emph{absolute deviation} is too weak to yield the sharp rates in \prettyref{thm:main} and \ref{thm:main-pointwise} even in one dimension. Instead, 
in order to obtain the optimal  statistical and computational guarantees,	it is crucial to bound the \emph{relative deviation} and show that with high probability, 
	\begin{equation}
	\sup_{\|\theta\|\leq C}\frac{\|f_n(\theta)-f(\theta)\|}{\|\theta\|} = \tilde O\pth{\sqrt{\frac{d}{n}}}
	\label{eq:relative-dev}
	\end{equation}	
	i.e., $f_n-f$ is $\tilde O(\sqrt{\frac{d}{n}})$-Lipschitz, 
	the reason being that when the iterates are close to zero, the finite-sample deviation is proportionally small as well.
In addition, in \prettyref{sec:mle} we show that the EM iterations converge to the MLE under mild conditions.

The global convergence of the population EM iterates has been analyzed in \cite{XHM16,DTZ17}. 
 The following deterministic result was shown: 
Provided that the initial value $\theta_0$ is not orthogonal to $\theta_*$, 
  the population version of the EM iteration, that is, the sequence \prettyref{eq:EMsample} with $f_n$ replaced by $f$, 
converges to the global maximizer of the population log likelihood $\ell$ in \prettyref{eq:LL-pop},
namely, 
$\theta_*$ (resp.~$-\theta_*$) if $\Iprod{\theta_0}{\theta_*} > 0$ (resp.~$<0$).
If $\Iprod{\theta_0}{\theta_*} = 0$, then the population EM iteration converges to $0$, the unique saddle point of $\ell$.
For the sample EM, \cite[Theorem 7]{XHM16} showed that when the dimension and $\theta_*$ are fixed, the difference of the sample and population EM iteration vanishes in the double limit of $t\to\infty$ followed by $n\to\infty$; neither finite-sample nor finite-iteration guarantees are provided. 
As for high dimensions, a variant of the EM algorithm using sampling splitting is analyzed in \cite{DTZ17} consisting of two steps: First, run EM with a random and sufficiently small initialization for $\Theta(\frac{\log d}{\thetanorm^2})$ iterations. Next, renormalize the resulting estimate so that its norm is a large constant, and continue to run EM for another $\Theta(\frac{1}{\thetanorm^2} \log \frac{1}{\epsilon})$ iterations. The final output achieves a loss of $\epsilon$ with high probability provided that each iteration operates on a fresh batch of $\tilde \Theta(\frac{d}{\epsilon^2 \thetanorm^4})$ samples. 
The use of sampling splitting conveniently ensures independence among iterations and circumvents the major difficulty of analyzing the entire trajectory;
however, for the desired accuracy of $\epsilon = O(\frac{1}{\thetanorm} \sqrt{d/n})$, the total number of samples is $\tilde \Theta(\frac{n}{\thetanorm^4})$, which far exceeds $n$ when $\thetanorm$ is small.

Based on the population results in \cite{XHM16}, \cite{mei2018landscape} showed that if $\thetanorm$ is at least a constant, the landscape of the log likelihood $\ell_n$ is close to that of the population version (in terms of the critical points and the Hessian).
Specifically, \cite[Theorem 8]{mei2018landscape} showed the following: 
There exist constants $C,C'$ depending on $\thetanorm$ and $\delta$, such that if $n \geq C d\log d$, then with probability $1-\delta$, $\ell_n$ has two local maxima in the ball $B(0,C')$, which are within Euclidean distance $C \sqrt{\frac{d \log n}{n}}$ of $\pm \theta_*$.
As a corollary of the empirical landscape analysis, with appropriately chosen parameters and initialized from any point in $B(0,C')$, standard trust-region method (cf.~e.g.~\cite[Algorithm 6.1.1]{conn2000trust}) is guaranteed to converge to a local maximizer of $\ell_n$.
It should be noted that trust-region method is a second-order method using the Hessian information, which is more expensive than first-order methods such as gradient descent including the EM algorithm \prettyref{eq:EMsample}. Furthermore, the number of iterations needed to reach the statistical optimum is unclear.

On the technical side, the main difficulty of analyzing a sample-driven iterative scheme, such as \prettyref{eq:EMsample}, is the dependency between the iterates $\{\theta_t\}$ and the data, since each iteration takes one pass over the same set of samples. 
Of course, one can conduct a uniform analysis by taking a supremum over the realization of $\theta_t$; however, since the supremum is over a $d$-dimensional space, the resulting bound is too crude to characterize the growth of the ``signal'' $\Iprod{\theta_*}{\theta_t}$, which is very close to zero initially (that is, $O_P(\frac{1}{\sqrt{d}})$, due to random initialization). 
It is for this reason that the analysis is significantly more challenging than those using sample splitting such as \cite{BWY17,DTZ17}, which sidesteps the difficulty of dependency.
Furthermore, such \emph{trajectory analysis}, which tracks the signal growth from random initializations, does not follow from landscape analysis.

In this vein, the most related to the current paper is the recent seminal work \cite{chen2018gradient} on analyzing gradient descent for nonconvex phase retrieval with random initializations, where the goal is to recover a $d$-dimensional signal $x_*$ from noiseless quadratic measurements $\Iprod{a_i}{x_*}^2$ with iid Gaussian $a_i$. 
To overcome the aforementioned difficulties due to dependency, the main idea of \cite{chen2018gradient} is two-fold: 
In addition to the commonly used ``leave-one-sample-out'' method that analyzes the auxiliary iteration when one measurement is replaced by an independent copy, \cite{chen2018gradient} introduced a ``leave-one-coordinate-out'' auxiliary iteration where a single coordinate of each measurement vector is is replenished with a random sign. This is possible thanks to the rotational symmetry of the Gaussian measurement vectors, which allows one to assume, without loss of generality, that the ground truth is a coordinate vector. 
By comparing the auxiliary dynamics to the original one, one can effectively decouple the data and the iterates.
The idea of leave-one-coordinate-out turns out to be crucial in our analysis of randomly initialized EM, where we introduce an auxiliary sequence with a randomized label but otherwise identical to the original sequence; on the other hand, we are able to conduct the analysis without resorting to the leave-one-sample-out method.
Compared to \cite{chen2018gradient} which relies on the strong convexity of the population objective function and the resulting contraction of the iteration, for the EM algorithm since we do not assume $\theta_*$ is bounded away from zero, none of these applies which creates additional challenges for the analysis.

Finally, we note that the very recent and independent work \cite{dwivedi2018singularity,dwivedi2019challenges} obtained a tight analysis of the performance of EM algorithm when the true model is a single Gaussian and the postulated model is an over-specified Gaussian mixture. 
In particular, guarantees similar to \prettyref{thm:main} are shown for the special case of $\theta_*=0$, and both balanced and unbalanced mixture model are considered as well as the more general location-scale mixtures.

\subsection{Notations}
\label{sec:notations}
Throughout the paper, $c,C,C_0,C_1,\ldots,C',C''$ denote constants whose values vary from place to place and 
only depend on an upper bound on $\|\theta_*\|$, and the notation $\lesssim, \gtrsim, \asymp$  are within these constant factors.
Since we assume that $\thetanorm \leq \thetaub$ for some absolute constant $\thetaub$, these constant factors are absolute as well.

Let $\calL(X)$ denote the distribution (law) of a random variable $X$.
The generic notation $\Expect_n[\cdot]$ denotes the empirical average over $n$ iid samples, namely, $\Expect_n[f(X)] \triangleq \frac{1}{n}\sum_{i=1}^n f(X_i)$, where $X_i$'s are iid copies of $X$.
We say a random variable $X$ is $s$-subgaussian (resp.~$s$-subexponential) if 
$\|X\|_{\psi_2} \triangleq \inf\{t>0: \Expect e^{X^2/t^2} \leq 2\} \leq \sqrt{s}$ (resp.~$\|X\|_{\psi_1} \triangleq \inf\{t>0: \Expect e^{|X|/t} \leq 2\} \leq s$).

Let $\|x\|$ denotes the Euclidean norm of a vector $x$. 
Let $B(x,R)$ denote the ball of radius $R$ centered at $x$ and $B(0,R)$ is abbreviated as $B(R)$.
For any matrix $M$, $\opnorm{M}$ and $\fnorm{M}$  denote its operator (spectral) norm and Frobenius norm, respectively.

\subsection{Organization}
	\label{sec:org}
	The rest of the paper is organized as follows.	
	\prettyref{sec:onedim} gives the statistical and computational guarantees for EM algorithm in one dimension, showing the achievability of the optimal average risk up to constant factors.	
	\prettyref{sec:concentration} states and proves the relative concentration result \prettyref{eq:relative-dev} for the sample EM map. 
	\prettyref{sec:ddim} presents the analysis of the EM algorithm in $d$ dimensions and give near-optimal statistical and computational guarantees assuming a modest condition on the initialization. In \prettyref{sec:refined} we show that starting from a single random initialization, such a condition is fulfilled in at most $O(\frac{\log n}{\thetanorm^2})$ iterations with high probability.
	\prettyref{sec:mle} proves the convergence of the EM iteration to the MLE.
	Discussions and open problems are presented in \prettyref{sec:discuss}.
	Proofs for Sections \ref{sec:onedim}--\prettyref{sec:mle} are given in \prettyref{sec:pf-onedim}--\ref{sec:pf-mle}, respectively.

	In particular, the main result \prettyref{thm:main-pointwise} previously announced in \prettyref{sec:main} follows from Theorems \prettyref{thm:main-alpha-large} in conjunction with \prettyref{thm:phase1} (on random initialization) and \prettyref{thm:MLE} (on convergence to MLE), 
	while \prettyref{thm:main} follows from combining Theorems \ref{thm:main-pointwise} and \ref{thm:main-alpha-small}.
	
	Complementing the performance guarantee on the EM algorithm, \prettyref{thm:minimax} in \prettyref{app:minimax} determines the minimax rates for the 2-GM model in any dimension, which may be of independent interest.
	Auxiliary results are given in \prettyref{app:lemma}.

\section{EM iteration in one dimension}
\label{sec:onedim}
In this section we present the analysis for one dimension which turns out to be significantly simpler than the $d$-dimensional case; nevertheless, several proof ingredients, both statistical and computational, will re-appear in the analysis for $d$ dimensions later in \prettyref{sec:ddim}.
To bound the relative deviation between the sample and population EM trajectories, we use the concentration inequality for empirical distributions under the Wasserstein distance. Although perhaps not crucial, this method simplifies the analysis and yields the optimal rate of the average risk without unnecessary log factors in one dimension.


\subsection{Concentration via Wasserstein distance}

Recall the 1-Wasserstein distance between probability distributions $\mu$ and $\nu$ \cite{villani.topics}:
\[
W_1(\mu,\nu) = \inf \Expect|X-Y|
\]
where the infimum is over all couplings of $\mu$ and $\nu$, i.e., joint law $\calL(X,Y)$ such that $\calL(X)=\mu$ and $\calL(Y)=\nu$.

To relate the Wasserstein distance to the EM map, we start with the following simple observation:
\begin{lemma}
\label{lmm:tanh}	
For any $x,y\in\reals$,
\[
\sup_{\theta \in \reals} \frac{|x \tanh(x \theta)-y \tanh(y \theta)|}{|\theta|} = |x^2-y^2|.
\]
\end{lemma}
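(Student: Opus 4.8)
The plan is to exploit the built-in symmetries of the expression and then reduce the difference of two $\tanh$-terms to a single application of the mean value theorem after the substitution $t=x^2$. Since $u\mapsto u\tanh(u\theta)$ is even in $u$ and $\theta\mapsto x\tanh(x\theta)-y\tanh(y\theta)$ is odd in $\theta$, the ratio $\frac{|x\tanh(x\theta)-y\tanh(y\theta)|}{|\theta|}$ is invariant under $x\mapsto -x$, $y\mapsto -y$, and $\theta\mapsto-\theta$. Hence it suffices to treat $x,y\ge 0$ and $\theta>0$ (the value at $\theta=0$ being understood as the limit $\theta\to0$, equivalently excluded from the supremum). Also the case $x=y$ is trivial, so assume $x\neq y$.

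For the upper bound, fix $\theta>0$ and set $\Phi(t)\triangleq \sqrt{t}\,\tanh(\sqrt{t}\,\theta)$ for $t\ge 0$, so that $x\tanh(x\theta)-y\tanh(y\theta)=\Phi(x^2)-\Phi(y^2)$. A direct computation gives, writing $s=\sqrt{t}$,
\[
\Phi'(t)=\frac{\tanh(s\theta)}{2s}+\frac{\theta}{2}\,\sech^2(s\theta),
\]
and this extends continuously to $t=0$ with $\Phi'(0)=\theta$, so $\Phi\in C^1([0,\infty))$. Using the elementary inequalities $0\le\tanh(u)\le u$ for $u\ge0$ and $\sech^2\le 1$, each of the two summands is at most $\theta/2$, hence $0\le\Phi'(t)\le\theta$ for all $t\ge0$. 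By the mean value theorem, $|\Phi(x^2)-\Phi(y^2)|\le\theta\,|x^2-y^2|$, i.e. $\frac{|x\tanh(x\theta)-y\tanh(y\theta)|}{\theta}\le|x^2-y^2|$ for every $\theta>0$, so the supremum is at most $|x^2-y^2|$.

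For the matching lower bound, Taylor-expand $\tanh(u)=u+O(u^3)$ as $u\to0$ to get $x\tanh(x\theta)-y\tanh(y\theta)=(x^2-y^2)\theta+O(\theta^3)$, whence $\frac{|x\tanh(x\theta)-y\tanh(y\theta)|}{|\theta|}\to|x^2-y^2|$ as $\theta\to0$; this shows the supremum is at least $|x^2-y^2|$, and combined with the previous paragraph the claimed identity follows. Honestly, I do not expect a genuine obstacle here: the only step that is not purely mechanical is the change of variables $t=x^2$, which turns the target factor $|x^2-y^2|$ into the natural increment for the mean value theorem and lets the two one-line inequalities $\tanh u\le u$ and $\sech^2\le1$ carry the whole argument; the one point to verify carefully is the $C^1$-regularity of $\Phi$ up to the endpoint $t=0$ so that the mean value theorem applies on the closed interval.
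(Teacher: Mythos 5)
Your proof is correct. It takes a different route from the paper's: the paper proves the lemma by showing that the whole ratio is monotone in $\theta$ — it computes the mixed partial $\fracp{}{\theta}\fracp{}{x}\pth{\frac{x\tanh(x\theta)}{\theta}}$ and verifies it is nonpositive (via $\sinh(t)\geq t$), so that for $x\geq y\geq 0$ the quantity $\frac{x\tanh(x\theta)-y\tanh(y\theta)}{\theta}=\int_y^x \fracp{}{u}\frac{u\tanh(u\theta)}{\theta}\,du$ is decreasing in $\theta\geq 0$ and the supremum is attained in the limit $\theta\to 0$, where it equals $x^2-y^2$. You instead fix $\theta$ and substitute $t=x^2$, proving the uniform Lipschitz bound $|\Phi(x^2)-\Phi(y^2)|\leq \theta|x^2-y^2|$ from the one-line estimates $\tanh u\leq u$ and $\sech^2\leq 1$, and then supply the matching lower bound separately by the Taylor expansion at $\theta\to 0$. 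Your version is computationally lighter (no mixed second derivative, no $\sinh(2\theta x)$ inequality) and the change of variables $t=x^2$ makes the appearance of $|x^2-y^2|$ transparent; the paper's version yields the slightly stronger structural fact that the ratio is monotone in $|\theta|$, though that extra information is not needed for the lemma or its use in \prettyref{lmm:Delta-W1}. Your handling of the $\theta=0$ convention and of the $C^1$ extension of $\Phi$ to $t=0$ is also fine.
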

\begin{proof}
	Without loss of generality (WLOG), assume that $x \geq y \geq 0$. Then by symmetry,
	\begin{align}
	\sup_{\theta \in \reals} \frac{|x \tanh(x \theta)-y \tanh(y \theta)|}{|\theta|} 
	= & ~ \sup_{\theta \geq 0} \frac{|x \tanh(x \theta)-y \tanh(y \theta)|}{\theta}	\nonumber \\
	= & ~ 	\sup_{\theta \geq 0} \frac{x \tanh(x \theta)-y \tanh(y \theta)}{\theta}.
	\label{eq:tanh}
	\end{align}
	Straightforward calculation gives 
	\[
	\fracp{}{\theta} \fracp{}{x} \pth{\frac{x \tanh(x \theta)}{\theta}}= \frac{1}{\theta^2 \cosh^2(\theta x)} \pth{\theta x - \frac{1}{2}\sinh(2\theta x) - 2 (\theta x)^2 \tanh(\theta x)} \leq 0,
	\]
	where the inequality follows from $\sinh(t) \geq t$ and $\tanh(t) \geq 0$ for $t\geq 0$.
	Therefore $\theta \mapsto \fracp{}{x} (\frac{x \tanh(x \theta)}{\theta})$ is decreasing on $\reals_+$, 
	which implies that the supremum on the RHS of \prettyref{eq:tanh} is attained at $\theta=0$.	
\end{proof}

By coupling, an immediate corollary to \prettyref{lmm:tanh} is the following:
\begin{lemma}
\label{lmm:Delta-W1}	
For any random variables $X$ and $Y$,
\[
	\sup_{\theta \in \reals} \frac{|\Expect[Y \tanh(\theta Y)] - \Expect[X \tanh(\theta X)]|}{|\theta|}  \leq 
	W_1(\calL(X^2),\calL(Y^2)).
	\]	
\end{lemma}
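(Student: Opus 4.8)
The plan is to reduce the claim to the statement that, for each fixed $\theta$, the map $t \mapsto \sqrt{t}\,\tanh(\theta\sqrt{t})$ is $|\theta|$-Lipschitz on $\reals_+$ — which is exactly the content of \prettyref{lmm:tanh} once one notices that $y \mapsto y\tanh(\theta y)$ is \emph{even} and hence a function of $y^2$ — and then to conclude by coupling, using the primal (transport) characterization of $W_1$.

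Concretely, for $\theta\in\reals$ define $g_\theta\colon\reals_+\to\reals$ by $g_\theta(t) \triangleq \sqrt{t}\,\tanh(\theta\sqrt{t})$. Since $\tanh$ is odd, $y\tanh(\theta y) = g_\theta(y^2)$ for every $y\in\reals$, so $\Expect[Y\tanh(\theta Y)]-\Expect[X\tanh(\theta X)] = \Expect[g_\theta(Y^2)] - \Expect[g_\theta(X^2)]$. Applying \prettyref{lmm:tanh} with $x=\sqrt{s}$, $y=\sqrt{t}$ for $s,t\geq 0$ gives $|g_\theta(s)-g_\theta(t)| \leq |\theta|\,|s-t|$. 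Now fix any coupling $(U,V)$ with $\calL(U)=\calL(X^2)$, $\calL(V)=\calL(Y^2)$; then
\[
\bigl|\Expect[g_\theta(X^2)] - \Expect[g_\theta(Y^2)]\bigr| = \bigl|\Expect[g_\theta(U)-g_\theta(V)]\bigr| \leq \Expect\bigl|g_\theta(U)-g_\theta(V)\bigr| \leq |\theta|\,\Expect|U-V|.
\]
Taking the infimum over all such couplings yields $|\Expect[g_\theta(X^2)]-\Expect[g_\theta(Y^2)]| \leq |\theta|\,W_1(\calL(X^2),\calL(Y^2))$, and dividing by $|\theta|$ and taking the supremum over $\theta\neq 0$ (the case $\theta=0$ being trivial) finishes the proof.

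I do not expect any real obstacle here: the entire substance is the Lipschitz estimate already established in \prettyref{lmm:tanh}, and the coupling step is the standard upper bound in the Kantorovich--Rubinstein duality. The only minor point to dispatch is integrability — if $W_1(\calL(X^2),\calL(Y^2))=\infty$ there is nothing to prove, and otherwise one restricts to couplings with $\Expect|U-V|<\infty$, under which $g_\theta(U)-g_\theta(V)$ is integrable (even though $g_\theta$ itself is unbounded), so every expectation above is well defined. An alternative, essentially equivalent, route is to invoke $W_1(\mu,\nu)=\sup_{h\,:\,\Lip(h)\leq 1}\int h\,d(\mu-\nu)$ directly with $h = g_\theta/|\theta|$; I would present the coupling version since it is self-contained.
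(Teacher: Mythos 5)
Your proof is correct and matches the paper's argument: the paper states \prettyref{lmm:Delta-W1} as an immediate corollary of \prettyref{lmm:tanh} by coupling, which is exactly your route (the observation that $y\tanh(\theta y)$ is even, so one may couple the laws of $X^2$ and $Y^2$ directly, is the right way to make "by coupling" precise). The integrability remark is a fine but inessential addition.
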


As mentioned earlier in \prettyref{sec:related}, it is crucial to establish the relative derivation in the sense of \prettyref{eq:relative-dev} for the sample EM trajectory. 
Let $\Delta_n = f_n-f$, where $f_n$ and $f$ are the sample and population EM map defined in \prettyref{eq:fn} and \prettyref{eq:f}. As a consequence of \prettyref{lmm:Delta-W1}, we have, for all $\theta\in\reals$,
	\begin{equation}
	|\Delta_n(\theta)| \leq |\theta| W_1(\nu,\nu_n) 
	\label{eq:Delta-lip}
	\end{equation}
	where $\nu=\calL(Y^2)$ and $\nu_n$ is the empirical distribution of the squared samples $Y_1^2, \ldots, Y_n^2$.
	In other words, $\Delta_n$ is $W_1(\nu,\nu_n)$-Lipschitz. To bound the Lipschitz constant, since $\expect{\exp(Y^2)} \leq C(r)$, 
	applying the concentration inequality in \cite[Theorems 1 and 2]{FG15} (with $d=p=1$, $\alpha=2/3$, $\epsilon=1/3$ and $\gamma=1$), we have 
	\begin{equation}
	\expect{W_1(\nu,\nu_n)} \leq \frac{c_0}{\sqrt{n}}
	\label{eq:W1-concentrate1}
	\end{equation}
	 and 
	\begin{align}
	 \prob{W_1(\nu,\nu_n) \geq x} \leq & ~ c_1[\exp(-c_2 n x^2) \indc{x\leq 1}	\nonumber \\
	& ~ 	  +  \exp(-c_2 (n x)^{1/3}) \indc{x\leq 1} +  \exp(-c_2 (n x)^{2/3})], 	\quad x > 0
	\label{eq:W1-concentrate2}
	\end{align}
	where $c_0,c_1,c_2$ depend only on $r$.
	Therefore, for any $1 \lesssim a \lesssim n^{1/10}$, $\pprob{W_1(\nu,\nu_n) \geq \frac{a}{\sqrt{n}}} \leq \exp(-\Omega(a^2))$.


\subsection{Finite-sample analysis}

The population EM map defined in \prettyref{eq:f} satisfies the following properties:
\begin{lemma}
\label{lmm:fprop}	
	For any $\theta_* \geq 0$,
	\begin{enumerate}
		\item $\theta\mapsto f(\theta)$ is an increasing odd and bounded function on $\reals$, with
		\[
		-(1+\theta_*) \leq - \Expect|Y| = f(-\infty) \leq  f(\theta) \leq f(\infty) = \Expect|Y| \leq 1+\theta_*.
		\]
		\item $\theta\mapsto f(\theta)$ is concave on $\reals_+$ and convex on on $\reals_-$.
		\item $f(0)=0$, $f'(0)=1+{\theta_*}^2$, 
	$f''(0)=0$, and	$f'(\theta_*) \leq \exp(-{\theta_*}^2/2)$.
	\item Define 
	\begin{equation}
	q(\theta)\triangleq \frac{f(\theta)}{\theta}
	\label{eq:q}
	\end{equation}
	Then $q$ is decreasing on $\reals_+$.
	Furthermore, for $\theta \geq 0$,
	\begin{equation}
	q'(\theta) = - \expect{\frac{Y \sinh(2\theta Y)-2\theta Y^2}{2\theta^2 \cosh^2(\theta Y)}} \leq - \frac{2\theta}{3} \expect{\frac{Y^4}{\cosh^2(\theta Y)}}.
	\label{eq:qtheta}
	\end{equation}
	\end{enumerate}
\end{lemma}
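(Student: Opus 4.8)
We begin by recording the explicit form of $f$. Writing $Y = X\theta_* + Z$ with $X$ Rademacher and $Z \sim N(0,I_d)$, and using $\tanh$ oddness together with $X^2=1$, one has $f(\theta) = \Expect[Y\tanh\iprod{\theta}{Y}]$; in the one-dimensional reduction relevant here we may assume $\theta,\theta_* \in \reals$. A cleaner route is to use the density representation \prettyref{eq:ptheta}: since $p_{\theta_*}(y) = \exp(-y^2/2)\varphi(\theta_*)\cosh(\theta_* y)$ (up to the $d$-dimensional normalization), we can write $f(\theta) = \int y \tanh(\theta y) p_{\theta_*}(y)\,dy$. Differentiating under the integral sign (justified by the boundedness of $\tanh$ and its derivatives, and Gaussian tails) gives $f'(\theta) = \Expect[Y^2 \sech^2(\theta Y)] \geq 0$, which immediately yields monotonicity, and oddness follows since $y\tanh(\theta y)$ is even in $y$ while the integrand's dependence on $\theta$ is odd — more precisely $f(-\theta) = \int y\tanh(-\theta y)p_{\theta_*} = -f(\theta)$. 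Boundedness: $|y\tanh(\theta y)| \leq |y|$, so $|f(\theta)| \leq \Expect|Y| \leq \Expect|X\theta_*+Z| \leq \theta_* + \Expect|Z| \leq \theta_*+1$ (last step as $\Expect|Z|\leq 1$ in one dimension), and $f(\pm\infty) = \pm\Expect|Y|$ by dominated convergence since $\tanh(\theta y)\to\sign(y)$.

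For part (2), compute $f''(\theta) = \Expect[Y^3 \cdot (\sech^2)'(\theta Y)] = -2\Expect[Y^3 \tanh(\theta Y)\sech^2(\theta Y)]$. The sign is not obvious termwise, so I would instead symmetrize using the $\pm y$ structure of $p_{\theta_*}$: grouping $y$ and $-y$, one reduces the integral to $y>0$ against $\cosh(\theta_* y)$, and on $\reals_+$ the relevant kernel $y\tanh(\theta y)$ is concave in $\theta$ for each fixed $y>0$ (check $\partial_\theta^2 [y\tanh(\theta y)] = -2y^3\tanh(\theta y)\sech^2(\theta y) \leq 0$ for $\theta \geq 0$, $y>0$), and concavity is preserved under mixtures; convexity on $\reals_-$ then follows from oddness. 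For part (3): $f(0)=0$ is immediate; $f'(0) = \Expect[Y^2] = \theta_*^2 + 1$; $f''(0) = -2\Expect[Y^3\cdot 0] = 0$ (or by oddness, $f$ odd $\Rightarrow f''$ odd $\Rightarrow f''(0)=0$). For $f'(\theta_*) \leq e^{-\theta_*^2/2}$: here I expect to exploit that at $\theta = \theta_*$ the population EM map has $\theta_*$ as a fixed point, so $f'(\theta_*) = \Expect_{P_{\theta_*}}[Y^2\sech^2(\theta_* Y)]$; substituting $Y = X\theta_* + Z$ and using $\sech^2 \leq 1$ together with a Gaussian-integral bound — concretely bounding $\Expect[\sech^2(\theta_* Y)]$ by completing the square in the $\cosh$ weight — should produce the factor $e^{-\theta_*^2/2}$. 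This particular inequality is the one place where a genuine (if short) computation is needed rather than a structural argument.

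For part (4), write $q(\theta) = f(\theta)/\theta = \Expect\big[\frac{Y\tanh(\theta Y)}{\theta}\big]$. The map $\theta \mapsto \tanh(\theta y)/\theta$ is decreasing on $\reals_+$ for each $y$ (since $\tanh$ is concave on $\reals_+$ with $\tanh(0)=0$, the secant slope from the origin decreases), hence $q$ is a $\theta$-wise decreasing mixture, so decreasing on $\reals_+$. For the formula: $q'(\theta) = \frac{f'(\theta)\theta - f(\theta)}{\theta^2} = \frac{1}{\theta^2}\Expect\big[\theta Y^2\sech^2(\theta Y) - Y\tanh(\theta Y)\big]$; using $\tanh(t)\sech^2(t)\cdot$-identities, $\theta Y^2\sech^2(\theta Y) - Y\tanh(\theta Y) = -\frac{Y\sinh(2\theta Y) - 2\theta Y^2}{2\cosh^2(\theta Y)}$ after rewriting $Y\tanh(\theta Y) = \frac{Y\sinh(2\theta Y)}{2\cosh^2(\theta Y)} + \theta Y^2\sech^2 \cdot(\text{correction})$ — I would verify this by clearing denominators with $\sinh(2t) = 2\sinh t\cosh t$. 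The stated inequality $q'(\theta) \leq -\frac{2\theta}{3}\Expect[Y^4/\cosh^2(\theta Y)]$ then reduces, pointwise in $Y$, to the scalar inequality $\frac{t\sinh(2t) - 2t^2}{2t^2} \geq \frac{2}{3}t^2$ with $t = \theta Y$, i.e. $\sinh(2t) - 2t \geq \frac{4}{3}t^3$, which is the Taylor inequality $\sinh(s) \geq s + s^3/6$ applied at $s = 2t$ (giving $\sinh 2t - 2t \geq \frac{8t^3}{6} = \frac{4t^3}{3}$). The only subtlety is handling the sign of $Y$ and $\theta$ correctly, but since everything is even in $Y$ and we restrict to $\theta \geq 0$, the reduction to $t \geq 0$ is clean.

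The main obstacle I anticipate is part (3)'s bound $f'(\theta_*) \leq e^{-\theta_*^2/2}$: it is the lone estimate that does not follow from a monotonicity/mixture/Taylor argument and requires correctly manipulating the Gaussian weight $\varphi(\theta_*)\cosh(\theta_* y)$ against $\sech^2(\theta_* y)$; I would expect to write $\Expect_{P_{\theta_*}}[g(Y)] = \varphi(\theta_*)\int g(y)e^{-y^2/2}\cosh(\theta_* y)\,dy$ and bound $Y^2\sech^2(\theta_* Y) \leq Y^2 / \cosh^2(\theta_* Y) \leq Y^2 e^{-2\theta_*|Y|}\cdot(\text{const})$, then absorb the exponential into the shifted Gaussian; tracking constants carefully to land exactly at $e^{-\theta_*^2/2}$ (rather than a weaker $Ce^{-c\theta_*^2}$) is the delicate point. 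Everything else is structural and should go through as sketched.
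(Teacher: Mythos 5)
Parts 1, 2 and 4 of your proposal are correct and follow essentially the same route as the paper: the derivative formulas $f'(\theta)=\Expect[Y^2\sech^2(\theta Y)]$ and $f''(\theta)=-2\Expect[Y^3\tanh(\theta Y)\sech^2(\theta Y)]$, concavity on $\reals_+$ (in fact the sign \emph{is} obvious termwise, since $Y^3\tanh(\theta Y)\geq 0$ pointwise for $\theta\geq 0$, so your symmetrization over $\pm y$ is valid but unnecessary), monotonicity of $q$ from the decreasing secant slope of a concave function through the origin, the quotient-rule identity for $q'$ via $\tanh t=\sinh(2t)/(2\cosh^2 t)$, and the pointwise reduction to $\sinh(2t)-2t\geq \frac{4}{3}t^3$ from $\sinh(s)\geq s+s^3/6$ — all of this matches the paper's proof.

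The genuine gap is exactly the step you flagged: $f'(\theta_*)\leq e^{-\theta_*^2/2}$. The concrete route you propose — bound $1/\cosh^2(\theta_* y)\leq C e^{-2\theta_*|y|}$ (necessarily with $C\geq 4$, from $\cosh t\geq e^{|t|}/2$) and absorb the exponential into a shifted Gaussian — cannot produce the constant-free bound. After the tilt the prefactor $C$ survives and the exponential gain is largely cancelled by recentring the Gaussian: at $\theta_*\to 0$ your bound tends to a constant at least $4$ while the target is $e^{-\theta_*^2/2}\leq 1$, and for general $\theta_*$ one only gets $C'e^{-c\theta_*^2}$ with $C'>1$, i.e.\ precisely the weaker form you wanted to avoid. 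The fix is to cancel one power of $\cosh$ \emph{exactly} against the density rather than bounding it: since $p_{\theta_*}(y)=\varphi(y)\cosh(\theta_* y)e^{-\theta_*^2/2}$, a change of measure to the standard normal (\prettyref{lmm:com}) gives $f'(\theta_*)=\Expect[Y^2/\cosh^2(\theta_* Y)]=e^{-\theta_*^2/2}\,\Expect[Z^2/\cosh(\theta_* Z)]\leq e^{-\theta_*^2/2}\,\Expect[Z^2]=e^{-\theta_*^2/2}$ with $Z\sim N(0,1)$. This one-line argument is what the paper uses; with it, your proposal is complete.
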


The sample-based EM iterates are given by \prettyref{eq:EMsample}, that is,
\[
\theta_{t+1} = f_n(\theta_t).
\]
Here the samples $Y_1,\ldots,Y_n$ are iid drawn from $P_{\theta_*}= \frac{1}{2} N(-\theta_*,1)+\frac{1}{2} N(\theta_*,1)$.
By the global assumption \prettyref{eq:thetabound}, we have $0 \leq \theta_* \leq r$.
WLOG, we assume that $\theta_0>0$ for otherwise we can apply the same analysis to the sequence $\{-\theta_t\}$. 
By \prettyref{eq:Delta-lip}, $\Delta_n=f-f_n$ is $w_n$-Lipschitz, where 
$w_n \triangleq W_1(\nu,\nu_n)$ is a random variable. 
Define the high-probability event
\begin{equation}
E = \{W_1(\nu,\nu_n) \leq c_w\},
\label{eq:event}
\end{equation}
where $c_w$ is a small constant depending only on $r$ that satisfies $c_w < \frac{1}{4}$. By \prettyref{eq:W1-concentrate2}, we have 
$\prob{E} \geq 1 - \exp(-\Omega(n^{1/3}))$. 

Define the following auxiliary iterations:
\begin{align}
\begin{cases}
\otheta_{t+1} = f(\otheta_t) + w_n \otheta_t  \\
\utheta_{t+1} = f(\utheta_t) - w_n \utheta_t
\end{cases}
, \qquad \otheta_0 = \utheta_0=\theta_0.
\label{eq:thetatsandwich}
\end{align}
By \prettyref{lmm:fprop}, $q$ is decreasing and maps $\reals_+$ onto $(0,1+\theta_*^2]$. 
Define
\begin{align}
\otheta \triangleq &~ q^{-1}\pth{1-w_n} \label{eq:otheta}\\
\utheta \triangleq &~ \begin{cases}
  q^{-1}\pth{1+w_n} & |\theta_*| \geq \sqrt{w_n} \\
 0 & |\theta_*| < \sqrt{w_n}
\end{cases}.
\label{eq:utheta}
\end{align}
We will show that on the high-probability event \prettyref{eq:event}, the EM iterates $\{\theta_t\}$ is sandwiched between the two auxiliary iterates $\{\otheta_t\}$ and $\{\utheta_t\}$ (see \prettyref{fig:sandwich1d}).
This is made precisely by the following theorem, 
which gives the estimation error bound and finite-iteration guarantees for the EM algorithm in one dimension:
\begin{figure}[ht]%
\centering
\includegraphics[width=0.35\columnwidth]{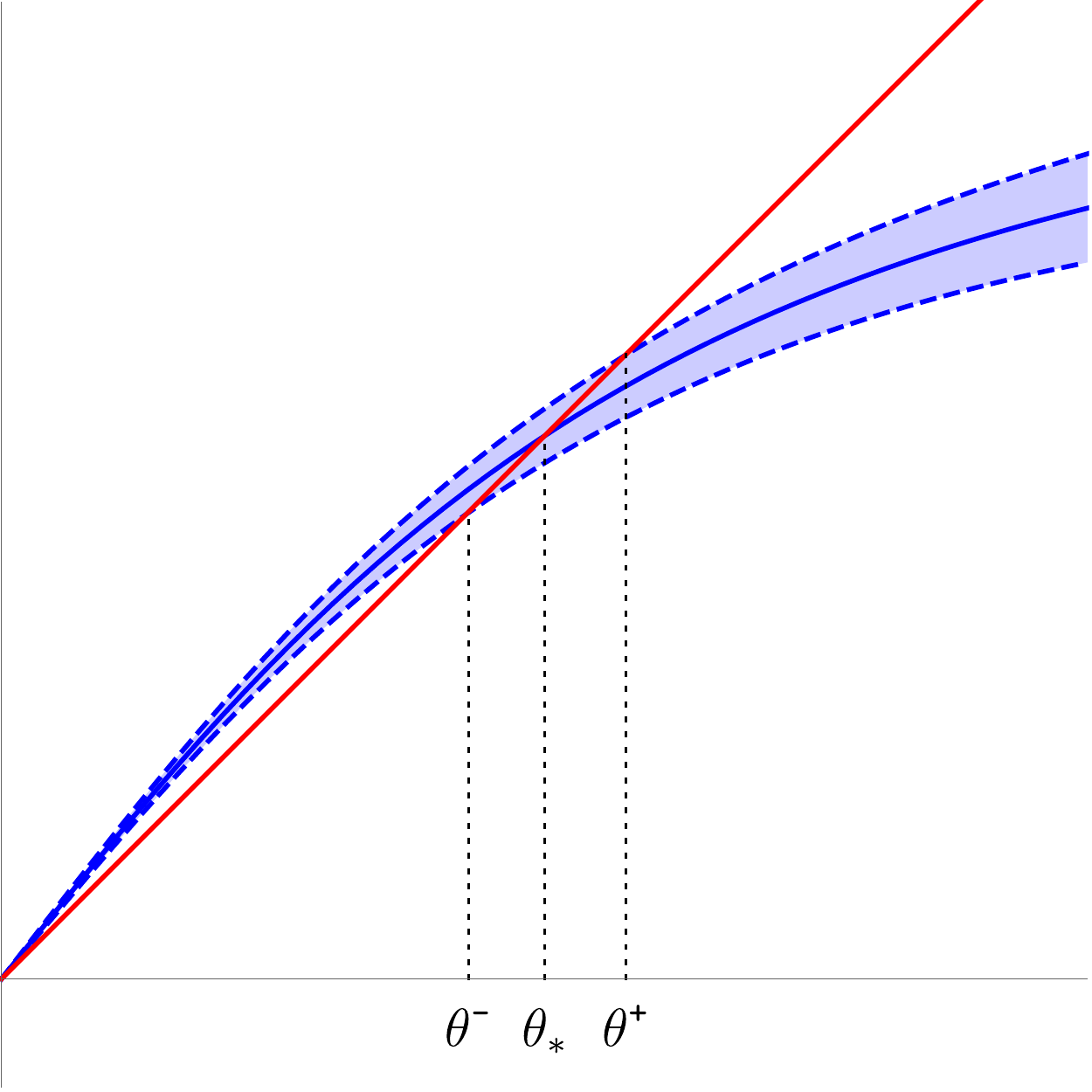}%
\caption{Perturbed EM trajectory and fixed points.}%
\label{fig:sandwich1d}%
\end{figure}

\begin{theorem}[Statistical and computational guarantees for one-dimensional EM]
\label{thm:main-1D}	
	Assume that 
	\begin{equation}
	0 \leq \theta_* \leq r
	\label{eq:thetabound}
	\end{equation}
	for some constant $r$. 
	Assume that 
	\[
	0 < \theta_0 \leq r_0.
	\]
	Then 
			there exist constants $\tau_1,\ldots,\tau_5$ depending on $r$ only, and a constant $n_0=n_0(r,r_0)$, such that for all $n \geq n_0$, on the event \prettyref{eq:event}, the following holds:
	\begin{enumerate}
		\item For all $t \geq 0$,
		\begin{equation}
		0 \leq \utheta_t\leq\theta_t\leq\otheta_t \leq \tau_1.
		\label{eq:sandwich}
		\end{equation}
		\item 
		\begin{equation}
		\ell(\theta_t,\theta_*) \leq  \tau_2 \min\sth{\frac{w_n}{\theta_* }, \sqrt{w_n}},
		\label{eq:sandwich-accuracy}
		\end{equation}
		holds for all $t \geq T=T(\theta_0,\theta_*,w_n)$, where 
		\begin{equation}
		T = \begin{cases}
		\frac{\tau_3}{w_n}	&  \theta_* \leq \tau_4 \sqrt{w_n} \\
		 \frac{\tau_3}{\theta_*^2}\log \frac{1}{\theta_0 w_n} & \theta_* \geq \tau_4 \sqrt{w_n}.
		\end{cases}
		\label{eq:sandwich-T}
		\end{equation}		
	\end{enumerate}
\end{theorem}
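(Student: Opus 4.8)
\emph{Strategy.} Both assertions hold deterministically on the event $E$ (on which $w_n\le c_w$), and the plan is to deduce them from the convergence of the two comparison recursions $\{\otheta_t\}$ and $\{\utheta_t\}$ in \prettyref{eq:thetatsandwich} to their fixed points $\otheta$ and $\utheta$, which pinch the EM iterate $\theta_t$ between them.

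\emph{Step 1: the sandwich \prettyref{eq:sandwich}.} First I would record that, since $f_n$ and $f$ are odd with $f_n(0)=f(0)=0$, the Lipschitz bound \prettyref{eq:Delta-lip} upgrades to the pointwise two-sided inequality $f(\theta)-w_n\theta\le f_n(\theta)\le f(\theta)+w_n\theta$ for all $\theta\ge 0$, i.e.\ $f_n$ is squeezed between $g^{\pm}(\theta)\triangleq f(\theta)\pm w_n\theta$. The maps $g^{\pm}$ are increasing on $[0,\tau_1]$: for $g^+$ because $f'>0$, and for $g^-$ once $c_w$ is chosen below the constant $\inf\{f'(\theta):\theta\in[0,\tau_1],\ \theta_*\le r\}$, which is positive since $f'(\theta)=\Expect[Y^2\sech^2(\theta Y)]$ is bounded away from $0$ on a bounded $\theta$-range uniformly in $\theta_*\le r$. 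Then a two-sided induction proves \prettyref{eq:sandwich}: assuming $\utheta_t\le\theta_t\le\otheta_t$ all lie in $[0,\tau_1]$, monotonicity and the squeeze give $\utheta_{t+1}=g^-(\utheta_t)\le g^-(\theta_t)\le f_n(\theta_t)=\theta_{t+1}\le g^+(\theta_t)\le g^+(\otheta_t)=\otheta_{t+1}$. To keep the induction in $[0,\tau_1]$ I would take $\tau_1$ large enough that $q(\tau_1)\le 1/2$ (possible uniformly in $\theta_*\le r$ because $q(\theta)\le\Expect|Y|/\theta$), so that $g^+$ maps $[0,\tau_1]$ into itself and contracts $\{\theta>\tau_1\}$ by a constant factor (hence the iterates enter $[0,\tau_1]$ after $O(1)$ burn-in steps), and note that $\utheta_t\ge 0$ is preserved since $g^-(0)=0$ and $g^-$ is increasing.

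\emph{Step 2: static fixed-point estimates.} Next I would estimate $\otheta,\utheta$ relative to $\theta_*$. By definition $q(\otheta)=1-w_n$ and $q(\theta_*)=1$ ($\theta_*$ being the positive fixed point of $f$), while $q(\utheta)=1+w_n$ if $\theta_*\ge\sqrt{w_n}$ and $\utheta=0$ otherwise, so $\utheta\le\theta_*\le\otheta$. The mean value theorem plus the explicit lower bound $-q'(\xi)\gtrsim\xi\,\Expect[Y^4/\cosh^2(\xi Y)]\gtrsim\xi$ for bounded $\xi$ (from \prettyref{lmm:fprop}) yields $\otheta-\theta_*=w_n/(-q'(\xi))\lesssim w_n/\theta_*$ and, when $\theta_*\ge\tau_4\sqrt{w_n}$ (so $\utheta\asymp\theta_*$), also $\theta_*-\utheta\lesssim w_n/\theta_*$. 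Separately, the Taylor expansion $q(\theta)=(1+\theta_*^2)-c_1(\theta_*)\theta^2+O(\theta^4)$ near $0$ with $c_1(\theta_*)\ge 1$ forces $\otheta,\utheta\lesssim\sqrt{w_n+\theta_*^2}$, hence $\otheta-\theta_*$ and $\theta_*-\utheta$ are $\lesssim\sqrt{w_n}$ whenever $\theta_*\lesssim\sqrt{w_n}$. Combining, $\max\{\otheta-\theta_*,\ \theta_*-\utheta\}\lesssim\min\{w_n/\theta_*,\sqrt{w_n}\}$, the bound claimed in \prettyref{eq:sandwich-accuracy}. Since $\theta_t\in[\utheta_t,\otheta_t]$ and $\theta_t,\theta_*\ge 0$, one has $\ell(\theta_t,\theta_*)=|\theta_t-\theta_*|\le\max\{|\otheta_t-\theta_*|,|\utheta_t-\theta_*|\}$, so it remains only to show that $|\otheta_t-\otheta|$ and $|\utheta_t-\utheta|$ are $\lesssim\min\{w_n/\theta_*,\sqrt{w_n}\}$ for all $t\ge T$.

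\emph{Step 3: convergence rate and the threshold $T$ (the hard part).} Each auxiliary recursion is monotone toward its fixed point, because $g^{\pm}(\theta)-\theta=\theta(q(\theta)-1\pm w_n)$ changes sign exactly there. In the regime $\theta_*\ge\tau_4\sqrt{w_n}$: while the iterate $u_t$ exceeds $2\theta_*$, $q(u_t)+w_n\le q(2\theta_*)+w_n\le 1-\Omega(\theta_*^2)$, so it decays geometrically with rate $1-\Theta(\theta_*^2)$ down to the $\Theta(\theta_*)$ scale; near the fixed point the derivative of $g^{\pm}$ is $1-\Theta(\theta_*^2)$ (as one checks from \prettyref{lmm:fprop}, since the fixed points are within $O(\sqrt{w_n})=O(\theta_*)$ of $\theta_*$ and $w_n\le\theta_*^2/\tau_4^2$ for $\tau_4$ large), so a further $O(\theta_*^{-2}\log\frac{1}{\theta_0 w_n})$ steps bring $u_t$ within $O(w_n/\theta_*)$ of its limit, the logarithm covering both the approach of a possibly tiny $\theta_0$ to the $\Theta(\theta_*)$ scale and the final accuracy. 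In the regime $\theta_*\le\tau_4\sqrt{w_n}$: Step 2 gives $\otheta\lesssim\sqrt{w_n}$, so if $\theta_0\le\otheta$ the monotone sequence $\{\otheta_t\}$ never leaves $[0,\otheta]\subseteq[0,O(\sqrt{w_n})]$ and no wait is needed, while if $\theta_0>\otheta$, after an $O(1)$-length phase of constant-rate descent the recursion behaves like $u_{t+1}\approx u_t-c_1u_t^3$, which by \prettyref{lmm:rate} reaches the $\Theta(\sqrt{w_n})$ scale in $O(1/w_n)$ steps; likewise $\{\utheta_t\}$ decreases to $\utheta\in[0,O(\sqrt{w_n})]$. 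In either regime, once the monotone iterates are within $O(\min\{w_n/\theta_*,\sqrt{w_n}\})$ of their limits they stay there, which establishes \prettyref{eq:sandwich-accuracy}--\prettyref{eq:sandwich-T}. The genuine obstacle is precisely this rate analysis: obtaining the $O(1/w_n)$ count in the flat regime $\theta_*\lesssim\sqrt{w_n}$ with no spurious logarithmic factor requires analyzing the genuinely sublinear cubic recursion through \prettyref{lmm:rate} rather than via a contraction estimate, and patching the constant-rate descent-from-$O(1)$ phase to the slow polynomial phase near the fixed point; in the complementary regime one must verify the contraction factor is of order exactly $1-\theta_*^2$ uniformly, so that $T$ degrades continuously to $\theta_*^{-2}\log(\cdot)$ as $\theta_*\downarrow\sqrt{w_n}$.
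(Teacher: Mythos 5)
Your proposal follows essentially the same route as the paper's proof: the two-sided induction showing $\utheta_t\le\theta_t\le\otheta_t$ with the monotone perturbed maps $f(\theta)\pm w_n\theta$, locating the fixed points through $q(\otheta)=1-w_n$, $q(\utheta)=1+w_n$ and the bound $q'(\theta)\lesssim-\theta$ to get the $\min\{w_n/\theta_*,\sqrt{w_n}\}$ accuracy, and the two-regime rate analysis (the cubic recursion handled by \prettyref{lmm:rate} giving $O(1/w_n)$ iterations when $\theta_*\lesssim\sqrt{w_n}$, and geometric rates $1\pm\Theta(\theta_*^2)$ plus a growth phase from small iterates when $\theta_*\gtrsim\sqrt{w_n}$), which is exactly the paper's Steps 1--6. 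The only points left implicit—chiefly the Taylor-expansion argument showing $f(\theta)-w_n\theta\ge(1+\theta_*^2/4)\theta$ for $\theta\le c_0\theta_*$, which drives the growth of a tiny lower iterate (the paper's Case II.2)—are routine to fill in and do not change the argument.
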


A corollary of \prettyref{thm:main-1D} is the following guarantee on the average risk:
\begin{coro}
\label{cor:em1d}	
				There exist constants $c_1,c_2$ depending only on $r$, such that
	\begin{equation}
		\Expect[\ell(\theta_t,\theta_*)] \leq  c_1 \min\sth{\frac{1}{\theta_* \sqrt{n}}, \frac{1}{n^{1/4}}},
	\label{eq:em1d-finitet}
	\end{equation}
		holds for all 
		\begin{equation}
		t\geq 	c_2 \min\sth{\sqrt{n}, \frac{1}{\theta_*^2}} \log \frac{n}{\theta_0}.
		\label{eq:em1d-finitet-T}
		\end{equation}		
	
\end{coro}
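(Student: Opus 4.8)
The plan is to condition on the event $E$ of \prettyref{eq:event}, on which both \prettyref{thm:main-1D} and the sandwich \prettyref{eq:sandwich} are available, and to dispose of $E^c$ crudely. On $E^c$, since $\theta_{t}=f_n(\theta_{t-1})$ satisfies $|\theta_t|\le\expectn{|Y|}$ for every $t\ge1$ by \prettyref{eq:fn}, we have $\ell(\theta_t,\theta_*)\le\expectn{|Y|}+r$; Cauchy--Schwarz together with $\Expect[\expectn{|Y|}^2]\le\Expect[Y^2]=1+\theta_*^2$ and $\Prob[E^c]\le\exp(-\Omega(n^{1/3}))$ from \prettyref{eq:W1-concentrate2} make $\Expect[\ell(\theta_t,\theta_*)\indc{E^c}]$ super-polynomially small, hence negligible against either target rate. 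All constants below depend only on $r$, so we are free to enlarge $c_2$ as the argument requires. It remains to bound $\Expect[\ell(\theta_t,\theta_*)\indc E]$, which we do in two regimes according to whether $\theta_*\le n^{-1/4}$ or $\theta_*> n^{-1/4}$ (these correspond respectively to target rates $n^{-1/4}$ and $\frac1{\theta_*\sqrt n}$, and to thresholds $c_2\sqrt n\log\frac n{\theta_0}$ and $\frac{c_2}{\theta_*^2}\log\frac n{\theta_0}$).

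\emph{The regime $\theta_*\le n^{-1/4}$.} Here I would use a crude uniform bound rather than the finite-iteration guarantee. The upper auxiliary iteration $\otheta_{t+1}=f(\otheta_t)+w_n\otheta_t$ of \prettyref{eq:thetatsandwich} is a concave increasing map started at $\otheta_0=\theta_0\le r_0$, with unique positive fixed point $\otheta=q^{-1}(1-w_n)$ as in \prettyref{eq:otheta}; by the quadratic decay of $q$ at $0$ (\prettyref{lmm:fprop}), $\otheta\le C\sqrt{\theta_*^2+w_n}$, and the rate estimates of \prettyref{lmm:rate} give
\[
\otheta_t\le C\pth{\sqrt{\theta_*^2+w_n}\vee t^{-1/2}},\qquad t\ge0.
\]
By the sandwich, $\ell(\theta_t,\theta_*)\le\max\{\otheta_t,\theta_*\}\le C(\sqrt{\theta_*^2+w_n}\vee t^{-1/2})$ on $E$. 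Taking $t\ge c_2\sqrt n\log\frac n{\theta_0}$, Jensen's inequality and $\Expect[w_n]\le c_0/\sqrt n$ from \prettyref{eq:W1-concentrate1} yield $\Expect[\ell(\theta_t,\theta_*)\indc E]\le C(\sqrt{\theta_*^2+\Expect[w_n]}+t^{-1/2})\lesssim n^{-1/4}$, the desired rate (note the absence of logarithmic factors, because we integrate $w_n$ rather than bounding it on an event).

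\emph{The regime $\theta_*> n^{-1/4}$.} The crude bound is only $O(\theta_*)$ here, so I invoke \prettyref{thm:main-1D} itself. If $t\ge T$ with $T$ as in \prettyref{eq:sandwich-T}, then \prettyref{eq:sandwich-accuracy} gives $\ell(\theta_t,\theta_*)\le\tau_2\min\{w_n/\theta_*,\sqrt{w_n}\}$. Suppose instead $t<T$. In the first case of \prettyref{eq:sandwich-T} one would have $T=\tau_3/w_n\le\tau_3\tau_4^2/\theta_*^2<t$ once $c_2$ is large, a contradiction; hence $\theta_*>\tau_4\sqrt{w_n}$, $T=\frac{\tau_3}{\theta_*^2}\log\frac1{\theta_0w_n}$, and $t<T$ forces $w_n<\theta_0^{\,c_2/\tau_3-1}n^{-c_2/\tau_3}$, which is $o(1/n)$ even after dividing by $\theta_*$ once $c_2$ is large. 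In particular $w_n$ is so small that $\utheta=q^{-1}(1+w_n)>0$ in \prettyref{eq:utheta} and $|\otheta-\theta_*|,|\utheta-\theta_*|\lesssim w_n/\theta_*\lesssim 1/n$ by the nondegeneracy of $q'$ near the true fixed point $\theta_*$ (\prettyref{lmm:fprop}). On the other hand, the time for $\otheta_t$ and $\utheta_t$ to reach a constant-factor neighbourhood of $\theta_*$ is at most $\frac{C'}{\theta_*^2}\log\frac n{\theta_0}$, after which they converge geometrically at rate $1-\Omega(\theta_*^2)$ (\prettyref{lmm:fprop}, \prettyref{lmm:rate}); so for $t\ge\frac{c_2}{\theta_*^2}\log\frac n{\theta_0}$ with $c_2>C'$ one has $|\otheta_t-\otheta|,|\utheta_t-\utheta|\lesssim\theta_*(\theta_0/n)^{\Omega(c_2)}\lesssim 1/n$, whence $\ell(\theta_t,\theta_*)\le\max\{|\otheta_t-\theta_*|,|\utheta_t-\theta_*|\}\lesssim 1/n$ by \prettyref{eq:sandwich}. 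Combining the two sub-cases, on $E$ and for all $t\ge\frac{c_2}{\theta_*^2}\log\frac n{\theta_0}$ we get $\ell(\theta_t,\theta_*)\le\tau_2\min\{w_n/\theta_*,\sqrt{w_n}\}+C/n$, and Jensen with $\Expect[w_n]\le c_0/\sqrt n$ gives $\Expect[\ell(\theta_t,\theta_*)\indc E]\le\tau_2\min\{\tfrac{c_0}{\theta_*\sqrt n},\sqrt{c_0}\,n^{-1/4}\}+C/n\lesssim\frac1{\theta_*\sqrt n}$. Adding the negligible $E^c$ term and assembling the two regimes yields $\Expect[\ell(\theta_t,\theta_*)]\lesssim\min\{\frac1{\theta_*\sqrt n},n^{-1/4}\}$ for $t$ beyond the stated threshold.

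The main obstacle is exactly the one handled in the last regime: the finite-iteration time $T$ in \prettyref{thm:main-1D} is \emph{data-dependent} and blows up as $w_n\to0$, so no fixed iteration budget $t$ can be guaranteed to exceed $T$, and the theorem cannot be applied as a black box. The resolution is to show that on the rare event $\{t<T\}$ the loss is still small — either because $\theta_*$ is so small that the slowly-converging iterates are automatically $O(n^{-1/4})$ by the budget $t\asymp\sqrt n\log n$, or because $t<T$ itself forces $w_n$ to be negligibly small while the sandwiching iterates have already entered their region of geometric convergence.
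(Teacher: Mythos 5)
Your argument is correct, but it resolves the central difficulty --- the data-dependent waiting time $T=T(\theta_0,\theta_*,w_n)$ in \prettyref{thm:main-1D}, which blows up as $w_n\to 0$ so that no fixed iteration budget provably exceeds it --- by a different device than the paper. The paper's proof observes that the proof of \prettyref{thm:main-1D} goes through verbatim when $w_n$ is replaced by any upper bound of it, and substitutes $\max\{w_n,n^{-1/2}\}$; this single monotonization caps $T$ by the deterministic threshold \prettyref{eq:em1d-finitet-T} while leaving the accuracy bound \prettyref{eq:sandwich-accuracy} of the right order, so the corollary follows from one application of the theorem, Jensen's inequality with $\Expect[w_n]\lesssim n^{-1/2}$ from \prettyref{eq:W1-concentrate1}, and Cauchy--Schwarz on $E^c$. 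You instead keep $w_n$ intact and treat the residual event $\{t<T\}$ head-on: you show that at the deterministic time budget it forces $w_n=o(1/n)$ (even after dividing by $\theta_*$), and then re-run the Phase II contraction analysis of the sandwiching iterates $\otheta_t,\utheta_t$ to conclude the loss is already $O(1/n)$ there; in the small-$\theta_*$ regime you bypass the theorem entirely via the uniform-in-$t$ bound $\otheta_t\lesssim \sqrt{\theta_*^2+w_n}\vee t^{-1/2}$ from \prettyref{lmm:rate}. Both routes end with the same Jensen/Cauchy--Schwarz bookkeeping, and both must lean on the internals of the proof of \prettyref{thm:main-1D} (the recursions for $\oepsilon_t,\uepsilon_t$) rather than its statement alone. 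The paper's fix is shorter and avoids re-deriving any contraction estimates; your route is heavier but makes explicit the quantitative reason the corollary is safe, namely that the only way a fixed budget can fall short of $T$ is when $w_n$ is so small that the perturbed fixed points and the auxiliary trajectories are already within $O(1/n)$ of $\theta_*$.
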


\begin{remark}
\label{rmk:rate}	
The rate in \prettyref{eq:em1d-finitet} is optimal in the following sense: the second term $n^{-1/4}$ matches the minimax lower bound in \prettyref{app:minimax}, while the first term corresponds to the local minimax rate since the Fisher information behaves as $\Theta(\theta_*^2)$ for small $\theta_*$.\footnote{Indeed, by Taylor expansion and the dominated convergence theorem, we have
$I(\theta) = \Expect_{\theta}[(\frac{\partial \log p_\theta(Y)}{\partial \theta})^2] = \Expect_{\theta}[(Y\tanh(\theta Y)-\theta)^2] = \theta^2 (\Expect_\theta[(Y^2-1)^2]+o(1)) = (2+o(1))\theta^2$, as $\theta\to0$.}
Indeed, we will show in \prettyref{sec:mle} that the EM iteration converges to the MLE which is asymptotic efficient.

In the special case of $\theta_*=0$, results similar to \prettyref{thm:main-1D} have been shown in \cite[Theorem 3]{dwivedi2018singularity}. 
Furthermore, \cite[Theorem 4]{dwivedi2018singularity} provided a matching lower bound showing that any limiting point of the EM iteration is $\Omega(n^{-1/4})$ with constant probability.

Computationally, suppose we initialize with $\theta_0=1$. Then regardless of the value of $\theta_*$, we have the worst-case computational guarantee:
	with high probability, the EM algorithm achieves the optimal rate \prettyref{eq:em1d-finitet} in at most $O(\sqrt{n} \log n)$ iterations.
	The number of needed iterations can be pre-determined on the basis of $n$ and $\theta_0$, without knowing $\theta_*$.
\end{remark}

\section{Concentration of the EM trajectory: relative error bound}
\label{sec:concentration}
Recall that $\Delta_n = f-f_n$ denotes the difference between the sample and the population EM maps. In one dimension, we have shown that the random function $\Delta_n: \reals \to \reals$ is $O_P(\frac{1}{\sqrt{n}})$-Lipschitz by means of the Wasserstein distance between the empirical distribution and the population. The goal of this section is to extend this result to $d$ dimensions, by showing with high probability $\Delta_n: \reals^d\to\reals^d$  is $O(\sqrt{\frac{d \log n}{n}})$-Lipschitz with respect to the Euclidean distance on a ball of radius $R = \Theta(\sqrt{d})$.\footnote{It is also possible to show that $\Delta_n$ is $O(\sqrt{\frac{d \log^3 n}{n}})$-Lipschitz on the entire space $\reals^d$.}
Since with high probability the EM map $f_n$ takes values within this radius, 
this result allows us to control the fluctuation of the EM trajectory with respect to its population counterpart
proportionally to the distance to the origin. 
This \emph{relative error bound} given next is crucial for obtaining the optimal statistical and computational guarantees.

\begin{theorem}
\label{thm:concentration}	
Assume that $\|\theta_*\| \leq r$ and
\[
n \geq C d \log d
\]
for some universal constant $C$.
There exist universal constants $c_0,C_0$, such that 
with probability at least $1-\exp(-c_0 d \log n)$, 
\begin{enumerate}
	\item 
	For all $\theta \in \reals^d$, $f_n(\theta) \in B(R)$ where $R = 10 (\sqrt{d}+r)$.
	
	\item The function $\Delta_n$ is $L$-Lipschitz on $B(R)$, where $L = C_0 (1+r) \sqrt{\frac{d}{n} \log n}$.
\end{enumerate}
\end{theorem}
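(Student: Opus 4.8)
The plan is to reduce the Lipschitz bound on $\Delta_n$ to a uniform concentration statement for the empirical process indexed by the unit sphere, using the structure of $f_n(\theta)-f(\theta)=\Expect_n[Y\tanh\iprod{\theta}{Y}]-\Expect[Y\tanh\iprod{\theta}{Y}]$. For part (1), I first note $\|Y_i\tanh\iprod{\theta}{Y_i}\|\le\|Y_i\|$, so $\|f_n(\theta)\|\le\Expect_n[\|Y\|]$ uniformly in $\theta$; since $\|Y\|$ is $O(1)$-subgaussian coordinatewise and $\Expect\|Y\|\le\sqrt{d}+r$, a standard concentration bound (Bernstein/Gaussian concentration for $\|Y_i\|$ around its mean, then averaging) gives $\Expect_n[\|Y\|]\le 10(\sqrt d+r)$ with probability $1-\exp(-\Omega(d))$, which is stronger than the claimed $1-\exp(-c_0 d\log n)$. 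For part (2), the key is that $\Delta_n$ is differentiable and its Jacobian at $\theta$ is $\Expect_n[\sech^2\iprod{\theta}{Y}\,YY^\top]-\Expect[\sech^2\iprod{\theta}{Y}\,YY^\top]$, so it suffices to bound $\sup_{\theta\in B(R)}\opnorm{\Expect_n[\sech^2\iprod{\theta}{Y}\,YY^\top]-\Expect[\cdots]}$ by $L$. (Alternatively one can bound $\sup_{\theta,\theta'}\|\Delta_n(\theta)-\Delta_n(\theta')\|/\|\theta-\theta'\|$ directly; using the Jacobian is cleaner.)

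The main step is therefore a uniform matrix-concentration argument. Fix $\theta\in B(R)$. For a unit vector $u$, the scalar $u^\top(\Expect_n-\Expect)[\sech^2\iprod{\theta}{Y}\,YY^\top]u=(\Expect_n-\Expect)[\sech^2\iprod{\theta}{Y}\,\iprod{u}{Y}^2]$ is an empirical average of i.i.d.\ variables of the form $g(Y)=\sech^2\iprod{\theta}{Y}\,\iprod{u}{Y}^2\in[0,\iprod{u}{Y}^2]$, hence $O(1)$-subexponential. Bernstein's inequality gives, for a single $(\theta,u)$, deviation $O(\sqrt{t/n}+t/n)$ with probability $1-e^{-t}$. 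I would then union-bound over a net: an $\epsilon$-net of $B(R)$ in $\reals^d$ has cardinality $(CR/\epsilon)^d$, and an $\epsilon$-net of $S^{d-1}$ has cardinality $(C/\epsilon)^d$; taking $\epsilon$ polynomially small (say $\epsilon=1/(nR)$) costs a factor $e^{O(d\log n)}$ in the union bound, which is exactly why the failure probability is $\exp(-c_0 d\log n)$ rather than $\exp(-\Omega(d))$. Choosing $t\asymp d\log n$ then yields $\sup_{\theta\in\text{net},\,u\in\text{net}}|(\Expect_n-\Expect)[g]|\lesssim \sqrt{\frac{d\log n}{n}}$ on the good event (the $t/n$ term is lower order since $n\gtrsim d\log d$).

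Finally I would discretize: for off-net $\theta,u$ I need the Lipschitz continuity of $\theta\mapsto\Expect_n[\sech^2\iprod{\theta}{Y}\,\iprod{u}{Y}^2]$ and similarly in $u$, with a crude bound — e.g.\ $\|\nabla_\theta(\sech^2\iprod{\theta}{Y}\iprod{u}{Y}^2)\|\le 2\|Y\|^3$ and $\Expect_n\|Y\|^3\lesssim d^{3/2}$ on the event of part (1) (again by concentration of $\Expect_n[\|Y\|^3]$), so the net error is $O(\epsilon\cdot d^{3/2})=O(d^{3/2}/(nR))$, negligible compared to $\sqrt{d\log n/n}$ since $R\asymp\sqrt d$ and $n\gtrsim d$. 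Combining, $\sup_{\theta\in B(R)}\opnorm{J_{\Delta_n}(\theta)}\le C_0(1+r)\sqrt{\frac{d}{n}\log n}$, and integrating along segments in the convex set $B(R)$ gives the $L$-Lipschitz bound. The factor $(1+r)$ tracks the crude dependence of the subexponential norms and mean of $\|Y\|^2$ on $\|\theta_*\|\le r$. I expect the main obstacle to be bookkeeping in the discretization — ensuring all the crude "Lipschitz in the parameter" bounds hold on a single high-probability event (that of part (1) together with concentration of $\Expect_n[\|Y\|^k]$ for $k=2,3$) and that the net resolution is fine enough without inflating the failure probability beyond $\exp(-c_0 d\log n)$.
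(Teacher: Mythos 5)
Your proposal is correct, but it takes a genuinely different route from the paper's. The paper never passes through the Jacobian of $\Delta_n$: it bounds the \emph{relative} deviation $\sup_{0<\|\theta\|\le R}\|\Delta_n(\theta)\|/\|\theta\|$ directly, writing $\|\Delta_n(\theta)\|\le 2\max_{u}(\Expect-\Expect_n)[\iprod{u}{Y}\tanh\iprod{\theta}{Y}]$ over a net of directions $u$, then netting over the direction $v=\theta/\|\theta\|$ and the scalar radius $a=\|\theta\|$, and applying Bernstein to the subexponential variables $\iprod{u}{Y}\tanh(a\iprod{v}{Y})/a$; because of the division by $a$, a separate Taylor-expansion case is needed for small $a$, where $\tanh(ax)/a\approx x$ reduces matters to concentration of $\Expect_n[\iprod{u}{Y}\iprod{v}{Y}]$. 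Your route instead controls $\sup_{\theta\in B(R)}\opnorm{\Expect_n[\sech^2\iprod{\theta}{Y}\,YY^\top]-\Expect[\sech^2\iprod{\theta}{Y}\,YY^\top]}$ via nets over $\theta\in B(R)$ and $u\in S^{d-1}$ and Bernstein for the subexponential variable $\sech^2\iprod{\theta}{Y}\iprod{u}{Y}^2$. This proves the Lipschitz claim exactly as stated (and hence the relative bound used downstream, since $\Delta_n(0)=0$), and it sidesteps the delicate small-$\|\theta\|$ case entirely because nothing is divided by $\|\theta\|$; the price is a net over the full ball $B(R)$ and a crude high-probability bound on $\Expect_n[\|Y\|^3]$ for the discretization, both of which cost only the same $e^{O(d\log n)}$ factor in the union bound (the paper needs the analogous crude bound $\Expect_n[\|Y\|^3]\le n^{3/2}$ together with a net of resolution $n^{-4}$ in its own argument). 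In fact your argument is essentially the one the paper uses later in the proof of \prettyref{lmm:hessMLE}, namely uniform concentration of the Jacobian $J_n(\theta)-J(\theta)$, there only on a small ball around $\theta_*$ and here on all of $B(R)$. Two harmless bookkeeping remarks: the natural $\psi_1$-norm of $\sech^2\iprod{\theta}{Y}\iprod{u}{Y}^2$ is of order $(1+r)^2$ rather than $(1+r)$, so your constant in $L$ is slightly worse in $r$ (immaterial here, since $r$ is a fixed constant and the paper's own bookkeeping at the corresponding step is equally loose); and your part (1), via $\Expect_n[\|Y\|]$ and concentration, matches the paper's, which instead uses $\|f_n(\theta)\|\le\sqrt{\Expect_n[\|Y\|^2]}$ and a $\chi^2$ tail bound.
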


The proof is given in \prettyref{sec:pf-concentration}. We note that it is straightforward to 
extend the argument in one dimension (cf.~\prettyref{eq:Delta-lip}--\prettyref{eq:W1-concentrate1}) to bound the Lipschitz constant of $\Delta_n$ by the Wasserstein (in fact, $W_2$) distance between the empirical distribution and the population. Nevertheless, it is well-known that the Wasserstein distance suffers from the curse of dimensionality; for example, the $W_1$ distance behaves as $O_P(n^{-\frac{1}{d}})$ (cf.~e.g~\cite{talagrand1994transportation,FG15}). This effect is due to the high complexity of Lipschitz functions in $d$ dimensions. In contrast, the EM map \prettyref{eq:fn} depends on the $d$-dimensional randomness only through its \emph{linear projection}, which suggests that the it is possible to obtain a rate close to $\sqrt{\frac{d}{n}}$.

\section{Analysis in $d$ dimensions}
\label{sec:ddim}

In this section we analyze for the EM algorithm in high dimensions. 
By using properties of the population EM iteration in \prettyref{sec:ddim-pop} and the relative deviation bound in \prettyref{sec:concentration}, in \prettyref{sec:ddim-sample} we prove optimal statistical and computational guarantees for the sample EM iteration, assuming a modest condition on the initialization which is much weaker than those in \cite{BWY17}. 
Although not necessarily satisfied by random initialization, later in \prettyref{sec:refined} we show that randomly initialized EM iteration will eventually fulfill such a condition with high probability.

\subsection{Properties of the population EM map}
\label{sec:ddim-pop}

Consider the population version of the EM iterates, driven by the population EM map \prettyref{eq:f}:
\[
\btheta_{t+1} = f(\btheta_{t}), \qquad \btheta_0=\theta_0.
\]
We use bold face to delineate it from the finite-sample iteration \prettyref{eq:EMsample}.
Let $\eta_* = \theta_*/\|\theta_*\|$.
Let 
\[
\theta_0 = \alpha_0 \eta_* + \beta_0 \xi_0,
\]
where $\xi_0 \perp \theta_*$ and $\|\xi_0\|=1$, so that $\Span(\theta_0,\theta_*)=\Span(\eta_*,\xi)$.
The next lemma shows that the population EM iterates cannot escape the two-dimensional subspace spanned by $\theta_*$ and $\theta_0$:
\begin{lemma}
\label{lmm:EMsubspace}	
	For each $t\geq 1$, 
	\begin{equation}
	\btheta_t\in \Span(\theta_*,\theta_0).
	\label{eq:subspace}
	\end{equation}
	Furthermore, let 
	\begin{equation*}
	\btheta_t = \balpha_t \eta_* + \bbeta_t \bxi_t
	\end{equation*}
where $\bxi_t \perp \eta_*$ and $\|\bxi_t \|=1$. Then $\{(\balpha_t,\bbeta_t)\}$ satisfies the following recursion
\begin{align}
\balpha_{t+1} = & ~  F(\balpha_t,\bbeta_t) \label{eq:alpha-pop}\\
\bbeta_{t+1}= & ~ 	G(\balpha_t,\bbeta_t)\label{eq:beta-pop}
\end{align}
	where
\begin{align}
F(\alpha,\beta)	\triangleq & ~ \Expect[V \tanh (\alpha V+\beta W)] \label{eq:F}\\
	G(\alpha,\beta) \triangleq & ~ \Expect[W \tanh (\alpha V+\beta W)] \label{eq:G}
\end{align}
with $W\sim N(0,1)$ and $V \sim \frac{1}{2} N(\pm \|\theta_*\|,1)$ being independent.
\end{lemma}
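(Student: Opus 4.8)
The plan is to establish \prettyref{eq:subspace} by induction on $t$, the engine being the fact that the population EM map $f$ leaves invariant any linear subspace containing $\theta_*$. Write $U = \Span(\theta_*,\theta_0)$ and let $\Pi$ be the orthogonal projection onto $U$. The base case $\btheta_0 = \theta_0 \in U$ is immediate. For the inductive step, assume $\btheta_t \in U$ and invoke the latent representation $Y = X\theta_* + Z$ from \prettyref{eq:latent}, with $X$ Rademacher and $Z\sim N(0,I_d)$ independent. Since $\theta_*\in U$, the complement component satisfies $(I-\Pi)Y = (I-\Pi)Z$, and because $Z$ is isotropic Gaussian, $(I-\Pi)Z$ is centered and independent of $(\Pi Z, X)$, hence of $\Pi Y = X\theta_* + \Pi Z$. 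As $\btheta_t\in U$, we have $\iprod{\btheta_t}{Y} = \iprod{\btheta_t}{\Pi Y}$, so
\[
f(\btheta_t) = \Expect[\Pi Y\,\tanh\iprod{\btheta_t}{\Pi Y}] + \Expect[(I-\Pi)Z]\,\Expect[\tanh\iprod{\btheta_t}{\Pi Y}] = \Expect[\Pi Y\,\tanh\iprod{\btheta_t}{\Pi Y}] \in U,
\]
which completes the induction.

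Granted subspace invariance, I would next set up coordinates on $U$: fix a unit vector $\bxi\perp\eta_*$ with $\bxi\in U$ (taking $\bbeta_t\equiv 0$ in the degenerate case $\theta_0\parallel\theta_*$), and set $\balpha_t = \iprod{\eta_*}{\btheta_t}$, $\bbeta_t = \iprod{\bxi}{\btheta_t}$, so that $\btheta_t = \balpha_t\eta_* + \bbeta_t\bxi$. Introduce the scalar random variables $V\triangleq\iprod{\eta_*}{Y}$ and $W\triangleq\iprod{\bxi}{Y}$. From $Y = X\|\theta_*\|\eta_* + Z$ and $\bxi\perp\theta_*$ one reads off $V = X\|\theta_*\| + \iprod{\eta_*}{Z}\sim\frac12 N(\pm\|\theta_*\|,1)$ and $W = \iprod{\bxi}{Z}\sim N(0,1)$; moreover $\iprod{\eta_*}{Z}$ and $\iprod{\bxi}{Z}$ are jointly Gaussian and uncorrelated, hence independent, and each is independent of $X$, so $V$ and $W$ are independent. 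Since $\btheta_t\in\Span(\eta_*,\bxi)$, we have $\iprod{\btheta_t}{Y} = \balpha_t V + \bbeta_t W$, and projecting the identity $\btheta_{t+1} = f(\btheta_t) = \Expect[Y\tanh\iprod{\btheta_t}{Y}]$ onto $\eta_*$ and $\bxi$ gives
\[
\balpha_{t+1} = \Expect[V\tanh(\balpha_t V+\bbeta_t W)] = F(\balpha_t,\bbeta_t),\qquad \bbeta_{t+1} = \Expect[W\tanh(\balpha_t V+\bbeta_t W)] = G(\balpha_t,\bbeta_t),
\]
which is exactly \prettyref{eq:alpha-pop}--\prettyref{eq:beta-pop}.

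I do not anticipate a genuine obstacle; the two points that merit care are (i) the independence of $(I-\Pi)Z$ from $\Pi Y$, which uses both the rotational invariance of the Gaussian noise and the fact that $\theta_*$ itself---not merely $\theta_0$---lies in $U$, so the argument is particular to the symmetric mixture model; and (ii) the bookkeeping that lets us fix $\bxi_t\equiv\bxi$, so that $(\balpha_t,\bbeta_t)$ range over all of $\reals^2$. If instead one insists on the normalization $\bbeta_t\ge 0$, one must allow $\bxi_t$ to flip sign; this is harmless, since $G$ is odd in its second argument and, by Stein's identity, $G(\alpha,\beta) = \beta\,\Expect[\sech^2(\alpha V+\beta W)]$ has the sign of $\beta$, so the sign of $\bbeta_t$ is preserved along the iteration.
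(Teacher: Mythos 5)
Your proposal is correct and follows essentially the same route as the paper: induction on $t$, writing $Y=X\theta_*+Z$ and using that the component of $Z$ orthogonal to $\Span(\theta_*,\theta_0)$ is zero-mean and independent of $X$ and of the in-span component (the paper phrases this as $\Iprod{u}{\btheta_{t+1}}=0$ for each $u$ in the orthogonal complement, you phrase it with the projection $\Pi$). Your explicit derivation of the $(\balpha_t,\bbeta_t)$ recursion and the remark on fixing $\bxi_t$ versus normalizing $\bbeta_t\ge 0$ are exactly the routine steps the paper leaves implicit, so nothing further is needed.
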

\begin{proof}
It suffices to show \prettyref{eq:subspace}, which was proved in \cite{XHM16}. To give some intuitions, we provide a simple argument below by induction on $t$. Clearly \prettyref{eq:subspace} holds for $t=0$.
Next, fix any $u \in \Span(\theta_*,\theta_0)^\perp$. By the induction hypothesis, $u\perp\btheta_t$.
Therefore
\[
\Iprod{u}{\btheta_{t+1}} = \Expect[\Iprod{u}{Y}  
\tanh(\Iprod{Y}{\btheta_{t}})] = \Expect[\Iprod{u}{Z} 
\tanh(\Iprod{\theta_*}{\btheta_{t}} X + \Iprod{Z}{\btheta_t}  )] = 0
\] 
since $\Iprod{u}{Z}, \Iprod{\btheta_t}{Z}$ and $X$ are mutually independent.
This proves \prettyref{eq:subspace} holds for $t+1$.
\end{proof}

Next, we analyze the convergence of 
$(\balpha_t,\bbeta_t)$.
Without loss of generality (otherwise we can negate $\theta_*$ and $\xi$), we assume that 
\[
\balpha_0 \geq 0,\quad \bbeta_0 \geq 0.
\]
Therefore $\btheta_t \to \theta_*$ is equivalent to $\balpha_t\to \|\theta_*\|$ and $\bbeta_t\to0$.
The convergence is easily justified by the following lemma:
\begin{lemma}[Properties of $F$ and $G$]
\label{lmm:FG}	
For any $\alpha$ and $\beta \geq 0$,
\begin{enumerate}
	\item 
	$\alpha \mapsto F(\alpha,\beta)$ is increasing, odd, concave (resp.~convex) on $\reals_+$ (resp.~$\reals_-$), with 
		$F(0,\beta)=0$, $F(\pm\|\theta_*\|,0)=\pm\|\theta_*\|$.

\item 
$F(\alpha,\beta) \geq 0$ for any $\alpha\geq 0$.
	
	\item $\beta \mapsto G(\alpha,\beta)$ is increasing and concave, with $G(\alpha,0)=0$.
	
	\item $\alpha \mapsto G(\alpha,\beta)$ is even, decreasing on $\reals_+$;
	$\beta \mapsto F(\alpha,\beta)$ is decreasing for $\alpha \geq 0$ and increasing for $\alpha \leq 0$. 
	

	\item (Boundedness) 
		\[
	|F(\alpha,\beta)| \leq \|\theta_*\|+\sqrt{2/\pi}, \quad 0\leq G(\alpha,\beta) \leq \sqrt{2/\pi}.
	\]
	\item
\begin{align}
G(\alpha,\beta) \leq & ~ G(0,\beta) 
= \Expect[W \tanh(\beta W)].
\label{eq:G0}
\end{align}

\item 
\begin{align}
f(\alpha) \geq F(\alpha,\beta) \geq f(\alpha) - (1+\|\theta_*\|^2) \alpha \beta^2, & \qquad \alpha \geq 0 \label{eq:F01}\\
f(\alpha) \leq F(\alpha,\beta) \leq f(\alpha) - (1+\|\theta_*\|^2) \alpha \beta^2, & \qquad \alpha \leq 0 \label{eq:F02}
\end{align}
where
\begin{equation}
f(\alpha)\triangleq F(\alpha,0) = \Expect[V \tanh (\alpha V)]
\label{eq:F0}
\end{equation}
coincides with the one-dimensional EM map defined in \prettyref{eq:f} with $\theta_*$ replaced by $\thetanorm$.

\item 
	\begin{equation}
	G(\alpha,\beta) \leq \beta\pth{ 1 - \frac{\alpha^2+\beta^2}{2+4(\alpha^2+\beta^2)}}.
	\label{eq:Gnew}
	\end{equation}
\end{enumerate}
\end{lemma}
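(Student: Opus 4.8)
The plan is to reduce every item to one‑dimensional Gaussian integrals and to exploit three structural facts. First, $V$ and $W$ are both symmetric about $0$: writing $V=\|\theta_*\|\epsilon+Z$ with $\epsilon$ a Rademacher variable and $Z\sim N(0,1)$ (mutually independent and independent of $W$), signs may be flipped freely. Second, $\tanh$ and all its derivatives are bounded, so one may differentiate under the expectation. Third — the key tool — Gaussian integration by parts (Stein's identity) applied to $W$ gives the clean rewritings
\[
G(\alpha,\beta)=\Expect[W\tanh(\alpha V+\beta W)]=\beta\,\Expect[\sech^2(\alpha V+\beta W)],
\]
and, writing $\kappa(s):=\sech^2(s)\tanh(s)=-\tfrac12(\sech^2)'(s)$,
\[
\partial_\beta F(\alpha,\beta)=\partial_\alpha G(\alpha,\beta)=\Expect[VW\sech^2(\alpha V+\beta W)]=-2\beta\,\Expect\bigl[V\,\kappa(\alpha V+\beta W)\bigr].
\]
The elementary items follow quickly. $F(0,\beta)=\Expect[V]\,\Expect[\tanh(\beta W)]=0$ and $G(\alpha,0)=\Expect[W]\,\Expect[\tanh(\alpha V)]=0$; since $\tanh(\|\theta_*\| V)=\Expect[\epsilon\mid V]$ we get $F(\pm\|\theta_*\|,0)=\pm\Expect[V\,\Expect[\epsilon\mid V]]=\pm\Expect[V\epsilon]=\pm\|\theta_*\|$; monotonicity of $F$ in $\alpha$ and of $G$ in $\beta$ comes from $\partial_\alpha F=\Expect[V^2\sech^2]\ge0$ and $\partial_\beta G=\Expect[W^2\sech^2]\ge0$; oddness of $F$ and evenness of $G$ in $\alpha$ (and evenness of $F$, oddness of $G$ in $\beta$) from the substitutions $V\mapsto-V$, $W\mapsto-W$; the bounds in item~5 from $|\tanh|\le1$, $\Expect|W|=\sqrt{2/\pi}$, $\Expect|V|\le\|\theta_*\|+\sqrt{2/\pi}$. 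Item~2 is then immediate from item~1, and item~6 from the evenness and (to be shown) monotonicity of $\alpha\mapsto G(\alpha,\beta)$.

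The core of items~1, 4 and 7 is a log‑concavity observation about the Gaussian smoothing $m_\beta(x):=\Expect_W[\sech^2(x+\beta W)]$: since $\log\cosh$ is convex, $\sech^2$ is log‑concave, hence by Prékopa's theorem so is its convolution $m_\beta$; being also even, $m_\beta$ is unimodal with maximum at $0$. Therefore $\Expect_W[\kappa(x+\beta W)]=-\tfrac12 m_\beta'(x)$ is \emph{odd and nonnegative on $\reals_+$}, and $\bigl|\tfrac{d}{dx}\Expect_W[\kappa(x+\beta W)]\bigr|=\bigl|\Expect_W[\kappa'(x+\beta W)]\bigr|\le1$ since $\kappa'(s)=\sech^2(s)\bigl(1-3\tanh^2(s)\bigr)=w(3w-2)$ with $w=\sech^2(s)\in(0,1]$ lies in $[-\tfrac13,1]$. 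Substituting $\Expect_W[\kappa(\alpha V+\beta W)]=-\tfrac12 m_\beta'(\alpha V)$ into $\partial_\alpha^2 F=-2\Expect[V^3\kappa(\alpha V+\beta W)]$ and into $\partial_\beta F=\partial_\alpha G$ above, and noting that $m_\beta'(\alpha V)$ has the same sign as $\alpha V$, gives, for $\alpha\ge0$: $\partial_\alpha^2 F\le0$ (concavity of $F$ in $\alpha$, item~1) and $\partial_\beta F=\partial_\alpha G\le0$ (item~4, hence item~6), with the $\reals_-$ statements following by oddness. For item~7, $F(\alpha,\beta)\le f(\alpha)$ ($\alpha\ge0$) is exactly item~4, while
\[
f(\alpha)-F(\alpha,\beta)=-\int_0^\beta\partial_bF(\alpha,b)\,db=\int_0^\beta 2b\,\Expect_V\bigl[\,V\cdot\bigl(-\tfrac12 m_b'(\alpha V)\bigr)\bigr]\,db,
\]
where $0\le\Expect_V[V\cdot(-\tfrac12 m_b'(\alpha V))]\le\Expect_V[|V|\cdot|\alpha V|]=\alpha(1+\|\theta_*\|^2)$ by the Lipschitz bound above; integrating in $b$ yields precisely $0\le f(\alpha)-F(\alpha,\beta)\le(1+\|\theta_*\|^2)\alpha\beta^2$, and the $\alpha\le0$ case follows from oddness of $F$ and $f$ in $\alpha$.

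For item~8, since $G(\alpha,\beta)=\beta\,\Expect[\sech^2(\alpha V+\beta W)]$ it suffices to prove $\Expect[\tanh^2(\alpha V+\beta W)]\ge\frac{\rho}{2+4\rho}$ with $\rho:=\alpha^2+\beta^2$. Conditioning on $\epsilon$ gives $\alpha V+\beta W\mid\epsilon\sim N(\pm\alpha\|\theta_*\|,\rho)$, so by evenness of $\tanh^2$ and symmetry of the $N(0,\rho)$ component, $\Expect[\tanh^2(\alpha V+\beta W)]=\Expect_{N\sim N(0,\rho)}[\tanh^2(\mu+N)]$ with $\mu=|\alpha|\,\|\theta_*\|$; and $\tfrac{d}{d\mu}\Expect_N[\tanh^2(\mu+N)]=2\Expect_N[\kappa(\mu+N)]=-m_{\sqrt\rho}'(\mu)\ge0$ for $\mu\ge0$, again by log‑concavity of $m$. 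Hence the left‑hand side is minimized at $\mu=0$, reducing the claim to the one‑dimensional inequality $\Expect_{N\sim N(0,\rho)}[\tanh^2 N]\ge\frac{\rho}{2+4\rho}$, which I would establish from $\tanh^2(s)\ge\frac{s^2}{1+s^2}$ (i.e.\ $\cosh^2 s\ge1+s^2$) by splitting on $\{\rho N^2\le1\}$ and its complement and using standard Gaussian tail estimates, the regimes $\rho$ small and $\rho$ large each being routine.

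The remaining item~3 — concavity of $\beta\mapsto G(\alpha,\beta)$ on $\reals_+$ — is the part I expect to be the main obstacle. Two applications of Stein's identity give $\partial_\beta^2 G=-2\beta\bigl(3\Expect[\kappa'(\alpha V+\beta W)]+\beta^2\Expect[\kappa'''(\alpha V+\beta W)]\bigr)$, so one must show $3\Expect[\kappa'(\alpha V+\beta W)]+\beta^2\Expect[\kappa'''(\alpha V+\beta W)]\ge0$; but, unlike the situation in items~1, 4 and~7, this is \emph{not} sign‑definite term by term (because $\kappa$ is not monotone, $\Expect[\kappa'(\cdot)]$ and $\Expect[\kappa'''(\cdot)]$ being small quantities whose positivity is delicate), and the log‑concavity trick does not close it directly. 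The plan is to expand $\kappa'$ and $\kappa'''$ as polynomials in $w=\sech^2(\alpha V+\beta W)$ and verify the inequality by bounding the moments $\Expect[w^j]$; this bookkeeping, together with matching the exact constants in items~7 and~8, is where essentially all of the remaining work lies.
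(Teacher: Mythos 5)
Most of what you prove is correct, and for items 1, 4, 6 and 7 you take a genuinely different route from the paper: you condition on $V$ and exploit log-concavity (Pr\'ekopa) of the Gaussian smoothing $m_\beta(x)=\Expect_W[\sech^2(x+\beta W)]$, so that $\Expect_W[\kappa(x+\beta W)]=-\frac12 m_\beta'(x)$ is odd, nonnegative on $\reals_+$, and $1$-Lipschitz because $\kappa'=w(3w-2)\in[-\frac13,1]$ for $w=\sech^2\in(0,1]$. The paper instead changes measure to a standard Gaussian (its \prettyref{lmm:com}) and conditions on the linear statistic $\alpha V+\beta W$, using for item 4 the monotone, odd posterior mean $y\mapsto\Expect[V\mid \alpha V+\beta W=y]$ (\prettyref{lmm:estimator}). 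Your Lipschitz bound recovers exactly the paper's constant $(1+\|\theta_*\|^2)\alpha\beta^2$ in \prettyref{eq:F01}, and your reduction of item 8 to the centered case $\mu=0$ is the same monotonicity-in-the-mean step as the paper's reduction to $s=0$. These parts buy a somewhat more transparent argument that avoids the MMSE lemma, at the cost of invoking Pr\'ekopa.

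Two items are not finished, and one is a genuine gap. For item 8 you defer the one-dimensional inequality $\Expect_{N(0,\rho)}[\tanh^2 N]\ge\rho/(2+4\rho)$ to a ``routine'' splitting argument; there is enough slack that this can probably be pushed through, but the clean way to hit the stated constant is the paper's: $\cosh^2 x\ge 1+x^2$, the exact identity $\Expect[(t^2+Z^2)^{-1}]=\bar\Phi(t)/(t\varphi(t))$, and the It\^o--McKean bound on the Mills ratio (\prettyref{lmm:ito}); that identity is the ingredient your sketch lacks. The real gap is item 3, the concavity of $\beta\mapsto G(\alpha,\beta)$, which you leave open. Your proposed bookkeeping --- proving $3\Expect[\kappa'(\alpha V+\beta W)]+\beta^2\Expect[\kappa'''(\alpha V+\beta W)]\ge0$ by bounding moments of $w=\sech^2(\alpha V+\beta W)$ --- is unlikely to close it, because the inequality is false conditionally on $V$: for $a=\alpha v$ large and $\beta$ small, a Taylor expansion gives $\Expect_W[W^3\tanh''(a+\beta W)]\approx 3\beta\,\tanh'''(a)>0$ (since $\tanh'''(a)\approx 4\sech^2(a)>0$ for large $a$), so the conditional map $\beta\mapsto\Expect_W[W\tanh(a+\beta W)]$ is locally convex there. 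Any correct proof must therefore use the averaging over $V$ (the Gaussian component of the mixture) in an essential way; pointwise or term-by-term moment bounds at a fixed conditional location cannot work, so this is a missing idea rather than mere bookkeeping. For comparison, the paper writes $\partial_\beta^2 G=-2\,\Expect\bigl[W^3\tanh(\alpha V+\beta W)\sech^2(\alpha V+\beta W)\bigr]$ and settles the sign by the same conditional symmetry device it uses for the concavity of $\alpha\mapsto F$ in item 1 (change of measure by $\cosh(sZ)$, then condition on $\alpha Z+\beta W=y$ and pair the odd conditional expectation against $\tanh''(y)$); until you supply an argument of this averaged type, item 3 of the lemma remains unproven in your write-up.
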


From \prettyref{lmm:FG} it is clear that in the population case, the only fixed points are the desired $(\pm\thetanorm,0)$ and undesired $(0,0)$. 
As long as the initial value is not orthogonal to the ground truth (i.e., $\balpha_0\neq 0$), $\btheta_t$ converges to $\pm \theta_*$; this has been previously shown in \cite{XHM16,DTZ17}.
In fact, the orthogonal component $\bbeta_t$ converges to $0$ monotonically regardless of the signal component $\balpha_t$. 
Furthermore, if we start out with $\balpha_0>0$, then $\balpha_t>0$ remains true for all $t$, and when $\bbeta_t$ gets sufficiently close to 0, $\balpha_t$ converges to $\thetanorm$ following the one-dimensional EM dynamics (cf.~\prettyref{eq:F0}). 
However, a major distinction between the one-dimensional and $d$-dimensional case is that $\balpha_t$ need \emph{not} converge monotonically even in the infinite-sample setting.
In fact, if the initial value has little overlap with the ground truth (as is the case for random initialization in high dimensions), $\bbeta_t$ is large initially which causes $\balpha_t$ to decrease and $\btheta_t$ to move closer to the undesired fixed point at zero (see \prettyref{fig:non-monotone}). 
Therefore, in the finite-sample setting, we need to assume conditions on the initialization (namely lower bound on $|\alpha_t|$) in order to avoid being trapped near zero -- we will return to this point in the finite-sample analysis in the next subsection.
 This is in stark contrast to the one-dimensional case: even with finite samples, for any non-zero initialization, the EM iteration eventually converges to a neighborhood of the ground truth with optimal accuracy (cf.~\prettyref{thm:main-1D}).

\begin{figure}[ht]
	\centering
	\subfigure[Non-monotone convergence of $\balpha_t$ ($\balpha_0=0.1$, $\bbeta_0=0.7$).]%
	{\label{fig:non-monotone} \includegraphics[width=0.45\columnwidth]{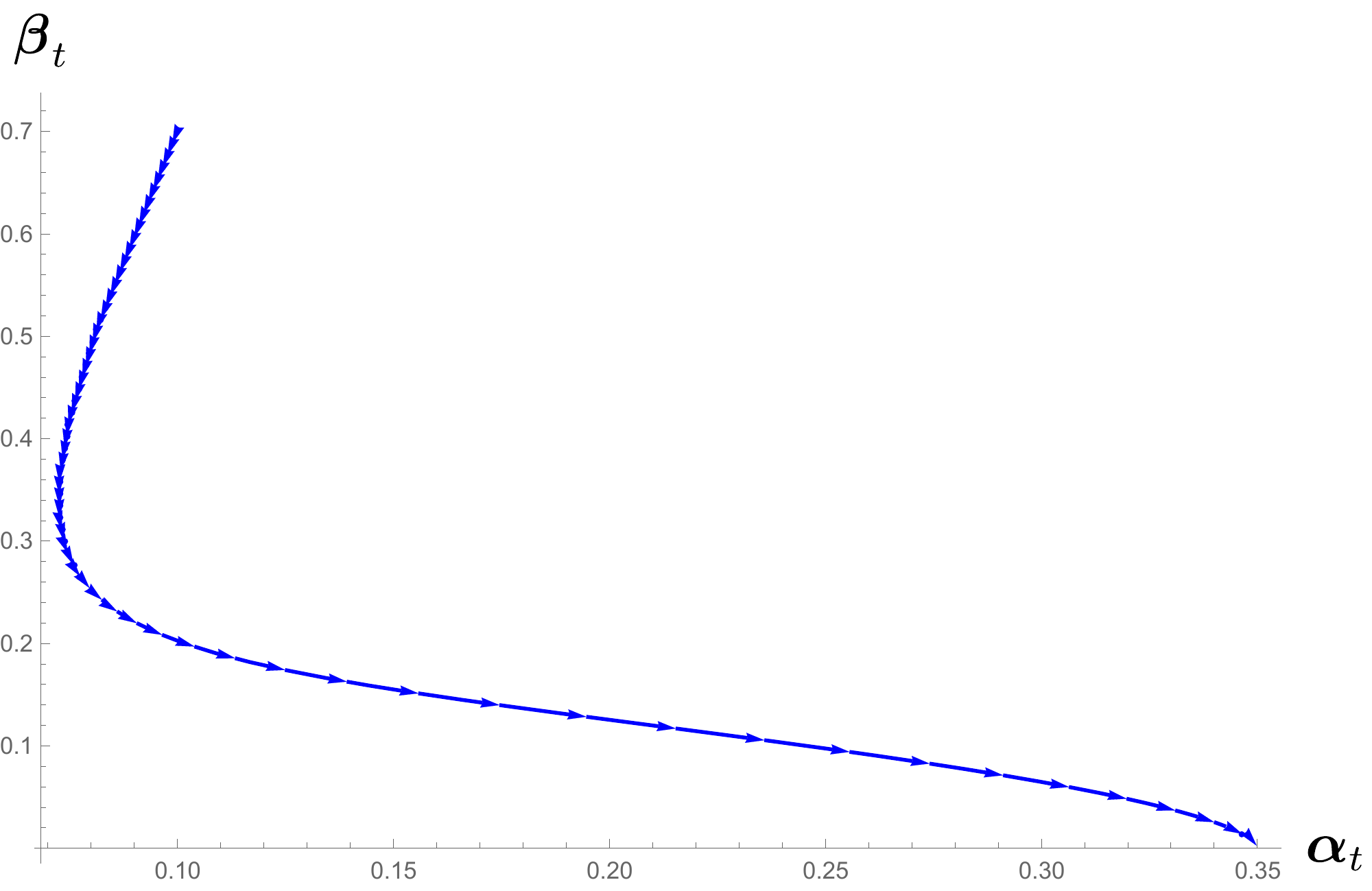}}
	\subfigure[Monotone convergence of $\balpha_t$ ($\balpha_0=\bbeta_0=0.1$).]%
	{\label{fig:monotone} \includegraphics[width=0.45\columnwidth]{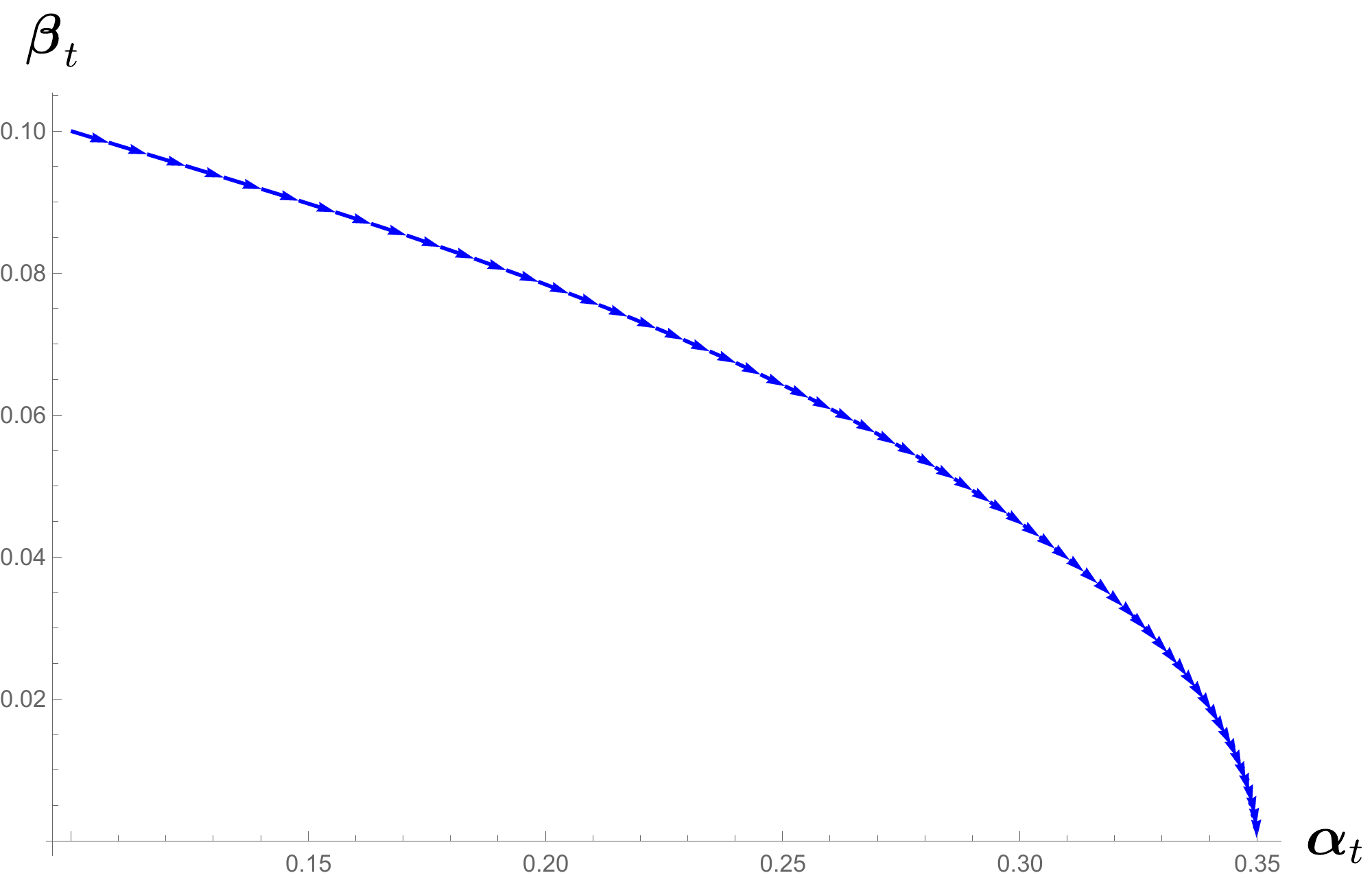}}
	\caption{Convergence of $(\balpha_t,\bbeta_t)$ in the population dynamics in $d$ dimensions with $\thetanorm=0.35$ for $60$ iterations.}
	\label{fig:EMpop}
\end{figure}





\subsection{Finite-sample analysis}
	\label{sec:ddim-sample}
We now analyze the $n$-sample EM iteration \prettyref{eq:EMsample}, that is,
\[
\theta_{t+1} = f_n(\theta_{t}).
\]
Write
\[
\theta_t = \alpha_t \eta_* + \beta_t \xi_t,
\]
where $\xi_t \perp \eta_* = \frac{\theta_*}{\thetanorm}$, $\|\xi_t\|=1$ and $\beta_t \geq 0$. Thus
$\|\theta_t\| = \sqrt{\alpha_t^2+\beta_t^2}$.

Recall that $\Delta_n=f_n-f$ denotes the difference between the sample and population EM maps. 
In view of \prettyref{thm:concentration}, with probability at least $1- \exp(-c_0 d \log n)$, the following event holds:
\begin{equation}
\begin{aligned}
\sup_{\theta\in\reals^d} \|f_n(\theta)\| & ~ \leq R	\\
\|\Delta_n(\theta)\| & ~ \leq \omega \|\theta\|, \quad \forall \theta \in B(R),	\\
\end{aligned}
\label{eq:eventd}
\end{equation}
where $R=10(r+\sqrt{d})$ and 
\begin{equation}
\omega = \sqrt{C_\omega \frac{d}{n} \log n}
\label{eq:omega}
\end{equation}
and $C_\omega $ is a constant that only depends on $\thetaub$.
We assume that $n$ is sufficiently large so that $\omega$ is at most an absolute constant.

Recall from \prettyref{lmm:EMsubspace} that $f(\theta) \in \Span(\eta_*, \theta)$ for any $\theta\in\reals^d$.
	Furthermore,
	\[
	f(\theta_t) = F(\alpha_t,\beta_t) \eta_* + G(\alpha_t,\beta_t) \xi_t,
	\]
	where $F$ and $G$ are defined in \prettyref{eq:F}--\prettyref{eq:G}.
Therefore
\[
\alpha_{t+1} = \iprod{\theta_{t+1}}{\eta_*} = F(\alpha_t,\beta_t) \eta_* + \Iprod{\Delta_n(\theta_t)}{\eta_*}.
\]
In view of \prettyref{eq:eventd}, 
 we have
\[
|\Iprod{\Delta_n(\theta_t)}{\eta_*}| \leq \|\Delta_n(\theta_t)\| \leq \omega (|\alpha_t|+\beta_t).
\]
Hence
\begin{align}
\alpha_{t+1} \leq & ~ F(\alpha_t,\beta_t)  + \omega (|\alpha_t|+\beta_t) \label{eq:alpha+}\\
\alpha_{t+1} \geq & ~ F(\alpha_t,\beta_t)  - \omega (|\alpha_t|+\beta_t) \label{eq:alpha-}
\end{align}
On the other hand, we have
\[
(I-\eta_*\eta_*^\top) \theta_{t+1} = G(\alpha_t,\beta_t) \xi_t+ (I-\eta_*\eta_*^\top) \Delta_n(\theta_t).
\]
Taking norms on both sides, we have
	\begin{align}
	\beta_{t+1}
	\leq & ~ G(\alpha_t,\beta_t) + \omega(|\alpha_t| + \beta_t). \label{eq:beta-iterate}
	\end{align}
		The equations \prettyref{eq:alpha+}--\prettyref{eq:alpha-} and \prettyref{eq:beta-iterate} should be viewed as the finite-sample perturbation of the population dynamics \prettyref{eq:alpha-pop} and \prettyref{eq:beta-pop}, respectively.

	We will show that the orthogonal component $\beta_t$ unconditionally converges to $O(\sqrt{\omega}) = O((\frac{d\log n}{n})^{\frac{1}{4}})$; however, for finite sample size we cannot expect $\beta_t$ to converge to zero. 
	To analyze $\alpha_t$, let us assume that $\beta_t$ have converged to this limiting value (in fact, by initializing near zero, we can ensure $\beta_t = O(\sqrt{\omega})$ for all $t$.)
Following the sandwich analysis in one dimension, we can define the auxiliary iterations similar to \prettyref{eq:thetatsandwich}
\begin{align}
\begin{cases}
\oalpha_{t+1} = F(\oalpha_t,\beta_t)  + \omega \oalpha_t + \omega^{3/2}   \\
\ualpha_{t+1} = F(\ualpha_t,\beta_t)  - \omega \ualpha_t - \omega^{3/2}
\end{cases}
, \qquad \oalpha_0 = \ualpha_0=\alpha_0
\label{eq:alphasandwich}
\end{align}
and show that the upper bound sequence $\{\oalpha_t\}$ converges to $\oalpha$ which is within the optimal rate of the desired $\thetanorm$. 
However, due to the additional intercept, 
	the lower bound sequence $\{\ualpha_t\}$ have two possible fixed points (see \prettyref{fig:perturb-em-ddim}): the ``good'' fixed point
	$\ualpha$ that is within the optimal rate of $\thetanorm$, and the ``bad'' fixed point	$\alpha^{\circ}$ that is close to zero (in fact, $\alpha^{\circ} = O(\sqrt{\omega})$).
\begin{figure}[ht]%
\centering
\includegraphics[width=0.4\columnwidth]{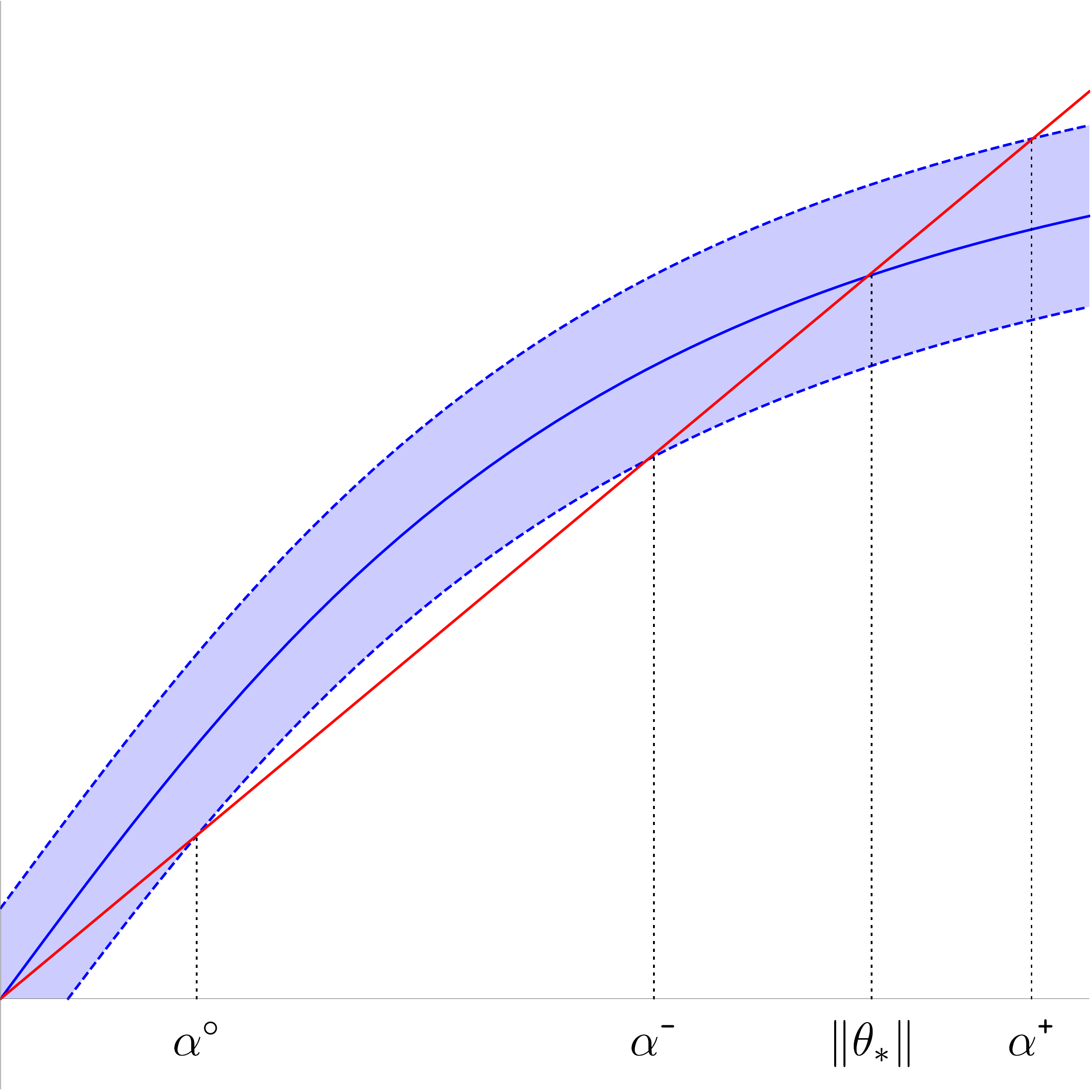}%
\caption{Perturbed EM trajectory for $\alpha_t$ and fixed points.}%
\label{fig:perturb-em-ddim}%
\end{figure}
	Consequently, if the iteration starts from from the left of the bad fixed points, i.e., $\alpha_0 < \alpha^{\circ}$, which is what happens when the initialization is nearly orthogonal to $\theta_*$, the lower bound sequence $\ualpha_t$ may be stuck at near zero and fail to converge to the desired neighborhood of $\thetanorm$.
	Thus to rule this out it requires more refined argument than the above sandwich analysis, which is carried out in the next section. 
	For this section we focus on proving the performance guarantee assuming a mild assumption on the initialization.	
	Specifically, we establish the following claims:
	\begin{enumerate}
		\item Orthogonal direction: we show that regardless of the initialization, 
			$\{\beta_t\}$ unconditionally converges to the near-optimal rate $O(\sqrt{\omega})$. 
			In particular, if we start from near zero (and we will), we can ensure that the entire sequence $\{\beta_t\}$ is $O(\sqrt{\omega})$ for \emph{all} $t$.  			
		\item Signal direction: we show that 
		\begin{itemize}
			\item For small $\theta_*$, i.e., $\thetanorm =O(\sqrt{\omega})$, 
			$\{|\alpha_t|\}$  unconditionally converges to $O(\sqrt{\omega})$, and hence so does $\|\theta_t-\theta_*\|$.
			
			\item 
			For large $\theta_*$, i.e., $\thetanorm = \Omega(\sqrt{\omega})$, 
		provided that the initialization satisfies 
		\[
		|\Iprod{\eta_0}{\eta_*}| \gtrsim \frac{1}{\thetanorm^2} \sqrt{\frac{d}{n} \log n},
		\]
		the signal part $\{\alpha_t\}$ converges to $\thetanorm + O(\frac{1}{\thetanorm}\sqrt{\frac{d}{n} \log n})$.	
			The condition on the initialization improves that of \cite{BWY17}, which requires that 
			$|\Iprod{\eta_0}{\eta}| \geq \Omega(1)$ and $\thetanorm = \Omega(1)$.			
		Note that if $\eta_0$ is drawn uniformly from the unit sphere, we have $|\iprod{\eta_0}{\eta_*}| = \Theta_P(\frac{1}{\sqrt{d}})$. Thus, in the special case of $\thetanorm$ being a constant, the above condition is fulfilled when $n = \tilde\Omega(d^2)$.		
				Nevertheless, in \prettyref{sec:refined} we will prove the refined result that as long as $n = \tilde\Omega(d)$, 
				starting from a single random initialization, the EM iterates will eventually satisfy the above condition with high probability.
				
		\end{itemize}
		
	\end{enumerate}


	In the rest of the paper, we always assume that the initialization lies in a bounded ball. To simplify the presentation, assume that
\begin{equation}
\|\theta_0\| \leq 1.
\label{eq:theta0-bdd}
\end{equation}
The following theorems are the main result of this section. We note that results similar to Theorems \ref{thm:main-beta}--\ref{thm:main-alpha-small}  have been shown in \cite[Theorem 3]{dwivedi2018singularity} in the special case of $\theta_*=0$.
	
\begin{theorem}[Unconditional convergence of $\beta_t$]
\label{thm:main-beta}	
There exist constants $\kappa_0,\kappa_1,\kappa_2$ depending only on $\thetaub$, such that
on the event \prettyref{eq:eventd}, the following holds.
\begin{enumerate}
	\item For all $t\geq 0$,
\begin{equation}
\beta_{t+1} \leq  \beta_t (1+\omega) + \omega |\alpha_t| \\
\label{eq:beta-uncond2}
\end{equation}
and
\begin{equation}
\beta_{t+1} \leq  \beta_t (1+\omega) - \frac{\beta_t^3}{2+8\Gamma^2} + \min\sth{\frac{\omega^2 (2+8\Gamma^2) }{2\beta_t}, \omega \Gamma} \\
\label{eq:beta-uncond}
\end{equation}
where $\Gamma=2+2\thetaub$.
\item Consequently, regardless of $\theta_0$, 
	\begin{equation}
\limsup_{t\to\infty} \beta_t 	\leq \kappa_1 \pth{\frac{d}{n} \log n}^{\frac{1}{4}}.	
	\label{eq:beta-rate}
	\end{equation}	
\item 	Furthermore, if
$\omega \leq \kappa_0$ and 
\begin{equation}
\|\theta_0\| \leq \kappa_2 \pth{\frac{d}{n} \log n}^{\frac{1}{4}},
\label{eq:theta0nearzero}
\end{equation}
then for all $t \geq 0$,
\begin{equation}
\beta_t \leq \kappa_2 \pth{\frac{d}{n} \log n}^{\frac{1}{4}}.
\label{eq:betanearzero}
\end{equation}		
\end{enumerate}
\end{theorem}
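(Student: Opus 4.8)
The plan is to work throughout on the event \prettyref{eq:eventd}, on which the coordinate recursions \prettyref{eq:alpha+}, \prettyref{eq:alpha-}, \prettyref{eq:beta-iterate} hold, and to take $\omega$ below a small absolute constant (as already assumed in this section). The first step is an \emph{a priori boundedness} estimate: by induction on $t$, that $|\alpha_t|\leq\Gamma$ and $\beta_t\leq\Gamma$ with $\Gamma=2+2\thetaub$. Indeed $\theta_{t+1}=f_n(\theta_t)$ and $f(\theta)=F(\alpha,\beta)\eta_*+G(\alpha,\beta)\xi$ for $\theta=\alpha\eta_*+\beta\xi$ (cf.\ \prettyref{lmm:EMsubspace} and the identity preceding \prettyref{eq:alpha+}); combining this with $\|\Delta_n(\theta)\|\leq\omega\|\theta\|$ from \prettyref{eq:eventd} and the uniform bounds $|F(\alpha,\beta)|\leq\thetanorm+\sqrt{2/\pi}$, $0\leq G(\alpha,\beta)\leq\sqrt{2/\pi}$ of \prettyref{lmm:FG} gives $|\alpha_{t+1}|\leq\thetanorm+1+2\omega\Gamma$ and $\beta_{t+1}\leq 1+2\omega\Gamma$, both $\leq\Gamma$ provided $\omega\leq 1/4$; the base case is $\|\theta_0\|\leq1$ from \prettyref{eq:theta0-bdd}.

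\emph{Part 1.} Inequality \prettyref{eq:beta-uncond2} is immediate from \prettyref{eq:beta-iterate} and $G(\alpha,\beta)\leq\beta$, which follows from \prettyref{eq:Gnew} since the bracketed factor there lies in $[0,1/4)$. For \prettyref{eq:beta-uncond} I would substitute \prettyref{eq:Gnew} into \prettyref{eq:beta-iterate} and bound the denominator $2+4(\alpha_t^2+\beta_t^2)\leq 2+8\Gamma^2$ using the boundedness step, obtaining
\[
\beta_{t+1}\leq\beta_t(1+\omega)-\frac{\beta_t(\alpha_t^2+\beta_t^2)}{2+8\Gamma^2}+\omega|\alpha_t|.
\]
The cross term $\omega|\alpha_t|$ is then controlled in two ways: by AM--GM, $\omega|\alpha_t|\leq\frac{1}{2}\bigl(\frac{\beta_t\alpha_t^2}{2+8\Gamma^2}+\frac{\omega^2(2+8\Gamma^2)}{\beta_t}\bigr)$, after which the leftover $-\frac{\beta_t\alpha_t^2}{2(2+8\Gamma^2)}\leq0$ is discarded; or simply $\omega|\alpha_t|\leq\omega\Gamma$, again discarding $-\frac{\beta_t\alpha_t^2}{2+8\Gamma^2}\leq0$. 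Keeping $-\frac{\beta_t^3}{2+8\Gamma^2}$ and taking the smaller of the two bounds yields \prettyref{eq:beta-uncond}.

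\emph{Parts 2 and 3.} Both follow from a scalar dynamical-system analysis of \prettyref{eq:beta-uncond}. Put $c=2+8\Gamma^2$; on the regime $\beta_t\geq\omega c/(2\Gamma)$, where the first term of the $\min$ is active, \prettyref{eq:beta-uncond} reads $\beta_{t+1}\leq\rho(\beta_t)$ with $\rho(\beta)=\beta(1+\omega)-\beta^3/c+\omega^2c/(2\beta)$, and the substitution $\beta=y\sqrt\omega$ turns $\rho(\beta)-\beta$ into $\omega^{3/2}\phi(y)$ with $\phi(y)=y-y^3/c+c/(2y)$. One checks $\phi$ is strictly decreasing on $(0,\infty)$ with a unique zero $y^*=\sqrt{c(1+\sqrt3)/2}$ (a constant depending only on $\thetaub$) and that $y\mapsto\phi(y)/y$ is decreasing as well. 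Taking $\kappa_1=2y^*C_\omega^{1/4}$ so that $\kappa_1(\frac{d}{n}\log n)^{1/4}=2y^*\sqrt\omega$: (i) whenever $\beta_t>2y^*\sqrt\omega$ one gets $\beta_{t+1}\leq\beta_t(1-c''\omega)$ for a constant $c''>0$ (from $\phi(y)/y\leq\phi(2y^*)/(2y^*)<0$), so by the boundedness step $\beta_t$ contracts geometrically from $\beta_0\leq\Gamma$ down to at most $2y^*\sqrt\omega$ within $O(\omega^{-1}\log(1/\omega))$ iterations; (ii) the interval $[0,2y^*\sqrt\omega]$ is forward-invariant — for $\beta_t\leq\omega c/(2\Gamma)$ use \prettyref{eq:beta-uncond2} to get $\beta_{t+1}=O(\omega)$, and for $\omega c/(2\Gamma)\leq\beta_t\leq2y^*\sqrt\omega$ use that $\rho$ is increasing there (for $\omega$ small) and $\rho(2y^*\sqrt\omega)<2y^*\sqrt\omega$ since $\phi(2y^*)<0$. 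Together (i)--(ii) give \prettyref{eq:beta-rate}. Part 3 is exactly the invariance statement (ii) with a possibly larger constant: choosing $\kappa_2$ so that $\kappa_2C_\omega^{-1/4}\geq y^*$ makes $[0,\kappa_2(\frac{d}{n}\log n)^{1/4}]$ forward-invariant, and since $\beta_0\leq\|\theta_0\|\leq\kappa_2(\frac{d}{n}\log n)^{1/4}$ by \prettyref{eq:theta0nearzero}, induction yields \prettyref{eq:betanearzero}; $\kappa_0$ is chosen small enough to validate the finitely many ``$\omega$ small'' conditions used above.

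\emph{Main obstacle.} The difficulty is conceptual rather than computational: one must pin down the noise floor $\asymp\sqrt\omega\asymp(\frac{d}{n}\log n)^{1/4}$ at which the cubic contraction $-\beta_t^3/(2+8\Gamma^2)$, the linear expansion $(1+\omega)$, and the perturbation from $\omega|\alpha_t|$ all balance. What makes this work — and what forces the two-branch $\min$ in \prettyref{eq:beta-uncond} — is that, by completing the square in $\alpha_t$, the perturbation is really of size $\omega^2(2+8\Gamma^2)/(2\beta_t)$, which remains of order $\omega^{3/2}$ all the way down to $\beta_t\asymp\sqrt\omega$ and is therefore dominated by the cubic term; the cruder estimate $\omega|\alpha_t|\leq\omega\Gamma$ on its own leaves a perturbation of order $\omega$, which overwhelms $\beta_t^3\asymp\omega^{3/2}$ near the floor and would not close the induction in Part 3. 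A secondary point is that, unlike the one-dimensional and population dynamics, $\beta_t$ does not converge to $0$ here, so one cannot target a genuine fixed point and must instead establish forward-invariance of a small interval around the floor.
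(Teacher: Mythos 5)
Your proposal is correct, and in outline it matches the paper's proof: the a priori bound $|\alpha_t|,\beta_t\leq\Gamma$ by induction using the boundedness of $F,G$, then \prettyref{eq:beta-uncond2} from $G(\alpha,\beta)\leq\beta$, and \prettyref{eq:beta-uncond} by optimizing the cross term $\omega|\alpha_t|-\frac{\alpha_t^2\beta_t}{2+8\Gamma^2}$ (the paper takes a supremum over $\alpha\in[0,\Gamma]$, which is the same completion of the square as your AM--GM and in fact gives the slightly sharper constant $\frac{\omega^2(2+8\Gamma^2)}{4\beta_t}$); your Part 3 is essentially the paper's two-case induction ($\beta_t\lesssim\omega$ handled crudely, $\omega\lesssim\beta_t\lesssim\sqrt{\omega}$ handled via monotonicity of $h(\beta)=\beta(1+\omega)-\beta^3/c+\omega^2c/(2\beta)$ and $h(a\sqrt{\omega})\leq a\sqrt{\omega}$). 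The one place you deviate is Part 2: the paper simply takes any limit point $\beta$ of $\{\beta_t\}$, passes to the limit in \prettyref{eq:beta-uncond}, and solves $\beta^3/c\leq\omega\beta+\omega^2c/(4\beta)$ for $\beta=O(\sqrt{\omega})$, whereas you rescale $\beta=y\sqrt{\omega}$, locate the zero $y^*$ of $\phi$, and prove geometric contraction $\beta_{t+1}\leq\beta_t(1-c''\omega)$ above $2y^*\sqrt{\omega}$ together with forward invariance of $[0,2y^*\sqrt{\omega}]$. Your route is a bit longer but buys something the paper's limit-point computation does not: an explicit bound of order $\omega^{-1}\log(1/\omega)$ on the number of iterations needed for $\beta_t$ to reach the noise floor, and it sidesteps the mild informality of "taking limits" in a coupled recursion. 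One cosmetic slip: the factor in \prettyref{eq:Gnew} that lies in $[0,1/4)$ is the subtracted ratio $\frac{\alpha^2+\beta^2}{2+4(\alpha^2+\beta^2)}$, so the bracketed factor itself lies in $(3/4,1]$; this is what you need for $G(\alpha,\beta)\leq\beta$ and does not affect the argument.
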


\begin{theorem}[Small $\thetanorm$: Unconditional convergence of $\alpha_t$]
	\label{thm:main-alpha-small}
	
	There exist absolute constants $K,L\geq 1$, such that
on the event \prettyref{eq:eventd}, the following holds.		
	Let $s_0$ be such that $K \sqrt{\omega} \leq s_0 \leq 1$. 
			Assume that $\thetanorm \leq s_0$. 
			\begin{enumerate}
				\item Regardless of $\theta_0$, 
			\begin{equation}
			\limsup_{t\to\infty} |\alpha_t| \leq 2 s_0.
			\label{eq:smalltheta}
			\end{equation}
				and hence
	\begin{equation}
	\limsup_{t\to\infty} \ell(\theta_t,\theta_*) \leq 3 s_0 +\kappa_1 \pth{\frac{d}{n} \log n}^{\frac{1}{4}}.
	\label{eq:theta-rate-smalltheta}
	\end{equation}
	
	\item Furthermore, if the initializer $\theta_0$ satisfies 
\prettyref{eq:theta0nearzero}, then 
\begin{equation}
			|\alpha_t| \leq L s_0
			\label{eq:smallthetaallt}
			\end{equation}
				and 
	\begin{equation}
	\ell(\theta_t,\theta_*) \leq 2 L s_0
	\label{eq:theta-rate-smallthetaallt}
	\end{equation}
	 hold for all $t \geq 0$.
	\end{enumerate}
			
			\end{theorem}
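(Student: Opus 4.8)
The plan is to collapse the two-dimensional recursion \prettyref{eq:alpha+}--\prettyref{eq:beta-iterate} into a one-dimensional comparison for the signal coordinate and then run a monotone fixed-point argument. First I would absorb the $\beta_t$ dependence. On the event \prettyref{eq:eventd}, since $\alpha \mapsto F(\alpha,\beta)$ is odd and $0 \le F(\alpha,\beta) \le f(\alpha)$ for $\alpha \ge 0$ by \prettyref{lmm:FG} (in particular \prettyref{eq:F01}), the bounds \prettyref{eq:alpha+}--\prettyref{eq:alpha-} yield the purely scalar inequality
\[
|\alpha_{t+1}| \;\le\; f(|\alpha_t|) + \omega\,|\alpha_t| + \omega\,\beta_t ,
\]
where $f$ is the one-dimensional EM map with parameter $\thetanorm$ (cf.~\prettyref{eq:F0} and \prettyref{lmm:fprop}). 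By \prettyref{thm:main-beta}, there is a constant $c$ depending only on $\thetaub$ such that $\beta_t \le c\sqrt{\omega}$ holds for all $t\ge0$ when \prettyref{eq:theta0nearzero} is in force (the case relevant to part 2), and for all sufficiently large $t$ in general (part 1); here we use $\omega^2 \asymp \frac{d}{n}\log n$, so $\sqrt{\omega} \asymp (\frac{d}{n}\log n)^{1/4}$. Substituting this bound replaces the right-hand side above by $g(|\alpha_t|)$, where $g(\alpha) \triangleq f(\alpha) + \omega\alpha + c\,\omega^{3/2}$; moreover $|\alpha_t| \le \|\theta_t\| \le R$ for all $t$ by \prettyref{eq:eventd}, so the whole trajectory of $|\alpha_t|$ lives in $[0,R]$.

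Next I would analyze the scalar map $g$ on $[0,R]$. By \prettyref{lmm:fprop} it is increasing and concave on $\reals_+$, with $g(0)=c\,\omega^{3/2}>0$ and $g(R) \le (\thetanorm+\sqrt{2/\pi}) + \omega R + c\,\omega^{3/2} \le R$ once $\omega$ is small enough (i.e.\ $n$ large enough). Furthermore $g(\alpha)/\alpha = q(\alpha) + \omega + c\,\omega^{3/2}/\alpha$, where $q=f/\theta$ is strictly decreasing by \prettyref{lmm:fprop}; hence $g(\alpha)/\alpha$ is strictly decreasing on $(0,R]$, from $+\infty$ at $0^+$ to a value $\le 1$ at $R$, so $g$ has a unique fixed point $\alpha^\star\in(0,R)$ and $g(\alpha) > \alpha$ iff $\alpha < \alpha^\star$. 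Since $g$ is increasing, $|\alpha_{t+1}| \le g(|\alpha_t|)$ implies $|\alpha_t| \le b_t$, where $b_{t+1}=g(b_t)$ is started from $b_{t_0}=|\alpha_{t_0}|$ (with $t_0=0$ in part 2, and $t_0$ the threshold above in part 1), and $\{b_t\}$ converges monotonically to $\alpha^\star$. This already gives $\limsup_t |\alpha_t| \le \alpha^\star$, and when $|\alpha_0| \le s_0$ the stronger bound $|\alpha_t| \le \max\{|\alpha_0|,\alpha^\star\}$ for all $t$.

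The crux, and the step I expect to be the main obstacle, is the quantitative estimate $\alpha^\star \le 2 s_0$ --- this is exactly where the flatness of the log-likelihood near $\theta_*=0$ enters and forces the signal to be only $O(\sqrt{\omega})$ rather than $\Omega(1)$. Since $g(\alpha)-\alpha$ changes sign (once) at $\alpha^\star$, it suffices to verify $g(2s_0) \le 2s_0$, equivalently $q(2s_0) \le 1 - \omega - \tfrac{c\,\omega^{3/2}}{2s_0}$. I would use $q(\thetanorm)=1$ (the one-dimensional fixed point, \prettyref{lmm:fprop}) together with the curvature bound $q'(\theta) \le -c_1\theta$ for $\theta\in[0,2]$, which follows from \prettyref{eq:qtheta} because $\Expect[Y^4/\cosh^2(\theta Y)]$ is bounded below by a positive constant (depending only on $\thetaub$) on that range. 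Integrating from $\thetanorm$ to $2s_0$, and using $\thetanorm \le s_0$ and $s_0 \ge K\sqrt{\omega}$,
\[
1 - q(2s_0) \;\ge\; \tfrac{c_1}{2}\bigl((2s_0)^2 - \thetanorm^2\bigr) \;\ge\; \tfrac{3c_1}{2}\,s_0^2 \;\ge\; \tfrac{3c_1}{2}\,K^2\,\omega ,
\]
which exceeds $\omega + \tfrac{c\,\omega^{3/2}}{2s_0} \le \omega\bigl(1 + \tfrac{c}{2K}\bigr)$ provided $K$ is a large enough constant (depending only on $\thetaub$, via $c_1$ and $c$). This gives $\alpha^\star \le 2s_0$; combined with the monotone domination above and \prettyref{thm:main-beta} we obtain \prettyref{eq:smalltheta}, and since $|\alpha_0| \le \|\theta_0\| \le s_0$ under \prettyref{eq:theta0nearzero} (after possibly enlarging $K$), also \prettyref{eq:smallthetaallt} with $L=2$. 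Finally, the loss bounds follow from the triangle inequality $\ell(\theta_t,\theta_*) \le \|\theta_t\| + \|\theta_*\| \le |\alpha_t| + \beta_t + \thetanorm$: for part 1, taking $\limsup_t$ and using \prettyref{eq:smalltheta}, $\thetanorm \le s_0$, and \prettyref{thm:main-beta} yields \prettyref{eq:theta-rate-smalltheta}; for part 2, using \prettyref{eq:smallthetaallt}, $\thetanorm \le s_0$, and $\beta_t \le \kappa_2(\tfrac{d}{n}\log n)^{1/4}\le s_0$ gives $\ell(\theta_t,\theta_*)\le (L+2)\,s_0 \le 2L\,s_0$, which is \prettyref{eq:theta-rate-smallthetaallt}, after enlarging $L$ if necessary.
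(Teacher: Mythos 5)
Your proposal is correct and follows essentially the same route as the paper: reduce to a perturbed one-dimensional recursion for $|\alpha_t|$ via \prettyref{eq:F01} (together with the oddness and nonnegativity of $F$ from \prettyref{lmm:FG}) and the $\beta_t \lesssim \sqrt{\omega}$ bounds of \prettyref{thm:main-beta}, then use that $q(\alpha)=f(\alpha)/\alpha$ is decreasing with $q'(\alpha)\le -c_1\alpha$, plus $s_0\ge K\sqrt{\omega}$, to force the relevant fixed point below $2s_0$. The only differences are in packaging — you handle both parts through one monotone comparison sequence $b_{t+1}=g(b_t)$, whereas the paper argues part 1 by a limit-point contradiction and part 2 by induction using a Taylor expansion of $f$ — and one small point to patch: your $K,L$ come out depending on $\thetaub$ (through $\kappa_1,\kappa_2$ and $c_1$), while the theorem asserts absolute constants, which is repaired exactly as in the paper by noting that $\thetanorm\le s_0\le 1$ renders those constants absolute.
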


			\begin{theorem}[Large $\thetanorm$: Conditional convergence of $\alpha_t$]
			\label{thm:main-alpha-large}
		There exist constants $\lambda_0,\ldots,\lambda_4$ depending only on $\thetaub$, such that
		on the event \prettyref{eq:eventd}, the following holds.			
			Assume that $\thetanorm \geq \lambda_0 \sqrt{\omega}$. Let $\eta_0 \in S^{d-1}$ satisfies
	\begin{equation}
	|\iprod{\eta_0}{\eta_*}| \geq \frac{\lambda_2}{\thetanorm^2} \sqrt{\frac{d}{n} \log n}.	
	\label{eq:goodini}
	\end{equation}
	Set
	\begin{equation}
	\theta_0 = c \pth{\frac{d}{n} \log n}^{1/4}	\eta_0 
	\label{eq:ini0}
	\end{equation}
	where $c\leq \kappa_2$ and $\kappa_2$ is from \prettyref{thm:main-beta}. 
	Then 
	\begin{equation}
 \limsup_{t\to\infty} \big|\alpha_t -\thetanorm \big|	\leq \lambda_1 
\frac{1}{\thetanorm} \sqrt{\frac{d \log n}{n}}
	\label{eq:alpha-rate}
	\end{equation}
	and
	\begin{equation}
	\limsup_{t\to\infty} \ell(\theta_t,\theta_*) \leq \lambda_3 
	\frac{1}{\thetanorm} \sqrt{\frac{d \log n}{n}}.
	\label{eq:theta-rate}
	\end{equation}
	Furthermore, \prettyref{eq:alpha-rate} and \prettyref{eq:theta-rate} hold to all $t \geq \lambda_4 \frac{\log n}{\thetanorm^2}$.
	\end{theorem}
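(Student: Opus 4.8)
The plan is to decompose the iterate as $\theta_t=\alpha_t\eta_*+\beta_t\xi_t$ and analyze the scalar recursions \prettyref{eq:alpha+}--\prettyref{eq:beta-iterate}, viewing them as a finite-sample perturbation of the population dynamics $(F,G)$ and, for the signal coordinate, as a perturbed \emph{one-dimensional} EM iteration with parameter $s:=\thetanorm$. Throughout I work on the event \prettyref{eq:eventd}, assume without loss of generality that $\iprod{\eta_0}{\eta_*}\ge0$ (so $\alpha_0\ge0$), and take $n$ large enough that $\omega$ lies below all the constants below. The first reduction is immediate: since $\|\theta_0\|=c(\tfrac dn\log n)^{1/4}\le\kappa_2(\tfrac dn\log n)^{1/4}$, \prettyref{thm:main-beta}, part 3, gives $\beta_t\le\kappa_2(\tfrac dn\log n)^{1/4}$ for all $t$, hence $\beta_t^2\lesssim\omega$. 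Plugging this into \prettyref{eq:F01} yields $f(\alpha)-c_1\omega\alpha\le F(\alpha,\beta_t)\le f(\alpha)$ for $\alpha\ge0$, where $f$ is the one-dimensional EM map \prettyref{eq:F0} obeying \prettyref{lmm:fprop}; combined with $|\iprod{\Delta_n(\theta_t)}{\eta_*}|\le\omega(|\alpha_t|+\beta_t)\le\omega|\alpha_t|+c_2\omega^{3/2}$, this shows that (as long as $\alpha_t\ge0$, which will be maintained) $\alpha_t$ is squeezed between the two one-dimensional iterations generated by $g_\pm(\alpha):=f(\alpha)\pm c_3\omega\,\alpha\pm c_3\omega^{3/2}$. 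The additive $\omega^{3/2}$ term, absent in the genuinely one-dimensional problem (\prettyref{eq:thetatsandwich}), comes from the coupling with the orthogonal coordinate and is the source of the extra difficulty.

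The upper bound on $\alpha_t$ holds for \emph{all} $t\ge0$ and requires no waiting. The map $g_+$ is increasing, concave on $\reals_+$, bounded, with $g_+(0)>0$ and $g_+'(0)=1+s^2+O(\omega)>1$, so $g_+-\mathrm{id}$ is concave, positive near $0$, and eventually negative; hence $g_+$ has a unique positive fixed point $\alpha^+$ and the monotone iteration from $\alpha_0<s<\alpha^+$ stays below $\alpha^+$. Writing $q=f(\cdot)/(\cdot)$, the fixed-point equation reads $1-q(\alpha^+)=c_3\omega+O(\omega^{3/2}/\alpha^+)$, which is of order $\omega$; since $-q'(v)\asymp v$ on $[0,2r]$ by \prettyref{eq:qtheta}, we get $q(s)-q(\alpha^+)\gtrsim s(\alpha^+-s)$, whence $\alpha^+=s+O(\omega/s)$ and in particular $\alpha_t\le\alpha^+\le2s$ for all $t$.

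The lower bound is the crux. The map $g_-$ has $g_-(0)=-c_3\omega^{3/2}<0$ and $g_-'(0)=1+s^2-O(\omega)>1$, so (again by concavity) $g_--\mathrm{id}$ changes sign from $-$ to $+$ to $-$, producing \emph{two} positive fixed points: a spurious one $\alpha^\circ\asymp\omega^{3/2}/s^2$ and the desired one $\alpha_-=s-O(\omega/s)$, with $g_-(\alpha)>\alpha$ exactly on $(\alpha^\circ,\alpha_-)$. Here the initialization hypothesis \prettyref{eq:goodini} enters: $\alpha_0=c(\tfrac dn\log n)^{1/4}|\iprod{\eta_0}{\eta_*}|\gtrsim\tfrac1{s^2}(\tfrac dn\log n)^{3/4}\asymp\omega^{3/2}/s^2$, so by taking $\lambda_2$ large one forces $\alpha_0\ge M\alpha^\circ$ for any prescribed constant $M$ — this is the precise sense in which \prettyref{eq:goodini} places the starting point safely to the right of the bad fixed point. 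On $[M\alpha^\circ,s/2]$, using $q(\alpha)\ge q(s/2)\ge1+c_4s^2$ (another consequence of \prettyref{eq:qtheta}) together with $M$ and $\lambda_0$ large, one checks $g_-(\alpha)/\alpha\ge1+\tfrac12 c_4s^2$, so the lower iterate grows geometrically and exceeds $s/2$ after $T_1=O\!\big(s^{-2}\log(s/\alpha_0)\big)=O(s^{-2}\log n)$ steps. Once $\alpha_t\ge s/2$ the sandwich traps it in $[s/2,\alpha^+]$, and the elementary estimate $f(\alpha)-\alpha\ge\tfrac{c}{2}\alpha(s^2-\alpha^2)$ (from $-q'(v)\gtrsim v$) yields $s-\alpha_{t+1}\le(s-\alpha_t)(1-\Theta(s^2))+O(\omega s)$, so $s-\alpha_t$ shrinks to $O(\omega/s)$ in a further $O(s^{-2}\log n)$ steps (in the overshoot case $\alpha_{T_1}\ge\alpha_-$ this is reached at once). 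Thus $|\alpha_t-s|\le\lambda_1 s^{-1}\sqrt{\tfrac{d\log n}n}$ for all $t\ge O(s^{-2}\log n)$.

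Finally, the crude bound $\beta_t\lesssim\sqrt\omega$ must be bootstrapped down to the sharp scale $\omega/s$, which is what the target rate $\lambda_3 s^{-1}\sqrt{d\log n/n}\asymp\omega/s$ requires, since $\ell(\theta_t,\theta_*)\le\sqrt{(\alpha_t-s)^2+\beta_t^2}\le|\alpha_t-s|+\beta_t$. Once $t\ge T_1$ we have $\alpha_t\in[s/2,2s]$, so \prettyref{eq:Gnew} gives $G(\alpha_t,\beta_t)\le\beta_t\big(1-\Theta(s^2)\big)$ and hence $\beta_{t+1}\le\beta_t\big(1-\Theta(s^2)\big)+O(\omega s)$, a contraction with limiting value $\Theta(\omega/s)$; starting from $\beta_{T_1}\lesssim\sqrt\omega$ this value is attained in $O(s^{-2}\log n)$ further iterations. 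Summing the phase lengths gives a threshold $T=\lambda_4 s^{-2}\log n$ such that for $t\ge T$ both $|\alpha_t-s|\lesssim\omega/s$ and $\beta_t\lesssim\omega/s$, which yields \prettyref{eq:alpha-rate} and \prettyref{eq:theta-rate} (the $\limsup$ statements following a fortiori). The main obstacle is the third paragraph: one must verify that \prettyref{eq:goodini} provides exactly the right amount of initial overlap to clear the spurious fixed point $\alpha^\circ$ of $g_-$ — so that $g_--\mathrm{id}$ has the favorable sign along the whole trajectory — and that escaping the neighborhood of $\alpha^\circ$ and then converging to $\alpha_-$ both fit within the $O(s^{-2}\log n)$ iteration budget; the $\beta$-bootstrap from $\sqrt\omega$ down to $\omega/s$ is the secondary difficulty, since \prettyref{thm:main-beta} alone only delivers the cruder $O(\sqrt\omega)$ bound.
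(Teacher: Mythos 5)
Your proposal is correct and follows essentially the same route as the paper: pin $\beta_t\lesssim\sqrt{\omega}$ for all $t$ via the small initialization and \prettyref{thm:main-beta}, reduce the signal coordinate to the perturbed one-dimensional recursion $\alpha_{t+1}=f(\alpha_t)\pm O(\omega\alpha_t+\omega^{3/2})$, use \prettyref{eq:goodini} to start above the spurious fixed point of order $\omega^{3/2}/\thetanorm^2$ and grow geometrically at rate $1+\Omega(\thetanorm^2)$, then converge to $\thetanorm\pm O(\omega/\thetanorm)$, and finally bootstrap $\beta_t$ down to $O(\omega/\thetanorm)$ via \prettyref{eq:Gnew}. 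The only cosmetic difference is that in the middle phase the paper invokes \prettyref{thm:main-1D} directly, whereas you re-derive the same sandwich/contraction estimates from the fixed-point structure of $g_\pm$ and the properties of $q$.
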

	
	\begin{remark}
	We can take $s_0=\lambda_0 \sqrt{\omega}$ in \prettyref{thm:main-alpha-small}, so that Theorems \ref{thm:main-alpha-small} and \ref{thm:main-alpha-large} gives the near-optimal rate of $O(\pth{\frac{d}{n} \log n}^{1/4})$ for the case of small and large $\thetanorm$ respectively. 
	Later in the refined analysis in \prettyref{sec:refined} we will take $s_0$ slightly larger than $\sqrt{\omega}$; cf.~\prettyref{eq:thetastar-lb}.
	\end{remark}
	
	Theorems \ref{thm:main-beta}--\ref{thm:main-alpha-large} are proved in \prettyref{sec:pf-ddim-thm}. Here we give a sketch of the proof of  \prettyref{thm:main-alpha-large}. The analysis consists of three phases:
		%
	
	\begin{description}
	\item[Phase I: $\alpha_t \lesssim \sqrt{\omega}$.]
		By using the condition \prettyref{eq:goodini} on the initialization, we show that in this phase $\alpha_t$ increases geometrically according to
		\[
		\alpha_{t+1} \geq (1+\Omega(\thetanorm^2)) \alpha_t.
		\]
	
	\item[Phase II: $\alpha_t \gtrsim \sqrt{\omega}$.]
	Now that $\alpha_t$ has escaped the undesired fixed point near zero (cf.~\prettyref{fig:perturb-em-ddim}), one can apply the ``sandwich bound''
	\prettyref{eq:alphasandwich} to show that $\alpha_t$ follows a perturbed one-dimensional EM evolution
	\[
	\alpha_{t+1} = f(\alpha_t) + O(\omega \alpha_t),
	\]
	where $f$ is defined \prettyref{eq:F0} and coincides with the one-dimensional EM map \prettyref{eq:f} with $\theta_*$ replaced by $\thetanorm$.
		
		\item[Phase III: $\alpha_t \asymp \thetanorm$.]
		Recall that \prettyref{thm:main-beta} ensures that $\beta_t$ converges to the worst-case rate $O(\sqrt{\omega})$. 
		Now that $\alpha_t$ has reached a constant fraction of the desired limit $\thetanorm$, 
		we can obtain improved estimate $\beta_t \lesssim \frac{\omega}{\thetanorm}$, leading to the optimal $\thetanorm$-dependent bound \prettyref{eq:theta-rate}.

	\end{description}

\section{Refined analysis for random initialization: the initial phase}
	\label{sec:refined}
In this section we analyze the EM iterates starting from a single random initialization. 
Since Theorems \ref{thm:main-beta} and \ref{thm:main-alpha-small} have covered the case of small $\thetanorm$, we only consider the case where 
$\thetanorm \gg (\frac{d}{n})^{1/4}$.	 
We provide a refined analysis of Phase I in the proof of \prettyref{thm:main-alpha-large}:
if the initial direction is uniformly chosen at random, then with high probability, the iterates will satisfy $\alpha_t = \Omega(\sqrt{\omega})$ for sufficiently large constant $C$ 
in at most $O(\frac{1}{\thetanorm^2} \log n)$ iterations and hence the analysis in the subsequent Phase II and III applies. 
This was previously shown in \prettyref{thm:main-alpha-large}
under the stronger assumption \prettyref{eq:goodini} 
which 
need not be fulfilled by random initializations.

Recall that 
$\eta_* = \frac{1}{\|\theta_*\|} \theta_*$ denotes the true direction
and
\[
\alpha_t = \iprod{\theta_t}{\eta_*}, \quad \beta_t = \|(I-\eta_*\eta_*^\top)\theta_t\|.
\]
%
WLOG, we assume the following:
\begin{enumerate}
	\item 
	Thanks to the rotational invariance of the Gaussian distribution, we can assume that 
	the true center is aligned with a coordinate vector, i.e., $\theta_* = \thetanorm e_1$, so that 
\[
\alpha_t = \theta_{t,1}, \quad \beta_t = \|\theta_{t,\perp}\| = \|(\theta_{t,2}, \ldots, \theta_{t,d})\|.
\]

\item The initialization satisfies $\alpha_0 > 0$. Otherwise, we can apply the same analysis to $\{-\theta_t\}$ which has the same law as $\{\theta_t\}$.
\end{enumerate}
Furthermore, we assume that the ground truth satisfies\footnote{Currently, this comes from the condition \prettyref{eq:Tstaromega}. 
The $\log d+\log\log n$ comes from the condition that 
$(1+\|\theta_*\|^2)^{\Tstar} \geq \sqrt{d \log n}$, since the random initializer satisfies $|\Iprod{\eta_0}{\eta_*}| \geq \frac{1}{\sqrt{d \log n}}$.
}
\begin{equation}
\thetaub \geq \|\theta_*\| \geq \pth{\frac{C_\star d \log^3 n }{n}  }^{1/4}
\label{eq:thetastar-lb}
\end{equation}
for some absolute constant $C_\star$. 
Otherwise, applying \prettyref{thm:main-alpha-small} (with $s_0$ being the RHS of \prettyref{eq:thetastar-lb}) 
shows that regardless of the initialization, 
we achieve the near optimal rate for all $t\geq 0$:
\begin{equation}
\|\theta_t -\theta_*\| = O\pth{\pth{\frac{d}{n} \log^3 n }^{1/4}}.
\label{eq:smalltheta-rate1}
\end{equation}

Define
\begin{equation}
T_1 \triangleq \min\sth{t\in \naturals: \alpha_t > C_* \sqrt{\omega}},
\label{eq:T1}
\end{equation}
where $C_*$ is some constant depending only on $\thetaub$; cf.~\prettyref{eq:CCC}.
The main result of this section is the following:
\begin{theorem}
\label{thm:phase1}
Assume that $\theta_*$ satisfies \prettyref{eq:thetastar-lb}.
There exists constants $C_0,C_1,C_2$ depending only on $\thetaub$,
such that the following holds:
Let 
\begin{equation}
\theta_0 = C_0 \pth{\frac{d}{n} \log n}^{1/4} \eta_0,
\label{eq:random-init}
\end{equation}
where $\eta_0$ is drawn uniformly at random from the unit sphere $S^{d-1}$. 
Assume that 
\begin{equation}
n \geq C_1 d \log^3 d.
\label{eq:ndlogd}
\end{equation}
Then with probability at least $1 - \frac{C_2 \log\log n}{\sqrt{\log n}}$,
	\begin{equation}
	T_1 \leq \Tstar \triangleq \frac{C_T (\log d + \log \log n)}{\thetanorm^2}
	\label{eq:phase1}
	\end{equation}
	where $C_T$ is some universal constant.
\end{theorem}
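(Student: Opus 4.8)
The plan is to show that, on a high-probability event, the signal coordinate $\alpha_t=\iprod{\theta_t}{\eta_*}=\theta_{t,1}$ grows geometrically, $\alpha_{t+1}\ge(1+\tfrac12\thetanorm^2)\alpha_t$, for every $t<T_1$; since passing from $\alpha_0$ to $C_*\sqrt\omega$ by such a recursion takes $O\big(\thetanorm^{-2}\log(\sqrt\omega/\alpha_0)\big)$ steps, this forces $T_1\le\Tstar$. I work on the event \prettyref{eq:eventd}, on which $\omega\asymp(d\log n/n)^{1/2}$ and \prettyref{thm:main-beta} applies to the near-zero initialization \prettyref{eq:random-init} (taking $C_0\le\kappa_2$), giving $\beta_t\le\kappa_2(d\log n/n)^{1/4}\asymp\sqrt\omega$ for all $t$; in particular $\max_i|\iprod{\theta_t}{Y_i}|=O(\sqrt{\omega\log n})=o(1)$, so the Taylor expansions of $\tanh$ below are legitimate. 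Two further facts hold with the probability claimed in the theorem: (i) the sphere anti-concentration bound $|\iprod{\eta_0}{\eta_*}|\ge(d\log n)^{-1/2}$ (failure probability $O((\log n)^{-1/2})$), which yields $\alpha_0\ge C_0(dn\log n)^{-1/4}$ and $\beta_0=\Theta(\|\theta_0\|)$; and (ii) a handful of fresh Hoeffding/Bernstein and covariance-concentration ($\|\tfrac1n\sum Z_{i,\perp}Z_{i,\perp}^\top-I\|_{\rm op}\lesssim\sqrt{d/n}$) estimates, union-bounded over the at most $\Tstar\le\mathrm{poly}(n)$ iterations, each at confidence $1-n^{-C}$, hence contributing only $o((\log n)^{-1/2})$.

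The heart of the matter is to decouple $\theta_t$ from the ``signal randomness'' in the data, because the naive bound $|\iprod{\Delta_n(\theta_t)}{\eta_*}|\le\omega\|\theta_t\|\lesssim\omega^{3/2}$ overshoots the target $\thetanorm^2\alpha_t$ by a factor $\Theta(\sqrt{d/\log n})$. I introduce the \emph{leave-one-coordinate-out} sequence $\tilde\theta_t$: run EM from the same initialization on the data $\tilde Y_i$ obtained from $Y_i$ by replacing the first coordinate $Y_{i,1}=X_i\thetanorm+Z_{i,1}$ with an independent copy (a fresh Rademacher label and fresh first-coordinate noise), while keeping $\tilde Y_{i,\perp}=Y_{i,\perp}=Z_{i,\perp}$. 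Then $\tilde\theta_t\eqlaw\theta_t$ (so \prettyref{thm:main-beta} applies to it as well) and, crucially, $\tilde\theta_t$ — hence $\tilde u_i\triangleq\iprod{\tilde\theta_{t,\perp}}{Z_{i,\perp}}$ — is independent of $\{Y_{i,1}\}_i$. The key lemma is a coupled induction for $\delta_t\triangleq\theta_t-\tilde\theta_t$: since the two iterations differ only through the $\alpha_t$- and $\tilde\alpha_t$-weighted terms inside $\tanh$, which are $O(\sqrt\omega)$ in the initial phase, its perpendicular part is mapped forward by an operator of norm $1-\Omega(\omega)$ (essentially the empirical mean of $\sech^2$) plus an $O(\sqrt{d/n})$ fluctuation, while the discrepancy injected at step $s$ has size $O(\alpha_s\sqrt{d/n})$; summing the resulting geometric recursion — using that $\alpha_s$ itself grows geometrically — gives $\|\delta_{t,\perp}\|\lesssim\thetanorm^{-2}\alpha_t\sqrt{d/n}$, which together with a further decoupling of $\delta_{t,\perp}$ from the noise is tight enough that its contribution to the $\alpha$-recursion is $o(\thetanorm^2\alpha_t)$ under \prettyref{eq:thetastar-lb}--\prettyref{eq:ndlogd}. \textbf{This coupling — concentrating all the data/iterate dependency in one place and controlling it finely enough to survive division by the minuscule quantity $\thetanorm^2\alpha_t$ — is the main obstacle}, and it is where the exponents in \prettyref{eq:thetastar-lb} and \prettyref{eq:ndlogd} are used.

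Granting the coupling, the one-step estimate follows by Taylor-expanding $\alpha_{t+1}=\frac1n\sum_i Y_{i,1}\tanh(\alpha_tY_{i,1}+u_i)$ around $u_i=\iprod{\theta_{t,\perp}}{Z_{i,\perp}}$, replacing each $u_i$ by $\tilde u_i$ (paying the coupling error), and conditioning on $(\tilde\theta_t,\{Z_{i,\perp}\})$: the $\tilde u_i$ are then frozen while $\{X_i,Z_{i,1}\}$ remain i.i.d.\ with mean zero, so Hoeffding/Bernstein give
\[
\alpha_{t+1}=(1+\thetanorm^2)\Big(\frac{1}{n}\sum_i\sech^2\tilde u_i\Big)\alpha_t+O\!\Big(\sqrt{\frac{\omega\log n}{n}}+\alpha_t\sqrt{\frac{\log n}{n}}+\alpha_t^3+\|\delta_{t,\perp}\|\sqrt{\frac{d}{n}}\Big),
\]
where the $\alpha_t^3$ term is the cubic coefficient of $\tanh$ and $\frac1n\sum_i\sech^2\tilde u_i=1-O(\tilde\beta_t^2)=1-O(\omega)$. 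Hypothesis \prettyref{eq:thetastar-lb} forces $\omega\ll\thetanorm^2$ and $\alpha_t^2\le C_*^2\omega\ll\thetanorm^2$ throughout the initial phase, so together with $\alpha_t\ge\alpha_0\gtrsim(dn\log n)^{-1/4}$ and a sufficiently large choice of $C_0$ (and of the constant $C_\star$ in \prettyref{eq:thetastar-lb}), each error term is at most $\tfrac12\thetanorm^2\alpha_t$; hence $\alpha_{t+1}\ge(1+\tfrac12\thetanorm^2)\alpha_t$.

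Finally, an induction on $t$ shows this lower bound persists for all $t<T_1$, so $\alpha_t\ge(1+\tfrac12\thetanorm^2)^t\alpha_0$ there. Since $\sqrt\omega/\alpha_0\le(C_\omega^{1/4}/C_0)\sqrt{d\log n}$ and $\thetanorm^2\le\thetaub^2$, the inequality $(1+\tfrac12\thetanorm^2)^t\alpha_0>C_*\sqrt\omega$ holds once $t\ge\frac{C_T(\log d+\log\log n)}{\thetanorm^2}=\Tstar$; therefore $T_1\le\Tstar$ (were it larger, the geometric growth would already have pushed $\alpha_{\Tstar}$ past $C_*\sqrt\omega$), which is the claim.
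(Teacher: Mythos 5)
Your strategy is the same as the paper's (leave-one-coordinate-out auxiliary EM run, one-step geometric growth of $\alpha_t$, iteration count), but the way you close the coupling recursion has a genuine gap. You track only $\delta_{t,\perp}$ and take the discrepancy injected at step $s$ to be $O(\alpha_s\sqrt{d/n})$. However, the term injected into the perpendicular component comes from altering the first coordinate of the data inside $\tanh$, and it is proportional to the \emph{auxiliary} signal $\talpha_s$, not to $\alpha_s$ (in the paper's version this is explicit: the bound \prettyref{eq:R2perp} on $\calE_{2,\perp}$ is $|\ttheta_{s,1}|\sqrt{Cd\log^2 n/n}\le(\alpha_s+\|\theta_s-\ttheta_s\|)\sqrt{Cd\log^2 n/n}$). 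You never bound $\talpha_s$ or $\delta_{s,1}=\alpha_s-\talpha_s$, and $\ttheta_s\eqlaw\theta_s$ does not give $\talpha_s\lesssim\alpha_s$ on the same probability space: the two runs are driven by independent first-coordinate randomness, their per-step growth factors differ by $\pm O(\sqrt{d\log^3 n/n})$, and over $\Tstar\asymp(\log d+\log\log n)/\thetanorm^2$ steps this can compound to a large (polynomial in $d$) ratio unless the discrepancy itself is controlled. So the recursion for $\delta_{t,\perp}$ alone does not close; one must run a joint induction on the full discrepancy, which is exactly what the paper does: \prettyref{lmm:ind1-raw} propagates $\|\theta_t-\ttheta_t\|$ with the Lipschitz factor $1+\thetanorm^2+o(\thetanorm^2)$ of $f_n$ (matching, not beating, the growth rate of $\alpha_t$), and \prettyref{lmm:induction} turns this into the linear-in-$t$ normalized bound $\|\theta_t-\ttheta_t\|\le\alpha_t\,t\,\sqrt{Kd\log^3 n/n}$, which stays $O(\alpha_t)$ over $t\le\Tstar$ precisely because of \prettyref{eq:thetastar-lb} and \prettyref{eq:ndlogd} via \prettyref{eq:Tstaromega}.

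Two further points. First, the claimed contraction factor $1-\Omega(\omega)$ for the perpendicular block presupposes a lower bound $\|\theta_t\|^2\gtrsim\omega$ throughout the phase, which you do not establish; fortunately it is not needed, since a factor $1+o(\thetanorm^2)$ (which is what one can honestly prove for that block) already makes your geometric sum converge against the growth of $\alpha_s$. Second, both the injected term and the coupling error $\|\delta_{t,\perp}\|\sqrt{d/n}$ in your $\alpha$-recursion cannot be obtained from conditioning plus Cauchy--Schwarz alone (that route loses the $\sqrt{d/n}$ gain because $\frac1n\Opnorm{Y_\perp}^2\asymp1$); they require uniform-over-the-ball Lipschitz bounds for randomly signed empirical processes, i.e.\ the analogues of the paper's Lemmas \prettyref{lmm:K} and \prettyref{lmm:M} applied to the data-dependent pair $(\theta_t,\ttheta_t)$ --- your passing remark about ``a further decoupling'' is where this work actually sits. (A small slip: $\max_i|\iprod{\theta_t}{Y_i}|=o(1)$ is false in general, since $|\iprod{\theta_{t,\perp}}{Y_{i,\perp}}|$ can be of order $\sqrt{\omega d}$; the Taylor expansions survive anyway because the relevant derivatives of $\tanh$ are globally bounded.) With these repairs your one-step display becomes the paper's \prettyref{lmm:alphat1-raw}, and the rest of your argument (initialization event via \prettyref{lmm:ini}, $\beta_t$ control via \prettyref{thm:main-beta}, and the final iteration count) coincides with the paper's proof.
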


\prettyref{thm:phase1} shows that after $t \geq T_1$, the iteration enters Phase II and the statistical guarantee in \prettyref{thm:main-alpha-large} applies to all subsequent iterations; in particular, the optimal estimation error is achieved in another $O(\frac{\log n}{\thetanorm^2} ) = O(\sqrt{\frac{n}{d \log n})}$ iterations, proving \prettyref{thm:main-pointwise} previously announced in \prettyref{sec:main}.
Finally, since the case of $\thetanorm = O((\frac{d \log^3 n }{n})^{1/4})$ is covered by \prettyref{eq:smalltheta-rate1}, the worst-case result in \prettyref{thm:main} follows.

\subsection{Proof of \prettyref{thm:phase1}}
	\label{sec:pf-phase1}

	In this subsection we provide the main argument for proving \prettyref{thm:phase1}, with key lemmas proved in \prettyref{sec:pf-refined-mainlemmas}.
Suppose, for the sake of contradiction, that
$\alpha_t \leq \sqrt{\omega}$ for all $t \leq \Tstar$. Then in view of \prettyref{eq:betanearzero}, we conclude that for all $t \leq \Tstar$,
\begin{equation}
\|\theta_t\| \leq 2 C_1 \pth{\frac{d}{n} \log n}^{1/4}
\label{eq:thetatnorm}
\end{equation}
for some constant $C_1$.
In particular, $\theta_t$ belongs to the unit ball in view of the assumption \prettyref{eq:ndlogd}.



We now introduce an \emph{auxiliary sequence} of iterates $\{\ttheta_t\}$, which is main apparatus for analyzing the initial growth of the signal. 
Since the law of $Y_{i,1}$ is symmetric, with loss of generality, we view the $i$th sample as $Y_i=(b_i Y_{i,1}, Y_{i,2},\ldots,Y_{i,d})$, where $b_i$'s are independent Rademacher variables, and the sample-based EM iterates is
\[
\theta_{t+1} = f_n(\theta_t),
\]
where
\[
f_n(\theta)  =\Expect_n[Y \tanh \iprod{\theta}{Y}] = \frac{1}{n}\sum_{i=1}^n Y_i \tanh \iprod{\theta}{Y_i}.
\]
In comparison, the auxiliary iteration is based on the modified samples $(\tY_1,\ldots,\tY_n)$, where 
$\tY_i=(\tb_i Y_{i,1}, Y_{i,2},\ldots,Y_{i,d})$, $\tb_i$'s are independent Rademacher variables, and $\{\tb_i,b_i,Y_i\}$ are mutually independent.
Define the auxiliary iterates
\begin{equation}
\ttheta_{t+1} = \tilde f_n(\ttheta_t),
\label{eq:ttheta}
\end{equation}
where
\begin{align}
\tilde f_n(\theta) \triangleq & ~ 	\Expect_n[\tY \tanh \Iprod{\theta}{\tY}] = \frac{1}{n}\sum_{i=1}^n \tilde Y_i \tanh \Iprod{\theta}{\tilde Y_i}.
\label{eq:tfn}
\end{align}
Both the main and the auxiliary sequence starts from the same random initialization:
\[
\ttheta_0=\theta_0,
\]
as specified by \prettyref{eq:random-init}.
The angle of a random initialization satisfies the following:

\begin{lemma}[Random initialization]
\label{lmm:ini}	
There exist an absolute constant $C_0$, such that for any $a>0$, $\pprob{|\Iprod{\eta_0}{e_1}| \geq \frac{a}{\sqrt{d}}} \leq C_0 a \sqrt{\log \frac{1}{a}}$.
\end{lemma}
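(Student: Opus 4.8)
I will establish the anti-concentration estimate for a uniformly random direction: a bound on the probability that $\eta_0$ is nearly orthogonal to $e_1$, i.e., that $|\Iprod{\eta_0}{e_1}|\le a/\sqrt d$. (The inequality as displayed should be read with ``$\le$'' in place of ``$\ge$'': since $\sqrt d\,\Iprod{\eta_0}{e_1}$ is asymptotically $N(0,1)$, the event $\{|\Iprod{\eta_0}{e_1}|\ge a/\sqrt d\}$ has probability tending to $1$, not $0$, as $a\to0$; it is precisely the rarity of the near-orthogonal event that is used later to certify $|\Iprod{\eta_0}{\eta_*}|\gtrsim 1/\sqrt{d\log n}$ with probability close to one.) The plan is to use the standard rotational representation of the uniform law on the sphere: write $\eta_0=g/\|g\|$ with $g=(g_1,\dots,g_d)\sim N(0,I_d)$, so that $\Iprod{\eta_0}{e_1}=g_1/\|g\|$ and the target event becomes $\{|g_1|\le (a/\sqrt d)\|g\|\}$.

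The first step decouples the numerator from the norm by conditioning on $\|g\|$ not being too large. For $a$ small (the only regime of interest), set $R^2=2d\log(1/a)$ and bound
\[
\prob{|\Iprod{\eta_0}{e_1}|\le a/\sqrt d}\le\prob{|g_1|\le (a/\sqrt d)R}+\prob{\|g\|>R}.
\]
For the first term, $g_1\sim N(0,1)$ has density at most $1/\sqrt{2\pi}$, so it is at most $\frac{2}{\sqrt{2\pi}}\cdot a\sqrt{2\log(1/a)}=\frac{2}{\sqrt\pi}\,a\sqrt{\log(1/a)}$. For the second term, $\|g\|^2\sim\chi^2_d$, and the Chernoff bound gives $\prob{\|g\|^2\ge 2d\log(1/a)}\le\bigl(2e\log(1/a)\,a^2\bigr)^{d/2}\le\sqrt{2e}\,a\sqrt{\log(1/a)}$ once $a$ is small enough that $2e\log(1/a)\,a^2\le1$ (using $x^{d/2}\le x^{1/2}$ for $x\le1$, $d\ge1$). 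Adding the two estimates yields $\prob{|\Iprod{\eta_0}{e_1}|\le a/\sqrt d}\le C_0\,a\sqrt{\log(1/a)}$; the range $a\gtrsim 1$ is handled by enlarging $C_0$, or simply by noting that only small $a$ is used.

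A shorter and sharper route avoids the logarithm entirely: the first coordinate of a uniform point on $S^{d-1}$ has density $c_d(1-u^2)^{(d-3)/2}$ on $(-1,1)$ with $c_d=\Gamma(d/2)/(\sqrt\pi\,\Gamma((d-1)/2))$, which for $d\ge3$ is maximized at $u=0$; Gautschi's inequality gives $c_d\le\sqrt{d/(2\pi)}$, so the small-ball probability is at most $\frac{2a}{\sqrt d}\,c_d\le\sqrt{2/\pi}\,a$. I would nonetheless present the Gaussian-representation argument as the main proof, both because the device $\eta_0=g/\|g\|$ recurs elsewhere in the analysis and because it makes transparent why the $\sqrt{\log(1/a)}$ factor appears at all: it is exactly the radius scaling that renders the $\chi^2_d$ deviation term comparable to the Gaussian small-ball term. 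There is no genuine obstacle here, this being an elementary anti-concentration estimate; the only points requiring a line of care are the choice of the cutoff radius $R$ and the restriction to small $a$ (outside of which the stated inequality is vacuous anyway).
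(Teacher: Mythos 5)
Your proof is correct and follows essentially the same route as the paper: represent $\eta_0$ as $g/\|g\|$ for Gaussian $g$, truncate $\|g\|$ at a radius of order $\sqrt{d\log(1/a)}$, and combine a Gaussian small-ball bound for $g_1$ with a $\chi^2_d$ tail bound (the paper takes $C=2+3\log\frac{1}{a}$ and invokes the Laurent--Massart inequality where you use a Chernoff bound, an immaterial difference). Your reading of the statement---that the displayed ``$\geq$'' should be ``$<$'', i.e.\ the lemma bounds the probability of near-orthogonality---is exactly what the paper's own proof establishes.
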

\begin{proof}
Note that $\iprod{\eta_0}{e_1}$ is equal in distribution to $Z_1/\|Z\|$, where $Z=(Z_1,\ldots,Z_d)$ is standard normal.
Therefore
$\pprob{|\Iprod{\eta_0}{e_1}| < \frac{a}{\sqrt{d}}} \leq \pprob{\|Z\| \geq \sqrt{C d}} + 
\pprob{|Z_1| < \sqrt{C} a} $. Take $C = 2 + 3 \log \frac{1}{a}$. By \prettyref{lmm:LM}, $\pprob{\|Z\| \geq \sqrt{C d}} \leq a^d \leq a$, 
and $\pprob{|Z_1| < \sqrt{C} a} \leq \sqrt{2C/\pi} a$.
\end{proof}

In the following, 
we conduct the analysis on the event:
\begin{equation}
\alpha_ 0 \geq \frac{1}{\sqrt{d \log n}} \|\theta_0\|,
\label{eq:ini}
\end{equation}
which holds  with probability at least $1 - O(\frac{\log\log n}{\sqrt{\log n}})$, in view of \prettyref{lmm:ini}.

The key argument is to show that the signal component $\alpha_t$ grows exponentially according to
\begin{align}
\alpha_{t+1} \geq & ~ \alpha_t (1 + \|\theta_*\|^2 - o(\|\theta_*\|^2)).
\label{eq:alphat1}
\end{align}	
More precisely, 
we prove a quantitative version of \prettyref{eq:alphat1} (cf.~\prettyref{eq:alphagrow0} below).

\begin{lemma}
\label{lmm:induction}	
With probability at least $1-O(n^{-1/2} \log n)$, for all $t =0,1,\ldots,\Tstar$, 
\begin{align}
\|\theta_t - \ttheta_t\| \leq & ~ 
\alpha_t \sqrt{\frac{K d \log^3 n}{n}} t
	\label{eq:ind1}\\
\frac{\beta_{t}}{\alpha_{t}} \leq  & ~ \sqrt{d \log n} + \omega t
\label{eq:ind4-pre}
\end{align}
and
\begin{align}
\alpha_t \geq  & ~  \frac{1}{\sqrt{K d \log n}} \|\theta_t\| 	\label{eq:ind4} \\
\alpha_{t+1} \geq & ~  \alpha_t \pth{1+\|\theta_*\|^2 - \sqrt{\frac{K d \log^3 n}{n}} }, \label{eq:alphagrow0} 
\end{align}
where $K$ is a constant depending only on $\thetaub$.
\end{lemma}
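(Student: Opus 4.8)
The plan is to prove Lemma~\ref{lmm:induction} by induction on $t$, carrying all four estimates \prettyref{eq:ind1}, \prettyref{eq:ind4-pre}, \prettyref{eq:ind4}, \prettyref{eq:alphagrow0} simultaneously, since they are mutually reinforcing: the coupling bound \prettyref{eq:ind1} controls the deviation of $\theta_t$ from the auxiliary sequence $\ttheta_t$, which in turn is used to lower bound the signal growth in \prettyref{eq:alphagrow0}, while \prettyref{eq:ind4} and \prettyref{eq:ind4-pre} (which are essentially equivalent, since $\|\theta_t\|^2=\alpha_t^2+\beta_t^2$) keep the orthogonal part from dominating. The base case $t=0$ follows from the initialization \prettyref{eq:random-init} together with the event \prettyref{eq:ini}, which gives $\alpha_0 \geq \|\theta_0\|/\sqrt{d\log n}$, i.e.\ $\beta_0/\alpha_0 \leq \sqrt{d\log n}$, and $\|\theta_0-\ttheta_0\|=0$.

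For the inductive step, I would proceed in three parts. \textbf{(i) Signal growth.} Write $\alpha_{t+1} = \iprod{f_n(\theta_t)}{e_1}$. The key is to replace $f_n$ by the auxiliary map $\tilde f_n$ of \prettyref{eq:tfn}, which is statistically independent of the sign $b_i$ attached to the first coordinate, and hence decouples the randomness in $\theta_{t,1}$ from the data used to update it. Concretely, $\iprod{\tilde f_n(\theta)}{e_1} = \Expect_n[\tb_i Y_{i,1}\tanh(\tb_i Y_{i,1}\theta_1 + \iprod{\theta_\perp}{Y_{i,\perp}})]$, and since $\tb_i$ is a fresh Rademacher independent of everything, conditionally this concentrates around its expectation, which by a Taylor expansion of $\tanh$ around $\iprod{\theta_\perp}{Y_{i,\perp}}$ is $(1+\|\theta_*\|^2)\alpha_t$ up to lower-order terms controlled by $\beta_t$; here the induction hypothesis \prettyref{eq:ind4-pre} ensures $\beta_t^2$ is only a $d\log n$-factor times $\alpha_t^2$, and the bound \prettyref{eq:thetastar-lb} on $\|\theta_*\|$ from below is what makes the error term $\sqrt{Kd\log^3 n/n}$ genuinely smaller than $\|\theta_*\|^2$, so that the factor $1+\|\theta_*\|^2-\sqrt{Kd\log^3 n/n}$ stays bounded away from $1$. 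Then \prettyref{eq:ind1} is used to transfer the estimate from $\tilde f_n(\theta_t)$ back to $f_n(\theta_t)=\alpha_{t+1}$, after bounding $\|f_n(\theta_t)-\tilde f_n(\theta_t)\|$ via the Lipschitz-type control of $f_n$ (from \prettyref{thm:concentration}) plus the perturbation $\|\theta_t-\ttheta_t\|$.

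\textbf{(ii) Coupling bound.} For \prettyref{eq:ind1} I would write $\theta_{t+1}-\ttheta_{t+1} = f_n(\theta_t)-\tilde f_n(\ttheta_t) = [f_n(\theta_t)-\tilde f_n(\theta_t)] + [\tilde f_n(\theta_t)-\tilde f_n(\ttheta_t)]$. The second bracket is at most $(1+\omega)\|\theta_t-\ttheta_t\|$ by the relative-deviation/Lipschitz property applied to $\tilde f_n$ (which has the same law as $f_n$), giving a recursion whose solution is geometric in $t$; combined with the telescoping this produces the linear-in-$t$ factor against the geometrically growing $\alpha_t$. The first bracket, $f_n(\theta_t)-\tilde f_n(\theta_t)$, is the genuinely new quantity: it is a difference of two empirical averages that differ only in whether the first coordinate carries $b_i$ or $\tb_i$, so each summand is $Y_{i,1}[b_i\tanh(\cdot)-\tb_i\tanh(\cdot)]$ and I would bound it by a concentration argument for sums of independent mean-zero terms, each of magnitude $O(|Y_{i,1}|\cdot|\alpha_t|)$ times $|Y_{i,1}|$ (from the difference of $\tanh$ arguments), yielding a bound of order $\alpha_t\sqrt{d\log^3 n/n}$ — this is where the extra $\log n$ factors and the $\alpha_t$ prefactor in \prettyref{eq:ind1} come from. \textbf{(iii) Orthogonal/ratio bound.} For \prettyref{eq:ind4-pre}, combine the unconditional $\beta$-recursion \prettyref{eq:beta-uncond2} from \prettyref{thm:main-beta}, namely $\beta_{t+1}\leq \beta_t(1+\omega)+\omega|\alpha_t|$, with the just-established growth $\alpha_{t+1}\geq(1+\|\theta_*\|^2-o(\cdot))\alpha_t$; since $\alpha$ grows at rate $1+\|\theta_*\|^2$ and $\beta$ at rate only $1+\omega$ (with $\omega\ll\|\theta_*\|^2$ by \prettyref{eq:thetastar-lb}), the ratio $\beta_t/\alpha_t$ does not blow up beyond $\sqrt{d\log n}+\omega t$ over the horizon $t\leq\Tstar$, and \prettyref{eq:ind4} is an immediate rewriting using $\|\theta_t\|^2=\alpha_t^2+\beta_t^2$.

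The main obstacle is part (ii), specifically showing that the single-coordinate-swap difference $\|f_n(\theta_t)-\tilde f_n(\theta_t)\|$ is as small as $\alpha_t\sqrt{Kd\log^3 n/n}$ \emph{uniformly over the random $\theta_t$}: $\theta_t$ depends on the data, so one cannot directly apply a conditional concentration bound with $\theta_t$ frozen, and the whole point of the auxiliary sequence is to sidestep exactly this dependency — but one must still argue that $\theta_t$ and $\ttheta_t$ stay close enough (which is circular with \prettyref{eq:ind1}) and that the relevant empirical process is controlled. I expect this to require a careful union bound over the $O(\Tstar)$ time steps combined with the fact that, within the inductive window, $\theta_t$ is confined to the small ball \prettyref{eq:thetatnorm}, so the supremum over $\theta$ in that ball of the swap-difference — which depends on $\theta$ only through the one-dimensional projection $\theta_1$ and the $(d-1)$-dimensional projection onto the orthogonal randomness — can be bounded by a low-complexity chaining/net argument rather than a crude $d$-dimensional one. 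The delicate accounting of logarithmic factors (to land exactly at $\log^3 n$, matching \prettyref{eq:thetastar-lb} and hence the final rate in \prettyref{thm:main}) is the other place where care is needed.
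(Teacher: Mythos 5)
Your overall architecture matches the paper's: the same auxiliary sequence $\{\ttheta_t\}$, a simultaneous induction carrying all four bounds, per-step high-probability estimates combined by a union bound over the $O(\Tstar)$ steps, and the ratio bound \prettyref{eq:ind4-pre} obtained by playing the growth rate $1+\|\theta_*\|^2$ of $\alpha_t$ against the rate $1+\omega$ of $\beta_t$ from \prettyref{eq:beta-uncond2}. Your part (i) is also sound in spirit: it is the mirror image of the paper's decoupling (you evaluate the $\tb_i$-map at $\theta_t$, which is independent of $\tb_i$; the paper evaluates the $b_i$-map at $\ttheta_t$, which is independent of $b_i$), and the conditional Hoeffding/Bernstein step there goes through.

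The genuine gap is in part (ii), precisely at the point you flag as the main obstacle: bounding the swap term $\|f_n(\theta_t)-\tilde f_n(\theta_t)\|$ at the data-dependent $\theta_t$. Writing $b\tanh(y+bx)=bT(x,y)+H(x,y)$, the first coordinate of this swap contains $\frac{1}{n}\sum_i b_i Y_{i,1}\,T(\alpha_t Y_{i,1},\Iprod{\theta_{t,\perp}}{Y_{i,\perp}})$, whose natural size is $\beta_t\sqrt{\log^2 n/n}$ \emph{only if} one can apply conditional concentration with $\theta_t$ frozen — which is unavailable because $\theta_t$ depends on the $b_i$'s. Your proposed escape, a ``low-complexity'' net over the confining ball, does not exist: the dependence on $\theta_{t,\perp}$ enters through the $n$ distinct projections $\Iprod{\theta_{t,\perp}}{Y_{i,\perp}}$, so the supremum is genuinely over a $d$-dimensional set, and a net costs an extra $\sqrt{d}$, giving an error of order $\beta_t\sqrt{d\log^2 n/n}\asymp \alpha_t\, d\,\polylog(n)/\sqrt{n}$. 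That swamps $\|\theta_*\|^2\geq\sqrt{C_\star d\log^3 n/n}$ unless $n\gtrsim d^2$, which destroys exactly the regime \prettyref{eq:ndlogd} the lemma is designed for (this is the ``absolute vs.\ relative deviation'' issue the paper highlights). The paper's resolution, which you should adopt, is to never require concentration of a $b_i$-weighted sum at $\theta_t$: in \prettyref{lmm:ind1-raw} the swap is taken at $\ttheta_t$ and split (via \prettyref{lmm:decomp}) so that each Rademacher multiplies quantities built from the iterate it is independent of ($\calE_3$ pairs $\tb_i$ with $\theta_t$-quantities, $\calE_4$ pairs $b_i$ with $\ttheta_t$-quantities, each bounded pointwise by \prettyref{lmm:rad} with no $\sqrt d$ loss), while the uniform-over-ball bounds \prettyref{lmm:K} and \prettyref{lmm:M} are reserved for remainder terms already carrying the small prefactors $\|\theta_t-\ttheta_t\|$ or $\ttheta_{t,1}$, where the extra $\sqrt d$ is affordable; the apparent circularity with \prettyref{eq:ind1} is resolved because the induction hypothesis at step $t$ feeds these per-step bounds to produce step $t+1$ (the recursion of \prettyref{lmm:Kt}). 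A secondary slip: the Lipschitz constant of $\tilde f_n$ is not $1+\omega$ but $\Opnorm{\Sigma_n}\leq 1+\|\theta_*\|^2+\tilde O(\sqrt{d/n})$ (\prettyref{lmm:fsmooth}); the coupling error nonetheless stays of size $\alpha_t\cdot t\cdot\sqrt{Kd\log^3 n/n}$ only because $\alpha_t$ itself grows at the matching rate $1+\|\theta_*\|^2$, so this must be tracked as in \prettyref{eq:Kt}, not waved away.
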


The proof of \prettyref{lmm:induction} is by induction on $t$, replying on the following results that relate the actual iterations to the auxiliary ones.

\begin{lemma}
\label{lmm:alphat1-raw}	
For each $t\geq 0$, with probability at least $1-O(n^{-1})$, we have
\begin{equation}
\alpha_{t+1} \geq \alpha_t \pth{1+\|\theta_*\|^2  - \sqrt{\frac{C \log n}{n}} - C \|\theta_t\|^2} 
- \sqrt{\frac{C \log^2 n}{n}} \|\theta_t\| - \sqrt{\frac{C d \log^2 n}{n}} \|\theta_t-\ttheta_t\|,
\label{eq:alphat1-raw}
\end{equation}
where $C$ is some constant depending only on $\thetaub$.	
\end{lemma}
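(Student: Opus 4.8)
The plan is to expand $\alpha_{t+1} = \iprod{f_n(\theta_t)}{e_1} = \Expect_n[Y_1 \tanh\iprod{\theta_t}{Y}]$ around the corresponding population quantity $F(\alpha_t,\beta_t) = \Expect[V\tanh(\alpha_t V + \beta_t W)]$, and then further expand $F$ around the first-order term. Concretely, using the decomposition in \prettyref{sec:ddim-sample}, we write $\alpha_{t+1} = F(\alpha_t,\beta_t) + \Iprod{\Delta_n(\theta_t)}{e_1}$. By \prettyref{eq:F01} of \prettyref{lmm:FG}, $F(\alpha_t,\beta_t) \geq f(\alpha_t) - (1+\|\theta_*\|^2)\alpha_t\beta_t^2 \geq f(\alpha_t) - C\alpha_t\|\theta_t\|^2$, and by the one-dimensional expansion in \prettyref{lmm:fprop} (item 3), $f(\alpha_t) = (1+\|\theta_*\|^2)\alpha_t + O(\alpha_t^3)$, so that $F(\alpha_t,\beta_t) \geq \alpha_t(1 + \|\theta_*\|^2 - C\|\theta_t\|^2)$ since $\alpha_t \le \|\theta_t\|$. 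This handles the ``population drift'' part cleanly; the bulk of the work is controlling the fluctuation term $\Iprod{\Delta_n(\theta_t)}{e_1}$ without paying the crude bound $\omega\|\theta_t\|$ from \prettyref{eq:eventd}, which would be too lossy here because we need the error to be proportional to $\alpha_t$ (or at least to $\|\theta_t\|$ with a better log power) rather than just to $\|\theta_t\|$ times $\sqrt{d/n}$.

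The key device is the auxiliary sequence $\{\ttheta_t\}$ with the relabeled first coordinate. First I would write $\Iprod{\Delta_n(\theta_t)}{e_1} = \Expect_n[Y_1\tanh\iprod{\theta_t}{Y}] - \Expect[V\tanh(\alpha_t V+\beta_t W)]$ and split this into (i) the difference between evaluating at $\theta_t$ and at $\ttheta_t$, and (ii) the centered fluctuation $\Expect_n[\tilde Y_1 \tanh\iprod{\ttheta_t}{\tilde Y}] - \Expect[\cdots]$ evaluated at the auxiliary point. For (i), a Lipschitz estimate in $\theta$ (the integrand $y_1\tanh\iprod{\theta}{y}$ has gradient bounded by $\|y\|\,|y_1|$, and $\Expect_n[\|Y\|\,|Y_1|]$ concentrates) gives a bound of order $\sqrt{d}\,\|\theta_t - \ttheta_t\|$ up to log factors — this is the term $\sqrt{C d\log^2 n/n}\,\|\theta_t-\ttheta_t\|$ in the statement (the extra $\sqrt{\log^2 n/n}$ presumably absorbed into how $\|\theta_t-\ttheta_t\|$ is itself estimated inductively; I may instead just carry a clean $O(\sqrt{d\log n})$-type Lipschitz constant here). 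For (ii), the crucial point is that $\ttheta_t$ is \emph{independent} of the relabeling signs $\tilde b_i$ conditioned on the rest — actually we need $\ttheta_t$ to depend on the data only through quantities independent of the fresh randomness — so that $\Expect_n[\tilde Y_1 \tanh\iprod{\ttheta_t}{\tilde Y}]$ is, conditionally, an average of $n$ independent bounded terms with the right conditional mean; a Bernstein/Hoeffding bound then yields fluctuations of order $\sqrt{\log n / n}$ times a factor capturing the scale, which near zero is $\approx \|\theta_t\|$, giving the $\sqrt{C\log^2 n / n}\,\|\theta_t\|$ term. The $-\sqrt{C\log n/n}\,\alpha_t$ correction inside the parenthesis comes from the fluctuation of the \emph{signal-aligned} part $\Expect_n[Y_1 \tanh(\alpha_t Y_1)]$ around its mean $f(\alpha_t)$, which is proportional to $\alpha_t$ with a $\sqrt{\log n/n}$ multiplicative deviation (this is exactly the one-dimensional relative-concentration phenomenon, cf.~\prettyref{eq:Delta-lip}, but in the squared-sample/Bernstein form).

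The main obstacle I anticipate is the independence bookkeeping required to apply a concentration inequality to $\Expect_n[\tilde Y_1\tanh\iprod{\ttheta_t}{\tilde Y}]$: since $\ttheta_t$ is produced by $t$ iterations of $\tilde f_n$ over the \emph{same} modified samples, it is not literally independent of the $\tilde Y_i$'s. The resolution — following the leave-one-coordinate-out philosophy — is that $\ttheta_t$'s first coordinate $\tilde\alpha_t$ has been ``decorrelated'' from the original labels $b_i$ by the relabeling, so the comparison should be set up so that the quantity being concentrated conditions on the $\sigma$-algebra generated by $(Y_{i,2},\ldots,Y_{i,d})_i$ and $|Y_{i,1}|$ together with all the $\tilde b_i$ used to \emph{define} $\ttheta_t$, leaving genuinely fresh randomness only in the $b_i$ appearing in $\alpha_{t+1} = \Expect_n[b_i Y_{i,1}\tanh(\cdots)]$; alternatively one applies the bound to the auxiliary iteration itself and transfers via \prettyref{eq:ind1}. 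Getting the conditioning exactly right so that the bounded-differences/Bernstein constants come out at the claimed $\poly\log n$ level — and checking the per-coordinate variance is $O(\|\theta_t\|^2)$ rather than $O(d\|\theta_t\|^2/n \cdot n) = O(d\|\theta_t\|^2)$ — is where the care is needed; everything else (the population expansions, the boundedness of $\tanh$, the $\chi^2$ tail for $\|Y\|$ via \prettyref{lmm:LM}) is routine.
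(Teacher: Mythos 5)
Your high-level skeleton (center at the population drift, use the auxiliary sequence $\{\ttheta_t\}$ and condition on everything except the fresh signs) is the right starting point, but the proposal has a genuine gap at exactly the step you flag as the main obstacle. After conditioning on $(|Y_{i,1}|, Y_{i,\perp}, \tb_i)_i$ so that $\ttheta_t$ is measurable and the original $b_i$'s are fresh, the quantity you must control is $\frac1n\sum_i b_i Y_{i,1}\tanh\bigl(b_i\,\ttheta_{t,1}Y_{i,1}+\Iprod{\ttheta_{t,\perp}}{Y_{i,\perp}}\bigr)$: the fresh sign appears \emph{inside} the $\tanh$ as well as outside, so this is not "an average of independent bounded terms with the right conditional mean," and Bernstein/Hoeffding does not apply to it directly. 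The missing idea is the even/odd decomposition in $b_i$ (the paper's $g(y+bx)=T(x,y)+bH(x,y)$, equivalently $\tanh(y+bx)=S+bB$): the odd-in-$b$ part is a genuine Rademacher sum and gives the $\sqrt{\log^2 n/n}\,\|\theta_{t,\perp}\|$ term, but the even-in-$b$ part has no fresh randomness left after your conditioning — it is the \emph{signal}, of size $\approx \alpha_t\cdot\frac1n\sum Y_{i,1}^2$, and it must be lower bounded deterministically on empirical-moment events ($H_2$, $H_4$, \prettyref{lmm:concentration2}) using pointwise bounds such as $0\le y\,T(x,y)\le x^2y^2+y^4$ and $|H(x,y)|\le|x|$ (\prettyref{lmm:gg}), which is where both the coefficient $1+\|\theta_*\|^2-\sqrt{C\log n/n}$ (from concentration of $\frac1n\sum Y_{i,1}^2$, not of $\Expect_n[Y_1\tanh(\alpha_t Y_1)]$) and the $-C\alpha_t\|\theta_t\|^2$ correction actually come from. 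Treating the whole deviation from $F(\alpha_t,\beta_t)$ as conditionally centered noise is incorrect, and trying to recenter the even part by a uniform-in-$\theta$ bound reintroduces the lossy $\omega\|\theta_t\|$ you are trying to avoid.

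There is a second quantitative gap in your step (i). A deterministic Lipschitz estimate with constant $\Expect_n[\|Y\|\,|Y_1|]\asymp\sqrt d$ gives only $O(\sqrt d)\,\|\theta_t-\ttheta_t\|$, which is a factor of order $\sqrt{n}/\log n$ larger than the $\sqrt{Cd\log^2 n/n}\,\|\theta_t-\ttheta_t\|$ appearing in the statement; the fallback you suggest ("carry a clean $O(\sqrt{d\log n})$-type Lipschitz constant") therefore does not prove the lemma, and plugging it into the induction of \prettyref{lmm:induction} (where $\|\theta_t-\ttheta_t\|\lesssim\alpha_t\sqrt{d\log^3 n/n}\,t$ and $t$ can be as large as $\Tstar\asymp\log d/\thetanorm^2$) produces an error term that swamps $\alpha_t\|\theta_*\|^2$. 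The $\sqrt n$ savings is again a consequence of the random signs: the coefficient of $\|\theta_t-\ttheta_t\|$ is the Lipschitz constant of the Rademacher-weighted empirical process $\theta\mapsto\frac1n\sum b_iY_{i,1}q(\Iprod{\theta}{Y_i})$, which is $O(\sqrt{d\log^2 n/n})$ with high probability (the paper's \prettyref{lmm:K}, proved via a net argument plus conditional subgaussian concentration of the sign-weighted sums), not a pointwise bound on the integrand. So the two ingredients you would need to add are precisely the symmetrization in $b_i$ with the drift/fluctuation split, and the high-probability Lipschitz bound for the sign-weighted process replacing the deterministic one.
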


\begin{lemma}
\label{lmm:ind1-raw}	
For each $t\geq 0$, with probability at least $1-O(n^{-1})$, we have
\begin{equation}
\|\ttheta_{t+1}-\theta_{t+1}\|
\leq \pth{1 + \|\theta_*\|^2+ \sqrt{\frac{C d\log^2 n}{n}}} \|\ttheta_{t}-\theta_{t}\| + \sqrt{\frac{C d\log^2 n}{n}} \alpha_t + \sqrt{\frac{C \log n}{n}} \|\theta_t\|,
\label{eq:ind1-raw}
\end{equation}
where $C$ is some constant depending only on $\thetaub$.	
\end{lemma}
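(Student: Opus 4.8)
The statement to prove is Lemma~\ref{lmm:ind1-raw}, which controls how the gap $\|\ttheta_{t+1} - \theta_{t+1}\|$ between the true and auxiliary iterates grows in a single step. Note that the auxiliary sequence $\{\ttheta_t\}$ is driven by $\tilde f_n$, which is built from the modified samples $\tY_i = (\tb_i Y_{i,1}, Y_{i,2}, \ldots, Y_{i,d})$ where the first-coordinate sign is replenished by an independent Rademacher $\tb_i$.

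Let me think about how the one-step recursion for the gap works. We have
\[
\ttheta_{t+1} - \theta_{t+1} = \tilde f_n(\ttheta_t) - f_n(\theta_t) = [\tilde f_n(\ttheta_t) - \tilde f_n(\theta_t)] + [\tilde f_n(\theta_t) - f_n(\theta_t)].
\]
The first bracket is a difference of the same map $\tilde f_n$ at two points; we want to show it's "contractive-like" with factor roughly $1 + \|\theta_*\|^2 + \text{small}$. The second bracket is the difference between the auxiliary map and the true map at the same point $\theta_t$; this is the "decoupling error" that should be small — of order $\sqrt{d/n}\cdot$ (something involving $\alpha_t$ and $\|\theta_t\|$) — since $\tilde f_n$ and $f_n$ differ only in how they treat the first coordinate's sign, and at points where $\alpha_t = \theta_{t,1}$ is small, this difference should be proportionally small.

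For the first bracket: $\tilde f_n(\ttheta_t) - \tilde f_n(\theta_t) = \Expect_n[\tY(\tanh\iprod{\ttheta_t}{\tY} - \tanh\iprod{\theta_t}{\tY})]$. Since $\tanh$ is $1$-Lipschitz, a crude bound gives $\Expect_n[\|\tY\|^2] \cdot \|\ttheta_t - \theta_t\|$, but $\Expect_n[\|\tY\|^2] \approx d$, which is too lossy. Instead we want the bound to be $(1 + \|\theta_*\|^2 + \text{small})\|\ttheta_t-\theta_t\|$. The key observation must be that the Jacobian of $f_n$ (or $\tilde f_n$) near the origin is close to that of $f$, whose operator norm on the relevant subspace is governed by $f'(0) = 1 + \|\theta_*\|^2$ in the signal direction and $1$ in the orthogonal directions. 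So I would write $\tilde f_n(\ttheta_t) - \tilde f_n(\theta_t) = M(\ttheta_t - \theta_t)$ where $M = \int_0^1 \nabla\tilde f_n(\theta_t + u(\ttheta_t - \theta_t))\,du$, and bound $\|M\|_{\rm op}$. The Jacobian is $\nabla\tilde f_n(\theta) = \Expect_n[\tY\tY^\top \mathrm{sech}^2\iprod{\theta}{\tY}]$. One needs a uniform-over-$\theta$ (in a small ball) concentration of this matrix around its population version, which is $\Expect[YY^\top\mathrm{sech}^2\iprod{\theta}{Y}]$, plus a bound on the latter's operator norm. The population Hessian at small $\theta$ has top eigenvalue $\approx 1 + \|\theta_*\|^2$; the concentration contributes the $\sqrt{Cd\log^2 n/n}$ correction. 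This is the \textbf{main obstacle}: establishing that $\|\nabla\tilde f_n(\theta) - \nabla f(\theta)\|_{\rm op} \lesssim \sqrt{d\log n/n}$ uniformly over a ball, which requires a matrix concentration / covering argument for $\Expect_n[\tY\tY^\top\mathrm{sech}^2\iprod{\theta}{\tY}]$ — essentially a variant of the concentration arguments already developed for Theorem~\ref{thm:concentration} in Section~\ref{sec:concentration}, adapted to the Jacobian and to the modified samples $\tY$.

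For the second bracket, the decoupling term: $\tilde f_n(\theta_t) - f_n(\theta_t) = \frac{1}{n}\sum_i [\tY_i\tanh\iprod{\theta_t}{\tY_i} - Y_i\tanh\iprod{\theta_t}{Y_i}]$. Writing $\theta_t = \alpha_t e_1 + \theta_{t,\perp}$, we have $\iprod{\theta_t}{Y_i} = \alpha_t b_i Y_{i,1} + \iprod{\theta_{t,\perp}}{Y_{i,\perp}}$ and $\iprod{\theta_t}{\tY_i} = \alpha_t \tb_i Y_{i,1} + \iprod{\theta_{t,\perp}}{Y_{i,\perp}}$. The two agree except for the first term, which is of size $\alpha_t |Y_{i,1}|$. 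A Taylor/Lipschitz expansion in this small quantity shows each summand is $O(\alpha_t |Y_{i,1}| \|\tY_i\|_{\text{-ish}})$, and the difference of the two first-coordinate contributions, together with averaging and concentration, produces the $\sqrt{Cd\log^2 n/n}\,\alpha_t + \sqrt{C\log n/n}\,\|\theta_t\|$ terms. The plan is: decompose each summand, bound it by a polynomial in $\alpha_t|Y_{i,1}|$ times norms of the remaining coordinates, then apply a Bernstein-type concentration (using the subexponential tails of $|Y_{i,1}|$ and of $\|Y_{i,\perp}\|^2$) to the empirical average, picking up $\log n$ factors from the $O(n^{-1})$-probability requirement. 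Throughout I would use that $\|\theta_t\| \le 1$ (from \prettyref{eq:thetatnorm}/\prettyref{eq:theta0-bdd}) to keep $\mathrm{sech}^2$ and $\tanh$ terms bounded and to avoid higher-order blowups. Combining the two brackets via the triangle inequality yields exactly \prettyref{eq:ind1-raw}.
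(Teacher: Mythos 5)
Your overall decomposition is a legitimate mirror image of the paper's (the paper writes $\ttheta_{t+1}-\theta_{t+1} = [f_n(\ttheta_t)-f_n(\theta_t)] + [\tilde f_n(\ttheta_t)-f_n(\ttheta_t)]$, decoupling at $\ttheta_t$; you apply $\tilde f_n$ to both points and decouple at $\theta_t$), and your first bracket can be made to work, though it is heavier than necessary: what you call the main obstacle is bypassed in the paper by the observation that $\sech^2\le 1$ gives $\nabla f_n(\theta)\preceq \Expect_n[YY^\top]$ pointwise for \emph{every} $\theta$, so the global Lipschitz constant of the EM map is $\Opnorm{\Expect_n[YY^\top]}\le 1+\|\theta_*\|^2+\sqrt{Cd/n}$ by plain covariance concentration (\prettyref{lmm:fsmooth}); no uniform-in-$\theta$ Jacobian concentration is needed.

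The genuine gap is in your treatment of the decoupling term $\tilde f_n(\theta_t)-f_n(\theta_t)$. You propose to bound it by "Bernstein-type concentration applied to the empirical average," but the point of evaluation $\theta_t$ is a function of the very data $\{b_i,Y_i\}$ entering that average, so the summands are not independent of $\theta_t$ and Bernstein at a fixed point does not apply; this dependence is exactly what the auxiliary sequence exists to handle. Concretely, after symmetrizing, the first coordinate of your decoupling term contains $\frac1n\sum_i b_i Y_{i,1}T_i$ with $|T_i|\le|\iprod{\theta_{t,\perp}}{Y_{i,\perp}}|$ and $T_i$ depending on $\theta_t$; the only dependence-free fix available without further ideas is a uniform-over-the-ball bound, which costs a factor $\sqrt{d}$ and yields a coefficient $\sqrt{Cd\log^2 n/n}$ on $\|\theta_t\|$ rather than the dimension-free $\sqrt{C\log n/n}$ claimed in \prettyref{eq:ind1-raw}. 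That dimension-free coefficient is not cosmetic: in \prettyref{lmm:induction} the term $\|\theta_t\|$ is converted to $\sqrt{d\log n}\,\alpha_t$ via \prettyref{eq:ind4}, so a $\sqrt{d\log^2 n/n}\,\|\theta_t\|$ bound would blow up to order $d\,\mathrm{polylog}(n)/\sqrt{n}\cdot\alpha_t$ per step and the induction would not close under $n\gtrsim d\log^3 d$. The missing idea is the further add-and-subtract between $\theta_t$ and $\ttheta_t$ inside the decoupling term: pair each Rademacher family with the iterate it is independent of ($\tb_i$ against functions of $\theta_t$, $b_i$ against functions of $\ttheta_t$), handle those pieces by conditional Hoeffding/Rademacher bounds (giving the dimension-free $\sqrt{\log^2 n/n}$ rates), and control the residual differences by uniform Lipschitz bounds (the paper's Lemmas \ref{lmm:K} and \ref{lmm:M}), which do cost $\sqrt{d\log^2 n/n}$ but only multiplied by $\alpha_t$ or by $\|\theta_t-\ttheta_t\|$, where that cost is affordable. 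Without this step your argument either is invalid (dependence) or proves a weaker inequality than the lemma states.
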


Now we complete the proof of \prettyref{thm:phase1} by contradiction. 
Since \prettyref{eq:alphagrow0} holds for all $t \leq \Tstar$, in view of the assumption \prettyref{eq:thetastar-lb}, we have
\[
\alpha_{t+1} \geq 	\alpha_t \pth{1+c_0 \|\theta_*\|^2}.
\]
 Since $\alpha_0 \geq \|\theta_0\| \frac{1}{\sqrt{d \log n}} \geq \frac{C_0}{\sqrt{C_\omega}} \frac{\sqrt{\omega} }{\sqrt{d \log n}}$, when $t \geq \Tstar = \frac{C_T (\log d + \log \log n)}{\thetanorm^2}$ for sufficiently large constant $C_T$,
we have 
$\alpha_t > \sqrt{\omega} = (C_\omega \frac{d}{n} \log n)^{1/4}$, which is the needed contradiction.

\section{Approaching the MLE}
	\label{sec:mle}
	
	Despite being a heuristic of solving the maximum likelihood, in this section we show that the EM iteration converges to the MLE under minimal conditions.	
	Define the MLE as any global maximizer of the likelihood function, i.e.,
	\begin{equation}
	\thetaMLE \in \arg\max_{\theta \in \reals^n} \ell_n(\theta),	
	\label{eq:mle-n}
	\end{equation}
	where the log likelihood $\ell_n$ is given in \prettyref{eq:LL}.
	Note that from first principles it is unclear whether there exists a unique global maximizer. 
	Furthermore, our previous analysis only shows that with high probability, the EM iterates are within the optimal rate of the true mean $\theta_*$ after a certain number of iterations. Indeed, 
	for $\thetanorm \geq (\frac{C d \log^3 n}{n})^{1/4}$, \prettyref{thm:main-alpha-large} and \prettyref{thm:phase1} together imply that, with probability $1-o(1)$, 
	\begin{equation}
	\ell(\theta_t,\theta_*) \leq \pth{\frac{C d \log n}{n}}^{1/4}
	\label{eq:beforeMLE}
	\end{equation}
	for all $t \geq T \triangleq \frac{C \log n}{\thetanorm^2}$, for some constant $C$. This, however, has no direct bearing on the convergence of the sequence $\theta_t$, since it does not rule out the possibility that $\theta_t$ oscillates within the optimal rate of $\theta_*$.
Next we will address both questions by showing that the MLE is unique and coincides with the limit of the EM iteration.
	
	\begin{theorem}
	\label{thm:MLE}		
		Assume that $n \geq C_1 d \log^3 d$ and $(C_2 \frac{d \log^3 n}{n})^{1/4}\leq \thetanorm \leq r$, where 
		$C_1,C_2$ are constants depending only on $\thetaub$.		
		With probability at least $1-2 n^{-1}$, for all $t\geq 1$,
		\begin{equation}
		\|\theta_{T+t}-\thetaMLE\| \leq e^{-c t\|\theta_*\|^2} \|\theta_T-\thetaMLE\|, 	
		\label{eq:mle-converge}
		\end{equation}
		for some absolute constant $c$.
		In particular, 	$\lim_{t\diverge} \theta_t$ exists and coincides with $\thetaMLE$, the unique (up to a global sign change) global maximizer of \prettyref{eq:mle-n}.

	\end{theorem}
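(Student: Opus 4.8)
The plan is to split \prettyref{thm:MLE} into a \emph{local} part — that $f_n$ is a strict contraction, with factor $1-c\|\theta_*\|^2$, on a neighborhood $\mathcal N$ of $\theta_*$ that already contains $\theta_T$ — and a \emph{global} part — a classification of the critical points of $\ell_n$ that pins down $\thetaMLE$. Throughout we work on the event \prettyref{eq:eventd} together with the standard spectral event $\|\Expect_n[YY^\top]-(I+\theta_*\theta_*^\top)\|_{\mathrm{op}}=O(\sqrt{d/n})$ and scalar concentration of $\ell_n$, jointly of probability $\ge1-2n^{-1}$; the hypotheses force $\omega=\sqrt{C_\omega d\log n/n}\ll\|\theta_*\|^2$ (since $\|\theta_*\|^4\gtrsim d\log^3n/n$ gives $\|\theta_*\|^2\gtrsim\omega\log n$), $\sqrt\omega=o(\|\theta_*\|)$, and $\sqrt{d/n}=o(\|\theta_*\|^2)$. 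By \prettyref{thm:main-alpha-large} (its hypotheses being in force, or following from \prettyref{thm:phase1} for the random initialization) we have $\alpha_t=\|\theta_*\|+O(\omega/\|\theta_*\|)$ and $\beta_t=O(\omega/\|\theta_*\|)$ for all $t\ge T=\Theta(\log n/\|\theta_*\|^2)$, so $\theta_T$ lies in
\[
\mathcal N=\bigl\{\theta\in\reals^d:(1-\delta)\|\theta_*\|\le\iprod{\theta}{\eta_*}\le(1+\delta)\|\theta_*\|,\ \|(I-\eta_*\eta_*^\top)\theta\|\le\epsilon\|\theta_*\|\bigr\}
\]
for fixed small $\delta\in(0,\tfrac16)$ and $\epsilon$; $\mathcal N$ is compact, convex, contained in $B(R)$, contains $\theta_*$, and is disjoint from $-\mathcal N$.

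For the contraction, write $\nabla f_n=\nabla f+\nabla\Delta_n$, with $\|\nabla\Delta_n(\theta)\|_{\mathrm{op}}\le\omega$ on $B(R)$ by the Lipschitz bound in \prettyref{thm:concentration}. On $\mathcal N$ one bounds $\|\nabla f(\theta)\|_{\mathrm{op}}\le1-c\|\theta_*\|^2$ by splitting $\reals^d$ into $\Span(\theta_*,\theta_\perp)$ and its complement: on the complement $\nabla f(\theta)$ is multiplication by $\Expect[\sech^2\iprod{\theta}{Y}]=1-\Expect[\tanh^2\iprod{\theta}{Y}]\le1-c\|\theta_*\|^2$; inside the plane the $2\times2$ block has diagonal entries $f'(\alpha)\le f'((1-\delta)\|\theta_*\|)\le1-c\|\theta_*\|^2$ (by concavity of $f$ and the values $f'(0)=1+\|\theta_*\|^2$, $f'(\|\theta_*\|)\le e^{-\|\theta_*\|^2/2}$ in \prettyref{lmm:fprop}; it is precisely because $f$ is \emph{expanding} in the $\eta_*$ direction for $\iprod{\theta}{\eta_*}\lesssim\|\theta_*\|$ that $\mathcal N$ must be clipped away from the origin) and $\partial_\beta G\le1-\tfrac{\alpha^2+\beta^2}{2+4(\alpha^2+\beta^2)}\le1-c\|\theta_*\|^2$ by \prettyref{eq:Gnew}, with off-diagonal entries $O(\beta\|\theta_*\|)=O(\epsilon\|\theta_*\|^2)$. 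Since $\omega\ll\|\theta_*\|^2$ and $\epsilon$ is small, $\|\nabla f_n(\theta)\|_{\mathrm{op}}\le\rho:=1-\tfrac c2\|\theta_*\|^2<1$ on $\mathcal N$; a direct check using that $f$ strictly contracts towards $\|\theta_*\|$ in the $\alpha$-direction and towards $0$ in the $\beta$-direction (\prettyref{lmm:fprop}, \prettyref{eq:F01}, \prettyref{eq:Gnew}), which absorbs the $O(\omega)$ perturbation, gives $f_n(\mathcal N)\subseteq\mathcal N$. Banach's theorem then yields a unique fixed point $\hat\theta\in\mathcal N$, and since $\theta_T\in\mathcal N$ and $\theta_{T+t}=f_n^{(t)}(\theta_T)$, $\|\theta_{T+t}-\hat\theta\|\le\rho^t\|\theta_T-\hat\theta\|\le e^{-ct\|\theta_*\|^2}\|\theta_T-\hat\theta\|$ — this is \prettyref{eq:mle-converge} — in particular $\theta_t\to\hat\theta$; taking $t\asymp\log n/\|\theta_*\|^2$ gives $\ell(\theta_t,\hat\theta)=o(1/n)$.

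To identify $\hat\theta$ with the MLE: since $\nabla\ell_n=f_n-\mathrm{id}$, $\hat\theta$ is a critical point of $\ell_n$, and on $\mathcal N$ we have $\nabla^2\ell_n=\nabla f_n-I$ with $0\preceq\nabla f_n(\theta)=\Expect_n[YY^\top\sech^2\iprod{\theta}{Y}]\preceq\rho I$, so $\ell_n$ is $(1-\rho)$-strongly concave on the convex set $\mathcal N$ and $\hat\theta$ is its unique critical point there; by $\ell_n(\theta)=\ell_n(-\theta)$, $-\hat\theta$ is the unique critical point in $-\mathcal N$. Because $\log\cosh t\le|t|$, $\ell_n$ is coercive, so $\thetaMLE$ exists and is a fixed point of $f_n$. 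The key step is to classify fixed points: writing one as $\alpha\eta_*+\beta\xi$ (WLOG $\alpha\ge0$), the boundedness of $F,G$ (item~(5) of \prettyref{lmm:FG}) gives $\|\theta\|=O(1)$; the orthogonal fixed-point equation with \prettyref{eq:Gnew} gives $\beta\cdot\tfrac{\|\theta\|^2}{2+4\|\theta\|^2}\le\omega\|\theta\|$, hence $\beta=O(\sqrt\omega)$; and the $\eta_*$-equation with \prettyref{eq:F01} together with the properties of $q(\alpha)=f(\alpha)/\alpha$ from \prettyref{lmm:fprop} (strictly decreasing from $1+\|\theta_*\|^2$ to $0$, $q(\|\theta_*\|)=1$, $-q'(\alpha)\gtrsim\alpha$ on compacts via \prettyref{eq:qtheta}) gives $|\alpha-\|\theta_*\||=O(\omega/\|\theta_*\|)=o(\|\theta_*\|)$ unless $\alpha\le\sqrt\omega$. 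Hence every fixed point lies in $B(0,C\sqrt\omega)\cup\mathcal N\cup(-\mathcal N)$. Finally, any fixed point $\theta^{\circ}\in B(0,C\sqrt\omega)$ is \emph{not} the MLE: restricting $\ell_n$ to the line through $0$ and $\theta^{\circ}$ and using $\log\cosh t\le t^2/2$ gives $\ell_n(\theta^{\circ})-\ell_n(0)\le\tfrac12\|\theta^{\circ}\|^2\bigl(\tfrac1n\sum_i\iprod{Y_i}{\theta^{\circ}/\|\theta^{\circ}\|}^2-1\bigr)\le\tfrac12\|\theta^{\circ}\|^2(\|\theta_*\|^2+O(\sqrt{d/n}))=O(\omega\|\theta_*\|^2)$, whereas $\ell_n(\hat\theta)\ge\ell_n(\theta_*)\ge\ell_n(0)+\Omega(\|\theta_*\|^4)$ since $\ell(\theta_*)-\ell(0)=D(P_{\theta_*}\,\|\,N(0,I_d))\asymp\|\theta_*\|^4$ dominates the $O(\|\theta_*\|^2\sqrt{\log n/n})$ empirical fluctuation (using the sharp variance $\Var(\log\cosh\iprod{Y}{\theta_*})\asymp\|\theta_*\|^4$ and $\|\theta_*\|^4\gtrsim d\log^3n/n$), and $\omega\|\theta_*\|^2\ll\|\theta_*\|^4$. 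Therefore $\thetaMLE\in\mathcal N\cup(-\mathcal N)$, so $\thetaMLE\in\{\hat\theta,-\hat\theta\}$ with $\ell_n(\hat\theta)=\ell_n(-\hat\theta)$ (uniqueness up to sign), and $\lim_t\theta_t=\hat\theta=\thetaMLE$ up to sign.

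I expect the two genuine obstacles to be: first, the uniform Jacobian bound $\|\nabla f(\theta)\|_{\mathrm{op}}\le1-c\|\theta_*\|^2$ on $\mathcal N$ — delicate because $f$ is expanding in the $\eta_*$ direction near the orthogonal hyperplane, forcing $\mathcal N$ to be a thin cylinder \emph{clipped away from $0$} rather than a Euclidean ball, and requiring the $O(\beta\|\theta_*\|)$ off-diagonal Jacobian terms to be kept below the contraction gap; and second, the classification of critical points of $\ell_n$ and the dismissal of the near-$0$ ones, which cannot be imported from the landscape analysis of \cite{mei2018landscape} (that needs $\|\theta_*\|=\Omega(1)$) and must be carried out from scratch — here the subtlety is that a critical point is data-dependent, so one must avoid any uniform-in-$\theta$ control of $\tanh$-type functions and instead route through the spectral bound on $\Expect_n[YY^\top]$ and the separation of likelihood values quantified above, all the way down to $\|\theta_*\|\asymp(d\log^3n/n)^{1/4}$.
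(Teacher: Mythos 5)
Your proposal is correct in substance but follows a genuinely different route from the paper. The paper treats EM as gradient ascent \prettyref{eq:EM-GD}, Taylor-expands $\nabla\ell_n$ around $\thetaMLE$, and gets the contraction from \prettyref{lmm:hessMLE}, a uniform bound $0\preceq I+\nabla^2\ell_n(\theta)\preceq e^{-c\|\theta_*\|^2}I$ on the small balls $B(\pm\theta_*,\delta)$ with $\delta\asymp(d\log^3 n/n)^{1/4}$, proved by a direct population computation plus a covering/Bernstein concentration step; crucially, to know that $\thetaMLE$ lies in one of these balls it \emph{imports} an a priori rate for the MLE from \cite{ho2016convergence} (cf.~\cite{DWYZ19}). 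You avoid that external input entirely: you establish that $f_n$ is a $(1-c\|\theta_*\|^2)$-contraction on a constant-size cylinder $\mathcal N$ around $\theta_*$ (reusing \prettyref{thm:concentration} for the $\omega$-Lipschitz deviation instead of a fresh covering argument), prove $f_n(\mathcal N)\subseteq\mathcal N$, and then identify the Banach fixed point with $\thetaMLE$ by classifying all fixed points of $f_n$ via \prettyref{eq:Gnew}, \prettyref{eq:F01} and the $q$-function, and ruling out the near-zero critical points through the likelihood comparison $\ell_n(\theta^\circ)-\ell_n(0)=O(\omega\|\theta_*\|^2)\ll\Omega(\|\theta_*\|^4)\leq\ell_n(\theta_*)-\ell_n(0)$. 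What this buys is a self-contained proof (no reliance on the MLE rate of Ho--Nguyen) and a cleaner reuse of the relative-deviation theorem; what it costs is the extra work of the critical-point classification, a KL/variance lower bound $D(P_{\theta_*}\|P_0)\gtrsim\|\theta_*\|^4$ not stated in the paper (only the upper bound $\log\cosh(\|\theta_*\|^2)$ appears), and a population Jacobian bound on a much larger region than the paper needs. Two local steps as written need patching, though both are fixable with the paper's own ingredients: the claim $\partial_\alpha F(\alpha,\beta)\leq 1-c\|\theta_*\|^2$ for $\alpha\geq(1-\delta)\|\theta_*\|$ does not follow from concavity of $f$ together with the endpoint values $f'(0)=1+\|\theta_*\|^2$ and $f'(\|\theta_*\|)\leq e^{-\|\theta_*\|^2/2}$ alone (concavity only says $f'$ is decreasing, and $(1-\delta)\|\theta_*\|<\|\theta_*\|$ is on the wrong side); you need a two-sided $\Theta(\alpha)$ bound on $-q'$ — the lower bound is \prettyref{eq:qtheta}, and a matching upper bound $-q'(\alpha)\lesssim\alpha$ must be added — together with a second-order (not first-order) Taylor argument in $\beta$, exploiting symmetry of $W$, to show the $\beta$-perturbation of the diagonal entry is $O(\beta^2)=O(\epsilon^2\|\theta_*\|^2)$ rather than $O(\beta)$. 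Similarly, $\partial_\beta G\leq 1-\frac{\alpha^2+\beta^2}{2+4(\alpha^2+\beta^2)}$ is not literally \prettyref{eq:Gnew}; it follows by combining the concavity of $\beta\mapsto G(\alpha,\beta)$ with $G(\alpha,0)=0$ (so $\partial_\beta G\leq G/\beta$) and then \prettyref{eq:Gnew}. With these repairs the argument goes through and yields the theorem, including uniqueness of the MLE up to sign.
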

	
	
Next we prove \prettyref{thm:MLE}.
Note that $\thetaMLE$ is a critical point, i.e., $\nabla \ell_n(\thetaMLE) = 0$. 
Recall from \prettyref{eq:EM-GD} that the EM iteration corresponds to gradient ascent of the log likelihood $\ell_n$ with step size one.
	Applying the Taylor expansion of $\nabla \ell_n$ at $\thetaMLE$, we get from 	\prettyref{eq:EM-GD}
	\begin{align}
	\theta_{t+1}-\thetaMLE
	= & ~ \theta_{t}-\thetaMLE + \nabla \ell_n(\theta_t)\nonumber\\
	= & ~ (I + \nabla^2 \ell_n(\xi_t)) (\theta_t-\thetaMLE), 	\label{eq:EM-GD1}
	\end{align}
	where $\xi_t = \alpha \theta_t + (1-\alpha) \thetaMLE$ for some $\alpha\in[0,1]$.
	The key lemma is
	\begin{lemma}
	\label{lmm:hessMLE}	
	Under the setting of \prettyref{thm:MLE}, denote 
	$\delta \triangleq (c \frac{d \log^3 n}{n})^{1/4}$ for some constant $c$ depending only on $r$.
		With probability at least $1-2n^{-1}$, 
		for all $\theta$ such that $\ell(\theta,\theta_*) \leq \delta$.
		\[
		0 \preceq I + \nabla^2 \ell_n(\theta) 	\preceq e^{-c\|\theta_*\|^2} I.
		\]
	\end{lemma}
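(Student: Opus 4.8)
The plan is to establish the two-sided bound on $I + \nabla^2 \ell_n(\theta)$ by computing the Hessian explicitly and controlling it uniformly over the small neighborhood $\{\theta : \ell(\theta,\theta_*) \le \delta\}$. First I would differentiate \prettyref{eq:LL} twice: since $\nabla \ell_n(\theta) = -\theta + \Expect_n[Y \tanh \iprod{\theta}{Y}]$, we get
\begin{equation}
\nabla^2 \ell_n(\theta) = -I + \Expect_n\bigl[ Y Y^\top \sech^2 \iprod{\theta}{Y} \bigr],
\label{eq:hess-formula}
\end{equation}
so that $I + \nabla^2 \ell_n(\theta) = \Expect_n[ Y Y^\top \sech^2\iprod{\theta}{Y}]$, which is manifestly PSD — that already gives the lower bound $0 \preceq I + \nabla^2\ell_n(\theta)$. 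The substance is the upper bound: I need to show this empirical second-moment matrix has operator norm at most $e^{-c\|\theta_*\|^2}$ when $\theta$ is close to $\pm\theta_*$.

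For the upper bound I would first pass to the population version. A direct computation (as in \prettyref{lmm:fprop}(3), where $f'(\theta_*) \le e^{-\theta_*^2/2}$ encodes exactly this phenomenon in one dimension) should show that $\Expect[ Y Y^\top \sech^2\iprod{\theta}{Y}] \preceq e^{-c'\|\theta_*\|^2} I$ when $\theta$ is exactly $\theta_*$, using $Y = X\theta_* + Z$ and the fact that $\sech^2$ is small on the bulk of $\iprod{\theta_*}{Y}$; the operator norm is controlled by testing against unit vectors and splitting into the direction along $\theta_*$ (where the $\sech^2$ damping kicks in) and the orthogonal directions (where $\iprod{\theta}{Y}$ is independent of the orthogonal components of $Y$, so the matrix restricted there is $\Expect[\sech^2\iprod{\theta}{Y}] I_{d-1}$, again damped). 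Then I would extend this to all $\theta$ with $\ell(\theta,\theta_*)\le\delta$ by a Lipschitz/perturbation argument: $\theta \mapsto \Expect[YY^\top \sech^2\iprod{\theta}{Y}]$ is smooth and $\delta = \tilde O((d/n)^{1/4})$ is small, so the population bound degrades only by $O(\delta)$, absorbed into the constant.

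Finally, I would upgrade the population bound to the empirical one. I need $\|\Expect_n[YY^\top \sech^2\iprod{\theta}{Y}] - \Expect[YY^\top \sech^2\iprod{\theta}{Y}]\|_{\rm op} = \tilde O(\sqrt{d/n})$ uniformly over the relevant $\theta$'s, which is $o(1)$ and hence negligible against the constant gap $1 - e^{-c\|\theta_*\|^2} \gtrsim \|\theta_*\|^2 \gtrsim \sqrt{d\log^3 n/n}$ — wait, here is where care is needed, since the gap $\|\theta_*\|^2$ can be as small as $\sqrt{d\log^3 n/n}$, which is \emph{smaller} than the naive $\sqrt{d/n}$ concentration error; so the concentration must itself exploit the $\sech^2$ damping to get an error of order $\|\theta_*\|^2 \cdot \sqrt{d\log n /n} \ll \|\theta_*\|^2$. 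This is the main obstacle: a standard matrix concentration bound (matrix Bernstein over a net of the $\theta$-neighborhood, with truncation of the heavy-tailed $YY^\top$ using the subgaussianity of $Y$ and the assumption $\|\theta_*\|\le r$) must be carried out in a way that the variance proxy carries a factor of $e^{-c\|\theta_*\|^2}$ or $\|\theta_*\|^2$ rather than $O(1)$, so that after dividing by the gap the relative error is still $o(1)$; the condition $n \ge C_1 d\log^3 d$ and $\|\theta_*\| \ge (C_2 d\log^3 n/n)^{1/4}$ is exactly calibrated so that $\tilde O(\sqrt{d\log n/n}) / \|\theta_*\|^2 = o(1)$. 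Once \prettyref{lmm:hessMLE} is in hand, \prettyref{thm:MLE} follows by iterating \prettyref{eq:EM-GD1}: for $t \ge T$ the bound \prettyref{eq:beforeMLE} places $\theta_t$ (and the MLE, which also satisfies such a bound since it is a near-optimal estimator) inside the neighborhood, so $\xi_t$ is too by convexity of the neighborhood, giving $\|\theta_{t+1}-\thetaMLE\| \le e^{-c\|\theta_*\|^2}\|\theta_t - \thetaMLE\|$ and hence geometric convergence to a unique fixed point, which must be $\thetaMLE$.
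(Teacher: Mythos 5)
Your architecture matches the paper's: write $I+\nabla^2\ell_n(\theta)=\Expect_n[YY^\top\sech^2\iprod{\theta}{Y}]$, which gives the PSD lower bound for free; bound the population matrix $J(\theta)=\Expect[YY^\top\sech^2\iprod{\theta}{Y}]$ by $e^{-c\|\theta_*\|^2}I$ uniformly over the $\delta$-neighborhood by splitting a test vector into components along and orthogonal to $\theta$ (using independence in the orthogonal directions and a change of measure in the parallel one); then control $\sup_\theta\opnorm{J_n(\theta)-J(\theta)}$ by a net argument plus Bernstein for the subexponential variables $\iprod{u}{Y}^2\sech^2\iprod{\theta}{Y}$. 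The gaps are quantitative, and both sit exactly where an error term must be compared against the spectral gap $1-e^{-c\|\theta_*\|^2}\asymp\|\theta_*\|^2$, which can be as small as $\sqrt{d\log^3n/n}$, i.e.\ of order $\delta^2\ll\delta$. First, your perturbation step (``the population bound degrades only by $O(\delta)$, absorbed into the constant'') fails on its face: an additive loss of order $\delta$ swamps a gap of order $\delta^2$. The loss has to be shown to be $O(\|\theta_*\|\delta)$, not $O(\delta)$; this is what the paper's computation produces (its term (II) is bounded by $2\|\theta_*\|\delta e^{\delta^2/2}$ via $\tanh x\le x$), after which $\delta\le c'\|\theta_*\|$, guaranteed by taking $c$ small relative to $C_2$, only degrades the exponent from $\|\theta_*\|^2/4$ to $\|\theta_*\|^2/16$. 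A generic ``the map is smooth in $\theta$'' argument with an $O(1)$ Lipschitz constant does not suffice.

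Second, your treatment of the empirical-versus-population step is internally inconsistent, and the fix you propose would fail. The comparison you flag is backwards: $\|\theta_*\|^2\gtrsim\sqrt{C_2\, d\log^3n/n}$ is \emph{larger} than the naive uniform concentration error $\sqrt{C_0\, d\log n/n}$, by a factor of order $\log n$ (plus a large constant), so the plain net-plus-Bernstein bound is exactly enough — this is what the paper does, and your own closing sentence about the calibration $\tilde O(\sqrt{d\log n/n})/\|\theta_*\|^2=o(1)$ is the correct statement, contradicting the ``obstacle'' you raise just before it. Worse, the strengthened concentration you propose as the fix, with error of order $\|\theta_*\|^2\sqrt{d\log n/n}$ coming from a $\sech^2$-damped variance proxy, is simply false: when $\|\theta\|\approx\|\theta_*\|$ is small (e.g.\ of order $n^{-1/8}$), $\sech^2\iprod{\theta}{Y}\approx1$ and $u^\top(J_n(\theta)-J(\theta))u$ is essentially the fluctuation of $\Expect_n[\iprod{u}{Y}^2]$, which is of order $n^{-1/2}$ per direction and $\sqrt{d/n}$ in operator norm regardless of $\|\theta_*\|$ — strictly larger than the bound you would need to prove. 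So carried out as written, your concentration step cannot be established; the correct route is the unweighted $\sqrt{d\log n/n}$ bound, beating the gap through the extra $\log^2 n$ built into the assumption $\|\theta_*\|^4\ge C_2\, d\log^3n/n$.
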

We now apply \prettyref{lmm:hessMLE}	to show the convergence of $\theta_t$ to $\thetaMLE$. 
To apply \prettyref{lmm:hessMLE}, we first need some crude guarantee on the MLE. 
The results of \cite{ho2016convergence} show that (cf.~\cite{DWYZ19}) with probability at least $1-\exp(-cd \log^2 n)$, 
$H(P_{\thetaMLE},P_{\theta_*}) \leq (C \frac{d \log^2 n}{n})^{1/2}$ and $\ell(\thetaMLE,\theta_*) \le (C \frac{d \log^2 n}{n})^{1/4}$ for some universal constants $c,C$.

	Since $\thetanorm > 2 \delta$ for all sufficiently large $n$, 
on the event that $\ell(\thetaMLE,\theta_*) \leq \delta$ and $\ell(\theta_T,\theta_*) \leq \delta$, 	
	$\theta_T$ and $\thetaMLE$ must both belong to exactly one of the two balls $B(\theta_*,\delta)$ and $B(-\theta_*,\delta)$. WLOG, assume the former.
	Taking norms on both sides of \prettyref{eq:EM-GD1} and applying \prettyref{lmm:hessMLE}, 
	we have
	\[
	\|\theta_{T+1}-\thetaMLE\| \leq e^{-c\|\theta_*\|^2} \|\theta_T-\thetaMLE\|, 	
	\]
	and hence \prettyref{eq:mle-converge} follows, which, 
	in particular,  implies the convergence of $\{\theta_t\}$ and the uniqueness of $\thetaMLE$.

\section{Discussions and open problems}
	\label{sec:discuss}
	
	We conclude this paper by discussing some technical aspects of the results and related or open problems:
	
	\paragraph{Small initialization}
	In this paper, we showed that the EM algorithm achieves the near-optimal rate and converges to the MLE when the direction of the initialization $\theta_0$ is uniform on the sphere and $\theta_0$ is sufficiently close to zero, specifically, $\|\theta_0\| = \Theta((\frac{d}{n} \log n)^{1/4})$ (cf.~\prettyref{thm:phase1}).
	Computationally speaking, using a small initialization does not compromise the needed number of iterations as the signal grows rapidly according to \prettyref{eq:alphagrow0} in the initial Phase I. 
	Technically speaking, the main reason for using a small initialization in the proof is to ensure the orthogonal component $\beta_t$ stays within the near-optimal rate throughout the entire trajectory, as shown in \prettyref{thm:main-beta}. An added bonus is that the signal component $\alpha_t$ converges monotonically; as demonstrated in \prettyref{fig:EMpop}, this can fail for large initialization.
	We conjecture that the same result applies to $\|\theta_0\| =\Theta(1)$. Proving such a result entails a refined analysis of the initial phase since $\alpha_t$ initially decays due to $\beta_t$ being as large as a constant (see \prettyref{fig:non-monotone}).

	%
	%
	
	\paragraph{Extensions}
	In this paper we considered the simple symmetric 2-GM model. It is of great interest to understand the performance or limitations of EM algorithms in more general Gaussian mixture models, e.g., multiple components, unknown covariance matrix, asymmetric and unknown weights, and, more generally, location-scale mixtures.
	The optimal and adaptive rates of location mixtures in one dimension were obtained in \cite{HK2015} and shown to be achieved by the generalized method of moments \cite{WY18}. It remains open whether the corresponding EM algorithm achieves competitive performance.
	One immediate hurdle is the existence of bad fixed points, which can exist for population EM for $3$-GM even in one dimension	\cite{jin2016local}.
	
	Beyond Gaussian mixture models, statistical problems with missing data, and other latent variable models such as 	mixture of regression  and 
	alignment problems in cryo-EM \cite{sigworth2010introduction} are major avenues where EM algorithm are applied. 
	Promising results have been obtained recently in \cite{BWY17,kwon2018global}, although finite-sample finite-iteration guarantees and analysis for random initializations are still lacking.

The present paper concerns analyzing EM algorithm for the purpose of parameter estimation. For the related problem of \emph{classification}, that is, recovering the labels of each sample with small error rate, we refer to	the recent work on Lloyd's algorithm \cite{lu2016statistical} and optimal rates \cite{ndaoud2018sharp}.
It remains open to understand the performance of EM algorithm for clustering and whether it achieves the optimal rates.

\section{Proofs in \prettyref{sec:onedim}}
\label{sec:pf-onedim}

\subsection{Proofs of \prettyref{thm:main-1D} and \prettyref{cor:em1d}}
	\begin{proof}[Proof of \prettyref{thm:main-1D}]
\emph{Step 1.} We show that 
\begin{equation}
\theta_t\leq\otheta_t
\label{eq:sandwich1}
\end{equation}
by induction on $t$.
The base case of $t=0$ is clearly true.
Assume that \prettyref{eq:sandwich1} holds for $t$.
Then
\begin{align*}
\theta_{t+1}
= & ~ f(\theta_t)+\Delta_n(\theta_t) \\
\leq & ~ f(\theta_t)+ w_n \theta_t \\
\leq & ~ f(\otheta_t)+ w_n \otheta_t = \otheta_{t+1},
\end{align*}
where we used the fact that $\theta \mapsto f(\theta) + w_n \theta$ is increasing on $\reals_+$.

\medskip
\emph{Step 2.} We show that $\otheta_t \leq C_1$ for all $t$ for some constant $C_1$.
This simply follows from the fact that $f$ is bounded. By \prettyref{lmm:fprop} and the assumption $\theta_*\leq r$,
\[
\otheta_{t+1} = f(\otheta_t) + w_n \otheta_t \leq 1+r + w_n \otheta_t,
\]
where $w_n \leq c_0 \leq \frac{1}{2}$ on the event \prettyref{eq:event}.
Setting $C_1=2(1+r)$ and letting $n \geq 4 C_0^2$, the proof follows from induction on $t$.

\medskip
\emph{Step 3.}
We show that 
\begin{equation}
\theta_t\geq\utheta_t \geq 0,
\label{eq:sandwich2}
\end{equation}
by induction on $t$. The base case of $t=0$ is clearly true.
Assume that \prettyref{eq:sandwich2} holds for $t$.
Then
\begin{align*}
\theta_{t+1}
\geq & ~ f(\theta_t)- w_n \theta_t \\
\geq & ~ f(\utheta_t)- w_n \utheta_t = \utheta_{t+1},
\end{align*}
where we used the fact $C_1\geq \theta_t \geq  \utheta_t$ as shown in the previous step and $\theta \mapsto f(\theta) - w_n \theta$ is increasing on $[0,C_1]$. To see this, note that $f(\theta)$ is concave on $\reals_+$. Therefore 
$f'(\theta) \geq f'(C_1) \geq w_n$ which holds on the event \prettyref{eq:event} provided that $c_w \leq f'(C_1)$.
Finally, $\utheta_{t+1} \geq 0$ follows again from monotonicity and $\utheta_t \geq 0$.
This completes the proof of \prettyref{eq:sandwich}.

\medskip
\emph{Step 4.} 
Next we prove the convergence of $\{\otheta_t\}$ to $\otheta$. 
Recall $q(\theta)=\frac{f(\theta)}{\theta}$ from \prettyref{lmm:fprop}, which is a decreasing function on $\reals_+$.
By definition, we have
\begin{equation}
q(\otheta) = 1-w_n.
\label{eq:qotheta}
\end{equation}
Furthermore, we have, crucially, $f(\theta) + w_n\theta \gtrless \theta$ if $\theta \lessgtr \otheta$. 
Therefore, $|\otheta_{t+1} - \otheta| < |\otheta_t - \otheta|$ and hence $\otheta_t \to \otheta$ as $t\diverge$.
Similarly, if $\theta_*^2 \geq w_n$, then we have $\utheta_t \to \utheta$; 
if $\theta_*^2 < w_n$, then $\utheta = 0$ by definition and we have $\liminf \utheta_t \geq \utheta$.

\medskip
\emph{Step 5.}
Finally, we show \prettyref{eq:sandwich-accuracy}.
Recall $q(\theta)=\frac{f(\theta)}{\theta}$ from \prettyref{lmm:fprop}.
If $\theta_*^2 \geq w_n$, by definition \prettyref{eq:otheta}--\prettyref{eq:utheta}, we have
\begin{align*}
q(\otheta) = &~ 1-w_n \\
q(\utheta) = &~ 1+w_n \\
q(\theta_*) = &~ 1.
\end{align*}
If $\theta_*^2 \leq w_n$, then $\utheta=0$ by definition. 
In both cases, since $q$ is decreasing on $\reals_+$ by \prettyref{lmm:fprop}, we have
\[
\utheta\leq\theta_*\leq\otheta.
\]
Furthermore, since $\theta_* \in [0,r]$, by \prettyref{eq:qtheta}, for all $\theta \in [0,C_1]$,
\begin{equation}
q'(\theta) \ \leq - \frac{2\theta}{3} \expect{\frac{Y^4}{\cosh^2(\theta Y)}} \leq - C_4 \theta 
\label{eq:qp}
\end{equation}
where $C_4$ is a constant that depends on $r$ (recall $C_1=2r+1$).

Let $\oepsilon =\otheta-\theta_*$. Then
\begin{align*}
-w_n 
= & ~  q(\theta_*+\oepsilon ) -q(\theta_*) = \int^{\theta_*+\oepsilon}_{\theta_*} q'(\tau) d\tau \\
\overset{\prettyref{eq:qp}}{\leq} & ~ - \frac{C_4}{2} ((\theta_*+\oepsilon)^2-\theta_*^2)	= - \frac{C_4}{2} (2\theta_* \oepsilon + {\oepsilon}^2).
\end{align*}
Hence
\begin{equation}
0 \leq \oepsilon \leq \min\sth{\frac{w_n}{C_4 \theta_* }, \sqrt{\frac{2w_n}{C_4}}}
\leq C_3 \min\sth{\frac{w_n}{\theta_* }, \sqrt{w_n}}.
\label{eq:eps+}
\end{equation}

Similarly, let $\uepsilon =\theta_*-\utheta$. Then $0 \leq \uepsilon \leq \theta_*$. 
Furthermore, if $\theta_*^2 \geq w_n$,
\begin{align*}
w_n 
= & ~  q(\theta_*-\uepsilon) -q(\theta_*) = \int_{\theta_*-\uepsilon}^{\theta_*} -q'(\tau) d\tau \\
\overset{\prettyref{eq:qp}}{\geq} & ~ \frac{C_4}{2} (\theta_*^2 - (\theta_*-\uepsilon)^2)	= \frac{C_4}{2} (2\theta_*- \uepsilon) \uepsilon
\geq \frac{C_4}{2} \theta_* \uepsilon.
\end{align*}
Hence
\begin{equation}
0 \leq \uepsilon \leq \min\sth{\theta_*, \frac{2w_n}{C_4 \theta_* }} \leq C_5
\min\sth{\frac{w_n}{\theta_* }, \sqrt{w_n} }.
\label{eq:eps-}
\end{equation}
If $\theta_*^2 < w_n$, since $\uepsilon\leq \theta_*$, then \prettyref{eq:eps-} holds automatically. Thus, combining \prettyref{eq:eps+} and \prettyref{eq:eps-} yields
\begin{equation}
		\theta_* - \epsilon \leq \utheta \leq \liminf_{t\to\infty} \theta_t  \leq \limsup_{t\to\infty} \theta_t \leq \otheta \leq  \theta_*  + \epsilon,
		\label{eq:sandwich-accuracy-lim}
		\end{equation}
		where 
		$\epsilon\triangleq C_6 \min\{\frac{w_n}{\theta_* }, \sqrt{w_n}\}$.

\medskip
\emph{Step 6.} Finally, we provide a finite-iteration version of \prettyref{eq:sandwich-accuracy-lim}.
In view of the sandwich inequality \prettyref{eq:sandwich}, it suffices to determine the convergence rate of $\{\otheta_t\}$ and $\{\utheta_t\}$.
Consider two cases separately.

\textbf{Case I:  $\theta_*^2 \leq 2w_n$.}
Let $\oepsilon_t = \otheta_t-\otheta$. 
If $\oepsilon_t \leq 0$, then we have 
$0 \leq \theta_t \leq \otheta_t \leq \otheta \leq \theta^*+\epsilon \lesssim n^{-1/4}$, which is already within the optimal rate of convergence.
So it suffices to consider $\oepsilon_t \geq 0$, i.e., $\otheta_t$ converging to $\otheta$ from above. Then
\begin{align}
\oepsilon_{t+1}
= & ~ \otheta_t \pth{q(\otheta_t) + w_n} -\otheta 	\nonumber \\
\overset{\prettyref{eq:qotheta}}{=} & ~ \oepsilon_t	+ \otheta_t [q(\otheta_t)-q(\otheta)] \nonumber\\
\overset{\prettyref{eq:qp}}{\leq} & ~ \oepsilon_t - C_6 (\oepsilon_t+\otheta) (\otheta \oepsilon_t + (\oepsilon_t)^2) \nonumber\\
\leq & ~ \oepsilon_t - C_6 ((\otheta)^2 \oepsilon_t + (\oepsilon_t)^3) \label{eq:epst+}\\
\leq & ~ \oepsilon_t - C_6' (\oepsilon_t)^3
\end{align}
where $C_6'=\min\{C_6,\frac{1}{r_0^2}\}$. Next we apply \prettyref{lmm:rate} with $h(x) = C_6' x^3$ to the sequence $\{\oepsilon_t\}$, which satisfies $h(x) < x$ for all $x \in (0,\epsilon_0^+)$, since $\epsilon_0^+ \leq \theta_0\leq r_0$. We have $G(x) = \int_x^{r_0} \frac{1}{h(\tau)}d\tau = C_7 (\frac{1}{x^2}-\frac{1}{r_0^2})$, we conclude that
\[
\oepsilon_t \leq \frac{1}{\sqrt{t/C_7+1/r_0^2}} \leq \sqrt{\frac{C_7}{t}}.
\]
Thus for all $t \geq C_7/w_n$, we have 
$\oepsilon_t \leq \sqrt{w_n}$ and hence $|\otheta_t - \theta_*| \lesssim \sqrt{w_n}$.

\textbf{Case II:  $\theta_*^2 \geq 2w_n$.}
Let $\oepsilon_t = \otheta_t-\otheta$.
First assume $\oepsilon_t \geq 0$, in which case $\oepsilon_t$ converges to zero from above.
Since $\theta_* \gtrsim \sqrt{w_n}$, we have $\utheta \asymp \otheta \asymp \theta_*$.
Continuing from \prettyref{eq:epst+}, we conclude that 
$\epsilon_{t+1}^+ \leq (1 - C_8 \theta_*^2)\oepsilon_t$
Therefore for all sufficiently large $n$, as soon as $t \geq C_8' \frac{\log n}{\theta_*^2}$, we have  $\theta_t - \theta_* \leq \oepsilon_t \leq \frac{1}{\theta^* \sqrt{n}}$. 
Similarly, if $\oepsilon_t \leq 0$, we have $\oepsilon_{t+1} \geq \oepsilon_t (1-C_8 \theta_*^2)$, which converges to zero from below.

Next we analyze the convergence rate of $\{\utheta_t\}$. 
 Let $\uepsilon_t = \utheta-\utheta_t$. 
We only consider the case of $\uepsilon_t \geq 0$ as the other case is entirely analogous.
Since $f(\theta) - w_n \theta > \theta$ if and only if $\theta < \utheta$, we have
$\utheta_t \to \utheta$ from below and $\uepsilon_t$ is a decreasing positive sequence.
Let $c_0= \frac{1}{200 \sqrt{3+r^4}}$.
Consider two cases:

\textbf{Case II.1:  $\utheta_t \geq c_0 \theta_*$.}
Entirely analogous to \prettyref{eq:epst+}, we have
\begin{align}
\uepsilon_{t+1}
= & ~ \uepsilon_t	- \utheta_t [q(\utheta_t)-q(\utheta)] \nonumber\\
\leq & ~ \uepsilon_t -  C_6  (\utheta)^2 \uepsilon_t \nonumber \\
\leq & ~ \uepsilon_t \pth{1 - C_9 \theta_*^2}. \label{eq:epst-}
\end{align}
Since $\epsilon_0^- = \utheta-\theta_0 \leq \theta_* \leq r$, 
for all sufficiently large $n$, as soon as $t \geq C_9' \frac{\log \frac{1}{w_n}}{\theta_*^2}$, we have  $\theta_t - \theta_* \geq - \uepsilon_t \geq - \frac{w_n}{\theta^* }$.

\textbf{Case II.2:  $0<\utheta_t \leq c_0\theta_*$.}
Recall from  \prettyref{lmm:fprop} that $f(0)=f''(0)=0$ and $f'(0)=1+\theta_*^2$.
Furthermore, $f'''(\theta) = \Expect[Y^4 \tanh'''(\theta Y)]$. Since $|\tanh'''| \leq 2$, we have for all $\theta$, 
\begin{equation}
|f'''(\theta)| \leq 2 \Expect[Y^4] \leq 16(3+r^4).
\label{eq:fppp}
\end{equation}
Therefore the Taylor expansion of $f$ at zero yields
\begin{align*}
\utheta_{t+1}
= & ~ f(\utheta_t)  - w_n \utheta_t \geq \pth{1+ \theta_*^2 - w_n - \frac{16(3+r^4)}{6} c_0^2 \theta_*^2  } \utheta_t
\geq \pth{1+ \frac{\theta_*^2 }{4} } \utheta_t,
\end{align*}
where the last inequality is by the choice of $c_0$.
Therefore in at most $ \frac{C_{11}}{\theta_*^2} \log \frac{\theta_*}{\theta_0}$ iterations, we have $\utheta_t \geq c_0\theta_*$ which enters the previous Case II.1.

In summary, for all $t \geq \frac{C_{12}}{\theta_*^2} \log \frac{\theta_*}{\theta_0 w_n}$, we have $|\theta_t - \theta_*| \lesssim \frac{w_n}{\theta^*}$.
\end{proof}

\begin{proof}[Proof of \prettyref{cor:em1d}]
An inspection of the proof of \prettyref{thm:main-1D} shows that the guarantees in \prettyref{eq:sandwich-accuracy} and \prettyref{eq:sandwich-T} apply if $w_n$ is replaced by any upper bound thereof, which we choose to be $\max\{w_n,\frac{1}{\sqrt{n}}\}$. Then on the event $E$ defined in \prettyref{eq:event}, we have
\begin{equation}
		\ell(\theta_t,\theta_*) \leq  \tau_2 \min\sth{\frac{\max\{w_n,\frac{1}{\sqrt{n}}\}}{\theta_* }, \sqrt{\max\sth{w_n,\frac{1}{\sqrt{n}}}}}
		\end{equation}
		holds for all $t$ satisfying \prettyref{eq:em1d-finitet-T}.
Taking expectation and using \prettyref{eq:W1-concentrate1} and Jensen's inequality, we have
	\[
		\Expect[\ell(\theta_t,\theta_*) \Indc_{E}] \leq  \tau_2 \min\sth{\frac{1}{\theta_* \sqrt{n}}, \frac{1}{n^{1/4}}},
	\]
	where the high-probability event $E$ is in \prettyref{eq:event}.
		Finally, by definition of the EM map, we have $|\theta_t| \leq \|f_n\|_\infty \leq \Expect_n |Y|$ and hence $|\ell(\theta_t,\theta_*)| \leq r + \Expect_n |Y|$. Therefore by the Cauchy-Schwarz inequality, we have
		\[
		\Expect[\ell(\theta_t,\theta_*) \Indc_{E^c}] \leq \sqrt{\prob{E^c}} \sqrt{\Expect[(r + \Expect_n |Y|)^2]} \overset{\prettyref{eq:W1-concentrate2}}{\leq} C \exp(-c n^{1/3})
		\]
		for some constants $c,C$ depending on $\thetaub$. 		Combining the previous two displays yields the desired \prettyref{eq:em1d-finitet}.		
\end{proof}

\subsection{Proof of \prettyref{lmm:fprop}}
\begin{proof}
	\begin{enumerate}
		\item By definition,
		\begin{align*}
		f'(\theta) 		= & ~ 	\Expect[Y^2 \tanh'(\theta Y)] = \expect{\frac{Y^2}{\cosh^2(\theta Y)}} \geq 0 \\
		f''(\theta) 		= & ~ 	\Expect[Y^3 \tanh''(\theta Y)] = -2 \expect{\frac{Y^3 \tanh(\theta Y)}{\cosh^2(\theta Y)}}.
		\end{align*}
		\item Clearly $f''(\theta)$ is negative (resp.~positive) when $\theta$ is  positive (resp.~negative).
		\item $f(0)=f''(0)=0$ by definition, $f'(0)=\Expect[Y^2]$ and 
		\begin{align*}
		f'(\theta_*)
		= & ~ \expect{\frac{Y^2}{\cosh^2(\theta_* Y)}}  \\
		= & ~ \expect{\frac{Z^2}{\cosh(\theta_* Z)}} \exp(-{\theta_*}^2/2) \qquad Z\sim N(0,1) \\
		\leq & ~ \expect{Z^2} \exp(-{\theta_*}^2/2) = \exp(-{\theta_*}^2/2),
		\end{align*}
		where the second equality follows from a change of measure from $Y$ to $Z$ (cf.~\prettyref{lmm:com}).
		
		\item The monotonicity of $q$ simply follows from the concavity of $f$ on $\reals_+$ and $f(0)=0$.		By the symmetry of the distribution of $Y$, we have
		\[
		q'(\theta) = - \expect{\frac{Y \sinh(2\theta Y)-2\theta Y^2}{2\theta^2 \cosh^2(\theta Y)} \Big| Y \geq 0} 
		\leq  - \frac{2\theta}{3} \expect{\frac{Y^4}{\cosh^2(\theta Y)} \Big| Y \geq 0}
		\]
		where we used the fact that $\sinh(x) \geq x + x^3/6$ for $x\geq 0$; (b) follows from $\cosh \geq 1$ and Jensen's inequality.
	\end{enumerate}
\end{proof}

\section{Proofs in \prettyref{sec:concentration}}
	\label{sec:pf-concentration}

\begin{proof}[Proof of \prettyref{thm:concentration}]
First of all, by definition, we have
\[
\|f_n(\theta)\| = \|\Expect_n[Y \tanh \iprod{\theta}{Y}]\| \leq \Expect_n[\|Y\|] \leq \sqrt{\Expect_n[\|Y\|^2]}.
\]
Define the event
\[
E_2 = \{\Expect_n[\|Y\|^2] \leq 2\|\theta_*\|^2 + 10d 	\}.
\]
Since $\Expect_n[\|Y\|^2] \leq 2 \|\theta_*\|^2 + 2 \Expect_n[\|Z\|^2]$, where $n\Expect_n[\|Z\|^2]\sim\chi^2_{nd}$.
By the $\chi^2$ tail bound \prettyref{eq:LM} in \prettyref{app:lemma}, 
\begin{equation}
\prob{E_2} \geq 1 - \exp(-nd).
\label{eq:PE2}
\end{equation}

Next, we show that with probability at least $1- \exp(-c_0 d \log n)$,
\[
\|\Delta_n(\theta)\| \leq  C_0 \|\theta\| (1+r) \sqrt{\frac{d}{n} \log n}
\]
for all $\theta\in B(R)$.

Let $Y,Y_1,\ldots,Y_n\iiddistr P_{\theta_*}$.
Let $\calC \subset S^{d-1}$ be an $\epsilon$-covering of $S^{d-1}$ in Euclidean distance, where $\epsilon \leq \frac{1}{2}$ is to be specified later. It is well-known (cf.~\cite{Vershynin-HDP}) that $\calC$ can be chosen so that $|\calC| \leq (1+\frac{2}{\epsilon})^d \leq (\frac{3}{\epsilon})^d$. Furthermore, for any $y\in\reals^d$, 
\[
\|y\| \leq \frac{1}{1-\epsilon}\max_{u \in \calC} \iprod{u}{y}
\]
and hence
\[
\|\Delta_n(\theta)\| \leq 2\max_{u \in \calC} \Expect [\iprod{u}{Y} \tanh\iprod{\theta}{Y}]-\Expect_n [\iprod{u}{Y} \tanh\iprod{\theta}{Y}]
\]

For each $\theta\in\reals$, there exists $v \in \calC$ such that $\|\|\theta\|v - \theta\| \leq \epsilon \|\theta\|$.
For any $u\in\calC$, using Cauchy-Schwarz and the fact that $\tanh$ is 1-Lipschitz, we have
\begin{align*}
& ~ |\Expect [\iprod{u}{Y} \tanh(\iprod{\theta}{Y})] - \Expect [\iprod{u}{Y} \tanh(\|\theta\|\iprod{v}{Y})]| \\
\leq & ~ |\Expect [|\iprod{u}{Y}| \iprod{\theta-\|\theta\|v}{Y}]	\leq \Expect[\|Y\|^2] \|u\| \|\theta-\|\theta\|v\| \leq \epsilon  \|\theta\| \Expect[\|Y\|^2].
\end{align*}
Similarly,
\begin{align*}
|\Expect_n [\iprod{u}{Y} \tanh(\iprod{\theta}{Y})] - \Expect_n [\iprod{u}{Y} \tanh(\|\theta\|\iprod{v}{Y})]| 
\leq  \epsilon \Expect_n[\|Y\|^2] \|\theta\|. 
\end{align*}
Therefore
\begin{align*}
\|\Delta_n(\theta)\| 
\leq  & ~  2\max_{u,v \in \calC} \Big|\Expect [\iprod{u}{Y} \tanh(\|\theta\|\iprod{v}{Y})]-\Expect_n [\iprod{u}{Y} \tanh(\|\theta\|\iprod{v}{Y})]\Big|\\
 & ~ + \epsilon \|\theta\| ( \Expect[\|Y\|^2] + \Expect_n[\|Y\|^2]),
\end{align*}
and hence
\begin{align*}
 \sup_{0<\|\theta\| \leq R}
\frac{\|\Delta_n(\theta)\|}{\|\theta\|}
\leq & ~  2\max_{u,v \in \calC} \sup_{0<a\leq R}
\underbrace{\frac{1}{a} \Big|\Expect [\iprod{u}{Y} \tanh(a \iprod{v}{Y})]-\Expect_n [\iprod{u}{Y} \tanh(a \iprod{v}{Y})]\Big|}_{\triangleq F(u,v,a)} \\
& ~ 	+ \epsilon ( \Expect[\|Y\|^2] + \Expect_n[\|Y\|^2]),
\end{align*}
where $\Expect[\|Y\|^2] = d + \thetanorm^2 \leq d + r^2$.
Consider two cases separately:

\textbf{Case I: $0<a\leq \epsilon$.}
Since $|\tanh'|\leq 1$ and $|\tanh''|\leq 1$ everywhere, we have
$|\frac{1}{a} \Expect [\iprod{u}{Y} \tanh(a \iprod{v}{Y})] - \Expect[\iprod{u}{Y} \iprod{v}{Y})]| \leq \epsilon \Expect[|\iprod{u}{Y}| \iprod{v}{Y}^2] \leq 
\epsilon \Expect[\|Y\|^3]$, and similarly, 
$|\frac{1}{a} \Expect_n[\iprod{u}{Y} \tanh(a \iprod{v}{Y})] - \Expect_n[\iprod{u}{Y} \iprod{v}{Y})]| \leq \epsilon \Expect_n[\|Y\|^3]$.
Therefore
\begin{align*}
 \sup_{0<a\leq \epsilon}F(u,v,a)
\leq \big|\Expect[\iprod{u}{Y} \iprod{v}{Y})] - \Expect_n[\iprod{u}{Y} \iprod{v}{Y})] \big|  + \epsilon (\Expect[\|Y\|^3]+\Expect_n[\|Y\|^3]).
\end{align*}
For any $u,v\in\calC$, note that 
\[
\iprod{u}{Y} \iprod{v}{Y} = 
\iprod{u}{\theta_*} \iprod{v}{\theta_*} + \iprod{XZ}{\iprod{u}{\theta_*}v+\iprod{v}{\theta_*}u} + \iprod{u}{Z} \iprod{v}{Z}.
\]
Since $\|\theta_*\|\leq r$ by assumption and $\|\iprod{u}{Z} \iprod{v}{Z}\|_{\psi_1} \leq \|\iprod{u}{Z}\|_{\psi_2} \|\iprod{v}{Z}\|_{\psi_2} =1$ (cf.~\cite[Lemma 2.7.7]{Vershynin-HDP}), we conclude that $\iprod{u}{Y}\iprod{v}{Y}$ is $C_2(r+1)$-subexponential 
By Bernstein's inequality (cf.~\cite[Theorem 2.8.1]{Vershynin-HDP}), for any $b$ such that $b d \log n \leq n$,
\begin{equation}
\prob{\big|\Expect[\iprod{u}{Y} \iprod{v}{Y})] - \Expect_n[\iprod{u}{Y} \iprod{v}{Y})] \big| \geq (1+r) \sqrt{\frac{b d \log n}{n}}} 
\leq \exp(- c b d \log n),
\label{eq:ber1}
\end{equation}
where $c$ is some absolute constant. Furthermore, $\Expect[\|Y\|^3] \leq C_4 (r + \sqrt{d})^3$,
and $\Expect_n[\|Y\|^3] \leq \max_{i\in[n]} \|Y_i\|^3$.
Since $n \geq d \log d$, 
$\prob{\|Y_i\| \geq \sqrt{n}} \leq \exp(- c n)$.
Therefore by the union bound, $\Expect_n[\|Y\|^3] \leq  n^{3/2}$ with probability at least $1-\exp(- c' n)$.

\textbf{Case II: $\epsilon \leq a \leq R$.}
Let $\calR$ be an $\epsilon^2$-net for the interval $[\epsilon,R]$, so that for any $a \in [\epsilon,R]$, there exists $a' \in \calR$ such that $|a-a'|\leq \epsilon^2$. Then
$|\frac{1}{a}\Expect [\iprod{u}{Y} \tanh(a \iprod{v}{Y})] - \frac{1}{a'}\Expect [\iprod{u}{Y} \tanh(a' \iprod{v}{Y})]| 
\leq 2 \frac{|a-a'|}{a} \Expect[|\iprod{u}{Y}\iprod{v}{Y}|] \leq 2 \epsilon \Expect[\|Y\|^2]$. Therefore
\[
\sup_{\epsilon \leq a \leq R} F(u,v,a) 
\leq 
\max_{a \in\calR} F(u,v,a)  + 2 \epsilon (\Expect[\|Y\|^2]+\Expect_n[\|Y\|^2]).
\]
For any $u,v \in \calC$ and $a \in \calR$, 
$|\frac{\Iprod{u}{Y} \tanh(a\Iprod{v}{Y})}{a}| \leq |\Iprod{u}{Y}||\Iprod{v}{Y}|$. 
Therefore $|\frac{\Iprod{u}{Y} \tanh(a\Iprod{v}{Y})}{a}|$ is $C_2(1+r)$-subexponential.
Again by Bernstein's inequality, we have
\begin{equation}
\prob{|F(u,v,a)| \geq (1+r) \sqrt{\frac{b d \log n}{n}}} \leq \exp(- c b d \log n).
\label{eq:bern2}
\end{equation}

Set $\epsilon = n^{-4}$ so that $|\calC| \leq (3n^4)^{d}$ and $|\calR| \leq R n^4$. 
Applying the union bound to both cases and choosing a sufficiently large constant $b$ completes the proof.
\end{proof}

\section{Proofs in \prettyref{sec:ddim}}
	\label{sec:pf-ddim}

		\subsection{Proofs of Theorems \ref{thm:main-beta}--\ref{thm:main-alpha-large}}
	\label{sec:pf-ddim-thm}

Throughout this section denote for brevity $s\triangleq \thetanorm$.

\begin{proof}[Proof of \prettyref{thm:main-beta}]
	We first show that the sequence $\{\alpha_t,\beta_t\}$ is bounded.
	By assumption, $\omega \leq \frac{1}{2}$ and $\|\theta_0\| \leq 1$ by \prettyref{eq:theta0-bdd}.
	Using the bounded property of the $F$ and $G$ maps in  \prettyref{lmm:FG} and induction on $t$, we have	
		\begin{equation}
		|\alpha_t| \leq \Gamma, \quad 0  \leq \beta_t  \leq \Gamma		
		\label{eq:ab-bounded}
		\end{equation}
		where $\Gamma = 2(\|\theta_*\|+\sqrt{2/\pi}) \leq 2r+2$.

		Combining \prettyref{eq:Gnew} and \prettyref{eq:beta-iterate}, we have
		\begin{align}
		\beta_{t+1}
		\leq & ~ 	\beta_t\pth{ 1 - \frac{\alpha_t^2+\beta_t^2}{2+4(\alpha_t^2+\beta_t^2)}}
+ \omega(|\alpha_t| + \beta_t) 
\end{align}
from which \prettyref{eq:beta-uncond2} follows.
To show \prettyref{eq:beta-uncond}, note that, in view of \prettyref{eq:ab-bounded}, we have
\begin{align}
		\beta_{t+1} \leq & ~ \beta_t\pth{ 1 - \frac{\alpha_t^2+\beta_t^2}{2+8\Gamma^2}}
+ \omega(|\alpha_t| + \beta_t)	\label{eq:betaraw1} \\
\leq  & ~ \beta_t (1+\omega) - \frac{\beta_t^3}{2+8\Gamma^2}
+ \sup_{0 \leq \alpha \leq \Gamma}\pth{\omega \alpha - \frac{\alpha^2 \beta_t}{2+8\Gamma^2}} \\
\leq  & ~ \beta_t (1+\omega) - \frac{\beta_t^3}{2+8\Gamma^2}
+ \min\sth{\frac{\omega^2 (2+8\Gamma^2) }{4\beta_t}, \omega \Gamma}. \label{eq:beta-uncond3}
		\end{align}		
Let $C_1=2+8\Gamma^2$. 
Let $\beta$ be any limiting point of the sequence $\{\beta_{t}\}$.
Taking limits on both sides we have
\[
\frac{\beta^3}{C_1}
\leq 
\omega \beta  + \frac{\omega^2 C_1}{4\beta}
\leq 
2 \pth{ \omega \beta  \vee \frac{\omega^2 C_1}{4\beta} }
\]
which implies that either $\beta \leq \sqrt{2C_1\omega}$ or $\beta \leq (\omega^2 C_1^2/2)^{1/4}$. So we conclude \prettyref{eq:beta-rate}.

	Finally, we prove \prettyref{eq:betanearzero}. We show by induction that there exists 
some constant $a$ depending only on $r$, such that $\beta_t \leq a \sqrt{\omega}$ for all $t \geq 0$. The base case is the assumption \prettyref{eq:theta0nearzero}.
Next, fix some constant $b$ to be specified and consider two cases:

\textbf{Case I: $\beta_t \leq b \omega$.}
From \prettyref{eq:beta-uncond}, we get
\begin{align*}
\beta_{t+1}
\leq \beta_t(1+\omega) - \frac{\beta_t^3}{C_1} + \omega \Gamma \leq \omega \pth{b+\omega + \Gamma} \leq a \sqrt{\omega},
\end{align*}
provided that $\sqrt{\omega} \leq \frac{a}{b+\omega + \Gamma}$.

\textbf{Case II: $b \omega \leq \beta_t  \leq a \sqrt{\omega}$.}
Again from \prettyref{eq:beta-uncond}, we get $\beta_{t+1} \leq h(\beta_t)$, where 
$h(\beta) \triangleq \beta(1+\omega) - \frac{\beta^3}{C_1} + \frac{\omega^2 C_1}{2\beta}$.
Note that 
$\frac{d}{d\beta} h(\beta) = 
1+\omega - \frac{3\beta^2}{C_1} - \frac{\omega^2 C_1}{2\beta^2} \geq 
 1 - \frac{C_1}{3b^2} +\omega (1- \frac{3a^2}{C_1}) \geq 0$, provided that
$\frac{C_1}{3b^2} \leq \frac{1}{2}$ and $\omega (1- \frac{3a^2}{C_1}) \geq -\frac{1}{2}$.
Therefore
\begin{align*}
\beta_{t+1} \leq \sup_{b\omega \leq \beta \leq a\sqrt{\omega}} h(\beta) \leq h(a\sqrt{\omega}) = 
a \sqrt{\omega}  + \omega^{3/2} \pth{ a - \frac{a^3}{C_1} + \frac{C_1}{2a}} \leq a \sqrt{\omega},
\end{align*}
provided that $\frac{a^3}{C_1} \geq 2a $ and $\frac{a^3}{C_1} \geq \frac{C_1}{a}$. Finally, choosing $a=2C_1$ and $b=C_1$, then the above conditions hold simultaneously 
as long as $\omega \leq c_0=c_0(r)$ for some small constant $c_0$.
\end{proof}

	\begin{proof}[Proof of \prettyref{thm:main-alpha-small}]

		It suffices to show \prettyref{eq:smalltheta} which, together with \prettyref{eq:beta-rate}, implies \prettyref{eq:theta-rate-smalltheta}.
	Combining \prettyref{eq:alpha+} with \prettyref{eq:F01} and \prettyref{eq:alpha-} with \prettyref{eq:F02}, we have		
	\begin{align}
\alpha_{t+1} \leq & ~ f(\alpha_t) + \Gamma |\alpha_t| \beta_t^2  + \omega (|\alpha_t|+\beta_t) \label{eq:alpha1} \\ 
\alpha_{t+1} \geq & ~ f(\alpha_t) - \Gamma |\alpha_t| \beta_t^2  - \omega (|\alpha_t|+\beta_t) \label{eq:alpha2}
\end{align}
with $\Gamma=1+s^2$.
Since $\thetanorm=s \leq s_0 \leq 1$, we have $\Gamma \leq 2$. 
Furthermore, in this case the constant $\kappa_2$ in \prettyref{eq:betanearzero} is also absolute.
Let $\alpha$ be any limiting point of $\{\alpha_t\}$. 
We show that $|\alpha| \leq 2 s_0$.
Assume for the sake of contradiction that $\alpha \geq 2 s_0$.
Sending $t\diverge$ in \prettyref{eq:alpha1} and in view of \prettyref{eq:betanearzero}, we have
	\begin{align}
\alpha
\leq & ~ f(\alpha) + C_3 (\alpha \omega  + \omega^{3/2}),
\label{eq:alpha3}
\end{align}
for some absolute constant $C_3$.
Let $q(\alpha) = \frac{f(\alpha)}{\alpha}$ be defined in \prettyref{eq:q} with $\theta_*$ replaced by $s$. As shown in  \prettyref{lmm:fprop}, $q$ is a decreasing function on $\reals_+$ with $q(s)=1$. 
Dividing both sides of \prettyref{eq:alpha3} by $\alpha$ leads to 
\begin{equation*}
1 \leq 
q(\alpha) + C_3 \pth{\omega + \frac{\omega^{3/2}}{\alpha}} 
\leq 
q(2 s_0) +  \frac{3 C_3}{2} \omega,
\end{equation*}
where the last inequality holds because of the assumption $s_0\geq \sqrt{\omega}$.
Furthermore, 
for all $\alpha \in[0,2]$, 
we have $q'(\alpha) \leq -C_4 \alpha$ for some absolute constant $C_4$.
Thus, $q(2 s_0) - 1 = \int_s^{2 s_0} q'(\alpha)d\alpha \leq - C_4 (4s_0^2- s^2) \leq -  3 C_4  s^2$.
Therefore we reach the desired contradiction that 
$q(2 s_0) + \frac{3C_3}{2} \omega \leq 1 -  C_4  s^2 + \frac{3C_3}{2} \omega < 1$, provided that $s^2  \geq \frac{3C_3}{2C_4}\omega$.
The proof is completed by taking $K= \max\{1, \sqrt{\frac{3C_3}{2C_4}}\}$.

For the other direction, if $\alpha < - 2 s_0$, then the above proof applies to \prettyref{eq:alpha2} with $\alpha$ replaced by $-\alpha$ and in view of the fact that $f(-\alpha)=-f(\alpha)$.	This completes the proof of \prettyref{eq:smalltheta}.

Finally, we show the second part for small initialization satisfying \prettyref{eq:theta0nearzero}.
We prove \prettyref{eq:smallthetaallt} by induction on $t$. The base case of $t=0$ follows from $\alpha_0 \leq \|\theta_0\| \leq \kappa_2 (\frac{d\log n}{n})^{1/4} \leq L K \sqrt{\omega} \leq L s_0$, 
provided that $L \geq \frac{\kappa_2}{K C_\omega^{1/4}}$, where both $\kappa_2$ and $C_\omega$ in \prettyref{eq:omega} are absolute constants since $\thetanorm \leq 1$ by assumption. 
Next, using \prettyref{eq:alpha1} and the argument that leads to \prettyref{eq:alpha3}, we have 
\[
\alpha_{t+1} \leq f(\alpha_t) + C_3 (\alpha_t \omega  + \omega^{3/2}).
\]
By the monotonicity of $f$, it suffices to show that $f(L s_0) + C_3 (L s_0 \omega  + \omega^{3/2}) \leq L s_0$.
To this end, recalling from \prettyref{eq:qp} and the fact that $q(0)=f'(0)=1+s^2 \leq 1+s_0^2$, we have
$q(\alpha) \leq 1+s_0^2 - C_4 \alpha^2/2$, where $C_4$ is absolute since $\thetanorm \leq 1$.
Thus
$f(\alpha)=\alpha q(\alpha) \leq \alpha (1+s_0^2) - C_4 \alpha^3/2$. Therefore using the assumption that $s_0 \geq K \sqrt{\omega}$, we have
$f(L s_0) + C_3 (L s_0 \omega  + \omega^{3/2}) = L s_0 + s_0(L - C_4 L^3/2 + C_3 (L/K^2+1/K^3)) \leq L s_0$, provided that $L$ exceeds some large absolute constant.
This completes the proof of \prettyref{eq:smallthetaallt}, which implies \prettyref{eq:theta-rate-smallthetaallt} in view of \prettyref{eq:betanearzero} provided that $L \geq \kappa_2$.
	\end{proof}

	\begin{proof}[Proof of \prettyref{thm:main-alpha-large}]
By assumption, $s \geq C_0 \sqrt{\omega}$.
	WLOG, we assume that $\alpha_0 \geq 0$ (otherwise we the same argument applies with $\alpha_t$ replaced by $-\alpha_t$ and $s$ by $-s$). 
		%
		%
	By design, $\alpha_t$ is close to zero at $t=0$. The argument entails proving that initially $\alpha_t$ increases geometrically approximately as 
	$\alpha_{t+1} = (1+\Omega(s^2)) \alpha_t$, until $\alpha_t$ exceeds $\Omega(\sqrt{\omega})$. After this point, the sandwich bound \prettyref{eq:alpha1}--\prettyref{eq:alpha2} behave as the linear perturbation of the one-dimensional EM iteration in \prettyref{eq:thetatsandwich}, and consequently the one-dimensional analysis in \prettyref{thm:main-1D} applies, yielding both error bound and speed of convergence.


By the assumption \prettyref{eq:ini0}, $\|\theta_0\| = c \pth{\frac{d}{n} \log n}^{1/4} \leq c' \sqrt{\omega}$ for some small constant $c'$ proportional to $c$.
	Since $c \leq \kappa_2$, \prettyref{eq:betanearzero} in 
	\prettyref{thm:main-beta} ensures that 
	$\beta_t \leq\kappa_2 \pth{\frac{d}{n} \log n}^{1/4}$ for \emph{all} $t\geq 0$.	Then
	\prettyref{eq:alpha1}--\prettyref{eq:alpha2} imply the following
		\begin{align}
\alpha_{t+1} \leq & ~ f(\alpha_t) + C_5 (\alpha_t \omega + \omega^{3/2}) \label{eq:alpha40} \\ 
\alpha_{t+1} \geq & ~ f(\alpha_t) - C_5 (\alpha_t \omega + \omega^{3/2}). \label{eq:alpha4}
\end{align}
	Let $C_*$ be a constant to be specified.
	Consider the following phases:

	\paragraph{Phase I: $\alpha_t \leq C_* \sqrt{\omega}$} 
	We will show that throughout Phase I, for some sufficiently large constant $C_4$,
		\begin{equation}
	\alpha_t \geq \frac{C_4}{s^2} \omega^{3/2}.
	\label{eq:alphaomega}
	\end{equation}
	In view of the choice of the initialization \prettyref{eq:ini0},  the assumption \prettyref{eq:ini0} ensures that
	\prettyref{eq:alphaomega} holds for the base case of $t=0$, 
	where $C_4$ is proportional to $\frac{\lambda_2}{c}$ and can be made sufficiently large.
	Assume \prettyref{eq:alphaomega} holds at time $t$. By \prettyref{lmm:fprop} and using \prettyref{eq:fppp}, the Taylor expansion of $f$ at $0$ gives
	$f(\alpha_t) \geq (1+s^2) \alpha_t - C_6 \alpha_t^3$.
	So \prettyref{eq:alpha4} implies
	\begin{equation}
	\alpha_{t+1} \geq (1+s^2) \alpha_t - C_6 \alpha_t^3 - C_5 (\alpha_t \omega + \omega^{3/2})
	\geq (1+s^2/4) \alpha_t
	\end{equation}
	where, since $s \geq C_0\sqrt{\omega}$ and $
	\frac{C_4}{s^2} \omega^{3/2}\leq \alpha_t \leq C_* \sqrt{\omega}$ by assumption, 
	the last inequality holds provided that 
	\begin{equation}
	C_0 \geq \frac{C_*}{\sqrt{4C_6}}, \quad C_0 \geq \sqrt{4C_5}, \quad C_4 \geq 4C_5.	
	\label{eq:CCC}
	\end{equation}
	Therefore \prettyref{eq:alphaomega} holds at time $t+1$. Furthermore, $\alpha_t$ grows exponentially and in $T_1 = O(\frac{1}{s^2} \log \frac{s}{\omega}) = O(\frac{\log n}{s^2})$ iterations enters the next phase.

\paragraph{Phase II: $\alpha_t \geq C_* \sqrt{\omega}$}
Then \prettyref{eq:alpha40}--\prettyref{eq:alpha4} imply
		\begin{align}
\alpha_{t+1} \leq & ~ f(\alpha_t) + C_5'  \omega\alpha_t \label{eq:alpha5} \\ 
\alpha_{t+1} \geq & ~ f(\alpha_t) - C_t'  \omega \alpha_t, \label{eq:alpha6}
\end{align}
where $C_5'=C_5(1+\frac{1}{C_*})$.
Comparing \prettyref{eq:alpha5}--\prettyref{eq:alpha6} with \prettyref{eq:thetatsandwich}, 
by replacing $w_n$ with $\omega$, $\theta_*$ with $s=\thetanorm$, and the initial value $\theta_0$ by $\alpha_{T_1} \geq C_*\sqrt{\omega}$, 
we see that \prettyref{thm:main-1D} applies to the convergence of $\{\alpha_t: t \geq T_1\}$. 
In particular, \prettyref{eq:sandwich-accuracy} and \prettyref{eq:sandwich-T} yield
\begin{equation}
|\alpha_t - s| \leq C_7 \min\sth{\frac{\omega}{s}, \sqrt{\omega}},
\label{eq:alpha-phase2}
\end{equation}
for all $t-T_1 \geq T_2 \triangleq \frac{C_8 }{s^2} \log \frac{ns}{\sqrt{\omega}} = O(\frac{1}{s^2} \log n)$.
This completes the proof of \prettyref{eq:alpha-rate}. 

\paragraph{Phase III: improved estimate on $\beta_t$}
Since $s \geq C_0 \sqrt{\omega}$ by assumption, from \prettyref{eq:alpha-phase2}, we conclude that for all $t \geq T_1+T_2$, we have $\alpha_t\in [s/2,2s]$. 
Recall that the prior unconditional analysis in \prettyref{thm:main-beta} treats $\alpha_t$ as zero (which is the worst case) and shows that $\beta_t = O(\sqrt{\omega})$.
Now that $\alpha_t = \Theta(s)$, we will use the $\alpha$-dependent bound \prettyref{eq:Gnew} to upgrade the error bound to $\beta_t = O(\frac{\omega}{s})$.
Continuing from \prettyref{eq:betaraw1}, for all $t\geq T_1+T_2$, we have
\begin{align}
		\beta_{t+1} 
		\leq & ~ \beta_t\pth{ 1 - \frac{\alpha_t^2+\beta_t^2}{2+8\Gamma^2}}
+ \omega(|\alpha_t| + \beta_t)	\nonumber \\
\stepa{\leq} & ~ \beta_t\pth{ 1 - \frac{s^2}{4(2+8\Gamma^2)}} + \omega(2s + \beta_t)	\nonumber\\
\stepb{\leq} & ~ \beta_t\pth{ 1 - C_9 s^2}  + 2 \omega s 
		\end{align}		
where (a) follows from $s/2\leq \alpha_t \leq 2s$ and (b) follows from the assumption $s \geq C_0\sqrt{\omega}$ for sufficiently large $C_0$, where $C_9$ is a constant depending only on $\Gamma$ (hence on $r$).
Thus $\beta_t \leq \frac{4\omega}{s}$ for all $t-(T_1+T_2) \geq T_3 \triangleq \frac{C_{10} }{s^2} \log \frac{s}{\omega} = O(\frac{1}{s^2} \log n)$.
This completes the proof of \prettyref{eq:theta-rate}. 
	\end{proof}

\subsection{Proof of \prettyref{lmm:FG}}
\begin{proof}
	Let $s=\|\theta_*\|$. 
Let $W = \Iprod{\xi}{Z}$ and $U = \Iprod{\eta}{Z}$, which are independent standard normals. Then
\begin{equation}
\Iprod{\theta}{Y} = \alpha\|\theta_*\| X+\alpha  U + \beta W = \alpha V +\beta W
\label{eq:thetaY}
\end{equation}
where $V \sim \frac{1}{2} N(\pm s,1)$ is independent of $W$.
\begin{enumerate}
	\item 
	The function $\alpha\mapsto\Expect[V \tanh (\alpha V+\beta W)]$ is because of the symmetry of the distribution of $W$.
	Furthermore,
\begin{align*}
		\fracp{F}{\alpha}	
= & ~ \expect{\frac{V^2}{\cosh^2(\alpha V+\beta W)}} \geq 0 \\
\frac{\partial^2 F}{\partial \alpha^2}	= & ~ 
\expect{ V^3 \tanh''(\alpha V+\beta W)} = 
\expect{ Z^3 \tanh''(\alpha Z+\beta Z) \cosh(s Z)}e^{-s^2/2},
\end{align*}
where the last equality follows from a change of measure (\prettyref{lmm:com}) with $Z \sim N(0,1)$ independent of $W$.
	Consider $\alpha\geq 0$.	By symmetry, $\expect{Z^3 \cosh(s Z)|\alpha Z+ \beta W=y}$ is an odd function which is nonnegative if and only if $y \geq 0$. 
	Since $\tanh''=-2\tanh \sech^2$, we have $\expect{Z^3 \cosh(s Z)|\alpha Z+ \beta W} \tanh''(\alpha Z+\beta W) \leq 0$ almost surely. Therefore $\alpha\mapsto F(\alpha,\beta)$ is concave on $\reals_+$, and convex on $\reals_-$ by symmetry.

\item This is simply because $F(\cdot,\beta)$ is an odd function and increasing on $\reals_+$.
	
	\item 
	Entirely analogously,
	\begin{align*}
		\fracp{G}{\beta}	
= & ~ \expect{\frac{W^2}{\cosh^2(\alpha V+\beta W)}} \geq 0 \\
\frac{\partial^2 G}{\partial \beta^2}	= & ~ -2\expect{\frac{W^3 \tanh(\alpha V+\beta W)}{\cosh^2(\alpha V+\beta W)}} \leq 0.
\end{align*}

\item
For $\alpha \geq 0$,
	\begin{align}
\fracp{F}{\beta}	=
\fracp{G}{\alpha}	
= & ~ \expect{WV\tanh'(\alpha V+ \beta W)} \nonumber \\
= &~ \beta \expect{V\tanh''(\alpha V+ \beta W)} \label{eq:Fb1}\\
= & ~ -2 \beta  \expect{\frac{V\tanh(\alpha V+ \beta W)}{\cosh^2(\alpha V+ \beta W)}} \nonumber\\
= & ~ -2 \beta \Expect\Bigg[\underbrace{\frac{\expect{V|\alpha V+ \beta W}\tanh(\alpha V+ \beta W)}{\cosh^2(\alpha V+ \beta W)}}_{\geq 0}\Bigg] \leq 0 \label{eq:Fb2},
\end{align}
where \prettyref{eq:Fb1} follows from Stein's lemma, and 
\prettyref{eq:Fb2} follows from the fact that, in view of \prettyref{lmm:estimator} and the symmetry of the distribution of $V$, $\hat V(y) \triangleq \expect{V|\alpha V+ \beta W=y}$ is an odd and increasing function such that $\hat V(y) \gtrless 0$ when $y \gtrless 0$. 

The case for $\alpha \leq 0$ follows the fact that $G(-\alpha,\beta)=G(\alpha,\beta)$ and $F(-\alpha,\beta)=-F(\alpha,\beta)$.

\item $|F(\alpha,\beta)| = |\Expect[V\tanh(\alpha V+\beta W)]| \leq \Expect[|V|] \leq \|\theta_*\| + \Expect|U|$, and similarly, 
$|G(\alpha,\beta)| \leq \Expect[|W|]$.

\item By the third property, $\alpha\mapsto G(\alpha,\beta)$ is maximized at $\alpha=0$.

\item We only prove \prettyref{eq:F01} for $\alpha \geq 0$; \prettyref{eq:F02} follows from the fact that $F(-\alpha,\beta)=-F(\alpha,\beta)$. 
The left inequality follows from \prettyref{eq:Fb2}. To show the right inequality, note that 
since $\expect{V|\alpha V+ \beta W}\tanh(\alpha V+ \beta W) \geq 0$ almost surely, using the fact that $\cosh(x) \geq 1$ and $\tanh(x) \lessgtr x$ for $x \gtrless 0$, we have
\begin{align*}
\expect{\frac{\expect{V|\alpha V+ \beta W}\tanh(\alpha V+ \beta W)}{\cosh^2(\alpha V+ \beta W)}}
\leq & ~  \expect{ \expect{V|\alpha V+ \beta W} (\alpha V+ \beta W)} \\
= & ~  \expect{ V(\alpha V+ \beta W)}\\
= & ~  \alpha \expect{V^2} = \alpha(1+\|\theta_*\|^2).
\end{align*}
Consequently, 
\[
\fracp{F}{\beta}	\geq -2 \beta \alpha(1+\|\theta_*\|^2).
\]
Integrating over $\beta$ yields the right inequality in \prettyref{eq:F01}.

\item 
	By symmetry, without loss of generality we assume $\alpha \geq 0$.
	By Stein's identity,
	\[
	G(\alpha,\beta) = \Expect[W \tanh(\alpha V+\beta W)] = \beta \Expect[\tanh'(\alpha V+\beta W)].
	\]
	Recall that $V=sX+U$, where $X$ is Rademacher and $U\sim N(0,1)$. 
		Let $T=\alpha (sX+U)+\beta W = \alpha s X + (\alpha U + \beta W)$.
	Then
	\[
	\frac{G(\alpha,\beta)}{\beta} = \expect{\frac{1}{\cosh^2(T)}}.
	\]
	Since
	$\Expect[X|T=t]=\tanh(\frac{\alpha s}{\alpha^2+\beta^2} t)$, we have
	\begin{align*}
	\fracp{}{s}\pth{\frac{G(\alpha,\beta)}{\beta}}
	= & ~ \alpha \Expect[X \tanh''(T)] = -2\alpha \expect{\frac{X\tanh(T)}{\cosh^2(T)}}\\
	= & ~ 	-2\alpha \Expect\Bigg[\underbrace{\frac{\tanh(\frac{\alpha s}{\alpha^2+\beta^2} T)\tanh(\alpha T)}{\cosh^2(T)}}_{\geq 0}\Bigg] \leq 0.
	\end{align*}
	Therefore $\frac{G(\alpha,\beta)}{\beta}$ is decreasing in $s$, and it suffices to consider $s=0$.
		Next we show for any $\sigma \geq 0$ and $Z\sim N(0,1)$,
	\begin{equation}
	\expect{\frac{1}{\cosh^2(\sigma Z)}} \leq 1 - \frac{\sigma^2}{2(1+2\sigma^2)},
	\label{eq:Gp0-goal}
	\end{equation}
	which applied to $\sigma^2=\alpha^2+\beta^2$ implies the desired result.
	
Using the inequality $\cosh(x) \geq 1+x^2/2$ and hence $\cosh^2(x) \geq 1+x^2$, we have\footnote{The last inequality in \prettyref{eq:mills} is due to the following integral representation of Mill's ratio \cite[3.466.1]{GR}:
\begin{equation}
\expect{\frac{1}{t^2+Z^2}} = \frac{\bar{\Phi}(t)}{t\varphi(t)}.
\label{eq:mills-int}
\end{equation}
To see this, let $f(t)=\Expect[\frac{t}{t^2+Z^2}] $. By Stein's identity, one can verify that $f$ satisfies the differential equation $f'(t) = tf(t)-1$.
Thus $g(t)=f(t)\varphi(t)$ satisfies $g'(t) = -\varphi(t)$, which implies that $g(t)=\bar\Phi(t)$ since $g(\infty)=0$.}
\begin{equation}
\expect{\frac{1}{\cosh^2(\sigma Z)}} \leq \expect{\frac{1}{1+\sigma^2 Z^2}} = \frac{\bar{\Phi}(1/\sigma)}{\sigma\varphi(1/\sigma)}.
\label{eq:mills}
\end{equation}
	Using \prettyref{lmm:ito}, we have
	\[
		\expect{\frac{1}{\cosh^2(\sigma Z)}} \leq 1 - \frac{2\sigma^2}{(\sqrt{1+2\sigma^2}+1)^2} \leq 1 - \frac{\sigma^2}{2(1+2\sigma^2)} 
	\]	
	This proves \prettyref{eq:Gp0-goal} and the desired \prettyref{eq:Gnew}.
\end{enumerate}	
\end{proof}

\section{Proofs in \prettyref{sec:refined}}
	\label{sec:pf-refined}

\subsection{Proof of Lemmas \ref{lmm:induction}, \ref{lmm:alphat1-raw} and \ref{lmm:ind1-raw}}
	\label{sec:pf-refined-mainlemmas}

	%

We start by defining a few typical events which will be used subsequently for several times.
\begin{lemma}
\label{lmm:typical}	
Define
\begin{align}
H_2 = & ~  \sth{\frac{1}{n} \sum_{i=1}^n Y_{i,1}^2 \geq 1 + \thetanorm^2 - \sqrt{\frac{\kappa \log n}{n}}} \label{eq:H2} \\
H_4 = & ~  \sth{\frac{1}{n} \sum_{i=1}^n Y_{i,1}^4 \leq \kappa } \label{eq:H4} \\
H_3 = &~ \sth{ \sum_{i=1}^n \|Y_i\|^3 \leq \kappa d^{3/2} } \label{eq:H3} \\
H_\infty = & ~  \sth{\max_{i\in[n]} |Y_{i,1}| \leq \sqrt{\kappa \log n}}. \label{eq:Hinfty} 
\end{align}
Then there exists some $\kappa = \kappa(\|\theta_*\|)$ such that $\prob{H_i} \geq 1-n^{-4}$ for $i=2,3,4,\infty$. 
\end{lemma}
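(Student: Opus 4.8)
The plan is to invoke the reduction already in force at the start of \prettyref{sec:refined}: by rotational covariance of $P_{\theta_*}$ we may take $\theta_* = \thetanorm e_1$, so the first coordinates $Y_{1,1},\ldots,Y_{n,1}$ are i.i.d.\ from $\frac12 N(\pm\thetanorm,1)$ (hence $O(1+r)$-subgaussian since $\thetanorm\le r$) and, independently, $(Y_{i,2},\ldots,Y_{i,d})=(Z_{i,2},\ldots,Z_{i,d})$ with $\|(Y_{i,2},\ldots,Y_{i,d})\|^2\sim\chi^2_{d-1}$. I would then bound $\prob{H_i^c}$ for each of the four events separately and take a union bound at the end, letting $\kappa$ be the largest of the four constants produced (each depending only on $r$).

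For $H_\infty$ I would use $|Y_{i,1}|\le r+|Z_{i,1}|$ with the Gaussian tail $\prob{|Z_{i,1}|>t}\le 2e^{-t^2/2}$ and a union bound over $i\in[n]$: choosing $\kappa$ large relative to $r$ gives $\prob{\max_{i\in[n]}|Y_{i,1}|>\sqrt{\kappa\log n}}\le 2n^{1-\kappa'/2}\le n^{-4}$. For $H_2$, note $\Expect[Y_{i,1}^2]=1+\thetanorm^2$, and since $Y_{i,1}$ is $O(1+r)$-subgaussian, $Y_{i,1}^2-\Expect[Y_{i,1}^2]$ is $O((1+r)^2)$-subexponential; Bernstein's inequality gives the one-sided (lower-tail) bound $\prob{\tfrac1n\sum_i Y_{i,1}^2\le 1+\thetanorm^2-t}\le\exp(-cn\min\{t^2/\sigma^2,t/\sigma\})$, and taking $t=\sqrt{\kappa\log n/n}$, which for $n$ past a constant lies in the sub-Gaussian regime of Bernstein, yields $\le\exp(-c\kappa\log n/\sigma^2)\le n^{-4}$ once $\kappa$ is large.

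The fourth- and third-moment events are the only mildly delicate ones, because $Y_{i,1}^4$ and $\|Y_i\|^3$ are sub-Weibull of order $1/2$ and $2/3$, so their moment generating functions are infinite and Bernstein does not apply directly; I would handle them by truncation. For $H_4$: on $H_\infty$ (with a slightly larger constant) every $Y_{i,1}^4$ is at most $M:=\kappa_0^2\log^2 n$, so $\tfrac1n\sum_i Y_{i,1}^4$ coincides with the average of the bounded variables $Y_{i,1}^4\wedge M$, which have mean $\le m_4:=\Expect[Y_{i,1}^4]=m_4(\thetanorm)$ and variance $\le\Expect[Y_{i,1}^8]=O(1)$; Bernstein for bounded summands gives $\prob{\tfrac1n\sum_i(Y_{i,1}^4\wedge M)\ge m_4+t}\le\exp(-cnt^2/(O(1)+Mt))$, and with $t$ a fixed constant this is $\le\exp(-c'n/\log^2 n)\le n^{-4}$ once $n$ exceeds a suitable constant (which holds under \prettyref{eq:ndlogd}). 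For $H_3$ I would do the analogue with $\|Y_i\|^2\le 2r^2+2\|Z_i\|^2$, $\|Z_i\|^2\sim\chi^2_d$: first use the Laurent--Massart tail \prettyref{eq:LM} with $x=5\log n$ and a union bound to confine $\max_i\|Z_i\|^2$ below $M:=2d+15\log n$ with probability $\ge 1-n^{-4}$, then apply Bernstein to $(\|Z_i\|^2)^{3/2}\wedge M^{3/2}$, whose mean is $\Theta(d^{3/2})$, variance $\le\Expect[\|Z_i\|^6]=O(d^3)$ and range $M^{3/2}=O(d^{3/2}+(\log n)^{3/2})$, with deviation $t=d^{3/2}$; the resulting exponent is of order $n/(1+(\log n/d)^{3/2})\ge 4\log n$ for $n$ past a threshold, so $\prob{H_3^c}\le n^{-4}$ after absorbing the $r^3$ term into $\kappa$.

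The main obstacle, such as it is, is precisely the heavy-tailedness of $Y_{i,1}^4$ and $\|Y_i\|^3$: a naive sub-exponential/Bernstein bound would leave a stray $\polylog n$ factor (e.g.\ $\tfrac1n\sum_i Y_{i,1}^4\lesssim\log^2 n$ instead of $O(1)$), which is too weak for the downstream lemmas. The truncate-then-Bernstein device removes it at the cost of requiring $n$ to exceed a constant multiple of $\log^3 n$ (harmless here), and the only bookkeeping to watch is keeping the dimension dependence in $H_3$ sharp — mean $\Theta(d^{3/2})$, variance $\Theta(d^3)$ — so that the deviation $t=\Theta(d^{3/2})$ is absorbed correctly.
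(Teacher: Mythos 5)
Your argument is correct, and for $H_\infty$ and $H_2$ it essentially matches the paper's: the paper writes $\frac1n\sum_i Y_{i,1}^2$ in law as $1+\thetanorm^2$ plus a centered $\chi^2_n/n$ term plus a Gaussian term and invokes \prettyref{eq:LM-lower} together with the Gaussian tail, which is the same Bernstein-type lower-tail estimate you obtain from subexponentiality of $Y_{i,1}^2$. Where you genuinely diverge is in $H_4$ and $H_3$: the paper dispatches these with a one-line appeal to ``the Chebyshev inequality'' --- which, read literally with second moments, only yields failure probability $O(1/n)$, so it implicitly means a Markov bound with a sufficiently high (e.g.\ eighth-order, Rosenthal-type) moment of the centered sum --- whereas you truncate at the level dictated by $H_\infty$ (resp.\ a Laurent--Massart cap on $\max_i\|Z_i\|^2$) and then apply Bernstein for bounded summands. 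Your route gives the much stronger bound $\exp(-cn/\log^2 n)$ at the price of the mild requirement $n\gtrsim\log^3 n$ (harmless given \prettyref{eq:ndlogd} and the paper's standing ``$n$ sufficiently large'' conventions) and of carrying the auxiliary truncation event; the paper's moment route is more elementary and needs no conditioning, but leaves the choice of moment order implicit. One small reading point: as printed, \prettyref{eq:H3} lacks the $1/n$ normalization (a sum of $n$ terms each of size $\Theta(d^{3/2})$ cannot be $O(d^{3/2})$); both your proof and the paper's intended use of this event (the bound $\Expect_n[\|Y\|^3]\leq \kappa d^{3/2}$ in the proof of \prettyref{lmm:hessMLE}) concern the empirical average, and your bookkeeping --- mean $\Theta(d^{3/2})$, variance $O(d^3)$, deviation $t=d^{3/2}$ --- is the right one for that statement.
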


Next we provide the supporting lemmas:

\begin{lemma}[Smoothness of the sample-EM map]
\label{lmm:fsmooth}	
	
	Let $f_n$ be defined in \prettyref{eq:fn}. Then 
	$f_n$ is $\opnorm{\Sigma_n}$-Lipschitz continuous on $\reals^d$, 
	where $\Sigma_n \triangleq \Expect_n[YY^\top]$ is the sample covariance matrix.
	In particular, with probability at least $1 - e^{-C'd\log n}$, 		
	\begin{equation}
	\opnorm{\Sigma_n} \leq 1+\|\theta_*\|^2 + \sqrt{\frac{C d}{n}},
	\label{eq:fsmooth}
	\end{equation}
	where the constants $C,C'$ depend only on $\thetaub$.
\end{lemma}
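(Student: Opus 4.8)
The plan is to prove the two assertions separately. For the Lipschitz bound, differentiate $f_n$ in \prettyref{eq:fn}: since $\tanh'=\sech^2$, the Jacobian of $f_n$ at any $\theta\in\reals^d$ (equivalently the matrix $I+\nabla^2\ell_n(\theta)$ in the gradient-ascent form \prettyref{eq:EM-GD}) is
\[
J_n(\theta)\triangleq\frac1n\sum_{i=1}^n Y_iY_i^\top\,\sech^2\!\big(\iprod{\theta}{Y_i}\big).
\]
Every summand is a positive semidefinite rank-one matrix scaled by $\sech^2(\cdot)\in[0,1]$, so $0\preceq J_n(\theta)\preceq\frac1n\sum_iY_iY_i^\top=\Sigma_n$, and hence $\opnorm{J_n(\theta)}\le\opnorm{\Sigma_n}$ for every $\theta$. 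Writing $f_n(\theta)-f_n(\theta')=\int_0^1 J_n\big(\theta'+s(\theta-\theta')\big)(\theta-\theta')\,ds$ and taking Euclidean norms gives $\|f_n(\theta)-f_n(\theta')\|\le\opnorm{\Sigma_n}\,\|\theta-\theta'\|$, which is the first claim and needs no probabilistic input.

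For the operator-norm estimate the key observation is that $X_i^2=1$, so $Y_iY_i^\top=(X_iY_i)(X_iY_i)^\top$ with $X_iY_i=\theta_*+W_i$, where $W_i\triangleq X_iZ_i\iiddistr N(0,I_d)$; thus $\Sigma_n=\frac1n\sum_{i=1}^n(\theta_*+W_i)(\theta_*+W_i)^\top$ is exactly the sample second-moment matrix of $n$ iid $N(\theta_*,I_d)$ vectors. Expanding and using the triangle inequality for $\opnorm{\cdot}$,
\[
\opnorm{\Sigma_n}\le\|\theta_*\|^2+2\|\theta_*\|\,\|\bar W\|+\opnorm{\tfrac1n\textstyle\sum_iW_iW_i^\top},\qquad\bar W\triangleq\tfrac1n\textstyle\sum_iW_i.
\]
Here $\|\theta_*\|^2\le r^2$; $n\|\bar W\|^2\sim\chi^2_d$, so by \prettyref{lmm:LM} one has $\|\bar W\|\lesssim\sqrt{d/n}$ off an exponentially small event; and $\opnorm{\tfrac1n\sum_iW_iW_i^\top}=\tfrac1n\sigma_{\max}(\mathbf W)^2$ with $\mathbf W$ the $n\times d$ Gaussian matrix with rows $W_i$. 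This last quantity I would bound by a net argument mirroring the proof of \prettyref{thm:concentration}: for an $\epsilon$-net $\calC$ of $S^{d-1}$ one has $\sigma_{\max}(\mathbf W)^2\le(1-\epsilon)^{-2}\max_{u\in\calC}\sum_i\iprod{W_i}{u}^2$, each inner sum is a central $\chi^2_n$, and \prettyref{lmm:LM} plus a union bound over $\calC$ controls it by $n+C''\sqrt{nd\log n}$ off an event of probability $e^{-C'd\log n}$. Combining the three pieces and using $n\ge Cd\log d$ (so that the $d/n$ term is dominated) yields $\opnorm{\Sigma_n}\le1+\|\theta_*\|^2+\sqrt{Cd/n}$ for a constant $C=C(r)$, with the stated failure probability; the same bound also follows from \cite[Theorem~4.6.1]{Vershynin-HDP} applied to the recentered matrix, at the cost of the weaker probability $1-e^{-cd}$.

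There is no genuine obstacle here, only a bookkeeping subtlety worth flagging: insisting on the advertised failure probability $e^{-C'd\log n}$ forces the deviation parameter in the $\chi^2_n$ concentration to be of order $\sqrt{d\log n}$, so strictly the net argument delivers $\opnorm{\Sigma_n}\le1+\|\theta_*\|^2+\sqrt{Cd\log n/n}$; this extra $\sqrt{\log n}$ is immaterial for every downstream use of the lemma and disappears if one is content with probability $1-e^{-cd}$. Everything else — the matrix sandwich $0\preceq J_n\preceq\Sigma_n$ for the Lipschitz claim, and the concentration of a Gaussian sample covariance matrix for the norm claim — is routine.
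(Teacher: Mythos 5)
Your proof is correct, and the first half is exactly the paper's argument: the Jacobian $J_n(\theta)=\Expect_n[YY^\top\sech^2\iprod{\theta}{Y}]$ is sandwiched as $0\preceq J_n(\theta)\preceq\Sigma_n$, and integrating along the segment gives the $\opnorm{\Sigma_n}$-Lipschitz bound. For the operator-norm estimate you diverge from the paper in a mild but genuine way: the paper simply writes $\opnorm{\Sigma_n}\leq\opnorm{\Sigma}+\opnorm{\Sigma_n-\Sigma}$ with $\opnorm{\Sigma}=1+\|\theta_*\|^2$ and invokes sub-Gaussian sample-covariance concentration (citing \cite[Exercise 4.7.3]{Vershynin-HDP}) to get $\opnorm{\Sigma_n-\Sigma}\leq\sqrt{Cd\log n/n}$ with probability $1-e^{-C'd\log n}$, whereas you use the identity $Y_iY_i^\top=(X_iY_i)(X_iY_i)^\top$ with $X_iY_i=\theta_*+W_i$, $W_i\iiddistr N(0,I_d)$, to recenter exactly at $\theta_*$ and then control the three pieces ($\|\theta_*\|^2$, the cross term via a $\chi^2_d$ bound on $\|\bar W\|$, and the Wishart term via a net or Davidson--Szarek). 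Your route is more self-contained and sidesteps any discussion of whether the coordinates of $Y_i$ are independent sub-Gaussians, at the cost of a slightly longer computation; the paper's route is a one-liner given the cited result. Your flagged bookkeeping point is apt but not a defect relative to the paper: the paper's own proof likewise only establishes $\opnorm{\Sigma_n}\leq 1+\|\theta_*\|^2+\sqrt{Cd\log n/n}$ at the stated failure probability $e^{-C'd\log n}$ (the lemma's display drops the $\log n$), and, as you note, this is immaterial downstream since the deviation parameter $\omega$ in \prettyref{eq:omega} already carries a $\log n$ factor.
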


	%

\begin{lemma}
\label{lmm:concentration2}	
Assume that $n \geq d$. 
Let $Y_\perp = [Y_{1,\perp}, \ldots, Y_{1,\perp}]$.
Then
\begin{equation}
\prob{\opnorm{Y_\perp} \geq 4 \sqrt{n}} \leq e^{-n}.
\label{eq:ds}
\end{equation}
Furthermore, there exists some constant $C$ depending only on $\thetaub$, such that 
with probability at least $1-n^{-3}$,
	\begin{equation}
	 \frac{1}{n} \sum_{i=1}^n Y_{i,1}^2 |\iprod{Y_{i,\perp}}{\theta}|^2 \leq C\|\theta\|^2 \log n,
		\label{eq:concentration2}
	\end{equation}
	for all $\theta \in \reals^{d-1}$.
\end{lemma}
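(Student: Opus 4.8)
The plan is to reduce both parts to the operator norm of the Gaussian matrix $Y_\perp$, combined with the crude coordinatewise control from \prettyref{lmm:typical}; in particular no covering argument is needed.

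For \prettyref{eq:ds}: after the reduction $\theta_*=\thetanorm e_1$ in force throughout \prettyref{sec:refined}, we have $Y_{i,\perp}=(Z_{i,2},\dots,Z_{i,d})$, so $Y_\perp\in\reals^{(d-1)\times n}$ has i.i.d.\ $N(0,1)$ entries. By the standard concentration bound for the largest singular value of a Gaussian matrix (Gaussian Lipschitz concentration of $G\mapsto\opnorm{G}$ together with $\Expect\,\opnorm{Y_\perp}\le\sqrt{d-1}+\sqrt n$; see e.g.\ \cite{Vershynin-HDP}), $\prob{\opnorm{Y_\perp}\ge\sqrt{d-1}+\sqrt n+t}\le e^{-t^2/2}$. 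Since $n\ge d$ gives $\sqrt{d-1}+\sqrt n\le 2\sqrt n$, taking $t=\sqrt{2n}$ yields $\opnorm{Y_\perp}\le(2+\sqrt2)\sqrt n<4\sqrt n$ with probability at least $1-e^{-n}$.

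For \prettyref{eq:concentration2}: note the left-hand side equals the quadratic form $\theta^\top A\theta$ with
\[
A\triangleq\frac1n\sum_{i=1}^n Y_{i,1}^2\,Y_{i,\perp}Y_{i,\perp}^\top=\frac1n\,Y_\perp\,D\,Y_\perp^\top,\qquad D\triangleq\mathrm{diag}(Y_{1,1}^2,\dots,Y_{n,1}^2)\succeq 0.
\]
Hence the bound holds for \emph{all} $\theta\in\reals^{d-1}$ at once precisely when $\opnorm{A}\le C\log n$, so by homogeneity in $\theta$ the uniformity is automatic and there is only one random event to control. Since $D\succeq0$,
\[
\opnorm{A}\le\frac1n\,\opnorm{D}\,\opnorm{Y_\perp}^2=\frac1n\Big(\max_{i\in[n]}Y_{i,1}^2\Big)\opnorm{Y_\perp}^2,
\]
and on the intersection of the event $\{\opnorm{Y_\perp}<4\sqrt n\}$ from above with the event $H_\infty=\{\max_{i\in[n]}|Y_{i,1}|\le\sqrt{\kappa\log n}\}$ of \prettyref{lmm:typical}, this is at most $\frac1n\cdot\kappa\log n\cdot16n=16\kappa\log n$. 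Thus \prettyref{eq:concentration2} holds with $C=16\kappa$, which depends only on $\thetaub$, on an event of probability at least $1-e^{-n}-n^{-4}\ge 1-n^{-3}$ by the union bound.

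I do not anticipate a real obstacle here: the lemma only requests the crude $O(\log n)$ bound, and a sharper $O(1+\sqrt{d\log n/n})$-type estimate via matrix Bernstein after truncating the weights $Y_{i,1}^2$ would be overkill for how this lemma is used downstream. The one point worth stating cleanly is the reduction of the uniform-in-$\theta$ statement to a single operator-norm inequality, after which both ingredients — Gaussian matrix operator-norm concentration and the sub-Gaussian maximum bound $H_\infty$ — are standard or already established.
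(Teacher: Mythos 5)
Your proposal is correct and follows essentially the same route as the paper: the first part is the Davidson--Szarek singular-value bound (equivalently, Gaussian concentration of $\opnorm{\cdot}$ around $\sqrt{n}+\sqrt{d-1}$) with $n\geq d$, and the second part is exactly the paper's reduction $\frac{1}{n}\sum_i Y_{i,1}^2\Iprod{Y_{i,\perp}}{\theta}^2 \leq \frac{1}{n}\opnorm{Y_\perp}^2\max_i Y_{i,1}^2\,\|\theta\|^2$ combined with the event $H_\infty$ of \prettyref{lmm:typical} and a union bound. No gap; the only cosmetic difference is that you phrase the reduction via the weighted Gram matrix $\frac{1}{n}Y_\perp D Y_\perp^\top$, which is the same inequality.
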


\begin{lemma}
\label{lmm:rad}	
	Let $b=(b_1,\ldots,b_n)$ consist of independent Rademacher random variables and let $x=(x_1,\ldots,x_n)$ be independent of $b$. 
	Then for any $a,t>0$,
	\begin{equation}
	\prob{\frac{1}{n}\left|\sum_{i=1}^n x_i b_i \right| \geq \sqrt{\frac{a s}{n}} }
		\leq 2\exp(-s/8)  + \prob{\frac{1}{n} \sum_{i=1}^n x_i^2 \geq a}.
	\label{eq:rad}
	\end{equation}
	Furthermore, given a finite collection $\{x^{\theta}:\theta\in\Theta\}$ independent of $b$, 
	\begin{equation}
	\prob{\sup_{\theta \in \Theta} \frac{1}{n} \left|\sum_{i=1}^n x_i b_i \right| \geq \sqrt{\frac{a s}{n}} }
		\leq 2\exp(-s/8)  |\Theta| + \prob{\frac{1}{n} \sup_{\theta \in \Theta} \sum_{i=1}^n x_i^2 \geq a}.
	\label{eq:rad2}
	\end{equation}
\end{lemma}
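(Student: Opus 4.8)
The plan is to prove both bounds by conditioning on the $x$'s and invoking the standard sub-Gaussian tail bound for Rademacher sums. Recall that, conditionally on $x=(x_1,\ldots,x_n)$, the sum $\sum_{i=1}^n x_i b_i$ is a sum of independent mean-zero terms with $\Expect[e^{\lambda x_i b_i}\mid x]=\cosh(\lambda x_i)\leq e^{\lambda^2 x_i^2/2}$, so by the Chernoff bound $\prob{|\sum_{i=1}^n x_i b_i|\geq u \mid x}\leq 2\exp\pth{-u^2/(2\sum_{i=1}^n x_i^2)}$ for every $u>0$. (We note in passing that the ``$t$'' appearing in the statement should read $s$.)

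First I would handle \prettyref{eq:rad}. Introduce the $x$-measurable event $\mathcal{G}=\{\frac1n\sum_{i=1}^n x_i^2\leq a\}$ and split
\[
\prob{\tfrac1n|\textstyle\sum_{i=1}^n x_i b_i|\geq \sqrt{as/n}} \leq \prob{\tfrac1n|\textstyle\sum_{i=1}^n x_i b_i|\geq \sqrt{as/n},\ \mathcal{G}} + \prob{\mathcal{G}^c}.
\]
On $\mathcal{G}$ we have $\sum_{i=1}^n x_i^2\leq na$, so taking $u=\sqrt{nas}$ (hence $u/n=\sqrt{as/n}$) the conditional estimate gives $2\exp(-u^2/(2na))=2\exp(-s/2)\leq 2\exp(-s/8)$. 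Since $\mathcal{G}$ is measurable with respect to $x$ and hence independent of the signs $b$, integrating this conditional bound over $\mathcal{G}$ shows the first term is at most $2\exp(-s/8)$, while $\prob{\mathcal{G}^c}=\prob{\frac1n\sum_{i=1}^n x_i^2> a}$ is exactly the second term in \prettyref{eq:rad}.

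For \prettyref{eq:rad2} I would run the same decomposition with the good event replaced by $\mathcal{G}'=\{\sup_{\theta\in\Theta}\frac1n\sum_{i=1}^n (x_i^\theta)^2\leq a\}$, which is again independent of $b$. On $\mathcal{G}'$ every $\theta\in\Theta$ satisfies $\sum_{i=1}^n (x_i^\theta)^2\leq na$, so a union bound over the finite index set $\Theta$ together with the conditional sub-Gaussian estimate applied separately to each fixed $\theta$ yields $\prob{\sup_{\theta\in\Theta}\frac1n|\sum_{i=1}^n x_i^\theta b_i|\geq \sqrt{as/n},\ \mathcal{G}'}\leq 2|\Theta|\exp(-s/8)$, while $\prob{(\mathcal{G}')^c}=\prob{\frac1n\sup_{\theta\in\Theta}\sum_{i=1}^n (x_i^\theta)^2> a}$ is the remaining term. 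The argument is essentially routine; the only point requiring any care is to keep the conditioning straight, namely that $\mathcal{G}$ and $\mathcal{G}'$ depend only on the $x$'s so that the conditional Hoeffding bound may genuinely be applied on those events. I do not anticipate a real obstacle here.
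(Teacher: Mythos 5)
Your proof is correct and follows essentially the same route as the paper's: condition on the $x$'s, apply the sub-Gaussian (Hoeffding/Chernoff) tail bound for the Rademacher sum, split on the $x$-measurable event $\{\frac{1}{n}\sum_i x_i^2 \leq a\}$, and finish \prettyref{eq:rad2} by a union bound over $\Theta$. Your explicit MGF computation even yields the sharper constant $2e^{-s/2}$ (the paper's generic $\psi_2$ argument gives $2e^{-s/8}$), and you are right that the ``$t$'' in the statement should read $s$.
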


\begin{lemma}
\label{lmm:K}	
Assume that $n \geq C d$ for some absolute constant $C$.
Let $q:\reals\to\reals$ be a function with bounded first two derivatives, such that 
\begin{equation}
\max\sth{\norm{q'}_\infty, \norm{q''}_\infty }  \leq L_0,
\label{eq:L0}
\end{equation}
for some constant $L_0$. 
Define a (random) function $D:\reals^d\to\reals$  by
\begin{equation}
D(\theta)
\triangleq \frac{1}{n} \sum_{i=1}^n Y_{i,1} b_i  q(\Iprod{\theta}{Y_{i}}).
\label{eq:D}
\end{equation}
where $\{b_i\}$ are independent Rademacher variables and independent of $\{Y_i\}$. 
Let $R>0$.
Then there exists a constant $L_1$ depending only on $L_0$, $\thetaub$ and $R$, such that with probability at least $1-10 n^{-1}$, 
$D$ is $\sqrt{\frac{L_1  d \log^2 n }{n}}$-Lipschitz on the ball $B_R = \{\theta\in\reals^{d}: \|\theta\| \leq R\}$.
\end{lemma}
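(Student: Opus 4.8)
The plan is to control the Lipschitz constant of $D$ on the convex set $B_R$ via a uniform bound on its gradient. Since $B_R$ is convex, $D$ is $L$-Lipschitz on $B_R$ iff $\sup_{\theta\in B_R}\|\nabla D(\theta)\|\le L$, and from \prettyref{eq:D}
\[
\nabla D(\theta)=\frac1n\sum_{i=1}^n b_i\,Y_{i,1}\,q'(\Iprod{\theta}{Y_i})\,Y_i,\qquad \|\nabla D(\theta)\|=\sup_{u\in S^{d-1}}g(\theta,u),
\]
where $g(\theta,u)\triangleq\frac1n\sum_{i=1}^n b_i\,Y_{i,1}\,q'(\Iprod{\theta}{Y_i})\,\Iprod{u}{Y_i}$. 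So it suffices to bound $\sup\{|g(\theta,u)|:\theta\in B_R,\ u\in S^{d-1}\}$. I would do this by a covering argument at a polynomially small scale: let $\calN_\theta$ be an $\epsilon$-net of $B_R$ and $\calN_u$ an $\epsilon$-net of $S^{d-1}$ with $\epsilon=n^{-2}$, and set $\Theta\triangleq\calN_\theta\times\calN_u$, so that $\log|\Theta|=O(d\log n)$ for $n$ large (recalling that $\thetaub,R$ are constants, and consistent with the WLOG reduction $\theta_*=\thetanorm e_1$ adopted in this section).

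First, the discretization error. Using $\|q'\|_\infty,\|q''\|_\infty\le L_0$ and $|Y_{i,1}|\le\|Y_i\|$,
\[
|g(\theta,u)-g(\theta',u')|\le L_0\Big(\|\theta-\theta'\|\cdot\tfrac1n\textstyle\sum_i\|Y_i\|^3+\|u-u'\|\cdot\tfrac1n\textstyle\sum_i\|Y_i\|^2\Big)
\]
for all $\theta,\theta',u,u'$ and all sign patterns $b$. On the typical events of \prettyref{lmm:typical} (together with an elementary $\chi^2$ bound for $\frac1n\sum_i\|Y_i\|^2=O(d)$), which hold with probability at least $1-O(n^{-3})$, both sums are $O(d^{3/2})$, so the discretization error is $O(L_0 d^{3/2}/n^2)$, which is $o(\sqrt{d\log^2 n/n})$ since $n\gtrsim d$.

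Second, the union bound over $\Theta$. Conditionally on $(Y_1,\dots,Y_n)$ (which are independent of the signs $b$), each $g(\theta,u)$ is a normalized Rademacher sum $\frac1n\sum_i b_i x_i^{(\theta,u)}$ with $x_i^{(\theta,u)}=Y_{i,1}q'(\Iprod{\theta}{Y_i})\Iprod{u}{Y_i}$, whose conditional subgaussian parameter is governed by $\frac1n\sum_i(x_i^{(\theta,u)})^2\le L_0^2\cdot\frac1n\sum_i Y_{i,1}^2\Iprod{u}{Y_i}^2$. Writing $\Iprod{u}{Y_i}^2\le 2u_1^2 Y_{i,1}^2+2\Iprod{u_\perp}{Y_{i,\perp}}^2$ and applying \prettyref{eq:H4} to the first term and \prettyref{eq:concentration2} of \prettyref{lmm:concentration2} to the second gives
\[
\frac1n\sum_{i=1}^n Y_{i,1}^2\Iprod{u}{Y_i}^2\le C\log n\qquad\text{uniformly over }u\in S^{d-1}
\]
with probability at least $1-O(n^{-3})$, for $C=C(\thetaub)$. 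Now apply \prettyref{lmm:rad}, in the form \prettyref{eq:rad2}, with index set $\Theta$, variance bound $a=CL_0^2\log n$, and $s=C'(d\log n+\log n)$ chosen large enough that $2e^{-s/8}|\Theta|\le n^{-1}$: with probability at least $1-n^{-1}-O(n^{-3})$,
\[
\sup_{(\theta,u)\in\Theta}|g(\theta,u)|\le\sqrt{\frac{as}{n}}\le\sqrt{\frac{L_1 d\log^2 n}{n}}
\]
for a suitable $L_1=L_1(L_0,\thetaub,R)$.

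Combining the two parts on the intersection of all the above high-probability events (total failure probability at most $10 n^{-1}$ for $n$ large), $\sup_{\theta\in B_R}\|\nabla D(\theta)\|\le\sup_\Theta|g|+O(L_0 d^{3/2}/n^2)\le\sqrt{L_1 d\log^2 n/n}$ after slightly enlarging $L_1$, which is the claim. The main obstacle is precisely this tension between cardinality and variance: because $\theta$ ranges over a $d$-dimensional ball, the net already contributes $\log|\Theta|=\Theta(d\log n)$, so the union bound can only afford a conditional variance proxy of order $\log n/n$ per coordinate, not $d/n$; securing this requires the structural fact that $q'$ is bounded together with the weighted second-moment bound \prettyref{eq:concentration2}, whose extra $\log n$ factor (on top of the $\log n$ from $\log|\Theta|$) is exactly what produces the $\log^2 n$ in the stated Lipschitz constant. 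A secondary, routine point is that the large $\theta$-net does not otherwise degrade the rate, since $\|q''\|_\infty<\infty$ makes $g$ smooth in $\theta$ with Lipschitz constant only polynomial in $d$, which is killed by taking the net scale polynomially small.
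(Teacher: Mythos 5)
Your proof is correct and follows essentially the same route as the paper's: reduce the Lipschitz constant on the convex ball to $\sup_{\theta\in B_R}\|\nabla D(\theta)\|$, discretize at a polynomially fine scale (with the discretization error killed by the boundedness of $q''$ and moment bounds on $\|Y_i\|$), and apply \prettyref{lmm:rad} over the resulting net of cardinality $e^{O(d\log n)}$ with a conditional variance proxy of order $\log n$, which yields the $\sqrt{d\log^2 n/n}$ rate. The only cosmetic differences are that you get the $O(\log n)$ variance proxy from \prettyref{eq:H4} together with \prettyref{eq:concentration2}, while the paper uses the event \prettyref{eq:Hinfty} combined with a $\chi^2$ bound over a $1/2$-net of directions, and you control the $\theta$-discretization directly at the level of $g(\theta,u)$ rather than via the Hessian of $D$ — both are interchangeable implementations of the same argument.
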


\begin{lemma}
\label{lmm:M}	
	For $\theta = (\theta_1,\theta_\perp) \in \reals^d$, define 
	\begin{equation}
	M(\theta) \triangleq \frac{1}{n} \sum_{i=1}^n b_i Y_{i,\perp} Y_{i,1} Q(\theta_1 Y_{i,1}, \Iprod{\theta_\perp}{Y_{i,\perp}} ).
	\label{eq:Mtheta}
	\end{equation}
	where $Q:\reals^2\to\reals$ satisfies $\max\{\norm{Q}_\infty, \norm{\partial_x Q}_\infty, \norm{\partial_y Q}_\infty\} \leq L_0$ for some constants $L_0$.
	Let $R>0$.
	Then there exist constant $L_1$ 	depending only on $L_0$, $\thetaub$ and $R$, such that with probability at least $1-10 n^{-1}$, 
	\begin{equation}
	\sup_{\|\theta\|\leq R} \|M(\theta)\|  \leq \sqrt{\frac{L_1 d \log^2 n}{n}}.
	\label{eq:M}
	\end{equation}
\end{lemma}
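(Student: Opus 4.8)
The plan is to follow the same three-ingredient recipe used in the proof of \prettyref{lmm:K}: reduce the supremum over $\theta$ to a maximum over a fine net of $B_R$, reduce the Euclidean norm $\|M(\theta)\|$ to a maximum of inner products $\Iprod{u}{M(\theta)}$ over a $\frac12$-net of the unit sphere in $\reals^{d-1}$, and for each fixed pair $(\theta,u)$ control the scalar Rademacher sum $\frac1n\sum_i b_i \Iprod{u}{Y_{i,\perp}}\, Y_{i,1}\, Q(\theta_1 Y_{i,1},\Iprod{\theta_\perp}{Y_{i,\perp}})$ conditionally on the data, finishing with a union bound over both nets.

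For the reduction to a net, I would use the smoothness of $Q$: since $\norm{\partial_x Q}_\infty,\norm{\partial_y Q}_\infty\le L_0$, a direct estimate gives $\|M(\theta)-M(\theta')\| \le \frac{2L_0}{n}\sum_{i=1}^n\|Y_i\|^3\,\|\theta-\theta'\|$, and on the high-probability event $\{\max_{i\in[n]}\|Y_i\|\le\sqrt n\}$ (a standard $\chi^2$ tail bound plus a union bound, as in the proof of \prettyref{thm:concentration}) this Lipschitz constant is at most $\poly(n)$. Hence an $n^{-4}$-net $\calN$ of $B_R$, of cardinality $n^{O(d)}$, suffices, incurring an additive discretization error of at most $n^{-1}$; similarly $\|M(\theta)\|\le 2\max_{u\in\calC}\Iprod{u}{M(\theta)}$ for a $\frac12$-net $\calC$ of the sphere with $|\calC|\le 5^d$. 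For the conditional Rademacher step, write $x_i^{(u,\theta)}\triangleq\Iprod{u}{Y_{i,\perp}}\,Y_{i,1}\,Q(\theta_1 Y_{i,1},\Iprod{\theta_\perp}{Y_{i,\perp}})$, so that $|x_i^{(u,\theta)}|\le L_0\,|Y_{i,1}|\,|\Iprod{u}{Y_{i,\perp}}|$; then \prettyref{lmm:concentration2} gives $\frac1n\sum_i (x_i^{(u,\theta)})^2 \le C L_0^2\log n$ \emph{uniformly over $u$} (and hence over $\theta$, since $|Q|\le L_0$) with probability at least $1-n^{-3}$. Applying \prettyref{lmm:rad} in its uniform form \prettyref{eq:rad2}, conditionally on $\{Y_i\}$, with $a\asymp L_0^2\log n$ and $s\asymp\log(|\calN||\calC|)\asymp d\log n$, bounds $\max_{\theta\in\calN,\,u\in\calC}\bigl|\frac1n\sum_i b_i x_i^{(u,\theta)}\bigr|$ by $\sqrt{as/n}\asymp\sqrt{L_0^2 d\log^2 n/n}$ except with probability $O(n^{-1})$. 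Combining the three ingredients and absorbing constants into $L_1$ yields the claim with failure probability at most $10 n^{-1}$.

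The only genuinely delicate point I anticipate is the bookkeeping behind the two logarithmic factors in the target rate: one $\log n$ comes from the uniform second-moment bound \prettyref{lmm:concentration2} for $\frac1n\sum_i Y_{i,1}^2\Iprod{u}{Y_{i,\perp}}^2$, while the other arises because the net $\calN\times\calC$ has size $n^{\Theta(d)}$, which forces $s\asymp d\log n$ in the sub-Gaussian tail bound for Rademacher sums. One must check these are compatible and that the polynomially large Lipschitz constant of $M$ really does cost only a negligible additive $n^{-1}$ after discretization; everything else is routine.
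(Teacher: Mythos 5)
Your proposal is correct and follows essentially the same route as the paper, which proves \prettyref{lmm:M} by reducing to the argument of \prettyref{lmm:K}: a net over the ball plus a $\tfrac12$-net of the sphere, a conditional Rademacher bound via \prettyref{eq:rad2} with $s\asymp d\log n$, and a uniform second-moment bound of order $\log n$ (the paper via the event $H_\infty$ and a $\chi^2$ bound, you via \prettyref{lmm:concentration2}, which is the same estimate). The only cosmetic difference is the discretization bookkeeping: the paper bounds $\Lip(M)\lesssim \frac{L_0}{n}\sum_i\|Y_{i,\perp}\||Y_{i,1}|(\|Y_{i,\perp}\|+|Y_{i,1}|)\lesssim d\log n$ and uses a $\frac{1}{dn}$-net, whereas you use a crude $\poly(n)$ Lipschitz bound with a finer $n^{-4}$-net, which works just as well.
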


We now prove the main lemmas:

\begin{proof}[Proof of \prettyref{lmm:induction}]
By the definition in \prettyref{eq:phase1}, we have $\Tstar = O(\sqrt{n} \log n)$. 
By the union bound, with probability at least $1-O(\Tstar n^{-1})=1-O(n^{-1/2} \log n)$, \prettyref{eq:alphat1-raw} and \prettyref{eq:ind1-raw} hold for all $t \leq \Tstar$. On this event, we proceed by induction on $t$. 

For the base case of $t=0$, \prettyref{eq:ind1} is trivially true, and \prettyref{eq:ind4-pre}--\prettyref{eq:ind4} hold by virtue of the random initialization on the event \prettyref{eq:ini}.

Next, assume that \prettyref{eq:ind1} and \prettyref{eq:ind4-pre} hold at time $t$. 
In particular, thanks to the assumption \prettyref{eq:thetastar-lb} and \prettyref{eq:ndlogd}, we have
\begin{equation}
\Tstar \omega \asymp \Tstar \sqrt{\frac{d \log n}{n}} \lesssim  \sqrt{\frac{d \log n (\log d + \log\log n)^2}{n \thetanorm^4}} \lesssim 1.
\label{eq:Tstaromega}
\end{equation}
Thus, \prettyref{eq:ind1} implies that
\begin{equation}
\|\theta_t - \ttheta_t\| \leq \alpha_t C_3.
\label{eq:ind1-weak}
\end{equation}
Similarly, by \prettyref{eq:Tstaromega},
\prettyref{eq:ind4-pre} implies that 
$\alpha_t \geq  \frac{1}{\sqrt{d \log n} + C_2 } \beta_t$, which further implies the desired \prettyref{eq:ind4}, since $\|\theta_t\|^2 =\alpha_t^2+\beta_t^2$.


To show that \prettyref{eq:ind4-pre} holds at time $t+1$, by \prettyref{eq:alphat1-raw} in \prettyref{lmm:alphat1-raw}, we have
\begin{align}
\alpha_{t+1} 
\geq & ~  \alpha_t \pth{1+\|\theta_*\|^2  - \sqrt{\frac{C \log n}{n}} - C \|\theta_t\|^2} - \sqrt{\frac{C \log^2 n}{n}} \|\theta_t\| - \sqrt{\frac{C d \log^2 n}{n}} \|\theta_t-\ttheta_t\| \nonumber \\
\geq & ~ \alpha_t \pth{1+\|\theta_*\|^2 - C_4 \sqrt{\frac{d \log^3 n}{n}} },
\label{eq:alphagrow}
\end{align}
where the last step follows from 
\prettyref{eq:thetatnorm}, \prettyref{eq:ind1-weak}, and \prettyref{eq:ind4}.
Combined with \prettyref{eq:beta-uncond2}, we have
\[
\frac{\beta_{t+1}}{\alpha_{t+1}}
\leq \frac{\beta_{t}}{\alpha_{t}}  \frac{1+\omega}{1+\|\theta_*\|^2 - C_4 \sqrt{\frac{d \log^3 n}{n}}} + \frac{\omega}{1+\|\theta_*\|^2 - C_4 \sqrt{\frac{d \log^3 n}{n}}}
\leq 
\frac{\beta_{t}}{\alpha_{t}} + \omega
\]
where the last step follows from the assumption \prettyref{eq:thetastar-lb} with the constant $C_\star$ chosen to be sufficiently large.
Thus, the ratio $\frac{\beta_t}{\alpha_t}$ grows at most linearly and satisfies 
$\frac{\beta_{t}}{\alpha_{t}} \leq \frac{\beta_0}{\alpha_0} + \omega t \leq \sqrt{d \log n} + \omega t$, on the event \prettyref{eq:ini}.
This is the desired \prettyref{eq:ind4-pre}.

It remains to show \prettyref{eq:ind1} holds at time $t+1$.
To this end, we write abstractly
\begin{equation}
\|\theta_t - \ttheta_t\| \leq \alpha_t K_t
\label{eq:thetatKt}
\end{equation}
and we will show that
\begin{equation}
K_t \leq C_5 \sth{\pth{1+ \sqrt{\frac{C_5 d \log^3 n}{n}}}^t -1},
\label{eq:Kt}
\end{equation}
which, in view of \prettyref{eq:Tstaromega},
 implies the desired 
\begin{equation}
K_t \leq C_5' \sqrt{\frac{d \log^3 n}{n}} t
\end{equation}
 for all $t\leq \Tstar$.
%
%

Next we apply the induction hypothesis to \prettyref{eq:ind1-raw} in \prettyref{lmm:ind1-raw}:
\begin{align*}
\|\ttheta_{t+1}-\theta_{t+1}\|
\leq  & ~ \pth{1 + \|\theta_*\|^2+ \sqrt{\frac{C d\log^2 n}{n}}} \|\ttheta_{t}-\theta_{t}\| + \sqrt{\frac{C d\log^2 n}{n}} \alpha_t + \sqrt{\frac{C \log n}{n}} \|\theta_t\|  \\
\stepa{\leq}  & ~ \alpha_t \sth{ K_t \pth{1 + \|\theta_*\|^2+ \sqrt{\frac{C d\log^2 n}{n}}} + \sqrt{\frac{C d\log^2 n}{n}} } + \sqrt{\frac{C \log n}{n}} \|\theta_t\|  \\
\stepb{\leq}  & ~ \alpha_t \sth{ K_t \pth{1 + \|\theta_*\|^2+ \sqrt{\frac{C d\log^2 n}{n}}} + \sqrt{\frac{C_6 d\log^2 n}{n}} }  \\
\stepc{\leq}  & ~ \alpha_{t+1} \frac{ K_t \pth{1 + \|\theta_*\|^2+ \sqrt{\frac{C d\log^2 n}{n}}} + \sqrt{\frac{C_6 d\log^2 n}{n}} }{
1+\|\theta_*\|^2 - C_4 \sqrt{\frac{d \log^3 n}{n}}}  \\
\stepd{\leq}  & ~ \alpha_{t+1} K_{t+1},
\end{align*}
where
(a) follows from \prettyref{eq:thetatKt};
(b) follows from \prettyref{eq:ind4};
(c) follows from \prettyref{eq:alphagrow};
(d) follows from \prettyref{lmm:Kt}.
This proves \prettyref{eq:Kt}, i.e., the desired \prettyref{eq:ind1}, at time $t+1$.
\end{proof}


\begin{proof}[Proof of \prettyref{lmm:alphat1-raw}]
First of all, in view of \prettyref{eq:ab-bounded} and \prettyref{eq:eventd}, with probability at least $1-2\exp(-2c_0 d \log n)$, both the main and the auxiliary sequences are bounded, i.e., 
\begin{equation}
\sup_{t \geq 0} \|\theta_t\| \leq 4(r+1), \quad \sup_{t \geq 0} \|\ttheta_t\| \leq 4(r+1).
\label{eq:thetat-bdd}
\end{equation}

Write
\[
f_n(\theta_t) = \Expect_n[YY^\top] \theta_t + \Expect_n[Y (\tanh\Iprod{\theta_t}{Y} - \Iprod{\theta_t}{Y})].
\]
Then
\[
\alpha_{t+1} = \underbrace{\Expect_n[Y_1 \Iprod{Y}{\theta_t }]}_{R_1} - \underbrace{\Expect_n[Y_1 (\Iprod{Y}{\theta_t}-\tanh\Iprod{\theta_t}{Y})]}_{R_2}.
\]
We first show that with probability at least $1-O(n^{-1})$,
\begin{equation}
R_1 \geq \pth{1+\|\theta_*\|^2 - \sqrt{\frac{C \log n}{n}} } \alpha_t - \sqrt{\frac{C \log^2 n}{n}}  \|\theta_{t,\perp}\| - 
 \sqrt{\frac{C d \log^2 n}{n}}  \|\ttheta_t-\theta_t\|
\label{eq:R1}
\end{equation}
and
\begin{equation}
|R_2| \leq C \alpha_t \|\theta_t\|^2 + \sqrt{\frac{C \log^2 n}{n}} \|\theta_{t,\perp}\| + \sqrt{\frac{C d \log^2 n}{n}}  \|\theta_t-\ttheta_t\|.
\label{eq:R2}
\end{equation}
Then the desired \prettyref{eq:alphat1-raw} follows from \prettyref{eq:R1} and \prettyref{eq:R2}.

For the linear term $R_1$, we have
\begin{align}
R_1
= & ~ \Expect_n[Y_1 \Iprod{Y}{\theta_t }] \nonumber \\
= & ~ \frac{1}{n} \sum_{i=1}^n b_i Y_{i1} (\alpha_t b_i Y_{i1} + \Iprod{Y_{i\perp}}{\theta_{t,\perp} })	\nonumber \\
= & ~ \pth{\frac{1}{n} \sum_{i=1}^n Y_{i1}^2 } \alpha_t +  \frac{1}{n} \sum_{i=1}^n b_i Y_{i1} \Iprod{Y_{i\perp}}{\theta_{t,\perp} }. \label{eq:R1a}
\end{align}
Here the first term (signal) satisfies $\frac{1}{n} \sum_{i=1}^n Y_{i1}^2 \geq 1+\|\theta_*\|^2 - O(\sqrt{\frac{\log n}{n}})$, in view of \prettyref{eq:H2}.   
For the second term, we cannot afford to take union bound over the $d$-dimensional sphere. Instead, we resort to the auxiliary iterates $\{\ttheta_t\}$.
Write
\begin{equation}
\frac{1}{n} \sum_{i=1}^n b_i Y_{i1} \Iprod{Y_{i\perp}}{\theta_{t,\perp} }
= \frac{1}{n} \sum_{i=1}^n b_i Y_{i1} \Iprod{Y_{i\perp}}{\ttheta_{t,\perp} }
+ \frac{1}{n} \sum_{i=1}^n b_i Y_{i1} \Iprod{Y_{i\perp}}{\theta_{t,\perp} - \ttheta_{t,\perp}  }.
\label{eq:R1b}
\end{equation}
Using the independence between $(\ttheta_t, \{Y_{i,1}\})$ and $\{b_i\}$, we have, for some constants $C,C'$,
\begin{align}
& ~ \prob{\left|\frac{1}{n} \sum_{i=1}^n b_i Y_{i1} \Iprod{Y_{i\perp}}{\ttheta_{t,\perp} }\right| \geq 
\sqrt{\frac{C \log^2 n }{n}} \|\ttheta_{t,\perp}\| } \nonumber \\
\stepa{\leq} & ~ 2 n^{-1}  + \prob{\frac{1}{n} \sum_{i=1}^n Y_{i1}^2 \Iprod{Y_{i\perp}}{\ttheta_{t,\perp} }^2  \geq C' \log n \|\ttheta_{t,\perp}\|^2} 	\nonumber \\
\stepb{\leq} & ~ 3 n^{-1}, 	\label{eq:R1c}
\end{align}
where 
(a) follows from \prettyref{lmm:rad}; (b) follows from \prettyref{lmm:concentration2}.
Furthermore, on the event \prettyref{eq:thetat-bdd}, applying \prettyref{lmm:K} to $q$ being the identity function, we conclude that, with probability at least $1-O(n^{-1})$, 
\begin{equation}
\left|\frac{1}{n} \sum_{i=1}^n b_i Y_{i1} \Iprod{Y_{i\perp}}{\theta_{t,\perp} - \ttheta_{t,\perp}  } \right|
\leq \sqrt{\frac{C d \log^2 n}{n}} \|\theta_t - \ttheta_t\|. 
\label{eq:R1d}
\end{equation}
Combining \prettyref{eq:R1a}--\prettyref{eq:R1d} and using the triangle inequality yield \prettyref{eq:R1}.


For the nonlinear term $R_2$, define
\begin{align}
g(x) \triangleq & ~ x - \tanh(x)	\label{eq:g}\\
T(x,y) \triangleq  & ~ \frac{1}{2} (g(y+x)+g(y-x))	\label{eq:T}\\
H(x,y) \triangleq  & ~ \frac{1}{2} (g(y+x)-g(y-x)).	\label{eq:H}
\end{align}
Then for any $x,y$ and any $b\in\{\pm 1\}$, we have
\begin{equation}
g(y + b x) = T(x,y) + b H(x,y).
\label{eq:gb}
\end{equation}
Furthermore, we have
\begin{lemma}
\label{lmm:gg}	
		For any $x,y\in\reals$,
	\begin{equation}
	0 \leq y \cdot T(x,y) \leq x^2 y^2 + y^4
	\label{eq:gg}
	\end{equation}
	and
	\begin{equation}
	|H(x,y)| \leq |x| .
	\label{eq:hh}
	\end{equation}
\end{lemma}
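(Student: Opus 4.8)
The plan is to reduce everything to elementary monotonicity and Lipschitz estimates for $g(x)=x-\tanh(x)$. First I would record the basic facts: $g'(x)=1-\sech^2(x)=\tanh^2(x)\ge 0$, so $g$ is odd, nondecreasing, and vanishes at $0$; in particular $x\,g(x)\ge 0$, and integrating the bound $g'(x)=\tanh^2(x)\le x^2$ gives $|g(x)|\le |x|^3/3$ for all $x$. These are the only properties of $g$ I will need.

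For the lower bound in \prettyref{eq:gg}, I would argue by symmetry. Assume $y\ge 0$; then oddness gives $g(y-x)=-g(x-y)$, while monotonicity together with $y+x\ge x-y$ gives $g(y+x)\ge g(x-y)$, so adding yields $g(y+x)+g(y-x)\ge 0$ and hence $y\,T(x,y)\ge 0$. The case $y<0$ follows by replacing $y$ with $-y$ and using that $g$ is odd. For the upper bound I would write $T(x,y)=g(y)+\frac{1}{2}\int_0^x\bigl(g'(y+t)-g'(y-t)\bigr)\,dt$ and factor $g'(y+t)-g'(y-t)=\tanh^2(y+t)-\tanh^2(y-t)=\bigl(\tanh(y+t)-\tanh(y-t)\bigr)\bigl(\tanh(y+t)+\tanh(y-t)\bigr)$; the first factor is at most $2|t|$ in absolute value since $\tanh$ is $1$-Lipschitz, while the second factor vanishes at $y=0$ and has $y$-derivative $\sech^2(y+t)+\sech^2(y-t)\in[0,2]$, hence is at most $2|y|$ in absolute value. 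Combining these gives $|T(x,y)|\le |g(y)|+x^2|y|\le \frac{|y|^3}{3}+x^2|y|\le |y|(x^2+y^2)$, so multiplying by $|y|$ yields $y\,T(x,y)\le x^2y^2+y^4$. Finally, the bound on $H$ is immediate: $H(x,y)=\frac{1}{2}\bigl(g(y+x)-g(y-x)\bigr)=\frac{1}{2}\int_{y-x}^{y+x}\tanh^2(s)\,ds$, which is bounded in absolute value by $|x|$ because $0\le\tanh^2\le 1$, giving \prettyref{eq:hh}.

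All of these calculations are routine; the one point that needs care is obtaining the \emph{precise} powers $x^2y^2+y^4$ in \prettyref{eq:gg} rather than a cruder estimate. A naive bound such as $|g'(y+t)-g'(y-t)|\le 2$ only produces $|T(x,y)|\lesssim |x|+|y|^3$, which is worthless when $y$ is small — and the small-$y$ regime is precisely what matters when this lemma feeds into the control of $R_2$ in \prettyref{lmm:alphat1-raw}. So the essential observation is that $\tanh(y+t)+\tanh(y-t)=O(|y|)$ uniformly in $t$, which supplies the extra factor of $|y|$; beyond spotting that, there is no real difficulty.
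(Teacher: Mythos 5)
Your proof is correct, and it reaches the bound \prettyref{eq:gg} by a genuinely different decomposition than the paper. The paper Taylor-expands $T(x,y)$ in $y$ around $y=0$: since $T(x,0)=0$ and $\partial_y^2 T(x,0)=0$ by oddness of $g''$, it uses $\partial_y T(x,y)|_{y=0}=\tanh^2(x)\le x^2$ to produce the $x^2y^2$ term and the uniform bound $\partial_y^3 T = 3(\sech^4(x+y)+\sech^4(x-y))-2(\sech^2(x+y)+\sech^2(x-y))\le 2$ (via $\sup_{0\le r\le 1}(3r^4-2r^2)=1$) to produce the $y^4$ term. You instead expand in $x$ around $x=0$: the boundary term $T(0,y)=g(y)$ with $|g(y)|\le |y|^3/3$ supplies the $y^4$ contribution, and the factorization $\tanh^2(y+t)-\tanh^2(y-t)=(\tanh(y+t)-\tanh(y-t))(\tanh(y+t)+\tanh(y-t))$ with the two Lipschitz bounds $\le 2|t|$ and $\le 2|y|$ supplies the $x^2y^2$ contribution after integrating in $t$. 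Both routes are short and elementary and give exactly the stated constants-free bound; yours avoids computing third derivatives and the small optimization over $\sech$, at the cost of the factorization trick, while the paper's is a one-line Taylor remainder once the derivative bound is checked. Your treatment of the nonnegativity $yT(x,y)\ge 0$ (monotonicity plus oddness of $g$) makes explicit what the paper only asserts, and your bound on $H$ ($H(x,y)=\frac12\int_{y-x}^{y+x}\tanh^2(s)\,ds$ with $0\le\tanh^2\le 1$) is the same $1$-Lipschitz argument as the paper's, so no issues there.
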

Then
\begin{align*}
R_2
= & ~ \Expect_n[Y_1 g(\Iprod{Y}{\theta_t})] \\
= & ~ \frac{1}{n} \sum_{i=1}^n b_i Y_{i,1} g(\Iprod{Y_i}{\theta_t}) = \frac{1}{n} \sum_{i=1}^n b_i Y_{i,1} g(b_i \alpha_t Y_{i,1}  + \Iprod{Y_{i,\perp}}{\theta_{t,\perp}}) \\
\stepa{=} & ~ \frac{1}{n} \sum_{i=1}^n Y_{i,1} g(\alpha_t Y_{i,1}  + b_i \Iprod{Y_{i,\perp}}{\theta_{t,\perp}}) \\
\stepb{=} & ~ \underbrace{\frac{1}{n} \sum_{i=1}^n T(\Iprod{Y_{i,\perp}}{\theta_{t,\perp}},\alpha_t Y_{i,1}) Y_{i,1}  }_{R_3}
+ \underbrace{\frac{1}{n} \sum_{i=1}^n H(\Iprod{Y_{i,\perp}}{\theta_{t,\perp}},\alpha_t Y_{i,1})  Y_{i,1} b_i}_{R_4},
\end{align*}
where (a) is due to $g(\pm x) = \pm g(x)$;
(b) follows from \prettyref{eq:gb}.
Next we show \prettyref{eq:R2} by proving that, with probability at least $1-O(n^{-1})$,
\begin{align}
|R_3| \leq  & ~ C \alpha_t \|\theta_{t}\|^2  \label{eq:Ibound}  \\
|R_4| \leq & ~ \sqrt{\frac{C \log^2 n}{n}} \|\theta_{t,\perp}\| + \sqrt{\frac{C d \log^2 n}{n}} \|\theta_t-\ttheta_t\|.  \label{eq:IIbound}
\end{align}
To prove \prettyref{eq:Ibound}, recall that $\alpha_t > 0$ by assumption.
Then with probability at least $1-O(n^{-1})$,
\begin{align*}
0 \stepa{\leq} R_3
= & ~ \frac{1}{n} \sum_{i=1}^n T(\Iprod{Y_{i,\perp}}{\theta_{t,\perp}}, \alpha_t Y_{i,1}) Y_{i,1}  \\
\stepb{\leq} & ~ \alpha_t  \pth{\frac{1}{n} \sum_{i=1}^n Y_{i,1}^2 \Iprod{Y_{i,\perp}}{\theta_{t,\perp}}^2}  +  \alpha_t^3 \pth{\frac{1}{n} \sum_{i=1}^n  Y_{i,1}^4}   \\
\stepc{\leq} & ~ C\alpha_t \|\theta_{t,\perp}\|^2  +  C \alpha_t^3 \\
\stepd{=} & ~ C \alpha_t \|\theta_t\|^2,
\end{align*}
where (a) and (b) follow from \prettyref{eq:gg} in \prettyref{lmm:gg}; (c) follows from \prettyref{lmm:concentration2} and \prettyref{eq:H4}; (d) is due to $\|\theta_{t,\perp}\|^2 + |\theta_{t,1}|^2 = \|\theta_t\|^2$.
This completes the proof of \prettyref{eq:Ibound}.

To show \prettyref{eq:IIbound}, we will again use the auxiliary iterates $\{\ttheta_t\}$.
For any $\theta=(\theta_1,\theta_\perp)\in\reals^d$, define
\begin{equation}
\xi(\theta)
\triangleq \frac{1}{n} \sum_{i=1}^n H(\Iprod{Y_{i,\perp}}{\theta_{\perp}},\theta_1 Y_{i,1})  Y_{i,1} b_i.
\label{eq:xi}
\end{equation}
Then
\begin{equation}
R_4 = \xi(\theta_{t}) = \xi(\ttheta_{t})  + \xi(\theta_{t}) - \xi(\ttheta_{t}),
\label{eq:R4pre}
\end{equation}
Define
\begin{equation}
\theta_t' \triangleq  (-\theta_{t,1},\theta_{t,\perp}), \quad \ttheta_t' \triangleq  (-\ttheta_{t,1},\ttheta_{t,\perp}),
\label{eq:thetaminus}
\end{equation}
which satisfies $\|\theta_t'-\theta_t'\|=\|\theta_t-\ttheta_t\|$.
Then
\[
\xi(\theta_{t}) - \xi(\ttheta_{t})
= 
\frac{1}{2n} \sum_{i=1}^n Y_{i,1} b_i \sth{ g(\Iprod{\ttheta_t}{Y_{i}}) - g(\Iprod{\theta_t}{Y_{i}})} -
\frac{1}{2n} \sum_{i=1}^n Y_{i,1} b_i \sth{ g(\Iprod{\ttheta_t'}{Y_{i}}) - g(\Iprod{\theta_t'}{Y_{i}})}.
\]
On the event \prettyref{eq:thetat-bdd}, applying \prettyref{lmm:K} to $q = g$ whose first two derivatives are bounded by absolute constants, we conclude that, with probability at least $1-O(n^{-1})$, 
\begin{equation}
|\xi(\theta_t) - \xi(\ttheta_t)| 
\leq \sqrt{\frac{C d \log^2 n}{n}} (\|\theta_t - \ttheta_t\| + \|\utheta_t - \ttheta_t^-\|)
= 2 \sqrt{\frac{C d \log^2 n}{n}} \|\theta_t - \ttheta_t\|.
\label{eq:R41}
\end{equation}
To bound $\xi(\ttheta_t)$, let $\tx_i \triangleq H( \Iprod{Y_{i,\perp}}{\ttheta_{t,\perp}}, \talpha_t Y_{i,1})  Y_{i,1} $, which are independent of $\{b_i\}$. Then
\begin{align*}
\prob{|\xi(\ttheta_t)| \geq \sqrt{\frac{C s \log n}{n}} \|\ttheta_{t,\perp}\|}
=  & ~ \prob{\frac{1}{n} \left|\sum_{i=1}^n \tx_i b_i\right| \geq \sqrt{\frac{C s \log n}{n}} \|\ttheta_{t,\perp}\|  }  \\
\stepa{\leq} & ~ 2 \exp(-s/8)  + \prob{\frac{1}{n} \sum_{i=1}^n \tx_i^2 \geq C \log n \|\ttheta_{t,\perp}\|^2} \\
\stepb{\leq} & ~ 2 \exp(-s/8)  + \prob{\frac{1}{n} \sum_{i=1}^n Y_{i,1}^2 \Iprod{Y_{i,\perp}}{\ttheta_{t,\perp}}^2 \geq C \log n \|\ttheta_{t,\perp}\|^2 } \\
\stepc{\leq} & ~ 2 \exp(-s/8)  + n^{-3},
\end{align*}
where (a) follows from \prettyref{lmm:rad}; (b) is due to \prettyref{eq:hh} in \prettyref{lmm:gg};
(c) is due to \prettyref{lmm:concentration2}.
Setting $s = 8 \log n$ yields
 with probability at least $1-O(n^{-1})$, 
\begin{equation}
|\xi(\ttheta_t)| 
\leq \sqrt{\frac{C \log^2 n}{n}} \|\ttheta_{t,\perp}\| \leq 
\sqrt{\frac{C \log^2 n}{n}} (\|\theta_{t,\perp}\| +  \|\theta_t - \ttheta_t\|).
\label{eq:R42}
\end{equation}
Combining \prettyref{eq:R4pre} with \prettyref{eq:R41} and \prettyref{eq:R42} completes the proof of \prettyref{eq:IIbound} and hence the lemma.	
\end{proof}

\begin{proof}[Proof of \prettyref{lmm:ind1-raw}]
Write
\[
\ttheta_{t+1} - \theta_{t+1} 
= \underbrace{f_n(\ttheta_{t}) - f_n(\theta_{t})}_{\triangleq \calE_1} + \underbrace{\tilde f_n(\ttheta_{t}) - f_n(\ttheta_{t})}_{\triangleq \calE_2}.
\]
For the first term, applying \prettyref{lmm:fsmooth} yields that with probability at least $1-\exp(-C'd\log n)$,
\begin{equation}
\|\calE_1\|=\| f_n(\ttheta_{t}) - f_n(\theta_{t})\| \leq 
\pth{1 + \|\theta_*\|^2 + \sqrt{\frac{C d}{n}}} \|\ttheta_{t} - \theta_{t}\|.
\label{eq:calE1}
\end{equation}

	%

Next we proceed to the second term. 
A trivial yet useful lemma is the following:
\begin{lemma}
\label{lmm:decomp}	
Assume that $b_i,\tb_i \in \{\pm1\}$. Then
\[
	\frac{1}{n} \sum_{i=1}^n h( y_i + \tb_i x_i ) - h(y_i + b_i x_i) 
	= 	\frac{1}{n} \sum_{i=1}^n (\tilde b_i - b_i) B(x_i,y_i) 	
	\]
	where  $B(x,y) \triangleq \frac{h(y+x)-h(y-x)}{2}$.	
\end{lemma}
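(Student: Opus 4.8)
The plan is to reduce the claimed identity to a pointwise statement and verify it by a one-line parity argument, with no analysis involved. Fix a pair $(x,y)$ and set $A(x,y)\triangleq\frac{h(y+x)+h(y-x)}{2}$, so that together with the definition $B(x,y)=\frac{h(y+x)-h(y-x)}{2}$ we have $h(y+x)=A(x,y)+B(x,y)$ and $h(y-x)=A(x,y)-B(x,y)$. These two equalities are exactly the $b=1$ and $b=-1$ cases of the decomposition
\[
h(y+bx)=A(x,y)+b\,B(x,y),\qquad b\in\{\pm1\},
\]
which is all we need: on $\{\pm1\}$ every function of the sign $b$ is affine in $b$.

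Applying this decomposition once with the sign $\tb_i$ and once with $b_i$ and subtracting, the common term $A(x_i,y_i)$ cancels, leaving, for each $i$,
\[
h(y_i+\tb_i x_i)-h(y_i+b_i x_i)=(\tb_i-b_i)\,B(x_i,y_i).
\]
Summing over $i=1,\dots,n$ and dividing by $n$ gives the asserted equality. I would note explicitly that the identity holds term by term, in particular when $\tb_i=b_i$, in which case both sides vanish; and that no hypothesis on $h$ is used beyond its being a real function of one variable.

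There is no substantive obstacle here — the lemma is purely algebraic. Its role is bookkeeping: it isolates the factor $(\tb_i-b_i)$, so that when one replaces the true Rademacher sign $b_i$ by the independent copy $\tb_i$ from the leave-one-coordinate-out construction (cf.\ \prettyref{eq:tfn}), the discrepancy between the two iterations is written as a Rademacher-type sum against the coefficients $B(x_i,y_i)$, which are controlled by $|B(x,y)|\le$ (a bound inherited from $h$ and $|x|$). This is precisely the form to which the concentration estimates of Lemmas~\ref{lmm:rad} and \ref{lmm:K} apply, so the only care required downstream is to check the boundedness of $B$ for the specific choices of $h$ used (the identity and $g(x)=x-\tanh x$).
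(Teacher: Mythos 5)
Your proposal is correct and follows essentially the same route as the paper: the paper likewise decomposes $h(y+bx)$ into its even and odd parts in $b$ (writing $h(x+by)=s+b\delta$ with $s=\frac{h(x+y)+h(x-y)}{2}$, $\delta=\frac{h(x+y)-h(x-y)}{2}$), so subtracting the two sign choices cancels the even part and leaves $(\tb_i-b_i)B(x_i,y_i)$ termwise. Nothing to add.
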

\begin{proof}
	This simply follows from the fact that whenever $b=\pm 1$, we can write $h(x + b y)  = s + b \delta$, where $s \triangleq \frac{h(x+y)+h(x-y)}{2}$ and $\delta=\frac{h(x+y)-h(x-y)}{2}$.
\end{proof}
To bound the orthogonal component of $\calE_2$, note that $\tY_{i,\perp}=Y_{i,\perp}$. 
To apply \prettyref{lmm:decomp} with $h=\tanh$, we define
\begin{align}
B(x,y) \triangleq & ~ \frac{\tanh(y+x) - \tanh(y-x)}{2}  \label{eq:B}\\
Q(x,y) \triangleq & ~ \frac{B(x,y)}{x},	 \label{eq:Q}
\end{align}
with $Q(0,y)$ understood as $ \lim_{x\to0}Q(x,y) = \sech^2(y)$.
The function 	$Q$ satisfies the following smoothness property:
\begin{lemma}
\label{lmm:Q}	
Then	for all $x,y\in\reals$,
$|Q(x,y)| \leq 1$, 
$|\partial_x Q(x,y)| \leq 1/3$, 
$|\partial_y Q(x,y)| \leq 1$.
\end{lemma}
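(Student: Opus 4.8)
The plan is to replace the difference quotient defining $Q$ by an explicit average and then differentiate directly. Writing $\tanh(y+x)-\tanh(y-x)=\int_{y-x}^{y+x}\sech^2(t)\,dt$ and substituting $t=y+sx$ gives the representation
\[
Q(x,y)=\frac{1}{2x}\int_{y-x}^{y+x}\sech^2(t)\,dt=\frac12\int_{-1}^{1}\sech^2(y+sx)\,ds,
\]
and, using $\cosh(y+x)\cosh(y-x)=\tfrac12(\cosh 2x+\cosh 2y)$ together with $\tanh A-\tanh B=\sinh(A-B)/(\cosh A\cosh B)$, the equivalent closed form $Q(x,y)=\sinh(2x)/\bigl(x(\cosh 2x+\cosh 2y)\bigr)$. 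Both forms make transparent that $Q>0$, that $Q$ is even in $x$, that $Q$ is decreasing in $|y|$, and that every bound in \prettyref{lmm:Q} reduces to an elementary estimate after differentiating under the integral sign.

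The two easy bounds come out immediately from the averaging form. Since $0\le\sech^2\le 1$ and the weight $\tfrac12\,ds$ on $[-1,1]$ has total mass $1$, we get $0\le Q(x,y)\le 1$, hence $|Q|\le1$. Differentiating under the integral, $\partial_y Q(x,y)=\tfrac12\int_{-1}^1\tanh''(y+sx)\,ds$, so $|\partial_y Q(x,y)|\le\tfrac12\int_{-1}^1|\tanh''(y+sx)|\,ds\le\max_t|\tanh''(t)|$, and since $|\tanh''(t)|=2\,\sech^2(t)\,|\tanh(t)|\le1$ (a bound already used in the paper) this gives $|\partial_y Q(x,y)|\le1$.

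The remaining bound $|\partial_x Q|\le 1/3$ is the delicate one. Here $\partial_x Q(x,y)=\tfrac12\int_{-1}^1 s\,\tanh''(y+sx)\,ds$; folding the integral about $s=0$ and using that $\tanh''$ is odd yields $\partial_x Q(x,y)=\tfrac12\int_0^1 s\,[\tanh''(y+sx)-\tanh''(y-sx)]\,ds$. Bounding the bracket by $\min\{\,2sx\max|\tanh'''|,\ 2\max|\tanh''|\,\}$ and carrying out the resulting one–variable integral already produces a clean absolute constant; to squeeze it down to $1/3$ I would instead pass to the one–variable reduction $\partial_x Q=\bigl(xg'(x)-g(x)\bigr)/(2x^2)$ with $g(x)=\tanh(y+x)-\tanh(y-x)$, where $g$ is odd in $x$. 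For small $|x|$ one uses the identity $xg'(x)-g(x)=\int_0^x t\,[\tanh''(y+t)-\tanh''(y-t)]\,dt$, which is $O(x^3)$; for large $|x|$ one uses that $g$ is bounded by $2$ while $g'$ decays like $\sech^2(|x|-|y|)$; and, since $Q$ is decreasing in $|y|$, the worst case is $y=0$, where one is left with the explicit scalar function $x\mapsto(\sinh 2x-2x)/\bigl(x^2(\cosh 2x+1)\bigr)$ to estimate on the intermediate range.

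I expect pinning the $\partial_x$–constant to $1/3$ to be the only real work: the naive uniform estimate $\tfrac12\max_t|\tanh''(t)|$ overshoots, so the argument must genuinely use the odd symmetry of $\tanh''$ about its inflection (equivalently, the oddness of $x\mapsto g(x)$) and split according to the size of $|x|$; the bounds on $|Q|$ and $|\partial_y Q|$ are essentially one line each from the integral representation.
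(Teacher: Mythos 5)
Your bounds $|Q|\le 1$ and $|\partial_y Q|\le 1$ are correct and amount to the paper's one-line argument (the $1$-Lipschitz continuity of $\tanh$ and of $\tanh'$); the averaging representation $Q(x,y)=\frac12\int_{-1}^1\sech^2(y+sx)\,ds$ is a clean way to see both. The gap is the third bound: you never prove $|\partial_x Q|\le 1/3$, you only sketch a split into small, intermediate and large $|x|$ and leave the intermediate range as ``an explicit scalar function to estimate'' --- and that estimate cannot succeed, because the inequality with constant $1/3$ is false. Your own reduction exposes this: at $y=0$ one has $\partial_x Q(x,0)=(x\sech^2 x-\tanh x)/x^2=-(\sinh 2x-2x)/(x^2(\cosh 2x+1))$, and at $x=1$ its absolute value is $\tanh 1-\sech^2 1\approx 0.342>1/3$ (the maximum, near $x\approx 0.93$, is about $0.343$). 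In addition, the step ``since $Q$ is decreasing in $|y|$, the worst case is $y=0$'' is a non sequitur: monotonicity of $Q$ in $|y|$ does not by itself locate the maximum of $|\partial_x Q|$, so that reduction would need its own justification even if the constant were attainable.

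For comparison, the paper derives $1/3$ from the formula $\partial_x Q=\frac12\int_0^1 z(1-z)\{\tanh'''(y+xz)-\tanh'''(y-xz)\}\,dz$ together with $|\tanh'''|\le 2$, but that formula drops the term coming from differentiating the explicit factor of $x$ in the Taylor identity (as written it vanishes identically at $y=0$, contradicting $\partial_x(\tanh x/x)\not\equiv 0$), so the stated constant $1/3$ is an artifact of the paper's computation rather than something you failed to reach. What your representation does give, in one line, is $|\partial_x Q(x,y)|\le\frac12\int_{-1}^1|s|\,|\tanh''(y+sx)|\,ds\le\frac12\|\tanh''\|_\infty=\frac{2\sqrt3}{9}<0.39$ --- precisely the ``naive'' estimate you dismissed. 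Since the only use of \prettyref{lmm:Q} is to verify the bounded-derivative hypothesis of \prettyref{lmm:M} in the proof of \prettyref{lmm:ind1-raw}, where any absolute constant suffices, the correct fix is to prove the lemma with a slightly larger constant (e.g.\ $1/2$) via this one-line bound, not to chase $1/3$.
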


In view of \prettyref{eq:gb}, we have
\begin{align*}
\calE_{2,\perp}
= & ~ \frac{1}{n}\sum_{i=1}^n Y_{i,\perp} \tanh \Iprod{\ttheta_t}{\tilde Y_i} - \frac{1}{n}\sum_{i=1}^n Y_{i,\perp} \tanh \Iprod{\ttheta_t}{Y_i} \\
= & ~ \frac{1}{n}\sum_{i=1}^n Y_{i,\perp} (\tanh (\Iprod{\ttheta_{t,\perp}}{Y_{i,\perp}} + \tb_i \ttheta_{t,1} Y_{i,1} ) - \tanh (\Iprod{\ttheta_{t,\perp}}{Y_{i,\perp}} + b_i \ttheta_{t,1} Y_{i,1})) \\
= & ~  \frac{1}{n} \sum_{i=1}^n (\tb_i-b_i) Y_{i,\perp}  B(\ttheta_{t,1} Y_{i,1}, \Iprod{\ttheta_{t,\perp}}{Y_{i,\perp}}) \\
= & ~  \ttheta_{t,1} \sth{\frac{1}{n} \sum_{i=1}^n (\tb_i-b_i) Y_{i,1}  Y_{i,\perp} Q(\ttheta_{t,1} Y_{i,1}, \Iprod{\ttheta_{t,\perp}}{Y_{i,\perp}}) }
\end{align*}
where the penultimate step follows from applying \prettyref{lmm:decomp} to $h=\tanh$.
To apply \prettyref{lmm:M}, first note that the function $Q$ defined in \prettyref{eq:Q} fulfills the bounded derivative condition thanks to \prettyref{lmm:Q}.
Thus with probability at least $1-O(n^{-1})$,
it holds that
\[
\norm{\frac{1}{n} \sum_{i=1}^n (\tb_i-b_i) Y_{i,\perp} Y_{i,1}  Q(\ttheta_{t,1} Y_{i,1}, \Iprod{\ttheta_{t,\perp}}{Y_{i,\perp}}) } \leq \sqrt{\frac{C d \log^2 n}{n}}
\]
and hence
\begin{equation}
\|\calE_{2,\perp}\| \leq |\ttheta_{t,1}| \sqrt{\frac{C d \log^2 n}{n}} 
\leq (\alpha_t + \|\ttheta_t-\theta_t\|) \sqrt{\frac{C d \log^2 n}{n}}.
\label{eq:R2perp}
\end{equation}

To bound the first coordinate of $\calE_2$, let 
$\tx_i = \ttheta_{t,1} Y_{i,1} $, $\ty_i = \Iprod{\ttheta_{t,\perp}}{Y_{i,\perp}}$ and similarly
$x_i = \theta_{t,1} Y_{i,1} $, $y_i = \Iprod{\theta_{t,\perp}}{Y_{i,\perp}}$. Then
\begin{align*}
\calE_{2,1}
= & ~ \frac{1}{n} \sum_{i=1}^n  b_i Y_{i,1} \tanh (\ty_i + b_i \tx_i)  - \tb_i Y_{i,1} \tanh (\ty_i + \tb_i \tx_i)\\
= & ~ \frac{1}{n} \sum_{i=1}^n  Y_{i,1} \sth{ \tanh (\tx_i+ b_i \ty_i)  - \tanh (\tx_i + \tb_i \ty_i) }\\
= & ~  \frac{1}{n} \sum_{i=1}^n (\tb_i-b_i) Y_{i,1}  B(\ty_i,\tx_i) \\
= & ~  \underbrace{\frac{1}{n} \sum_{i=1}^n \tb_i Y_{i,1} B(y_i,x_i)}_{\calE_3} 
- \underbrace{\frac{1}{n} \sum_{i=1}^n b_i Y_{i,1}  B(\ty_i,\tx_i)}_{\calE_4}  + \underbrace{\frac{1}{n} \sum_{i=1}^n \tb_i Y_{i,1} \sth{
B(\ty_i,\tx_i) - 
B(y_i,x_i)}}_{\calE_5}.
\end{align*}
The first two terms can be dealt with using the same technology:
For $\calE_3$, we have
\begin{align}
\prob{|\calE_3| \geq 4 \|\theta_{t,\perp}\| \sqrt{\frac{s}{n}} }
\stepa{\leq} & ~ 2\exp(-s/8)  + \prob{\frac{1}{n} \sum_{i=1}^n B(y_i,x_i)^2 \geq 16 \|\theta_{t,\perp}\|^2}	\nonumber \\
\stepb{=} & ~ 2\exp(-s/8)  + \prob{\frac{1}{n} \sum_{i=1}^n \Iprod{\theta_{t,\perp}}{Y_{i,\perp}}^2 \geq 16 \|\theta_{t,\perp}\|^2}.	\nonumber \\	
\stepc{\leq} & ~ 2\exp(-s/8)  + \exp(-n), \label{eq:ind1I}
\end{align}
where (a) follows from \prettyref{lmm:rad};
(b) follows from the fact that $|B(y,x)| = \frac{|\tanh(x+y) - \tanh(x-y)|}{2} \leq |y|$, since $\tanh$ is $1$-Lipschitz;
(c) follows from \prettyref{eq:ds} in \prettyref{lmm:concentration2}. 
Choosing $s=8\log n$ yields
\begin{equation}
|\calE_3| \leq \|\theta_t\| \sqrt{\frac{C \log n}{n}}
\label{eq:calE3}
\end{equation}
with probability at least $1-O(n^{-1})$.

Entirely analogously,  we have
\begin{align}
\prob{|\calE_4| \geq 4 \|\ttheta_{t,\perp}\| \sqrt{\frac{s}{n}} }
\leq 2\exp(-s/8)  + \exp(-n), \label{eq:ind1II}
\end{align}
Choosing $s=8\log n$ yields
\begin{equation}
|\calE_4| \leq (\|\theta_t\|+\|\ttheta_t-\theta_t\|) \sqrt{\frac{C \log n}{n}}
\label{eq:calE4}
\end{equation}
with probability at least $1-O(n^{-1})$.

To bound $\calE_5$, 
recall from \prettyref{eq:thetaminus} the notations
$\theta_t' = (-\theta_{t,1},\theta_{t,\perp})$ and 
$\ttheta_t' = (-\ttheta_{t,1},\ttheta_{t,\perp})$, which satisfies 
$\|\theta_t'-\ttheta_t'\|=\|\theta_t-\ttheta_t\|$.
Then we have
\begin{equation}
\calE_5 = 
\frac{1}{2n} \sum_{i=1}^n \tb_i Y_{i,1}  (\tanh \Iprod{\theta_t}{Y_i} - \tanh \Iprod{\ttheta_t}{Y_i}) +
\frac{1}{2n} \sum_{i=1}^n \tb_i Y_{i,1}  (\tanh \Iprod{\theta_t'}{Y_i} - \tanh \Iprod{\ttheta_t'}{Y_i}).
\label{eq:calE5}
\end{equation}
By \prettyref{lmm:K} (applied to $q=\tanh$), the first term satisfies, with probability at least $1-O(n^{-1})$, 
\begin{equation}
\left|\frac{1}{2n} \sum_{i=1}^n \tb_i Y_{i,1}  (\tanh \Iprod{\theta_t}{Y_i} - \tanh \Iprod{\ttheta_t}{Y_i})\right|
\leq \sqrt{\frac{C_1  d \log^2 n }{n}} \|\theta_t-\ttheta_t\|.
\label{eq:calE5-bound}
\end{equation}
Entirely analogously, the second term (and hence $|\calE_5|$ itself) in \prettyref{eq:calE5} satisfies the same bound since $\|\theta_t'-\ttheta_t'\|=\|\theta_t-\ttheta_t\|$.
Finally, since 
$\|\ttheta_{t+1}-\theta_{t+1}\| \leq \|\calE_1\| + \|\calE_{2,\perp}\| + |\calE_3|+|\calE_4|+|\calE_5|$,
the desired \prettyref{eq:ind1-raw} follows from combining \prettyref{eq:R1}, \prettyref{eq:R2perp}, \prettyref{eq:calE3}, \prettyref{eq:calE4}, \prettyref{eq:calE5}, and \prettyref{eq:calE5-bound}.
\end{proof}



	\subsection{Proof of supporting lemmas}
	\label{sec:random-aux}

\begin{proof}[Proof of \prettyref{lmm:typical}]
Note that $\frac{1}{n} \sum_{i=1}^n Y_{i,1}^2$ is equal in distribution to $1 + \thetanorm^2 + \frac{\chi^2_n}{n}-1 +  N(0,\frac{4 \thetanorm^2}{n})$.
Then \prettyref{eq:H2} follows from the $\chi^2$-distribution tail bound \prettyref{eq:LM-lower} and the Gaussian tail bound.	
Next, since $Y_{i,1}\iiddistr \frac{1}{2} N(\pm \thetanorm,1)$ have finite moments, \prettyref{eq:H4} follows from the Chebyshev inequality.
Also, since  $\|Y_i\| \leq \|Z_i\| + \thetanorm$, where $\|Z_i\|\sim \chi_d$, \prettyref{eq:H3} follows similarly from the Chebyshev inequality. 
Finally, \prettyref{eq:Hinfty} follows simply from the union bound.	
\end{proof}

\begin{proof}[Proof of \prettyref{lmm:fsmooth}]
The Jacobian of $f_n$ is the following:
\begin{equation}
J_n(\theta) \triangleq \Expect_n\qth{YY^\top  \sech^2(\iprod{\theta}{Y})},
\label{eq:jac}
\end{equation}
which is a (random) PSD matrix. Since $0 \leq \sech\leq 1$, for any $u$, we have 
$0 \leq u^\top J_n(\theta) u = \Expect_n\qth{ \iprod{u}{Y}^2 \sech^2(\iprod{\theta}{Y})} \leq 
\Expect_n\qth{ \iprod{u}{Y}^2 } \leq u^\top J_n(0) u = u^\top \Sigma_n u$. Thus $J_n(\theta) \preceq  \Sigma_n$ for any $\theta$.
For $\tau \in [0,1]$, define 
$a_\tau \triangleq (1-\tau) a_0 + \tau a_1$.
Then
\[
f_n(a_1) - f_n(a_0)
= \frac{1}{n}\sum_{i=1}^n Y_i \int_0^1 d\tau \sech \iprod{a_\tau}{Y_i} \iprod{Y_i}{a_1- a_0}
= \sth{\int_0^1 d\tau J_n(a_\tau) } (a_1- a_0).
\]
Therefore
\begin{align*}
\| f_n(a_1) - f_n(a_0)\| 
\leq & ~  \opnorm{\int_0^1 d\tau J_n(a_\tau)} \|a_1- a_0\|\\
\leq & ~ \sup_\theta \opnorm{ J_n(\theta) } \|a_1- a_0\|	\\
\leq & ~ \opnorm{\Sigma_n} \|a_1- a_0\|.
\end{align*}
Finally, 
$\opnorm{\Sigma_n} \leq \opnorm{\Sigma} + \opnorm{\Sigma_n-\Sigma\|}$, where $\opnorm{\Sigma}=1 + \|\theta_*\|^2$.
Furthermore, 
since the entries of $Y_i$ are independent and subgaussian with parameter depending only on $\thetanorm\leq\thetaub$, by concentration of the sample covariance matrix (cf.~\cite[Exercise 4.7.3]{Vershynin-HDP}), we have
$\opnorm{\Sigma_n-\Sigma} \leq \sqrt{\frac{C d \log n}{n}}$ with probability at least $1 - \exp(-C' d \log n)$ for some constants $C$ and $C'$.
\end{proof}

\begin{proof}[Proof of \prettyref{lmm:concentration2}]
Note that	$Y_\perp$ is a $(d-1)\times n$ matrix with iid $N(0,1)$ entries. 
By the Davidson-Szarek bound \cite[Theorem II.7]{DS2001}, 
\begin{equation}
\prob{\opnorm{Y_\perp} \geq \sqrt{n}+ \sqrt{d-1} + t} \leq e^{-t^2/2},
\label{eq:ds1}
\end{equation}
which implies \prettyref{eq:ds} since $n\geq d$.

Next, note that
	\[
	\sup_{\theta\in S^{d-1}} \frac{1}{n} \sum_{i=1}^n Y_{i,1}^2 |\iprod{Y_{i,\perp}}{\theta}|^2 
	\leq \opnorm{\frac{1}{n} \sum_{i=1}^n Y_{i,\perp}Y_{i,\perp}^\top} \max_{i\in[n]} Y_{i,1}^2
	= \frac{1}{n} \opnorm{Y_{\perp}}^2 \max_{i\in[n]} Y_{i,1}^2.
	\]
The proof is completed in view of the high-probability event \prettyref{eq:Hinfty}.
\end{proof}

\begin{proof}[Proof of \prettyref{lmm:rad}]
Note that each $b_i$ is Rademacher and hence $4$-subgaussian. Thus conditioned on any realization of $x$, $\iprod{x}{b}$ is $4\|x\|^2$-subgaussian and hence
$\prob{\left|\iprod{x}{b} \right| \geq \sqrt{s} \|x\| \mid  x}  \leq 2\exp(-s/8)$ for any $t$.
	The desired \prettyref{eq:rad}
	then follows from 
	$\prob{\left|\iprod{x}{b} \right| \geq \sqrt{as} } \leq 
	\prob{\left|\iprod{x}{b} \right| \geq \sqrt{s} \|x\| }   + \prob{\|x\| \geq \sqrt{a} }$.
	Finally, \prettyref{eq:rad2} follows analogously from the union bound.	
\end{proof}

\begin{proof}[Proof of \prettyref{lmm:K}]
By dilating $q$, we can assume WLOG that $R=1$.
Recall the global assumption $\|\theta_*\|\leq \thetaub$.
Throughout the proof, unless stated to be absolute, all constants depend only on $\thetaub$ and $L_0$.
Since $B_1$ is convex, the Lipschitz constant of $D$ is given by
\[
L = 
\sup_{\theta\in B_1} \norm{\nabla D(\theta)}.
\]
It remains to bound $L$ from above with high probability, i.e.,
\begin{equation}
\sup_{\theta\in B_1} \norm{\nabla D(\theta)} \leq \sqrt{\frac{L_2 d\log ^2 n}{n}}
\label{eq:Labove}
\end{equation}
for some constant $L_2$. Furthermore,
the Hessian of $D$ is given by
\[
\nabla^2 D(\theta) = \frac{1}{n} \sum_{i=1}^n b_i Y_{i,1} Y_i Y_i^\top q''(\Iprod{\theta}{Y_{i}}).
\]
Since $|q''| \leq L_0$, we have
\begin{equation}
\sup_{\theta \in B_1} \opnorm{\nabla^2 D(\theta)} \leq L_0 \max_{i\in [n]} |Y_{i,1}| \|Y_i\|^2.
\end{equation}
In view of \prettyref{eq:Hinfty}, 
$\max_{i\in [n]} |Y_{i,1}| \leq \sqrt{\kappa \log n}$ with probability at least $1-n^{-2}$.
Furthermore, $\|Y_i\|^2 \leq 2 \|\theta_*\|^2+2 \|Z_i\|^2$. By \prettyref{lmm:LM}, for each $i$,
\begin{equation*}
\prob{\|Z_i\|^2 \geq d + 2 \sqrt{d x} + 2x} \leq \exp(-x).
\end{equation*}
Since $n/d$ is at least some absolute constant by assumption, $\prob{\|Z_i\|^2 \geq C_2 d \log n} \leq n^{-2}$ for some absolute constant $C_2$.
Therefore,
with probability at least $1-2n^{-1}$, 
\begin{equation}
\sup_{\theta \in B_1} \opnorm{\nabla^2 D(\theta)} \leq L_2 \max_{i\in [n]} |Y_{i,1}| \|Y_i\|^2
\label{eq:HessD}
\end{equation}
for some constant $L_2$, i.e.,
$\theta \mapsto \nabla D(\theta)$ is $L_2 d (\log n)^{3/2}$-Lipschitz.
Let $\Theta$ be a $\frac{1}{dn}$-net of the unit ball $B_1$, with cardinality  \cite[Corollary 4.2.13]{Vershynin-HDP}
\begin{equation}
|\Theta| \leq (1+ 2 dn)^d \leq (1+ 2 n^2)^d.
\label{eq:covering-Theta}
\end{equation}
Then on the event of \prettyref{eq:HessD}, 
\begin{equation}
\sup_{\theta \in B_1} \norm{\nabla D(\theta)} \leq 
\max_{\theta \in \Theta} \norm{\nabla D(\theta)}  + \frac{L_2 (\log n)^{3/2}}{n}.
\label{eq:HessD2}
\end{equation}


Note that
\begin{align}
\nabla D(\theta) = & ~ \frac{1}{n} \sum_{i=1}^n b_i Y_{i,1}Y_{i} q'(\Iprod{\theta}{Y_i}).		
\end{align}
Let $\calU$ be a $\frac{1}{2}$-net of $S^{d-1}$ with cardinality at most 
\begin{equation}
|\calU| \leq 5^d.
\label{eq:covering-U}
\end{equation}
Then
$\norm{\nabla D(\theta)} \leq 2 \max_{u \in \calU} \Iprod{u}{\nabla D(\theta)}$.
Recall the high-probability event $H_\infty = \{\max_{i\in[n]}  |Y_{i,1}| \leq\sqrt{\kappa \log n} \}$ defined in \prettyref{eq:Hinfty}. 
On this event, we have
\begin{align}
& ~\prob{\max_{\theta\in\Theta} 
\norm{\nabla D(\theta)} 		\geq 2 C_1 \sqrt{\frac{d \log^2 n }{n}}, H_\infty}\nonumber \\
\leq			 & ~  \prob{\max_{u\in\calU, \theta\in\Theta} \frac{1}{n}  \left| \sum_{i=1}^n b_i Y_{i,1} \Iprod{Y_i}{u} q'(\Iprod{\theta}{Y_i}) \right|
		\geq C_1 \sqrt{\frac{d \log^2 n }{n}} , H_\infty}\nonumber \\
\leq		 & ~  \prob{\max_{u\in\calU, \theta\in\Theta} \frac{1}{n}  \left| \sum_{i=1}^n b_i Y_{i,1} \Iprod{Y_i}{u} q'(\Iprod{\theta}{Y_i}) \right|
		\geq C_1 \sqrt{\frac{d \log^2 n }{n}} , H_\infty}\nonumber \\
\stepa{\leq} & ~  2\exp(-C_1 d \log n/8) |\Theta| |\calU| + \prob{\max_{u\in\calU, \theta\in\Theta} \frac{1}{n} \sum_{i=1}^n Y_{i,1}^2 \Iprod{Y_i}{u}^2 q'(\Iprod{\theta}{Y_i})^2 \geq C_1 \log n, H_\infty} \nonumber \\
\stepb{\leq} & ~  \exp(-C_3 d \log n)  + \prob{\max_{u\in\calU} \frac{1}{n} \sum_{i=1}^n \Iprod{Y_i}{u}^2  \geq \frac{C_1}{\kappa}}  \nonumber \\
\stepc{\leq} & ~  \exp(-C_3 d \log n)  + 5^d \exp(-n), \label{eq:maxD} 
\end{align}
where (a) follows from \prettyref{eq:rad2} in \prettyref{lmm:rad} with $s=\sqrt{C_1 d \log n}$; 
(b) follows from \prettyref{eq:covering-Theta}, \prettyref{eq:covering-U}, the assumption \prettyref{eq:L0}, and the event $H_\infty$;
(c) follows provided that $C_1$ is sufficiently large, in view of the fact that 
$\Iprod{Y_i}{u}^2 \leq 2 \Iprod{\theta_*}{u}^2 +2\Iprod{Z_i }{u}^2$ where 
$\Iprod{Z_i }{u} \iiddistr N(0,1)$, and the $\chi^2$-tail bound (cf.~\prettyref{eq:LM}):
\begin{equation}
	\prob{\chi^2_n \geq 5n} \leq e^{-n}.
	\label{eq:chi2}
\end{equation}
The proof of \prettyref{eq:Labove} is completed in view of \prettyref{eq:HessD2}.
\end{proof}

\begin{proof}[Proof of \prettyref{lmm:M}]
The proof is almost identical to that of \prettyref{lmm:K}, so we only mention the part that is different.
WLOG, assume that $R=1$.
First note that the Lipschitz constant of $M:\reals^d \to \reals^{d-1}$ (with respect to the Euclidean norm) is bounded by
\begin{equation}
\Lip(M) \leq L_0 \frac{1}{n} \sum_{i=1}^n \|Y_{i,\perp}\| |Y_{i,1}| (\|Y_{i,\perp}\| + |Y_{i,1}|).
\label{eq:LipM}
\end{equation}
Similar to the argument that leads to \prettyref{eq:HessD2}, we conclude that with probability at least $1-n^{-1}$ 
$\Lip(M)  \leq L_2 d \log n$ for some constant $L_2$.

Next let $\Theta$ be a $\frac{1}{dn}$-net of the unit ball in $\reals^d$ and 
let $\calU$ be a $\frac{1}{2}$-net of the unit sphere in $\reals^{d-1}$.
It suffices to bound $\max_{u\in\calU,\theta\in\Theta} \Iprod{u}{M(\theta)}$.
The rest of the proof is identical to that of \prettyref{eq:maxD}.	
\end{proof}

\begin{proof}[Proof of \prettyref{lmm:gg}]
	Note that $y \mapsto T(x,y)$ is an odd function and $T(x,y) \geq 0$ for $y \geq 0$.
	For the upper bound, note that
	$\partial_y T(x,y)|_{y=0} = \tanh^2(x)$ and 
	$\partial_y^3 T(x,y) = 3 (\sech(x+y)^4 + \sech(x-y)^4) -2(\sech(x+y)^2 + \sech(x-y)^2)$.
	Since $\sup_{0 \leq r \leq 1} (3r^4-2r^2) = 1$, we have $\partial_y^3 T(x,y) \leq 2$ for all $x,y$.
	Thus \prettyref{eq:gg} follows from the Taylor expansion of $T(x,y)$ at $y=0$ and the fact that $\tanh(x)^2\leq x^2$.	
	
Finally, \prettyref{eq:hh} follows from the $1$-Lipschitz continuity of $g$, since $g'(z) = 1-\sech^2(z)$.
\end{proof}

\begin{proof}[Proof of \prettyref{lmm:Q}]
	Recall that $Q(x,y) = \frac{1}{2x} (\tanh(y+x)-\tanh(y-x))$. Then
	$|Q(x,y)| \leq 1$ and 
	$|\partial_y Q(x,y)| \leq 1$ and 
	follows from the $1$-Lipschitz continuity of $\tanh$ and $\tanh'$, respectively.
	Finally,
	by Taylor's theorem, we have
	$\tanh(y+x)-\tanh(y-x) = 2 x \tanh'(y) + x \int_0^1 dz (1-z) \{\tanh''(y+xz)+\tanh''(y-xz)\}$.
	Therefore
	$\partial_x Q(x,y) = 
	\frac{1}{2} \frac{\partial}{\partial x} \int_0^1 dz (1-z) \{\tanh''(y+xz)+\tanh''(y-xz)\}
	= \frac{1}{2} \int_0^1 dz z(1-z) \{\tanh'''(y+xz)-\tanh'''(y-xz)\}	$.
	Since $|\tanh'''| \leq 2$, we have
	$|\partial_x Q(x,y)| \leq 2\int_0^1 dz z(1-z)= \frac{1}{3}$.
\end{proof}

\section{Proofs in \prettyref{sec:mle}}
	\label{sec:pf-mle}

	
	
	\begin{proof}[Proof of \prettyref{lmm:hessMLE}]
	Since $\ell(\theta,\theta_*) \leq \delta$, WLOG, assume that $\|\theta-\theta_*\| \leq \delta$. 
Note that 
$\nabla^2 \ell_n(\theta) = - I + J_n(\theta)$, where $J_n(\theta)$ is the Jacobian of $f_n$ given in \prettyref{eq:jac}. Then 
\[
I + \nabla^2 \ell_n(\theta) = J_n(\theta) = \Expect_n\qth{YY^\top  \sech^2\iprod{\theta}{Y}},
\]
which is PSD with probability one. Therefore it remains to bound the maximum eigenvalue of $J_n$ from above uniformly in a neighborhood of $\theta_*$. We do so in two steps.

\medskip
\emph{Step 1: Population version.}
By assumption, $\|\theta_*\|\geq 100 \delta$ for sufficiently large $n$ and hence $\iprod{\theta}{\theta_*} \geq 0$. 
Consider the expectation of $J_n$:
\[
J(\theta) \triangleq \Expect[J_n(\theta)] = \Expect\qth{YY^\top  \sech^2 \Iprod{\theta}{Y}},
\]
which is a PSD matrix. We show that
\begin{equation}
\sup_{\|\theta-\theta_*\|\leq\delta} \sup_{\|u\|=1} u^\top J(\theta) u \leq e^{-c \|\theta_*\|^2}.
\label{eq:hess-pop}
\end{equation}
Consider two cases:

\textbf{Case 1: $u \perp \theta$.} Then 
$|\Iprod{u}{\theta_*}| = |\Iprod{u}{\theta_*-\theta}| \leq \|\theta-\theta_*\| \leq \delta$.
By the independence of 
$\Iprod{u}{Z}$ and $\Iprod{\theta}{Z}$, we have
\begin{equation}
u^\top J(\theta) u = \Expect[\Iprod{u}{Y}^2 \sech^2 \Iprod{\theta}{Y}] = 
\Expect[\Iprod{u}{Y}^2] \Expect[\sech^2 \Iprod{\theta}{Y}].
\label{eq:uJu}
\end{equation}
Here $\Expect[\Iprod{u}{Y}^2] = \Iprod{u}{\theta_*}^2 + 1 \leq 1 +\delta^2$.
Furthermore,
let $\eta \triangleq \theta/\|\theta\|$. Then 
$U \triangleq \Iprod{\eta}{Y} \sim \frac{1}{2} N(\pm s,1)$, where 
$s  = \Iprod{\eta}{\theta_*}$ satisfies $|s -\|\theta\|| = |\Iprod{\eta}{\theta_*-\theta}| \leq \delta$ and hence $s \geq \|\theta_*\| - 2 \delta$.
By a change of measure (\prettyref{lmm:com}), we have
\begin{align}
\Expect[\sech^2 \Iprod{\theta}{Y}]
= & ~ 	\Expect[\sech^2 (\|\theta\|U)] \nonumber \\
= & ~ 	\Expect[\cosh(s W) \sech^2 (\|\theta\| W)] e^{-s^2/2}, \quad W \sim N(0,1) \nonumber \\
\leq & ~ 	F(s,\|\theta\|) e^{-\|\theta_*\|^2/4}, \quad W \sim N(0,1). \label{eq:Fsb}
\end{align}
Put 
$F(a,b) \triangleq \Expect[\cosh(a W) \sech^2 (b W)]$. 
Straightforward calculation shows that $\frac{\partial F(a,b)}{\partial b} \leq 0$ and $\frac{\partial F(a,b)}{\partial a} \geq 0$, i.e., $F(a,b)$ is increasing in $a$ and decreasing in $b$. 
Write $b=\|\theta\|$. Since $|s -b|  \leq \delta$, we have
\[
F(s,b) \leq F(b+\delta,b) = \underbrace{\Expect[\cosh(\delta W) \sech (b W)]}_{\termI} + \underbrace{\Expect[\sinh(\delta W) \sinh(b W) \sech^2 (b W)]}_{\termII}.
\]
The first term satisfies 
$\termI \leq \Expect[\cosh(\delta W)] = e^{\delta^2/2}$. For the second term, 
using the fact that $\tanh(x) \leq x$ when $x \geq 0$, we get the following bound that is, crucially, proportional to $\thetanorm$:
\[
\termII \leq b~\Expect[W \sinh(\delta W)] = b \delta e^{\delta^2/2} \leq 2 \|\theta_*\| \delta e^{\delta^2/2}.
\]
Combining the above with \prettyref{eq:Fsb} and \prettyref{eq:uJu}, we get
\begin{align*}
u^\top J(\theta) u 
\leq  & ~  (1+\delta^2) (1+2 \|\theta_*\| \delta) e^{\delta^2/2-\|\theta_*\|^2/4}\\
\leq & ~ 	e^{3\delta^2/2 + 2 \|\theta_*\| \delta -\|\theta_*\|^2/4} \leq
e^{3\delta^2/2 + 2 \|\theta_*\| \delta -\|\theta_*\|^2/4} \leq e^{ -\|\theta_*\|^2/16}.
\end{align*}


\textbf{Case 2: $u \mathrel{/\mkern-5mu/} \theta$.} WLOG, assume $u=\eta$. Entirely analogously to the previous case, we have
\[
u^\top J(\theta) u \leq \Expect[W^2 \cosh(s W) \sech^2 (\|\theta\| W)] e^{-\|\theta_*\|^2/4},
\]
and 
\begin{align*}
& ~ \Expect[W^2 \cosh(s W) \sech^2 (\|\theta\| W)] \\
\leq  & ~  \Expect[W^2 \cosh((\|\theta\|+\delta) W) \sech^2 (\|\theta\| W)] \\
=&~\Expect[W^2 \cosh(\delta W) \sech (b W)] + \Expect[W^2 \sinh(\delta W) \sinh(b W) \sech^2 (b W)]\\
\leq & ~ \Expect[W^2 \cosh(\delta W)] + b \Expect[W^3 \sinh(\delta W)] \\
= &~	(1+\delta^2) e^{\delta^2/2} + \|\theta\| \delta (3+\delta^2) e^{\delta^2/2}.
\end{align*}
Therefore $u^\top J(\theta) u \leq e^{ -\|\theta_*\|^2/50} $.

Finally, we combine the two cases. For an arbitrary unit vector $u$, let $u=\cos\phi \eta + \sin\phi v$ for some $v\perp \eta$. 
Then $\Iprod{v}{Y}$ and $\Iprod{\eta}{Y}$ are independent and hence
\begin{align*}
u^\top J(\theta) u 
= & ~ \cos^2\phi\Expect[\Iprod{\eta}{Y}^2 \sech^2 \Iprod{\theta}{Y}] + \sin^2\phi\Expect[\Iprod{v}{Y}^2 \sech^2 \Iprod{\theta}{Y}] \\
& ~ + 2\cos\phi\sin\phi \Expect[\Iprod{v}{Y} \Iprod{\eta}{Y}  \sech^2 \Iprod{\theta}{Y}]  \\
= & ~ \cos^2\phi\Expect[\Iprod{\eta}{Y}^2 \sech^2 \Iprod{\theta}{Y}] + \sin^2\phi\Expect[\Iprod{v}{Y}^2 \sech^2 \Iprod{\theta}{Y}] \leq e^{ -\|\theta_*\|^2/50},
\end{align*}
where the second equality follows from 
\[
\Expect[\Iprod{v}{Y} \Iprod{\eta}{Y}  \sech^2 \Iprod{\theta}{Y}] = \Expect[\Iprod{v}{Y}]\Expect[\Iprod{\eta}{Y}  \sech^2 \Iprod{\theta}{Y}] =  0
\]
 thanks to independence. This yields the desired \prettyref{eq:hess-pop}.

\medskip
\emph{Step 2: Concentration.}
We show that with probability at least $1-2n^{-1}$, 
\begin{equation}
\sup_{\|\theta-\theta_*\|\leq\delta} \opnorm{J_n(\theta)-J(\theta)} \leq \sqrt{\frac{C_0 d \log n}{n}}.
\label{eq:hess-dev}
\end{equation}
Since $\sech^2$ is 1-Lipschitz, we have
\begin{align*}
\opnorm{J_n(\theta) - J_n(\theta')}
\leq & ~  \opnorm{ \Expect_n\qth{YY^\top  \big|\sech^2 \Iprod{\theta}{Y} - \sech^2 \Iprod{\theta'}{Y} \big| }}\\
\leq & ~ \|\theta-\theta'\| \cdot \opnorm{\Expect_n\qth{YY^\top  \cdot \|Y\|}}	\\
\leq & ~ \|\theta-\theta'\| \cdot \Expect_n[\|Y\|^3]. 
\end{align*}
Therefore on the event $F_1$ in \prettyref{eq:H3}, which has probability at least $1-n^{-4}$, 
$\theta \mapsto J_n(\theta)$ is $C_4 d^{3/2}$-Lipschitz with respect to the $\ell_2$-norm and the $\opnorm{\cdot}$-norm, where $C_4$ is a constant depending only on $\thetaub$.
Let 
$\calE$ be an $\epsilon$-net of $B(\theta_*,\delta)$ with $\epsilon=\frac{\delta}{\sqrt{d^3 n}} $
and $|\calE| \leq (1+2\frac{\delta}{\epsilon})^d \leq \exp(C_5 d \log(n))$. 
Let $\calU$ be a $\frac{1}{2}$-net of $S^{d-1}$ with cardinality at most $|\calU| \leq 5^d$. 
Then
\begin{equation}
\sup_{\|\theta-\theta_*\|\leq\delta} \opnorm{J_n(\theta)-J(\theta)} \leq 
2 \sup_{\theta \in \calE} \sup_{u \in \calU} u^\top (J_n(\theta)-J(\theta))u + \frac{2 C_4}{\sqrt{n}}.
\label{eq:hess-dev1}
\end{equation}

Fix $u\in \calU$ and $\theta \in \calE$, put $U = \Iprod{u}{Y}^2 \sech^2 \Iprod{\theta}{Y}$ and $U_i = \Iprod{u}{Y_i}^2 \sech^2 \Iprod{\theta}{Y_i}$. 
Note that 
$\Iprod{u}{Y}^2$ is sub-exponential with $\|\Iprod{u}{Y}^2\|_{\psi_1} \leq C_1=C_1(\thetaub)$. By the moment characterization of sub-exponentiality (cf.~\cite[Proposition 2.7.1]{Vershynin-HDP}), since $|\sech| \leq 1$, we conclude that $\|U\|_{\psi_1} \leq C_2=C_2(\thetaub)$.
By Bernstein's inequality (c.f.~\cite[Theorem 2.8.1]{Vershynin-HDP}), 
\begin{align*}
\prob{|u^\top (J_n(\theta)-J(\theta))u| \geq  \frac{t}{\sqrt{n}}}
= & ~ \prob{|\Expect_n[U] - \Expect[U]| \geq  \frac{t}{\sqrt{n}} } \\
\leq & ~ 	2\exp\pth{-c \min\sth{\frac{t^2}{\|U\|_{\psi_1}^2}, \frac{t \sqrt{n}}{\|U\|_{\psi_1}} }}.
\end{align*}
for some absolute constant $c$.
Choosing $t = \sqrt{C_3d \log n}$ with $C_3=C_3(\thetaub)$ sufficiently large, and in view of the assumption that
$n = \Omega(d \log n)$, we conclude that
\[
\prob{|u^\top (J_n(\theta)-J(\theta))u| \geq  \frac{t}{\sqrt{n}}}
\leq 2 \exp\pth{-2 C_5 d \log n}.
\]
The proof of \prettyref{eq:hess-dev} is completed by applying the union bound over $\theta\in\calE$ and $u\in\calU$ in \prettyref{eq:hess-dev1}.

Finally, since $\thetanorm^2  = \Omega(\sqrt{\frac{d \log n}{n}}) $, combining \prettyref{eq:hess-dev} with \prettyref{eq:hess-pop} yields the lemma.		
	\end{proof}

\appendix

\section{Auxiliary results}
	\label{app:lemma}
	
\begin{lemma}[{\cite[Lemma 1]{LM00}}]
\label{lmm:LM}	
For any $x \geq 0$,
\begin{align}
\prob{\chi^2_n \geq 2n+3x} \leq \prob{\chi^2_n - n \geq 2 \sqrt{nx}+2x} \leq &~\exp(-x), \label{eq:LM} \\
\prob{\chi_n^2 \leq n - 2 \sqrt{ n x}} \leq & ~ \exp(-x).\label{eq:LM-lower}
\end{align}
\end{lemma}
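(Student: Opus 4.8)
This is the Laurent--Massart deviation bound for a central $\chi^2$, so one may simply invoke \cite[Lemma 1]{LM00}; for completeness I would reproduce the standard exponential-moment argument. Write $\chi^2_n = \sum_{i=1}^n g_i^2$ with $g_i \iiddistr N(0,1)$, and recall $\Expect[e^{\lambda(g_i^2-1)}] = e^{-\lambda}(1-2\lambda)^{-1/2}$ for $\lambda<1/2$, so that $\log\Expect[e^{\lambda(\chi^2_n-n)}] = -n\lambda - \tfrac n2\log(1-2\lambda)$.

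For the upper tail, Markov's inequality applied to $e^{\lambda(\chi^2_n-n)}$ followed by exact minimization over $\lambda\in(0,1/2)$ (the optimizer is $\lambda = \tfrac{t}{2(n+t)}$) gives, for every $t\ge0$,
\[
\prob{\chi^2_n - n \ge t} \le \exp\Bigl(\tfrac n2\log\bigl(1+\tfrac tn\bigr) - \tfrac t2\Bigr).
\]
It then remains to check that with $t = 2\sqrt{nx}+2x$ the exponent is at most $-x$. Setting $s=\sqrt{x/n}$ one has $t/n = 2s(1+s)$ and $x = ns^2$, and the required inequality $\tfrac t2 - \tfrac n2\log(1+\tfrac tn) \ge x$ reduces to $1+2s+2s^2 \le e^{2s}$, which holds term-by-term in the Taylor series of $e^{2s}$. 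The first inequality in \prettyref{eq:LM}, namely $\prob{\chi^2_n \ge 2n+3x}\le\prob{\chi^2_n-n\ge 2\sqrt{nx}+2x}$, is then immediate from $n+x\ge 2\sqrt{nx}$ (AM--GM).

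For the lower tail I would argue symmetrically using $\Expect[e^{-\lambda(\chi^2_n-n)}] = e^{\lambda}(1+2\lambda)^{-1/2}$ for $\lambda>0$; minimizing the resulting Chernoff bound (optimizer $\lambda = \tfrac{t}{2(n-t)}$, the case $t\ge n$ being trivial since $\chi^2_n\ge0$) yields $\prob{\chi^2_n\le n-t}\le\exp\bigl(\tfrac t2 + \tfrac n2\log(1-\tfrac tn)\bigr)$, and with $t = 2\sqrt{nx}$ and $s=\sqrt{x/n}$ the elementary bound $-2s-\log(1-2s)\ge 2s^2$, clear from the series $-\log(1-2s)=\sum_{k\ge1}(2s)^k/k$, shows the exponent is $\le -x$.

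The computation is entirely routine; the only point worth any care is the \emph{asymmetry} of the two tails --- the extra $+2x$ in the upper bound of \prettyref{eq:LM} but not in \prettyref{eq:LM-lower} --- which traces exactly to the fact that $-\log(1-v)-v$ exceeds $v-\log(1+v)$ for $v>0$. This is why the lower tail enjoys the clean sub-Gaussian radius $2\sqrt{nx}$ while the upper tail needs the additional sub-exponential correction $2x$; correspondingly, I would not expect any genuine obstacle here, just the bookkeeping of choosing $\lambda$ and verifying the two one-variable inequalities above.
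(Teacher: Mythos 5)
Your proof is correct. The paper does not prove this lemma at all --- it simply cites Laurent--Massart \cite[Lemma 1]{LM00} --- and your Chernoff computation (exact log-MGF, exact optimizer, then the two scalar inequalities $e^{2s}\ge 1+2s+2s^2$ and $-\log(1-2s)-2s\ge 2s^2$, plus AM--GM for the first inequality in \prettyref{eq:LM}) is exactly the standard argument behind the cited bound, so there is nothing substantive to compare. One typo to fix: in the lower-tail step the identity $\Expect[e^{-\lambda(\chi^2_n-n)}]=e^{\lambda}(1+2\lambda)^{-1/2}$ is the one-variable version and should read $e^{n\lambda}(1+2\lambda)^{-n/2}$; your subsequent bound $\prob{\chi^2_n\le n-t}\le\exp\bigl(\tfrac t2+\tfrac n2\log(1-\tfrac tn)\bigr)$ and the optimizer $\lambda=\tfrac{t}{2(n-t)}$ show you used the correct $n$-fold form.
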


\begin{lemma}
\label{lmm:Kt}	
Let $\epsilon,\delta>0$. 
Assume that the sequence $\{K_t\}$ satisfies $K_0=0$ and $K_{t+1} \leq (1+\epsilon) K_t + \delta$. Then for all $t\geq 0$,
\[
K_t \leq \frac{\delta}{\epsilon} \sth{(1+\epsilon)^t-1}.
\]	
\end{lemma}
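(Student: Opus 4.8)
The plan is to prove this by a straightforward induction on $t$, or equivalently by unrolling the recursion and summing a geometric series. The statement is elementary, so the proof will be short; I will present the inductive version since it is the cleanest.

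First I would check the base case $t=0$: by hypothesis $K_0=0$, and the right-hand side is $\frac{\delta}{\epsilon}\sth{(1+\epsilon)^0-1} = \frac{\delta}{\epsilon}\cdot 0 = 0$, so equality holds. Then, assuming the bound $K_t \leq \frac{\delta}{\epsilon}\sth{(1+\epsilon)^t-1}$ holds at step $t$, I would apply the recursion together with the fact that $1+\epsilon>0$ to write
\[
K_{t+1} \leq (1+\epsilon)K_t + \delta \leq \frac{\delta(1+\epsilon)}{\epsilon}\sth{(1+\epsilon)^t-1} + \delta = \frac{\delta}{\epsilon}\sth{(1+\epsilon)^{t+1} - (1+\epsilon) + \epsilon} = \frac{\delta}{\epsilon}\sth{(1+\epsilon)^{t+1}-1},
\]
which closes the induction. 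Alternatively, one may note directly that iterating the inequality gives $K_t \leq \delta\sum_{j=0}^{t-1}(1+\epsilon)^j$, and evaluating the geometric sum yields the same conclusion; I would mention this as the one-line alternative.

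There is no real obstacle here — the only thing to be careful about is that multiplying the inductive hypothesis by $(1+\epsilon)$ preserves the inequality, which is immediate since $\epsilon>0$, and that the algebraic simplification $-(1+\epsilon)+\epsilon = -1$ is carried out correctly. This lemma is used in the proof of \prettyref{lmm:induction} (step (d) of the chain of inequalities bounding $K_{t+1}$), so no further generality is needed.
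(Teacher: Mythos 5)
Your proof is correct and is exactly the argument the paper has in mind — its proof consists of the single line "This follows simply from induction on $t$," and your induction (with the base case $K_0=0$ and the algebraic step $(1+\epsilon)\sth{(1+\epsilon)^t-1}+\epsilon=(1+\epsilon)^{t+1}-1$) fills in precisely those details. Nothing further is needed.
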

\begin{proof}
	This follows simply from induction on $t$.
\end{proof}

The following lemma is useful for analyzing the rate of convergence:
	\begin{lemma}[{\cite[Appendix A]{PW14a}}]
	\label{lmm:rate}
Let
\[
x_{t+1} \leq x_{t} - h(x_t), \quad x_0>0
\]
where $h:\reals_+ \to \reals_+$ is a continuous increasing function with $h(0)=0$ and $h(x) < x$ for all $x\in(0,x_0)$. 
Then $\{x_t\}\subset \reals_+$ is a monotonically decreasing sequence converging to the unique fixed point at zero as $n\diverge$. 
Furthermore,
\begin{equation}
	x_t \leq G^{-1}(t), \quad t \geq 1
	\label{eq:tn-rate}
\end{equation}	
where $G: [0,1] \to \reals_+$ by $G(x) = \int_x^{x_0} \frac{1}{h(\tau)} \diff \tau$. 
\end{lemma}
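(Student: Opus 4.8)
The plan is to run the standard potential–function argument comparing the discrete recursion to the ODE $\dot x = -h(x)$, whose solution satisfies exactly $G(x(t))=t$ with $G(x)=\int_x^{x_0}\frac{\diff\tau}{h(\tau)}$. The whole proof reduces to establishing a one–step decrement $G(x_{t+1})\ge G(x_t)+1$ and then telescoping.

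First I would dispose of the qualitative claims. Since $h$ is nonnegative and increasing with $h(0)=0$, the hypothesis $x_{t+1}\le x_t-h(x_t)\le x_t$ shows $\{x_t\}$ is nonincreasing; being also nonnegative it converges to some $x_\infty\ge 0$. Passing to the limit in $x_{t+1}\le x_t-h(x_t)$ and using continuity of $h$ gives $h(x_\infty)\le 0$, hence $h(x_\infty)=0$, and since $h$ is (strictly) increasing this forces $x_\infty=0$, so $0$ is the unique limit/fixed point. Along the way one records that $0<x_t\le x_0$ for all $t$ — the upper bound from monotonicity, strict positivity because $h(x_t)<x_t$ on $(0,x_0)$ keeps the iteration from overshooting past $0$ — except in the degenerate case that some $x_{t_0}=0$, in which case $x_t=0$ for all $t\ge t_0$ and the asserted bound $x_t\le G^{-1}(t)$ holds trivially since $G^{-1}\ge 0$.

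The heart of the argument is the increment estimate. In the nondegenerate case I would write
\[
G(x_{t+1})-G(x_t)=\int_{x_{t+1}}^{x_t}\frac{\diff\tau}{h(\tau)}\ \ge\ \int_{x_t-h(x_t)}^{x_t}\frac{\diff\tau}{h(\tau)}\ \ge\ \int_{x_t-h(x_t)}^{x_t}\frac{\diff\tau}{h(x_t)}=1,
\]
where the first inequality uses $x_{t+1}\le x_t-h(x_t)$ together with positivity of the integrand on $(0,x_0]$ (enlarging the interval of integration can only increase the integral), and the second uses that $h$ is increasing, so $h(\tau)\le h(x_t)$ for $\tau\le x_t$. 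Iterating this and using $G(x_0)=0$ yields $G(x_t)\ge t$; since $G$ is continuous and strictly decreasing on $(0,x_0]$ it has a well-defined decreasing inverse on its range, and $G(x_t)\ge t$ is then equivalent to $x_t\le G^{-1}(t)$, which is the claimed rate \prettyref{eq:tn-rate}.

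I do not anticipate a genuine obstacle; the only care needed is bookkeeping of edge cases — checking that $1/h$ is finite and integrable along each interval $[x_{t+1},x_t]$ (immediate since $h$ is continuous and positive there, using $h(x_t)<x_t$), handling a sequence that reaches $0$ exactly, and noting that if $t$ exceeds $\sup G$ then necessarily $x_t=0$ so the bound is again vacuous. All of these follow directly from the stated hypotheses on $h$.
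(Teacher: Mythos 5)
Your proof is correct, and it is essentially the same argument as the one the paper relies on (the paper defers to \cite[Appendix A]{PW14a}, whose proof is exactly this comparison with the ODE $\dot x=-h(x)$ via the one-step increment $G(x_{t+1})-G(x_t)=\int_{x_{t+1}}^{x_t}\frac{d\tau}{h(\tau)}\ge\frac{x_t-x_{t+1}}{h(x_t)}\ge 1$ followed by telescoping and inverting the decreasing function $G$). Your handling of the edge cases (a term hitting $0$ exactly, and $t$ exceeding $\sup G$ when $1/h$ is integrable at $0$) is careful and consistent with the intended reading that $h$ is strictly increasing, so nothing is missing.
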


The proof of \prettyref{lmm:fprop} and \prettyref{lmm:FG} on the properties of the population EM map relies on the following auxiliary results. 
\begin{lemma}
\label{lmm:estimator}	
	Let $Y = \alpha V + \beta W$, where $\alpha,\beta \geq 0$ and $W\sim N(0,1)$. 
	Let $\hat V(y) = \Expect[V|Y=y]$. Then
	\begin{enumerate}
		
		\item $\hat V$ is an increasing function.
		\item If $V$ has a symmetric distribution in the sense that $V \eqlaw -V$, then $\hat V$ is an odd function.
	
	\end{enumerate}
\end{lemma}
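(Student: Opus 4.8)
\textbf{Proof proposal for Lemma~\ref{lmm:estimator}.}

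The plan is to treat both claims as consequences of the structure of $\hat V(y) = \Expect[V \mid Y = y]$ when $Y = \alpha V + \beta W$ with $W \sim N(0,1)$ independent of $V$. Write $\mu$ for the law of $V$ and let $\varphi_\beta$ denote the density of $N(0,\beta^2)$ (assuming $\beta > 0$; the case $\beta = 0$ is trivial since then $\hat V(y) = y/\alpha$ is linear and odd). Then $Y$ has density $p_Y(y) = \int \varphi_\beta(y - \alpha v)\,\mu(\diff v)$, and by Bayes' rule
\[
\hat V(y) = \frac{\int v\, \varphi_\beta(y - \alpha v)\,\mu(\diff v)}{\int \varphi_\beta(y - \alpha v)\,\mu(\diff v)}.
\]
The key observation for monotonicity is that the family of conditional densities $v \mapsto \varphi_\beta(y - \alpha v)$, indexed by $y$, forms a \emph{monotone likelihood ratio} (MLR) family in $v$: for $y_1 < y_2$, the ratio $\varphi_\beta(y_2 - \alpha v)/\varphi_\beta(y_1 - \alpha v) = \exp\!\big(\tfrac{\alpha}{\beta^2}(y_2 - y_1) v - \tfrac{1}{2\beta^2}(y_2^2 - y_1^2)\big)$ is a nondecreasing function of $v$ since $\alpha \geq 0$. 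A standard fact about MLR families is that the posterior distribution of $V$ given $Y = y_2$ stochastically dominates that given $Y = y_1$, hence the posterior mean is nondecreasing in $y$; this proves part~1.

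For part~2, suppose $V \eqlaw -V$, i.e.\ $\mu$ is symmetric about $0$. Substituting $v \mapsto -v$ in the integrals above and using $\varphi_\beta(-y + \alpha v) = \varphi_\beta(y - \alpha v)$ after also negating $y$, one gets directly
\[
\hat V(-y) = \frac{\int v\, \varphi_\beta(-y - \alpha v)\,\mu(\diff v)}{\int \varphi_\beta(-y - \alpha v)\,\mu(\diff v)}
= \frac{\int (-v)\, \varphi_\beta(-y + \alpha v)\,\mu(\diff v)}{\int \varphi_\beta(-y + \alpha v)\,\mu(\diff v)}
= -\hat V(y),
\]
where the middle equality is the change of variable $v \mapsto -v$ together with symmetry of $\mu$, and the last uses $\varphi_\beta(-y + \alpha v) = \varphi_\beta(y - \alpha v)$. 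So $\hat V$ is odd.

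I do not expect a serious obstacle here; the only point requiring a little care is citing or proving the MLR-implies-monotone-posterior-mean fact cleanly (alternatively one can differentiate: $\hat V'(y) = \tfrac{\alpha}{\beta^2}\Var(V \mid Y = y) \geq 0$, which follows from differentiating under the integral sign and recognizing the score-function identity $\partial_y \log \varphi_\beta(y - \alpha v) = -\tfrac{1}{\beta^2}(y - \alpha v)$ — this gives an even quicker route to part~1 and makes the nonnegativity transparent). I would present the derivative computation as the main argument and note the MLR interpretation as a remark, since the derivative identity $\hat V'(y) = (\alpha/\beta^2)\Var(V\mid Y=y)$ is both elementary and self-contained.
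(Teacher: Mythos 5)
Your proposal is correct and takes essentially the same route as the paper: the paper proves part~1 by rescaling to $\alpha=\beta=1$ and invoking the identity $\frac{d}{dy}\hat V(y)=\Var(V\mid Y=y)\geq 0$ (your derivative computation, with the extra factor $\alpha/\beta^2$ in general), and part~2 by the symmetry of the Gaussian noise together with $V\eqlaw -V$, exactly as in your change-of-variable argument. The MLR observation is a fine but unnecessary alternative.
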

\begin{proof}
By scaling, it suffices to consider $\alpha=\beta=1$.
	The first item follows from the well-known fact that $\frac{d}{dy}\hat V(y) = \Var(V|Y=y) \geq 0$ (see, \eg, \cite[Eq.~(131)]{mmse.functional.IT}), while the second is due to the fact that $W$ has a symmetric distribution.
\end{proof}

We also need the following bound on the Mill's ratio due to  Ito and McKean \cite[Exercise 1, p.~851]{SW-empirical}
\begin{lemma}
\label{lmm:ito}	
	Let $\varphi(x) \triangleq \frac{1}{\sqrt{2\pi}} \exp(-\frac{x^2}{2})$ denote the standard normal density and $\bar\Phi(x) = \int_{x}^\infty \varphi(t)dt$ the normal tail probability. Then	
	\begin{equation}
	\frac{\bar{\Phi}(x)}{\varphi(x)} \leq \frac{2}{\sqrt{2+x^2}+x},
	\label{eq:ito}
	\end{equation}
\end{lemma}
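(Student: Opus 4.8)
The plan is to prove the equivalent inequality $R(x)\le \sqrt{x^2+2}-x$ for $x\ge 0$, where $R(x)\triangleq \bar\Phi(x)/\varphi(x)$ is Mill's ratio; this is equivalent to \prettyref{eq:ito} because rationalizing $\frac{2}{\sqrt{2+x^2}+x}$ yields exactly $\sqrt{x^2+2}-x$. The engine is the elementary identity $R'(x)=xR(x)-1$, which follows immediately from $\bar\Phi'=-\varphi$ and $\varphi'=-x\varphi$. The approach is then a Gr\"onwall-type comparison: exhibit the candidate bound $g(x)\triangleq\sqrt{x^2+2}-x$ as a subsolution of this same first-order ODE and use the known behavior of $R$ at $+\infty$ to pin down the sign of $R-g$.

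First I would check that $g$ is a subsolution: $g'(x)\le xg(x)-1$ on $[0,\infty)$. Since $g'(x)=\frac{x}{\sqrt{x^2+2}}-1$, this reduces after clearing denominators to $x\sqrt{x^2+2}\le x^2+1$, which upon squaring is merely $0\le 1$, with equality only at $x=0$. Next, set $\phi\triangleq R-g$; combining $R'=xR-1$ with $g'\le xg-1$ gives $\phi'\ge x\phi$, hence $\frac{d}{dx}\bigl(e^{-x^2/2}\phi(x)\bigr)=e^{-x^2/2}(\phi'-x\phi)\ge 0$, so $e^{-x^2/2}\phi$ is nondecreasing on $[0,\infty)$. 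Finally, both $R(x)$ and $g(x)$ are bounded above by $1/x$ for $x>0$ — the bound $R(x)\le 1/x$ is classical, and $g(x)=\frac{2}{\sqrt{2+x^2}+x}\le \frac1x$ — so $\phi(x)\to 0$, whence $e^{-x^2/2}\phi(x)\to 0$ as $x\to\infty$; a nondecreasing function tending to zero is nonpositive, so $\phi\le 0$, which is exactly $R(x)\le g(x)$.

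I do not expect any serious obstacle here: the whole argument is a few lines once the Mill's ratio ODE is recognized. The one point that warrants care is the direction of the comparison — one must verify that $g$ is a \emph{sub}solution rather than a supersolution, so that $e^{-x^2/2}\phi$ is nondecreasing and the boundary value $\phi(+\infty)=0$ forces $\phi\le 0$ rather than $\phi\ge 0$; this is precisely the role of the elementary inequality $x\sqrt{x^2+2}\le x^2+1$. Since \prettyref{lmm:ito} is applied (in \prettyref{eq:mills}) only with argument $1/\sigma>0$, it suffices to treat $x\ge 0$.
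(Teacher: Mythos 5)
Your proof is correct. Note that the paper does not actually prove \prettyref{lmm:ito} at all — it cites the bound to Ito and McKean via Shorack and Wellner — so your ODE comparison argument is a self-contained alternative rather than a reproduction of the paper's route. Writing $R(x)=\bar\Phi(x)/\varphi(x)$ and $g(x)=\sqrt{x^2+2}-x$, every step checks out: rationalizing shows \prettyref{eq:ito} is equivalent to $R\le g$; the identity $R'=xR-1$ follows from $\varphi'=-x\varphi$; the subsolution inequality $g'\le xg-1$ reduces, for $x>0$, to $x\sqrt{x^2+2}\le x^2+1$, which squares to $0\le 1$ (and holds with equality in the original inequality at $x=0$); and since $0\le R(x)\le 1/x$ and $0\le g(x)\le 1/x$ for $x>0$, the nondecreasing function $e^{-x^2/2}(R-g)$ tends to $0$ at infinity, forcing $R\le g$ on $[0,\infty)$. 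Your integrating-factor device is, incidentally, the same trick the paper uses in the footnote accompanying \prettyref{eq:mills-int} to derive the integral representation of Mill's ratio, so the argument fits naturally with the surrounding text. One further remark in your favor: the restriction to $x\ge 0$ is not merely convenient but necessary, since the stated inequality fails for sufficiently negative $x$ (the left side grows like $\sqrt{2\pi}\,e^{x^2/2}$ while the right side grows only linearly in $|x|$); as you observe, the lemma is invoked only in \prettyref{eq:mills} at the positive argument $1/\sigma$, so proving the case $x\ge 0$ covers exactly what the paper needs.
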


We will invoke Stein's lemma repeatedly, which is included below for completeness:
\begin{lemma}
\label{lmm:stein}	
	Let $W\sim N(0,1)$ and $f$ be a differentiable function such that $\expect{|f'(W)|}<\infty$. Then
	\begin{equation}
	\expect{Wf(W)} = \Expect[f'(W)].
	\label{eq:stein}
	\end{equation}
\end{lemma}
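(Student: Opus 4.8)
The plan is to reduce the identity to integration by parts against the Gaussian density, exploiting the defining ODE $\varphi'(w)=-w\varphi(w)$ of the standard normal density $\varphi(w)=\frac{1}{\sqrt{2\pi}}e^{-w^2/2}$. Writing $\Expect[Wf(W)]=\int_{\reals} wf(w)\varphi(w)\,dw$ and substituting $w\varphi(w)=-\varphi'(w)$, one formally gets $\int_{\reals}(-\varphi'(w))f(w)\,dw=\big[-f(w)\varphi(w)\big]_{-\infty}^{\infty}+\int_{\reals}f'(w)\varphi(w)\,dw=\Expect[f'(W)]$, provided the boundary terms vanish and the manipulation is legitimate. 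The bulk of the work is to justify this under only the hypothesis $\Expect|f'(W)|<\infty$.

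To make it rigorous without invoking the behaviour of $f$ at infinity, I would argue by Fubini's theorem. Split at the origin. For $w>0$, use $\varphi(w)=\int_w^\infty t\varphi(t)\,dt$ (again from $\varphi'=-\,\mathrm{id}\cdot\varphi$ together with $\varphi(\infty)=0$) and $f(t)-f(0)=\int_0^t f'(w)\,dw$; then
\[
\int_0^\infty t\varphi(t)\big(f(t)-f(0)\big)\,dt=\int_0^\infty t\varphi(t)\int_0^t f'(w)\,dw\,dt=\int_0^\infty f'(w)\int_w^\infty t\varphi(t)\,dt\,dw=\int_0^\infty f'(w)\varphi(w)\,dw,
\]
where the interchange of integration order is valid because $\int_0^\infty|f'(w)|\varphi(w)\,dw\le\Expect|f'(W)|<\infty$. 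A symmetric computation on $(-\infty,0)$, using $\varphi(w)=\int_{-\infty}^w(-t)\varphi(t)\,dt$, gives $\int_{-\infty}^0 t\varphi(t)\big(f(t)-f(0)\big)\,dt=\int_{-\infty}^0 f'(w)\varphi(w)\,dw$. Adding the two halves yields $\Expect[f'(W)]=\int_{\reals}t\varphi(t)\big(f(t)-f(0)\big)\,dt=\Expect[Wf(W)]-f(0)\Expect[W]=\Expect[Wf(W)]$, since $\Expect[W]=0$; the same Fubini bound also shows $\Expect|Wf(W)|<\infty$ en route, so every quantity above is finite.

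The only genuine subtlety — and the step I would be most careful about — is the integrability bookkeeping that licenses Fubini and guarantees finiteness of $\Expect[Wf(W)]$; the rest is the deterministic identity $\varphi'=-\,\mathrm{id}\cdot\varphi$ and clerical work. A more traditional alternative is a direct integration-by-parts on $[-N,N]$ followed by $N\to\infty$, but then one must separately verify $f(w)\varphi(w)\to 0$ as $|w|\to\infty$, which requires bounding $|f(w)|=|f(0)+\int_0^w f'|$ against the Gaussian tail and is marginally more delicate than the Fubini route, so I would present the latter.
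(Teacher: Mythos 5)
The paper does not actually prove this lemma: it is stated in \prettyref{app:lemma} purely for completeness, as the classical Stein identity, and is invoked elsewhere (e.g., in the proofs of \prettyref{lmm:FG} and \prettyref{lmm:localKL}) as a known fact. Your argument is a correct, self-contained proof of it, and it is the standard rigorous route: Tonelli/Fubini on each half-line using $\varphi(w)=\int_w^\infty t\varphi(t)\,dt$ for $w>0$ (and its mirror image for $w<0$), with the interchange licensed exactly by $\Expect|f'(W)|<\infty$, and with the finiteness of $\Expect|Wf(W)|$ falling out of the same bound so that subtracting $f(0)\,\Expect[W]=0$ is legitimate. The one fine point you gloss over is the very first identity $f(t)-f(0)=\int_0^t f'(w)\,dw$: for a function that is merely differentiable everywhere this is not automatic, but it does hold here because $\Expect|f'(W)|<\infty$ makes $f'$ locally Lebesgue integrable and an everywhere-differentiable function with locally integrable derivative is absolutely continuous on compacts (Rudin, \emph{Real and Complex Analysis}, Thm.~7.21); alternatively one could note that every application in the paper uses smooth $f$ (powers of $\tanh$, $\sech$, polynomials), so the point is harmless. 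Your remark that the truncation-plus-boundary-term route needs extra tail control of $f(w)\varphi(w)$ is also accurate, and the Fubini version you chose avoids that cleanly.
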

The following useful result is simply a change of measure from the symmetric 2-GM to the standard normal:
\begin{lemma}
\label{lmm:com}	
Let $V\sim P_s = \frac{1}{2} N(\pm s,1)$ as in \prettyref{eq:Ptheta} and let $Z\sim N(0,1)$. Then
\[
\Expect[f(V)] = \Expect[f(Z)\cosh(sZ)] e^{-s^2/2}.
\]
\end{lemma}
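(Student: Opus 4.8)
The plan is to identify the likelihood ratio $\frac{dP_s}{d\gamma}$, where $\gamma = N(0,1)$, show that it equals $e^{-s^2/2}\cosh(sz)$, and then invoke the definition of expectation. First I would write the density of $P_s$ as $p_s(v) = \frac{1}{2}[\varphi(v-s)+\varphi(v+s)]$, where $\varphi$ is the standard normal density, exactly as in \prettyref{eq:ptheta} specialized to one dimension. The only computational step is completing the square in each Gaussian exponent:
\[
\varphi(v \mp s) = \frac{1}{\sqrt{2\pi}}\,e^{-(v\mp s)^2/2} = \frac{1}{\sqrt{2\pi}}\,e^{-v^2/2}\,e^{\pm sv - s^2/2} = \varphi(v)\,e^{\pm sv - s^2/2},
\]
so that
\[
p_s(v) = \varphi(v)\,e^{-s^2/2}\cdot\frac{e^{sv}+e^{-sv}}{2} = \varphi(v)\,e^{-s^2/2}\cosh(sv).
\]
This is precisely the factored form already recorded in \prettyref{eq:ptheta}.

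With this identity in hand, for any Borel $f$ for which the integrals make sense I would compute directly:
\[
\Expect[f(V)] = \int_{\reals} f(v)\,p_s(v)\,dv = e^{-s^2/2}\int_{\reals} f(v)\cosh(sv)\,\varphi(v)\,dv = e^{-s^2/2}\,\Expect[f(Z)\cosh(sZ)],
\]
which is the claimed identity.

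There is essentially no obstacle here. The only point deserving a word is integrability: the manipulations above are valid as soon as $f(Z)\cosh(sZ)$ is integrable under the standard normal, which holds whenever $f$ has subgaussian or polynomial growth — the only regime in which the lemma is invoked (e.g.\ with $f$ a power of $|v|$ times a bounded function in the proofs of \prettyref{lmm:fprop} and \prettyref{lmm:FG}). Thus the proof reduces to the one-line change-of-measure computation displayed above, which I would state without further ado.
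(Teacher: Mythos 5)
Your proof is correct and follows exactly the paper's argument: the paper's one-line proof is precisely the likelihood-ratio identity $\frac{p_s(z)}{\varphi(z)} = \cosh(sz)e^{-s^2/2}$, which you derive by completing the square, and the rest is the same change-of-measure computation. The added remark on integrability is fine but not needed for how the lemma is used.
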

\begin{proof}
	This follows from $\frac{p_s(z)}{\varphi(z)} = \cosh(s z) e^{-s^2/2}$.
\end{proof}

\section{Minimax rates}
\label{app:minimax}


\begin{theorem}
\label{thm:minimax}
For any $d \geq 2$ and $n\in\naturals$ and $s\geq 0$,
		\begin{equation}
	\inf_{\hat\theta} \sup_{\|\theta_*\| = s} \Expect_{\theta^*}[\ell(\hat\theta,\theta_*)] \asymp \min\sth{
	\frac{1}{s} \pth{\frac{d}{n} + \sqrt{\frac{d}{n}}}  + \sqrt{\frac{d}{n}}, s}.
	\label{eq:minimax1}
	\end{equation}	
Furthermore, for any $d,n\in\naturals$ and $r\geq 0$,
		\begin{equation}
	\inf_{\hat\theta} \sup_{\|\theta_*\| \leq r} \Expect_{\theta^*}[\ell(\hat\theta,\theta_*)] \asymp \min\sth{\pth{\frac{d}{n}}^{\frac{1}{4}} + \sqrt{\frac{d}{n}}, r}.
	\label{eq:minimax2}
	\end{equation}	
\end{theorem}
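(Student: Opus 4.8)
The plan is to establish the sphere bound \prettyref{eq:minimax1} first and then obtain the ball bound \prettyref{eq:minimax2} from it. For the reduction, one writes $\sup_{\|\theta_*\|\le r}\Expect_{\theta_*}[\ell(\hat\theta,\theta_*)]=\sup_{0\le s\le r}\sup_{\|\theta_*\|=s}\Expect_{\theta_*}[\ell(\hat\theta,\theta_*)]$ (the lower bound is immediate, and the estimator used in the sphere case does not use knowledge of $s$), and then checks the elementary identity
\[
\sup_{0\le s\le r}\min\sth{\frac{1}{s}\pth{\frac{d}{n}+\sqrt{\frac{d}{n}}}+\sqrt{\frac{d}{n}},\, s}\asymp\min\sth{\pth{\frac{d}{n}}^{1/4}+\sqrt{\frac{d}{n}},\, r},
\]
whose supremum is attained at $s\asymp\min\{r,(d/n)^{1/4}+\sqrt{d/n}\}$, the ``corner'' where the increasing branch $s$ meets the decreasing branch. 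So it suffices to prove \prettyref{eq:minimax1}; abbreviate $\rho\triangleq\sqrt{d/n}$ so that the target rate is $\min\{\frac1s(\rho^2+\rho)+\rho,\,s\}$.

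\emph{Upper bound.} I would combine the trivial estimator $\hat\theta\equiv 0$ (risk $s$) with a moment estimator exploiting $\Expect[YY^\top]=I_d+\theta_*\theta_*^\top$. Let $\hat\lambda_1,\hat v_1$ be the top eigenpair of the sample covariance $\Sigma_n=\Expect_n[YY^\top]$ and set $\hat\theta=\hat s\,\hat v_1$, where $\hat s$ is a method-of-moments estimator of $\thetanorm$ (a thresholded $\sqrt{(\hat\lambda_1-1)_+}$, or, in the regime $d>n$ where the noise bulk of $\Sigma_n$ dominates, the positive root of a debiased U-statistic built from $\Expect_n[\iprod{Y_i}{Y_j}^2]$, whose mean is $\thetanorm^4+2\thetanorm^2+d$), and $\hat\theta$ is reset to $0$ once $\hat s$ drops below a detection threshold of order $\rho^{1/2}$. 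The analysis uses: the operator-norm deviation $\Opnorm{\Sigma_n-(I_d+\theta_*\theta_*^\top)}\lesssim(1+s)\rho+\rho^2$ with probability $1-e^{-\Omega(d)}$, obtained by splitting off the Wishart part $\frac1n\sum_iZ_iZ_i^\top-I_d$ (Davidson--Szarek, as already used in \prettyref{lmm:concentration2}) and the rank-two cross term $\theta_*\bar g^\top+\bar g\theta_*^\top$ with $\bar g=\frac1n\sum_iX_iZ_i\sim N(0,I_d/n)$; Weyl's inequality for $\hat\lambda_1$; the Davis--Kahan $\sin\Theta$ theorem for $\hat v_1$ with eigengap $\thetanorm^2$; and a variance bound for the U-statistic. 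Combining through $\ell(\hat\theta,\theta_*)\le|\hat s-s|+s\,\ell(\hat v_1,\eta_*)$ (with $\eta_*=\theta_*/\thetanorm$) gives, on the good event, risk $\lesssim\frac1s(\rho^2+\rho)+\rho$, while the thresholding makes the same estimator output $O(s)$ when the signal is below detection; the exceptional event contributes negligibly to the expectation via Cauchy--Schwarz with the crude a.s.\ bound $\|\hat\theta\|\le\Opnorm{\Sigma_n}^{1/2}\le(\Expect_n[\|Y\|^2])^{1/2}$.

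\emph{Lower bound.} I would prove three lower bounds and take their maximum. \textbf{(a) Radius.} Restricting to $\{s\eta:\eta\in S^{d-1}\}$, take an $e^{\Omega(d)}$-packing of $S^{d-1}$ with $\Omega(1)$ mutual angular separation, so $\ell(s\eta_i,s\eta_j)\asymp s$; since the Fisher information of $P_\theta$ is $\asymp\min\{\thetanorm^2,1\}\,I_d$ (it vanishes quadratically at $\theta=0$ and saturates at $I_d$ as $\thetanorm\to\infty$, evaluated via the change of measure \prettyref{lmm:com}), one gets $D(P_{s\eta_i}^{\otimes n}\|P_{s\eta_j}^{\otimes n})\lesssim n\min\{s^4,s^2\}$, and Fano yields risk $\gtrsim s$ whenever $n\min\{s^4,s^2\}\lesssim d$, which is exactly the range where the rate equals its $s$ branch. \textbf{(b) Direction.} Fixing the length $s$ and a base direction $e_1$, take a \emph{local} $e^{\Omega(d)}$-packing $\{s\eta_v\}$ of angular radius $\delta\lesssim1$ with pairwise Euclidean separation $\asymp s\delta$; then $D(P_{s\eta_v}^{\otimes n}\|P_{s\eta_{v'}}^{\otimes n})\lesssim n\min\{s^2,1\}(s\delta)^2$, and the optimal $\delta$ (subject to $\delta\lesssim1$ and a Fano budget $\lesssim d$) gives risk $\gtrsim\frac1s(\rho^2+\rho)+\rho$ in the regime where this is $\le s$. \textbf{(c) Noise floor.} By the data-processing inequality, revealing the labels $X_i$ reduces the task to estimating $\theta_*$ from $X_iY_i\iiddistr N(\theta_*,I_d)$, whose minimax risk over $\{\|\theta_*\|=s\}$ is the textbook $\min\{s,\sqrt{d/n}\}$, a lower bound for the original risk. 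A short case analysis in $(s,\rho)$ then shows the maximum of (a)--(c) matches $\min\{\frac1s(\rho^2+\rho)+\rho,s\}$ in every regime, including $d>n$.

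The step I expect to be the main obstacle is the \emph{non-asymptotic} control of $D(P_{s\eta}\|P_{s\eta'})$ with the right dependence on $s$ and on the separation: what matters is that the Fisher information of $P_\theta$ is $\Theta(\|\theta\|^2)$ near the origin (not $\Theta(1)$) and saturates at $\Theta(1)$ for large $\|\theta\|$, so that $D(P_{s\eta}\|P_{s\eta'})\asymp\min\{s^2,1\}\cdot s^2\|\eta-\eta'\|^2$ in the local regime and $\asymp\min\{s^4,s^2\}$ for well-separated directions; pinning these down uniformly (not just from a Taylor expansion at $\theta=0$) is what fixes the exponents $\frac14$ and $\frac12$ in \prettyref{eq:minimax1}. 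A secondary complication is the high-dimensional regime $d>n$, where $\sqrt{d/n}\ge1$ and the top eigenvalue of $\Sigma_n$ is buried in the noise bulk, so the radial part of the upper-bound estimator must be replaced by a debiased moment statistic with a correspondingly more careful variance analysis.
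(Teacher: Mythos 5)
Your upper bound and the overall architecture of your lower bound are essentially the paper's: the same spectral estimator $\hat\theta=\hat s\,\hat v_1$ with $\hat s=\sqrt{(\hat\lambda_1-1)_+}$ combined with the zero estimator, analyzed through the Wishart-plus-rank-two decomposition of $\hat\Sigma-\Sigma$, Weyl, and Davis--Kahan; and a Fano argument over a packing of directions at radius $s$, supplemented by the label-revealing reduction to the Gaussian location model. (The extra machinery you add for $d>n$ — thresholding and a debiased U-statistic — is not needed: when $d\gtrsim n$ the target rate degenerates to $\min\{s,\sqrt{d/n}\}$, the zero estimator covers $s\lesssim\sqrt{d/n}$, and the plain spectral analysis covers the rest, which is exactly how the paper proceeds. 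Also, the paper realizes your separate packings (a) and (b) with a single local construction, taking angular radius $\epsilon=c_1\min\{1,\frac{1}{s^2}\sqrt{d/n}\}$.)

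The genuine gap is precisely the step you flag as "the main obstacle" and then leave unproved: a uniform, dimension-free bound of the form $D(P_{su}\|P_{sv})\lesssim s^4\,\ell(u,v)^2$ for $s\lesssim 1$. Your proposed justification — Fisher information of $P_\theta$ is $\asymp\min\{\|\theta\|^2,1\}$, hence $D\lesssim \min\{s^2,1\}\,s^2\delta^2$ — is exactly the heuristic the paper's remark after \prettyref{lmm:localKL} says does not directly yield the needed bound: a quadratic expansion of KL is only infinitesimal, and the remainder must be controlled uniformly in $d$ even though the coordinates of $Y$ are dependent through the shared label, so nothing prevents a dimension-dependent (or $s^2\delta^2$ rather than $s^4\delta^2$) error term without further work. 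The paper's proof of \prettyref{lmm:localKL} is a separate, nontrivial argument: after rotating so that $v=e_1$, it applies the chain rule $D(P_Y\|Q_Y)=D(P_{Y_1}\|Q_{Y_1})+\Expect[D(P_{Y_\perp|Y_1}\|N(0,I_{d-1}))]$, bounds the one-dimensional term by an explicit $\chi^2$ computation, and bounds the conditional term via the Ingster--Suslina identity together with $\Expect[\tanh^2(u_1Y_1)]\lesssim s^2$, which is where the crucial factor $s^4$ (not $s^2$) in front of $\ell(u,v)^2$ comes from; this factor is what produces the exponents $1/4$ and the $\frac1s\sqrt{d/n}$ branch in \prettyref{eq:minimax1}. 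Without this lemma or an equivalent, your Fano budgets in (a) and (b) are off by a factor of $1/s^2$ in the small-$s$ regime and the claimed rates do not follow. A secondary gap: your reduction of \prettyref{eq:minimax2} to \prettyref{eq:minimax1} only covers $d\ge2$; for $d=1$ the "sphere" $\{|\theta_*|=s\}=\{\pm s\}$ has zero minimax risk under $\ell$, so the one-dimensional ball lower bound needs a separate argument (the paper uses a two-point Le Cam comparison of $\theta_*=0$ versus $\theta_*=\epsilon$ with $D(P_\theta\|P_0)\le\log\cosh(\|\theta\|^2)$).
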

Before proving \prettyref{thm:minimax}, we note that the rate in \prettyref{eq:minimax1} behaves as 
\begin{equation}
\inf_{\hat\theta} \sup_{\|\theta_*\| = s} \Expect_{\theta^*}[\ell(\hat\theta,\theta_*)]  \asymp \begin{cases}
  s & s \leq \pth{\frac{d}{n}}^{\frac{1}{4}} \\
 \frac{1}{s} \sqrt{\frac{d}{n}} & \pth{\frac{d}{n}}^{\frac{1}{4}} \leq s \leq 1 \\
\sqrt{\frac{d}{n}} & s \geq 1 \\
\end{cases}
\label{eq:minimax11}
\end{equation}
for $d\leq n$ and 
\begin{equation}
\inf_{\hat\theta} \sup_{\|\theta_*\| = s} \Expect_{\theta^*}[\ell(\hat\theta,\theta_*)]  \asymp \begin{cases}
  s & s \leq \sqrt{\frac{d}{n}}  \\
 \sqrt{\frac{d}{n}} & s \geq \sqrt{\frac{d}{n}} \\
\end{cases}
\label{eq:minimax12}
\end{equation}
for $d\geq n$. The latter case coincides with the $\ell_2$-rate of the Gaussian location model.

\paragraph{Upper bound.}
As before, denote $s = \thetanorm$ and $\eta_*=\theta_*/s$.
Let $\epsilon \triangleq \max\{\sqrt{\frac{d}{n}}, \frac{d}{n}\}$. 
Since the trivial estimator $\hat\theta=0$ achieves $\ell(\hat\theta,\theta_*) = s$, it remains to show the upper bound $C_0 \sqrt{\epsilon}$ under the assumption that $\thetanorm \geq C_1 \sqrt{\epsilon}$, for some universal constants $C_0,C_1$.
Let $\hat{\lambda}$ and $\hat\eta$ denote the top eigenvalue and the associated eigenvector (of unit norm) of the sample covariance matrix $\hat\Sigma \triangleq \Expect_n[YY^\top]$. 
Let $\Sigma=\Expect[YY^\top] = I_d + \theta_*\theta_*^\top$. 
Consider the estimator:
\begin{equation}
\hat\theta = \hat s \hat\eta, \quad \hat s = \sqrt{(\hat{\lambda}-1)_+},
\label{eq:spectral}
\end{equation}
where $(x)_+ \triangleq \max\{0,x\}$ 
for any $x\in\reals$.
To analyze its loss, recall that $Y=X\theta^*+Z$, where $X$ is Rademacher and independent of $Z\sim N(0,I_d)$.
Since
$\Expect_n[YY^\top] = \theta_*\theta_*^\top+\Expect_n[ZZ^\top] + \theta_*(\Expect_n[XZ] )^\top + (\Expect_n[XZ] ) \theta_*^\top$, we have
$\hat\Sigma-\Sigma \overset{\text{law}}{=} \Delta + \frac{1}{\sqrt{n}} (\theta_* w^\top + w \theta_*^\top) $, where $w\sim N(0,I_d)$ and $\Delta \triangleq \Expect_n[ZZ^\top] -I_d$.
Consequently,
$\Opnorm{\hat\Sigma-\Sigma} \leq \opnorm{\Delta} +  \frac{2}{\sqrt{n}} \|w\| \thetanorm$.
By Davis-Kahan's perturbation bound, we have
\[
\ell(\hat\eta,\eta_*) \leq 4 \frac{\Opnorm{\hat\Sigma-\Sigma}}{s^2}
\]
Furthermore, by Weyl's inequality, $|\hat \lambda -1 - s^2| \leq \Opnorm{\hat\Sigma-\Sigma}$ and thus
\[
|\hat s - s| = \frac{|(\hat \lambda-1)_+^2 - s^2|}{(\hat \lambda-1)_+ + s} \leq 
 \frac{|\hat \lambda-1 - s^2|}{s} \leq 
\frac{\Opnorm{\hat\Sigma-\Sigma}}{s}.
\]
 Applying the triangle inequality and combining the last two displays, we obtain
\[
\ell(\hat\theta,\theta_*) \leq |\hat s -s| + s \ell(\hat\eta,\eta_*) \leq 
5 \frac{\Opnorm{\hat\Sigma-\Sigma}}{s^2}.
\]
Finally, since $\Expect[\opnorm{\Delta}] \leq  C\epsilon$ \cite[Theorem 4.7.1]{Vershynin-HDP} for some universal constant $C$ and $\Expect[\|w\|] \leq \sqrt{d}$, taking expectation on both sides, we have
\[
\Expect \ell(\hat\theta,\theta_*) \leq 5 \frac{\Expect\Opnorm{\hat\Sigma-\Sigma}}{s} \leq C'\pth{\frac{\epsilon}{s}    + \sqrt{\frac{d}{n}}}
\]
for some universal constant $C'$.
This proves the upper bound part of \prettyref{eq:minimax1}, and, upon taking the supremum over $s \leq r$, that of \prettyref{eq:minimax2} (since the estimator \prettyref{eq:spectral} does not depend on $\thetanorm$).

\paragraph{Lower bound.}
Recall that 
$P_\theta = \frac{1}{2} N(-\theta,I_d) + \frac{1}{2} N(\theta,I_d)$; in particular, $P_0=N(0,I_d)$.
Then straightforward calculation shows that the $\chi^2$-divergence is $\chi^2(P_\theta\|P_0) = \cosh(\|\theta\|^2) - 1$.
Since $D(P\|Q) \leq \log (1+\chi^2(P\|Q))$, the KL divergence is upper bounded by
\begin{equation}
D(P_\theta\|P_0) \leq \log \cosh(\|\theta\|^2).
\label{eq:KL}
\end{equation}
Note that $\log \cosh(x) \asymp \min\{x,x^2\}$ for $x \geq 0$.
Applying Le Cam's method (two-point argument) to $\theta_*=0$ versus $\theta_*=\epsilon$, with $\epsilon = c_0 \min\{r,n^{-1/4}\}$ for some sufficiently small constant $c_0$, we obtain the desired lower bound in \prettyref{eq:minimax2} for $d=1$.

Next we consider $d\geq 2$. It suffices to prove the lower bound part of \prettyref{eq:minimax1}, which yields that of \prettyref{eq:minimax2} by taking the supremum over $s \leq r$.
Furthermore, since the rate for the Gaussian location model (which is $s \wedge \sqrt{\frac{d}{n}}$) constitutes a lower bound for the Gaussian mixture model, this proves \prettyref{eq:minimax12} as well as the last case of \prettyref{eq:minimax11}. So next we focus on $2 \leq d \leq n$ and $s \leq 1$.

Let $c_0$ be some small absolute constant.
Let $v_1,\ldots,v_M$ be a $c_0$-net for the unit sphere $S^{d-2} \cap \reals_+^{d-1}$, such that 
(a) $\|v_i\|=1$; 
(b) $\ell(v_i,v_j)=\|v_i-v_j\| \geq c_0$ for any $i\neq j$;
(c) $M \geq \exp(C_0d)$ for some absolute constant $C_0$.
Now define 
$u_0,\ldots,u_M \in \reals^d$ by $u_0 = e_1=[1,0,\ldots,0]$ and $u_i= [1-\epsilon^2, \epsilon v_i]$ for $i\in[M]$, where $\epsilon = c_1 \min\{1,\frac{1}{s^2} \sqrt{\frac{d}{n}}\}$ for some small constant $c_1$. 
Then $\ell(u_i,u_j)  \geq c_0 \epsilon$ for any distinct $i,j\in[M]$ and 
$\ell(u_i,u_0)  \leq 2 c_0 \epsilon$ for any $i\in [M]$. 
Finally, let $\theta_i=s u_i$ for $i=0,\ldots,M$.
By the key result \prettyref{lmm:localKL} below, the KL radius of $\{P_{\theta_i}:i\in[M]\}$ is at most
\[
\max_{i\in[M]} D(P_{\theta_i}\|P_{\theta_0}) \leq C_1 s^4 \epsilon^2
\]
for some absolute constant $C_1$.
Applying Fano's method \cite{YB99} yields a lower bound that is a constant factor of $\epsilon s \asymp \min\{s,\frac{1}{s} \sqrt{\frac{d}{n}}\}$.



It remains to prove the following result on the local behavior of KL divergence in the 2-GM model.
\begin{lemma}
\label{lmm:localKL}	
	Let $0\leq s\leq 1$. Then there exists a universal constant $C$, such that for any $d\geq 1$ and $u,v\in S^{d-1}$,
	\begin{equation}
	D(P_{su} \| P_{sv}) \leq C \ell(u,v)^2 s^4.
	\label{eq:local}
	\end{equation}
\end{lemma}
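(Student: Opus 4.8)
The plan is to reduce \prettyref{eq:local} to a two–dimensional Gaussian computation and then exploit a cancellation between the first‑ and second‑order terms of a Taylor expansion. Since $P_{sv}=P_{-sv}$ and $\ell(u,-v)=\ell(u,v)$, we may assume $c:=\iprod{u}{v}\in[0,1]$; writing $\epsilon:=1-c$ we then have $\ell(u,v)^2=\|u-v\|^2=2\epsilon$, so it suffices to prove $D(P_{su}\|P_{sv})\lesssim \epsilon\, s^4$ (the case $\epsilon=0$, which in particular covers $d=1$, being trivial). Using the density formula \prettyref{eq:ptheta} together with the change of measure $\frac{dP_{s\theta}}{dN(0,I_d)}(z)=e^{-s^2/2}\cosh(s\iprod{z}{\theta})$ for a unit vector $\theta$, one obtains
\[
D(P_{su}\|P_{sv})=e^{-s^2/2}\,\Expect\big[\cosh(A)\big(\log\cosh A-\log\cosh(A+\delta)\big)\big],
\]
where $A:=s\iprod{Z}{u}$ and $\delta:=s\iprod{Z}{v-u}$ with $Z\sim N(0,I_d)$; hence $(A,\delta)$ is a centered bivariate Gaussian with $\Var(A)=s^2$, $\Var(\delta)=2s^2\epsilon$ and $\mathrm{Cov}(A,\delta)=-s^2\epsilon$, so that $\Expect[\delta\mid A]=-\epsilon A$ and $\Var(\delta\mid A)=s^2\epsilon(2-\epsilon)$.

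Next I would Taylor‑expand $\log\cosh(A+\delta)$ about $A$ to second order, using $(\log\cosh)'=\tanh$, $(\log\cosh)''=\sech^2$, and $|(\log\cosh)'''(x)|=2|\tanh x|\,\sech^2 x\le 2\min(1,|x|)$, and take expectations term by term (everything is integrable since $\cosh A\le e^{|A|}$). The first‑order term equals $-e^{-s^2/2}\Expect[\sinh(A)\,\Expect[\delta\mid A]]=\epsilon\, e^{-s^2/2}\Expect[A\sinh A]=\epsilon s^2$, using the elementary identity $\Expect[A\sinh A]=s^2 e^{s^2/2}$ for $A\sim N(0,s^2)$. The second‑order term is $-\tfrac12 e^{-s^2/2}\Expect[\sech(A)\,\delta^2]$; substituting $\Expect[\delta^2\mid A]=s^2\epsilon(2-\epsilon)+\epsilon^2A^2$ and adding the first‑order term gives
\[
\epsilon s^2\big(1-e^{-s^2/2}\Expect[\sech A]\big)+\tfrac12\epsilon^2\,e^{-s^2/2}\,\Expect\big[(s^2-A^2)\sech A\big].
\]

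The heart of the matter — and the step I expect to be the main obstacle — is that although the first‑order contribution $\epsilon s^2$ is far larger than the target $\epsilon s^4$, it is almost entirely cancelled by the second‑order one, and one must show that each of the two factors above carries an extra $s^2$ (resp.\ $s^4$); the naive estimates only give $O(\epsilon s^2)$, which is useless. For the first factor, $1-e^{-s^2/2}\Expect[\sech A]=e^{-s^2/2}\Expect[\sinh^2 A/\cosh A]\le e^{-s^2/2}\Expect[A^2\cosh A]=s^2+s^4\le 2s^2$, using $|\tanh x|\le|x|$ and $\Expect[A^2\cosh A]=(s^2+s^4)e^{s^2/2}$. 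For the second, since $\Expect[s^2-A^2]=0$ and $0\le 1-\sech x\le x^2/2$, one gets $|\Expect[(s^2-A^2)\sech A]|=|\Expect[(s^2-A^2)(\sech A-1)]|\le\tfrac12\Expect[|s^2-A^2|\,A^2]\lesssim s^4$. Finally, the cubic remainder is at most $\tfrac13 e^{-s^2/2}\Expect[\cosh(A)(|A|+|\delta|)|\delta|^3]$, which I would control by conditioning on $A$: every power of $\delta$ contributes a factor $s^2\epsilon$ (through $\Var(\delta\mid A)$) or $\epsilon|A|$ (through $\Expect[\delta\mid A]$), and the resulting moments of $A$ against the weight $\cosh A$ satisfy $\Expect[\cosh A]\le 2$, $\Expect[|A|\cosh A]\lesssim s$, $\Expect[A^2\cosh A]\lesssim s^2$, $\Expect[A^4\cosh A]\lesssim s^4$ for $s\le1$; here the bound $\Expect[|A|\cosh A]\lesssim s$ (rather than $O(1)$) is exactly what absorbs the $\big(\Var(\delta\mid A)\big)^{3/2}\le(2s^2\epsilon)^{3/2}$ term when $\epsilon$ is of constant order. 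Collecting the three estimates and using $\epsilon\le1$ yields $D(P_{su}\|P_{sv})\lesssim\epsilon s^4=\tfrac12\ell(u,v)^2 s^4$, which is \prettyref{eq:local}.
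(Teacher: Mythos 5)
Your argument is correct, and it takes a genuinely different route from the paper. The paper also first reduces by symmetry to bounding $D(P_{su}\|P_{sv})$ by $C\|u-v\|^2s^4$, but then splits the KL divergence by the chain rule into the one-dimensional marginal term $D(P_{Y_1}\|Q_{Y_1})$ (handled by an explicit $\chi^2$ computation) plus the conditional term $\Expect[D(P_{Y_\perp|Y_1}\|N(0,I_{d-1}))]$ (handled via the Ingster--Suslina identity, with the key estimate $\Expect[\tanh(u_1Y_1)^2]\lesssim s^2$); the dependence of the coordinates under $P_{su}$ is what makes that decomposition necessary there. You instead observe that the log-likelihood ratio depends on $Z$ only through the two projections $A=s\iprod{Z}{u}$ and $A+\delta=s\iprod{Z}{v}$, so after the change of measure the whole problem collapses to a bivariate Gaussian computation with no conditioning on $Y_1$ and no $\chi^2$ bound at all. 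The price you pay is the near-cancellation between the first-order term $\epsilon s^2$ and the second-order term, and you handle this correctly: the identity $1-e^{-s^2/2}\Expect[\sech A]=e^{-s^2/2}\Expect[\sinh^2A/\cosh A]\le s^2+s^4$, the centering trick $\Expect[(s^2-A^2)\sech A]=\Expect[(s^2-A^2)(\sech A-1)]\lesssim s^4$, and the weighted moment $\Expect[|A|\cosh A]\lesssim s$ that tames the $(\Var(\delta\mid A))^{3/2}\lesssim (s^2\epsilon)^{3/2}$ part of the cubic remainder all check out (I verified the exact values $\Expect[A\sinh A]=s^2e^{s^2/2}$ and $\Expect[A^2\cosh A]=(s^2+s^4)e^{s^2/2}$, and the conditional law $\delta\mid A\sim N(-\epsilon A,\,s^2\epsilon(2-\epsilon))$). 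Your route is arguably more self-contained (no Ingster--Suslina identity, no chain rule) and makes the dimension-free nature of the bound transparent; the paper's route avoids the delicate first/second-order cancellation by isolating the directional information in the conditional term, at the cost of the two-part decomposition.
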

\begin{remark}
	The result \prettyref{eq:local} can be interpreted in two ways. 
First, by the local expansion of the KL divergence, we have $D(P_{\theta'} \| P_{\theta}) = O(\|\theta-\theta'\|^2 I(\theta))$, where 
$I(\theta)$ is the Fisher information at $\theta$, which, in the 2-GM model, behaves as $\|\theta\|^2$ for small $\theta$ (see \prettyref{rmk:rate});
however, this intuition does not directly lead to the desired dimension-free bound.
Additionally, \prettyref{eq:local} can be ``anticipated'' by drawing analogy to the covariance model: Suppose the latent variable in the mixture model is standard normal instead of Rademacher. Then $D(P_{su} \| P_{sv}) = D(N(0,I+s^2uu^\top) \| N(0,I+s^2 vv^\top)) = \frac{s^4}{2(1+s^2)} \Fnorm{uu^\top-vv^\top}^2\asymp s^4 \ell(u,v)^2$,
where the second identity is from \cite[Eqn.~(52)]{CMW12}.
\end{remark}

\begin{proof}[Proof of \prettyref{lmm:localKL}]
	First of all, by symmetry, it suffices to show 
		\begin{equation}
	D(P_{su} \| P_{sv}) \leq C \|u-v\|^2 s^4.
	\label{eq:local1}
	\end{equation}	
	Next, let $\delta = \|u-v\| \in [0,\sqrt{2}]$. 
	By the rotational invariance of the normal distribution, we can and shall assume
	$v=e_1$ and $u$ satisfies $|u_1-1|\leq \delta$ and $\|u_\perp\|\leq \delta$, where $u_\perp=(u_2,\ldots,u_d)$.
	Let $Q=Q_{Y_1,\ldots,Y_d}=P_{sv}$ and $P=P_{Y_1,\ldots,Y_d}=P_{sv}$. 
	Then 	$Q = P_s \otimes N(0,I_{d-1})$ is a product distribution, while $P$ is not, since under $P$, $Y_1,\ldots,Y_d$ are dependent through the common label; this is where the majority of the technical difficulty of this proof comes from.
	Next we use the chain rule to evaluate the KL divergence:
	\[
	D(P_{Y_1,\ldots,Y_d}\|Q_{Y_1,\ldots,Y_d}) = 
	\underbrace{D(P_{Y_1}\|Q_{Y_1})}_{\termI} + \underbrace{\Expect[D(P_{Y_\perp |Y_1}\|N(0,I_{d-1})]}_{\termII},
	\]
	where we used the fact that $Y_\perp$ is standard normal and independent of $Y_1$ under $Q$, and the expectation in (II) is taken over $P_{Y_1}$.
	In what follows we show that both terms are $O(s^4 \delta^2)$.
	
	\medskip
	\textbf{Bounding (I):}
	Let $u_1=s+\epsilon$, where $|\epsilon| \leq s \delta$.
	Then $\termI = D(P_{s+\epsilon} \|P_s)$.
	Recall $p_\theta(y)$ given in \prettyref{eq:ptheta} denotes the density function of $P_\theta$.
	In one dimension, we have $p_\theta(y)= e^{-\frac{\theta^2}{2}}  \varphi(y) \cosh(\theta y)$.
	Then
	\begin{align*}
	\termI
	\leq & ~ \chi^2(P_{s+\epsilon} \|P_s) \\
	\stepa{\leq} & ~ e^{\frac{s^2}{2}} \int \varphi(y) [e^{-\frac{(s+\epsilon)^2}{2}}  \cosh((s+\epsilon)y) - e^{-\frac{s^2}{2}}  \cosh(s y)]^2	\\
	\stepb{= } & ~ e^{\frac{s^2}{2}} (\cosh \left(s^2\right)-2 \cosh (s (s+\epsilon ))+\cosh \left((s+\epsilon )^2\right)) \stepc{\leq} C_1 s^2 \epsilon^2 \leq C_1 s^4 \delta^2,
	\end{align*}
	where (a) is due to $\cosh \geq 1$; (b) follows from the facts that $\int \varphi(y)  \cosh(sy) = e^{s^2/2}$, $\int \varphi(y)  \cosh(sy)^2 = e^{s^2} \cosh(s^2)$, and $2\cosh(a)\cosh(b) =\cosh(a+b)+\cosh(a-b)$;
	(c) is by Taylor expansion since $0\leq |\epsilon| \leq \sqrt{2} s \leq \sqrt{2}$, where $C_1$ is some universal constant.

	\medskip
	\textbf{Bounding (II):}
	Let $Y=(Y_1,Y_\perp)$ and $Y_\perp=(Y_2,\ldots,Y_d)$. Under $P$, we can write $Y_i = R_i + Z_i$, 
	where $R_i = s u_i \cdot B$, $B$ is Rademacher and independent of $Z_i \iiddistr N(0,1)$.
	Therefore $P_{Y_\perp|Y_1} = P_{R_\perp|Y_1} * N(0,I_{d-1})$
	is a Gaussian location mixture (convolution). 	
	Recall the Ingster-Suslina identity \cite{IS03}: 
	for any distribution $\mu$ on $\reals^d$,
	\[
	\chi^2(\mu * N(0,I_{d})\|N(0,I_{d}) = \Expect[\exp(\Iprod{X}{\tilde X})]-1,
	\]
	where $X,\tilde X \iiddistr \mu$.
	Then we have
	\[
	\termII \leq \Expect[\chi^2(P_{Y_\perp|Y_1}\|N(0,I_{d-1}))] = \Expect[\exp(\Iprod{R_\perp}{\tilde R_\perp})]-1,
	\]
	where $\tilde R_\perp$ is an independent copy of $R_\perp$ conditioned on $Y_1$.
	Note that $\|R_\perp\| \leq s \|u_\perp\| \leq s \delta$ almost surely. 
	Then $|\Iprod{R_\perp}{\tilde R_\perp}| \leq (s \delta)^2 \leq 2$. Therefore by Taylor expansion, we have
	\[
	\Expect[\exp(\Iprod{R_\perp}{\tilde R_\perp})]-1 \leq 
	\Expect[\Iprod{R_\perp}{\tilde R_\perp}] + C_2 (s \delta)^4,
	\]
	where $C_2$ is some universal constant.
	By linearity, we have
	\begin{align*}
	\Expect[\Iprod{R_\perp}{\tilde R_\perp}]
	= & ~  \sum_{i=2}^d \Expect[R_i \tilde R_i] 
	= \sum_{i=2}^d \Expect[\Expect[R_i|Y_1] \Expect[\tilde R_i|Y_1]] \\
	\stepa{=} & ~  \sum_{i=2}^d \Expect[\Expect[R_i|Y_1]^2] 
	\stepb{=} s^2 \sum_{i=2}^d u_i^2 \Expect[\Expect[B|Y_1]^2] \\
	\stepc{=} & ~  s^2 \delta^2 \Expect[\tanh(u_1 Y_1)^2] 
	\stepd{\leq} 4 s^4 (1+4s^2) \delta^2  \leq 40 s^4 \delta^2,
	\end{align*}
	where (a) is because of $\tilde R_i$ is a conditional independent copy of $R_i$;
	(b) is due to $R_i = s u_i B$;
	(c) is by $\|u_\perp\|=\delta$ and the conditional mean is given by \prettyref{eq:condmean};
	(d) is by $|\tanh(x)|\leq |x|$ and $|u_1| \leq s(1+\delta) \leq 2s$.
		
	\medskip
	Finally, combining (I) and (II) completes the proof of \prettyref{eq:local1}.
\end{proof}

\section*{Acknowledgment}
The authors are grateful for Yuxin Chen for helpful discussions on \cite{chen2018gradient} and Natalie Doss for pointing out \cite{ho2016convergence}.


\newcommand{\etalchar}[1]{$^{#1}$}

\end{document}